\theoremstyle{plain}                       % default
\newtheorem{thm}{Theorem}[section]
\newtheorem{lemma}[thm]{Lemma}
\newtheorem{cor}[thm]{Corollary}
\newtheorem{definition}[thm]{Definition}
\newtheorem{remark}[thm]{Remark}
\theoremstyle{remark}
\def\Xint#1{\mathchoice
  {\XXint\displaystyle\textstyle{#1}}%
  {\XXint\textstyle\scriptstyle{#1}}%
  {\XXint\scriptstyle\scriptscriptstyle{#1}}%
  {\XXint\scriptscriptstyle\scriptscriptstyle{#1}}%
  \!\int}
\def\XXint#1#2#3{{\setbox0=\hbox{$#1{#2#3}{\int}$}
  \vcenter{\hbox{$#2#3$}}\kern-.5\wd0}}
\def\dashint{\Xint-}
\numberwithin{equation}{section}
\journal{Journal of \LaTeX\ Templates}
\begin{document}

\begin{frontmatter}

\title{Convergence rates and $W^{1,p}$ estimates in homogenization theory of \\
Stokes systems in Lipschitz domains \tnoteref{mytitlenote}}
\tnotetext[mytitlenote]{
The research was supported
by the National Natural Science Foundation of China (Grant No. 11471147).}

% Group authors per affiliation:
\author{\large Qiang~ Xu \fnref{myfootnote}}
\address{\normalsize Department of Mathematics, Peking University, Beijing, 100871, PR China.}
\fntext[myfootnote]{E-mail:~xuqiang@math.pku.edu.cn}

%% or include affiliations in footnotes:
%\author[mymainaddress]{Qiang Xu}
%\ead[Email]{xuqiang09@lzu.edu.cn}

%\author[mysecondaryaddress]{Qiang Xu\corref{mycorrespondingauthor}}
%\cortext[mycorrespondingauthor]{Corresponding author}
%\ead{support@elsevier.com}

%\address[mymainaddress]{School of Mathematics and Statistics, Lanzhou University, Lanzhou 730000, China.}
%\address[mysecondaryaddress]{School of Mathematics and Statistics, Lanzhou University, Lanzhou 730000, China.}

\begin{abstract}
 Concerned with the Stokes systems with rapidly oscillating periodic coefficients,
 we mainly extend the recent works in \cite{SGZWS,G} to those in term of Lipschitz domains.
 The arguments employed here are quite different from theirs, and the basic idea comes from \cite{QX2}, originally motivated by
 \cite{SZW2,SZW12,TS}.
 We obtain an almost-sharp $O(\varepsilon\ln(r_0/\varepsilon))$ convergence rate in $L^2$ space, and
 a sharp $O(\varepsilon)$ error estimate in $L^{\frac{2d}{d-1}}$ space by a little stronger assumption.
 Under the dimensional condition $d=2$,
 we also establish the optimal $O(\varepsilon)$ convergence rate on pressure terms in $L^2$ space. Then utilizing the convergence rates we can derive the $W^{1,p}$ estimates
 uniformly down to microscopic scale $\varepsilon$ without any smoothness assumption on the coefficients,
 where $|\frac{1}{p}-\frac{1}{2}|<\frac{1}{2d}+\epsilon$ and $\epsilon$ is a positive constant independent of
 $\varepsilon$. Combining the local estimates, based upon $\text{VMO}$ coefficients,
 consequently leads to the uniform $W^{1,p}$ estimates. Here the proofs do not rely on
 the well known compactness methods.
\end{abstract}

\begin{keyword}
Homogenization \sep Stokes systems \sep Convergence rates \sep Lipschitz domains \sep $W^{1,p}$ estimates
\end{keyword}

\end{frontmatter}

%\linenumbers

\section{Introduction and main results}

In recent years the study of quantitative homogenization of Stokes systems in smooth domains has received
an important development in \cite{SGZWS,G}.
However there is few related research involving non-smooth ones.
Based on the weighted-type estimates and duality methods investigated by the author in \cite{QX2},
essentially motivated by \cite{SZW2,SZW12,TS},
this paper primarily studies the sharp convergence rates in $L^2$ space for homogenization theory of
Stokes systems in a bounded Lipschitz domain. As an application,
one may derive the uniform $W^{1,p}$ estimates with $|\frac{1}{p}-\frac{1}{2}|<\frac{1}{2d}+\epsilon$
by an additional smoothness assumption on coefficients.
Here we improved the arguments used in \cite{SZW10,JGZSLS}.
In fact, we can employ the convergence rates to establish the $W^{1,p}$ estimates uniformly down
to the microscopic scale, and then together with the corresponding local estimates
arrive at the full-type estimates. We mention that the idea is motivated by S. Armstrong and Z. Shen in
\cite{SZ,SACS,SZW12}.

More precisely, given
$F\in H^{-1}(\Omega;\mathbb{R}^d)$,
$h\in L^2(\Omega)$ and $g\in H^{\frac{1}{2}}(\Omega;\mathbb{R}^d)$ with a proper compatibility condition,
we consider the following compressible Stokes systems with Dirichlet
boundary condition
\begin{equation*}\label{pde:1.1}
(\mathbf{DS_\varepsilon})\left\{
\begin{aligned}
\mathcal{L}_\varepsilon(u_\varepsilon) + \nabla p_\varepsilon &= F &\quad &\text{in}~~\Omega, \\
 \text{div} (u_\varepsilon) &= h &\quad&\text{in} ~~\Omega,\\
 u_\varepsilon &= g &\quad&\text{on} ~\partial\Omega,
\end{aligned}\right.
%\qquad
%(\mathbf{NS_\varepsilon})\left\{
%\begin{aligned}
%\mathcal{L}_\varepsilon(u_\varepsilon) + \nabla p_\varepsilon &= N_f &\quad &\text{in}~~\Omega, \\
% \text{div} (u_\varepsilon) &= N_h &\quad&\text{in} ~~\Omega,\\
% \frac{\partial u_\varepsilon}{\partial\nu_\varepsilon}-np_\varepsilon &= N_g &\quad&\text{on} ~\partial\Omega,
%\end{aligned}\right.
\end{equation*}
where $\varepsilon>0$, and
\begin{equation*}
 \mathcal{L}_\varepsilon = - \text{div}\big[A(x/\varepsilon)\nabla\big]
 = -\frac{\partial}{\partial x_i}
 \Big[a_{ij}^{\alpha\beta}\Big(\frac{x}{\varepsilon}\Big)\frac{\partial}{\partial x_j}\Big].
\end{equation*}

Let $d\geq 2$ and $1 \leq i,j,\alpha,\beta\leq d$.
The summation convention for repeated indices is used throughout and $\Omega$ is always
supposed to be a bounded Lipschitz domain, unless otherwise stated.
Assume that the coefficient matrix $A = (a_{ij}^{\alpha\beta})$ is real and satisfies
the uniform ellipticity condition
\begin{equation}\label{a:1}
 \mu |\xi|^2 \leq a_{ij}^{\alpha\beta}(y)\xi_i^\alpha\xi_j^\beta\leq \mu^{-1} |\xi|^2
 \quad \text{for}~y\in\mathbb{R}^d~\text{and}~\xi=(\xi_i^\alpha)\in \mathbb{R}^{d\times d},
 \quad\text{where}~\mu>0;
\end{equation}
and the periodicity condition
\begin{equation}\label{a:2}
A(y+z) = A(y)
\qquad\text{for}~y\in \mathbb{R}^d ~\text{and}~ z\in \mathbb{Z}^d.
\end{equation}
Then it is well known that $u_\varepsilon \rightharpoonup u_0$ weakly in $H^1(\Omega;\mathbb{R}^d)$ and
$p_\varepsilon - \dashint_\Omega p_\varepsilon \rightharpoonup p_0 -\dashint_\Omega p_0 $ weakly in
$L^2(\Omega)$ as $\varepsilon \to 0$ (see for example \cite{ABJLGP,SGZWS}),
where the notation $\dashint_\Omega$ denotes
the average integral over $\Omega$, and the pair $(u_0,p_0)\in H^1(\Omega;\mathbb{R}^d)\times L^2(\Omega)$ is the weak solution of the homogenized
system
\begin{equation*}
(\mathbf{DS_0})\left\{
\begin{aligned}
\mathcal{L}_0(u_0) + \nabla p_0 &= F &\quad &\text{in}~~\Omega, \\
 \text{div} (u_0) &= h &\quad&\text{in} ~~\Omega,\\
 u_0 &= g &\quad&\text{on} ~\partial\Omega.
\end{aligned}\right.
\end{equation*}
Here the homogenized operator $\mathcal{L}_0$ is an elliptic operator with constant coefficients satisfying
$\eqref{a:1}$ and depending only on the matrix $A$ (see \cite{SGZWS,ABJLGP}).
Besides, we impose the symmetry condition $A=A^*$, i.e.,
\begin{equation}\label{a:3}
 a_{ij}^{\alpha\beta}(y) = a_{ji}^{\beta\alpha}(y) \quad \text{for}~ y\in\mathbb{R}^d.
\end{equation}
To guarantee the existence of the solutions of $(\mathbf{DS_\varepsilon})$ and $(\mathbf{DS_0})$,
it is also necessary to introduce the compatibility condition for the given data $h$ and $g$ such that
\begin{equation}\label{a:4}
\int_\Omega h dx = \int_{\partial\Omega} n\cdot g dS,
\end{equation}
where $n=(n_1,\cdots,n_d)$ denotes the outward unit normal vector to $\partial\Omega$ throughout.

We present the quantitative results in the following,
and some unfamiliar notation will be explained later.
\begin{thm}[Convergence rates]\label{thm:1.1}
Suppose that $A$ satisfies $\eqref{a:1}-\eqref{a:3}$. Assume $F\in L^2(\Omega;\mathbb{R}^d)$,
$h\in H^1(\Omega)$ and $g\in H^1(\partial\Omega;\mathbb{R}^d)$ with the compatibility condition $\eqref{a:4}$.
Let $(u_\varepsilon,p_\varepsilon)$ and $(u_0,p_0)$ in $H^1(\Omega;\mathbb{R}^d)\times L^2(\Omega)/\mathbb{R}$
be the weak solutions of the Dirichlet problems
$(\textbf{DS})_\varepsilon$ and $(\textbf{DS})_0$, respectively. Then
we have
\begin{equation}\label{pri:1.1}
\big\|u_\varepsilon - u_0\big\|_{L^2(\Omega)}\leq C\varepsilon\ln(r_0/\varepsilon)
\Big\{\|F\|_{L^2(\Omega)}+\|h\|_{H^1(\Omega)}+\|g\|_{H^1(\partial\Omega)}\Big\},
\end{equation}
and
\begin{equation}\label{pri:1.3}
\|p_\varepsilon-p_0-\pi(\cdot/\varepsilon)
S_\varepsilon(\psi_{2\varepsilon}\nabla u_0)\|_{L^2(\Omega)/\mathbb{R}}
\leq C\varepsilon^{\frac{1}{2}}\Big\{\|F\|_{L^2(\Omega)}+\|h\|_{H^1(\Omega)}
+\|g\|_{H^1(\partial\Omega)}\Big\}.
\end{equation}
Moreover, if $u_0\in H^2(\Omega;\mathbb{R}^d)$, then we have
  \begin{equation}\label{pri:1.2}
  \big\|u_\varepsilon - u_0\big\|_{L^{\frac{2d}{d-1}}(\Omega)}\leq C\varepsilon \|u_0\|_{H^2(\Omega)},
  \end{equation}
while in the special case of $d=2$, we derive a sharp estimate
  \begin{equation}\label{pri:1.4}
  \big\|u_\varepsilon-u_0\big\|_{L^{p}(\Omega)}
  + \|p_\varepsilon-p_0-\pi(\cdot/\varepsilon)
  \nabla u_0\|_{L^2(\Omega)/\mathbb{R}}
  \leq C\varepsilon\|u_0\|_{H^2(\Omega)}
  \end{equation}
for $1\leq p<\infty$, where $C$ depends only on $\mu,d$ and $\Omega$.
\end{thm}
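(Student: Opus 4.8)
The plan is to build everything from a single quantitative estimate for the \emph{first-order approximation} of $u_\varepsilon$ in energy norm together with a duality (Aubin–Nitsche) argument. First I would introduce the correctors $\chi=(\chi_j^\beta)$ and the associated flux correctors, and set $w_\varepsilon = u_\varepsilon - u_0 - \varepsilon\chi(\cdot/\varepsilon)S_\varepsilon(\psi_{2\varepsilon}\nabla u_0)$, where $S_\varepsilon$ is the smoothing operator and $\psi_{2\varepsilon}$ a cutoff supported away from $\partial\Omega$ at scale $2\varepsilon$. The standard computation (as in \cite{QX2,SZW2}) shows that the pair $(w_\varepsilon, q_\varepsilon)$, with $q_\varepsilon = p_\varepsilon - p_0 - \pi(\cdot/\varepsilon)S_\varepsilon(\psi_{2\varepsilon}\nabla u_0)$, solves a Stokes system whose right-hand side is a divergence of a term that is $O(\varepsilon)$ in $L^2$ away from the boundary plus a boundary-layer term supported in $O_{2\varepsilon} = \{x\in\Omega: \text{dist}(x,\partial\Omega)<2\varepsilon\}$. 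The energy estimate then gives $\|\nabla w_\varepsilon\|_{L^2(\Omega)} + \|q_\varepsilon\|_{L^2(\Omega)/\mathbb{R}} \le C\varepsilon\|u_0\|_{H^2}$ \emph{plus} a term controlled by $\|\nabla u_0\|_{L^2(O_{2\varepsilon})}$, which by a trace/Hardy-type inequality is $O(\varepsilon^{1/2})\|u_0\|_{H^2}$. This already yields \eqref{pri:1.3} once one checks $\|\varepsilon\chi(\cdot/\varepsilon)S_\varepsilon(\psi_{2\varepsilon}\nabla u_0)\|_{L^2} = O(\varepsilon)$ (so the corrector term in $w_\varepsilon$ is harmless for the pressure identity), and it gives the $H^1$-error bound $O(\varepsilon^{1/2})$ which is the input for the duality step.

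For \eqref{pri:1.2}, I would run the duality argument: given $\Phi\in L^{2d/(d+1)}(\Omega;\mathbb{R}^d)$, let $(v_\varepsilon,\sigma_\varepsilon)$ solve the adjoint Stokes system $\mathcal{L}_\varepsilon^*(v_\varepsilon)+\nabla\sigma_\varepsilon=\Phi$, $\text{div}\,v_\varepsilon=0$, $v_\varepsilon=0$ on $\partial\Omega$ (the symmetry assumption \eqref{a:3} makes $\mathcal{L}_\varepsilon^*=\mathcal{L}_\varepsilon$). Pairing $u_\varepsilon-u_0$ against $\Phi$ and integrating by parts, one expresses $\int_\Omega (u_\varepsilon-u_0)\cdot\Phi$ in terms of $\int_\Omega A(\cdot/\varepsilon)\nabla w_\varepsilon\cdot\nabla(\text{error in }v)$-type quantities plus corrector contributions; the key is that $v_\varepsilon$ itself is well-approximated by its homogenized profile with the same $O(\varepsilon^{1/2})$ (or better, using the $L^{2d/(d-1)}$-type refinement) energy rate, and that $\|v_0\|_{H^2}\le C\|\Phi\|_{L^{2d/(d+1)}}$ by $W^{2,p}$ estimates for the constant-coefficient Stokes system on the Lipschitz domain in the relevant exponent range. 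Collecting terms, $|\int_\Omega(u_\varepsilon-u_0)\cdot\Phi|\le C\varepsilon\|u_0\|_{H^2}\|\Phi\|_{L^{2d/(d+1)}}$, which is exactly \eqref{pri:1.2} by duality of $L^{2d/(d-1)}$ and $L^{2d/(d+1)}$. The estimate \eqref{pri:1.1} comes from the same scheme but feeding in only $F\in L^2$, $h\in H^1$, $g\in H^1(\partial\Omega)$ (so $u_0\in H^2$ is not available) and instead using interior $H^2$ bounds on $u_0$ away from $\partial\Omega$ together with a careful accounting of the boundary layer; the logarithm $\ln(r_0/\varepsilon)$ enters precisely when summing the dyadic boundary-layer contributions $\sum_{2^k\varepsilon<r_0}$ of size $O(\varepsilon)$ each, a standard phenomenon in Lipschitz-domain homogenization.

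The case $d=2$ in \eqref{pri:1.4}: here the correctors $\chi$ are bounded \emph{and} the cutoff can essentially be dispensed with because in two dimensions $\nabla u_0\in L^q$ for all $q<\infty$ when $u_0\in H^2$ (Sobolev), so $\|\nabla u_0\|_{L^2(O_{2\varepsilon})}\le C\varepsilon^{1/q'}\|u_0\|_{H^2}$ can be pushed arbitrarily close to $O(\varepsilon)$, and one can replace $S_\varepsilon(\psi_{2\varepsilon}\nabla u_0)$ by $\nabla u_0$ at the cost of an $O(\varepsilon)$ error; rerunning the energy estimate without the $\varepsilon^{1/2}$-lossy boundary term gives $\|\nabla w_\varepsilon\|_{L^2}+\|p_\varepsilon-p_0-\pi(\cdot/\varepsilon)\nabla u_0\|_{L^2/\mathbb{R}}\le C\varepsilon\|u_0\|_{H^2}$, and then duality against $L^{p'}$ for any $1<p'\le 2$ (the adjoint solution again lies in $H^2$ with norm controlled by $\|\Phi\|_{L^{p'}}$ for $p'>1$ in $d=2$) yields $\|u_\varepsilon-u_0\|_{L^p}\le C\varepsilon\|u_0\|_{H^2}$ for every $p<\infty$, with the $L^\infty$-type endpoint excluded. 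The main obstacle I anticipate is the boundary layer: on a Lipschitz (non-smooth) domain one cannot use boundary correctors or smooth flattening, so the term $\|\nabla u_0\|_{L^2(O_{2\varepsilon})}$ and its companion $\|\nabla v_0\|_{L^2(O_{2\varepsilon})}$ must be handled purely by the $H^2$-regularity of the homogenized solutions plus trace inequalities, and getting from the crude $O(\varepsilon^{1/2})$ to the almost-sharp $O(\varepsilon\ln(r_0/\varepsilon))$ in $L^2$ (rather than the sharp $O(\varepsilon)$) is exactly the price of Lipschitz geometry — I would expect the logarithm to be genuinely necessary in the argument via the dyadic decomposition, matching \cite{QX2}.
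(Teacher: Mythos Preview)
Your overall architecture --- energy estimate for the first-order approximation followed by a duality argument --- matches the paper, but there is a genuine gap that would block the proof on a Lipschitz domain.

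You repeatedly invoke global $H^2$ (or $W^{2,p}$) regularity for the homogenized adjoint solution, e.g.\ ``$\|v_0\|_{H^2}\le C\|\Phi\|_{L^{2d/(d+1)}}$ by $W^{2,p}$ estimates for the constant-coefficient Stokes system on the Lipschitz domain''. Such estimates are \emph{not} available on a general Lipschitz domain; this is precisely the obstruction the paper is designed to overcome. Instead of global $H^2$ on the adjoint side, the paper decomposes $u_0$ (and $\phi_0$) into a whole-space part (where $W^{2,q}$ is fine) and a boundary-value part handled via nontangential and radial maximal function estimates (Lemma~\ref{lemma:3.3}). The duality step then uses \emph{weighted} $L^2$ norms with weights $\delta(x)$ and $\delta(x)^{-1}$ (Lemma~\ref{lemma:4.1} and Lemma~\ref{lemma:4.3}); the logarithm in \eqref{pri:1.1} arises not from a dyadic sum of $O(\varepsilon)$ layers but from the integral $\int_{c\varepsilon}^{c_0} t^{-1}\,dt$ appearing when one estimates $\|\nabla\phi_0\|_{L^2(\Sigma_\varepsilon;\delta^{-1})}$ and $\|\nabla^2 u_0\|_{L^2(\Sigma_\varepsilon;\delta)}$. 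For \eqref{pri:1.2} the paper again avoids $\phi_0\in H^2$: it instead feeds the adjoint side with $\Phi\in L^{2d/(d+1)}$ into the second form of Theorem~\ref{thm:3.1} (with $S_\varepsilon^2$), which needs only $\Phi\in L^q$, $q=2d/(d+1)$, to produce the required $O(\varepsilon^{1/2})$ on the adjoint.

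Your $d=2$ argument also falls short of the sharp rate. Pushing $\|\nabla u_0\|_{L^2(O_{2\varepsilon})}\le C\varepsilon^{1/q'}\|u_0\|_{H^2}$ with $q'\to 1$ gives only $O(\varepsilon^{1-\delta})$ for every $\delta>0$, not $O(\varepsilon)$. The paper's route is different and does \emph{not} use duality for \eqref{pri:1.4}: in $d=2$ the hole-filling technique gives $\chi\in C^{0,\sigma}$ (Lemma~\ref{lemma:6.2}), and this H\"older continuity (not mere boundedness) is then used to prove that the flux correctors $E_{jik}^{\alpha\gamma}$ and $q_{ik}^\gamma$ are in $L^\infty$ (Lemma~\ref{lemma:6.1}). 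With all these quantities bounded one may take $\varphi=\nabla u_0$ in \eqref{eq:3.4} with no cutoff or smoothing; the right-hand side $\tilde f$ of \eqref{pde:3.2} becomes $\varepsilon\cdot(\text{bounded})\cdot\nabla^2 u_0$, and the boundary trace $w_\varepsilon|_{\partial\Omega}=-\varepsilon\chi(\cdot/\varepsilon)\nabla u_0$ is $O(\varepsilon)$ in $H^{1/2}(\partial\Omega)$ since $\chi$ is bounded. The energy estimate then gives $O(\varepsilon)$ directly.
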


A few remarks on notation are in order. $L^2(\Omega)/\mathbb{R}$ represents the quotient space of $L^2(\Omega)$ with respect to the relation
$u\sim v \Leftrightarrow u-v\in\mathbb{R}$, while
$\|\cdot\|_{L^2(\Omega)/\mathbb{R}}$ is the corresponding quotient norm, given by
$\inf_{c\in\mathbb{R}}\|\cdot-c\|_{L^2(\Omega)}$.
The function $\pi=(\pi_k^\gamma)$ coupled with $\chi=(\chi_k^{\beta\gamma})$
is referred to as the correctors associated with the problem $(\mathbf{DS}_\varepsilon)$,
and they are the solution of the cell problem $\eqref{pde:2.1}$.
$S_\varepsilon$ denotes the smoothing operator at scale $\varepsilon$ (see Definition
$\ref{def:2.1}$), and we mention that V.V. Zhikov and S.E. Pastukhova originally applied the so-called
Steklov smoothing operator to the homogenization problem in \cite{ZVVPSE}.
Here $\psi_{2\varepsilon}$ is a cut-off function whose definition is given in $\eqref{def:2.5}$.

Without any smoothness assumption on coefficients,
Theorem $\ref{thm:1.1}$ remarkably extends the results obtained by \cite{G}
in two perspectives: lower regularity assumptions on domains and given data, and sharp convergence rates for pressure term in the case of $d=2$.
The approach to attack the problems related to non-smooth domains
is usually more complicated than that to smooth one.
There are three crucial analysis tools devoted to complete the proof of Theorem $\ref{thm:1.1}$.
The first one is the weighted-type estimates for smoothing operator $S_\varepsilon$ at scale $\varepsilon$
(see Subsection $\ref{subsection:2.1}$).
The second one is the so-called duality lemmas, i.e., Lemmas $\ref{lemma:4.1}$ and $\ref{lemma:4.2}$,
which actually motivated by T. Suslina in \cite{TS}. The last one is
non-tangential maximal function coupled with radial maximal function, which is of help to the so-called
``layer type'' and ``co-layer type'' estimates (see Lemmas $\ref{lemma:3.3}$ and $\ref{lemma:4.3}$),
and this tool is originally employed by C. Kenig, F. Lin and Z. Shen in \cite{SZW2}.
We mention that radial maximal function plays an important part in controlling the behavior of
the pressure term near the boundary.
The methods developed in \cite{SZW2} were totally designed for non-smooth domains. Nevertheless,
we can not apply it directly since we lack the non-tangential maximal function estimates
for the solution of
$(\textbf{DS}_\varepsilon)$, and this will be established in a separate work.
Here on account of the weighted-type estimates and duality lemmas,
it is possible to transfer all the estimates from the problem $(\textbf{DS}_\varepsilon)$ to
the homogenized one $(\textbf{DS}_0)$,
while we have already had many useful estimates for $(\textbf{DS}_0)$, e.g., \cite{EBFCEKGCV,RMBZS,OAL}.
We end this paragraph by mention that the results of Theorem $\ref{thm:1.1}$ may be extended to
the Neumann boundary value problem without any real difficulty.

Note that the convergence rate on the pressure term $\eqref{pri:1.3}$
is not sharp except of the special case $d=2$,
since this result actually relies on the error estimate of
$u_\varepsilon-u_0-\varepsilon\chi(\cdot/\varepsilon)S_\varepsilon(\psi_{2\varepsilon}\nabla u_0)$
in $H^1_0(\Omega;\mathbb{R}^d)$ and its result is merely $O(\varepsilon^{\frac{1}{2}})$
in the case of $d\geq3$.
We find that if the corrector $\chi=(\chi_k^{\beta\gamma})$ is H\"older continuous in $\mathbb{R}^d$,
then it is not very hard to derive that
$\|u_\varepsilon-u_0-\varepsilon\chi(\cdot/\varepsilon)\nabla u_0\|_{H^1(\Omega)}
= O(\varepsilon)$.
In this case the duality methods is even not employed.
However, we can not count on $\chi_k^{\beta\gamma}\in C^{0,\sigma}(\mathbb{R}^d)$ for some
$\sigma\in(0,1)$ without any smoothness assumption
on $a_{ij}^{\alpha\beta}$, because of the absence of the De Giorgi-Nash-Moser theory.
Fortunately, there still exists an exceptional case $d=2$, in which the hole-filling technique (see \cite{MGLM,KOW})
guarantees the H\"older continuity
of $\chi_k^{\beta\gamma}$ as long as $A$ satisfies the assumptions $\eqref{a:1}$ and $\eqref{a:2}$.
Consequently the estimate $\eqref{pri:1.4}$ follows, and we remark that the proof, in fact,
does not rely on the symmetry condition $\eqref{a:3}$. Although the estimate $\eqref{pri:1.3}$ is not
optimal, it is sufficient to derive
uniform global $L^\infty$ estimates for pressure terms in term of smooth domains,
and we will address this topic in a forthcoming paper.

To make the statements of the paper well-founded,
we actually ask the Lipschitz domain $\Omega$ without external cusps,
since there is a counterexample (see \cite[pp.374-375]{GARGDMAM}) to show that
the desired estimates $\eqref{pri:2.2}$, related to divergence operator ``div'', is not true
if the domain has an external cusp.
However this is not a very severe restriction, and star-shaped domains are still valid,
as well as most of the Lipschitz domains even with large character constant.
We refer the reader to \cite{GARGDMAM} and the references therein for more details.
Finally, we remark that the topic on convergence rates in homogenization theory has extensively
been studied in recent years, and without attempting to be exhaustive we refer the reader to
\cite{SZ,SACS,ABJLGP,BMSHSTA,JGZSLS,GG1,G,VSO,SZW2,SZW16,ODVB,SZW12,SZW10,SZW13,QXS1,QX2} for more results.

As we mentioned before, on account of the convergence rates,
we find another way of leading to $W^{1,p}$ estimates uniformly down to scale $\varepsilon$.
The idea comes from the recent work (see \cite{SZ,SZW12}).
To obtain the full-type estimates, we need the corresponding local estimate at scale $\varepsilon$,
and this is exactly where the smoothness of the coefficient works.
Here the coefficient $A$ is required to belong
to $\text{VMO}(\mathbb{R}^d)$ class, and its definition and the notation $\omega$
may be found in \cite[pp.2283]{SZW10}.
The following theorem concerns a uniform regularity estimate.

\begin{thm}[$W^{1,p}$ estimates]\label{thm:1.2}
Assume that $A\in\emph{VMO}(\mathbb{R}^d)$ satisfies $\eqref{a:1}-\eqref{a:3}$. Then exists
$\epsilon>0$ depending only on $\mu,d$ and $\Omega$, such that for any
$F\in W^{-1,p}(\Omega;\mathbb{R}^d)$ with $\big|\frac{1}{p}-\frac{1}{2}\big|<\frac{1}{2d}+\epsilon$,
$h\in L^p(\Omega)$ and $g\in B^{1-\frac{1}{p},p}(\partial\Omega;\mathbb{R}^d)$ with
the compatibility condition $\eqref{a:4}$,
the unique solution
$(u_\varepsilon,p_\varepsilon)\in W^{1,p}(\Omega;\mathbb{R}^d)\times L^p(\Omega)/\mathbb{R}$ to
the Dirichlet problem $(\textbf{DS}_\varepsilon)$ satisfies the uniform estimate
\begin{equation}\label{pri:1.5}
\|\nabla u_\varepsilon\|_{L^{p}(\Omega)} + \|p_\varepsilon\|_{L^p(\Omega)/\mathbb{R}}
\leq C\Big\{\|F\|_{W^{-1,p}(\Omega)} + \|h\|_{L^p(\Omega)}
+ \|g\|_{B^{1-1/p,p}(\partial\Omega)}\Big\},
\end{equation}
where $\epsilon,C$ depends only on $\mu,\omega,d$ and $\Omega$.
\end{thm}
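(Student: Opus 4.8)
\medskip

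\noindent\textbf{Proof proposal.}
The plan is to deduce $\eqref{pri:1.5}$ from Shen's real-variable $L^{p}$ localization lemma (see \cite{SZW10} and the references therein), feeding it two ingredients that act at complementary scales: below $\varepsilon$, the local $W^{1,q}$ estimate for Stokes systems with $\text{VMO}$ coefficients; above $\varepsilon$, the convergence rates of Theorem~\ref{thm:1.1}. This realises the scheme sketched in the introduction, namely ``$W^{1,p}$ estimates uniformly down to the microscopic scale'' coming from the convergence rates, plus the ``local estimates based on $\text{VMO}$ coefficients'' at smaller scales. Throughout, let $\epsilon_{0}=\epsilon_{0}(\mu,d,\Omega)>0$ denote the gain in the $L^{q}$ theory of the constant-coefficient Stokes system on the Lipschitz domain $\Omega$ and on subdomains $B\cap\Omega$, so that this theory holds for $\bigl|\tfrac1q-\tfrac12\bigr|<\tfrac1{2d}+\epsilon_{0}$; the $\epsilon$ in the statement will be taken equal to $\epsilon_{0}$.

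Before applying the lemma I would make the usual reductions. Since $A=A^{*}$, the adjoint of the Stokes operator $(u,p)\mapsto(\mathcal{L}_{\varepsilon}u+\nabla p,\mathrm{div}\,u)$ is again a Stokes operator of the same form, so by a duality argument (test $\int_{\Omega}\nabla u_{\varepsilon}\!:\!\Phi$ against the solution of the adjoint problem with datum $-\mathrm{div}\,\Phi$, using the compatibility condition $\eqref{a:4}$ and the $L^{p'}$-solvability $\eqref{pri:2.2}$ of the divergence equation for the pressure) the case $p<2$ of $\eqref{pri:1.5}$ follows from the case of the conjugate exponent; hence it suffices to treat $p\ge2$. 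Writing $F=\mathrm{div}\,\mathbb{F}$ with $\mathbb{F}\in L^{p}$ and extending $g$ to $G\in W^{1,p}(\Omega;\mathbb{R}^{d})$, replacing $u_{\varepsilon}$ by $u_{\varepsilon}-G$ lets us take $g=0$, so that $F$ and $h$ enter only through a fixed forcing $\mathcal{G}:=|\mathbb{F}|+|\nabla G|+|h|\in L^{p}(\Omega)$. By density it is then enough to prove, for solutions with smooth data, the a priori bound
\begin{equation*}
\|\nabla u_{\varepsilon}\|_{L^{p}(\Omega)}+\|p_{\varepsilon}\|_{L^{p}(\Omega)/\mathbb{R}}\le C\bigl\{\|\nabla u_{\varepsilon}\|_{L^{2}(\Omega)}+\|p_{\varepsilon}\|_{L^{2}(\Omega)/\mathbb{R}}+\|\mathcal{G}\|_{L^{p}(\Omega)}\bigr\},
\end{equation*}
since the $L^{2}$ terms are afterwards absorbed by the uniform (in $\varepsilon$) energy estimate, the $L^{p}$ data with $p\ge2$ dominating the corresponding $L^{2}$ data on the bounded domain $\Omega$. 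Set $f:=\bigl(|\nabla u_{\varepsilon}|^{2}+|p_{\varepsilon}-\dashint_{\Omega}p_{\varepsilon}|^{2}\bigr)^{1/2}$, extended by $0$ off $\Omega$, fix $q$ with $p<q$ and $\bigl|\tfrac1q-\tfrac12\bigr|<\tfrac1{2d}+\epsilon_{0}$, and let $\eta$ be the threshold of the lemma; it remains to produce, for every $B=B(x_{0},r)$ with $x_{0}\in\overline{\Omega}$ and $0<r<c_{0}\,\mathrm{diam}\,\Omega$, a splitting $f\le f_{B}+R_{B}$ on $2B\cap\Omega$ with $\bigl(\dashint_{2B\cap\Omega}|f_{B}|^{2}\bigr)^{1/2}\le\eta\,\Lambda(B)$ and $\bigl(\dashint_{2B\cap\Omega}|R_{B}|^{q}\bigr)^{1/q}\le C\,\Lambda(B)$, where $\Lambda(B):=\bigl(\dashint_{4B\cap\Omega}|f|^{2}\bigr)^{1/2}+\sup_{B\subset B'}\bigl(\dashint_{B'}|\mathcal{G}|^{2}\bigr)^{1/2}$.

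I would split the verification into $r<K_{0}\varepsilon$ and $r\ge K_{0}\varepsilon$ for a large $K_{0}=K_{0}(\eta)$. When $r<K_{0}\varepsilon$, rescaling by $\varepsilon$ sends $u_{\varepsilon}$ to $\widetilde u(y)=\varepsilon^{-1}u_{\varepsilon}(\varepsilon y)$, which solves a Stokes system whose coefficient $A(\,\cdot+x_{0}/\varepsilon\,)$ — by periodicity — lies in $\text{VMO}(\mathbb{R}^{d})$ with the same modulus $\omega$, on a ball of radius $r/\varepsilon<K_{0}$; the interior and Lipschitz-boundary local $W^{1,q}$ estimates for Stokes systems with $\text{VMO}$ coefficients (cf. \cite{SZW10,JGZSLS}), valid in the range $\bigl|\tfrac1q-\tfrac12\bigr|<\tfrac1{2d}+\epsilon_{0}$ dictated by the Lipschitz geometry of $\partial\Omega$, then give after undoing the scaling the reverse-H\"older bound $\bigl(\dashint_{B\cap\Omega}|f|^{q}\bigr)^{1/q}\le C\,\Lambda(B)$ (i.e. the splitting with $f_{B}\equiv0$, the forcing terms being subordinate). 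This is the only step where $\text{VMO}$ is used, and it is what makes $C$ and the admissible size of $\epsilon$ depend on $\omega$. When $r\ge K_{0}\varepsilon$, I would compare $u_{\varepsilon}$ on $\Omega_{2r}:=B(x_{0},2r)\cap\Omega$ with the solution $(v_{0},s_{0})$ of $(\mathbf{DS_0})$ in $\Omega_{2r}$ carrying the same $F,h$ and the boundary datum $v_{0}=u_{\varepsilon}$ on $\partial\Omega_{2r}$. Rescaling $\eqref{pri:1.1}$ (together with the $H^{1}$-error estimate for the first-order approximation $u_{\varepsilon}-v_{0}-\varepsilon\chi(\cdot/\varepsilon)S_{\varepsilon}(\psi_{2\varepsilon}\nabla v_{0})$ underlying $\eqref{pri:1.3}$, resp. $\eqref{pri:1.4}$ when $d=2$) to the Lipschitz domain $\Omega_{2r}$, and then using the Caccioppoli inequality on an intermediate ball, yields
\begin{equation*}
\Bigl(\dashint_{\Omega_{\alpha r}}\bigl|\nabla u_{\varepsilon}-\nabla v_{0}-\nabla\chi(\cdot/\varepsilon)S_{\varepsilon}(\psi_{2\varepsilon}\nabla v_{0})\bigr|^{2}+\bigl|p_{\varepsilon}-s_{0}-\pi(\cdot/\varepsilon)S_{\varepsilon}(\psi_{2\varepsilon}\nabla v_{0})\bigr|^{2}\Bigr)^{1/2}\le C\,\theta(\varepsilon/r)\,\Lambda(B),
\end{equation*}
with $\theta(t)\to0$ as $t\to0^{+}$; taking the left-hand integrand as $f_{B}$ and using $\theta(\varepsilon/r)\le\theta(1/K_{0})$, a large enough $K_{0}$ makes this at most $\eta\,\Lambda(B)$. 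For $R_{B}$ I would take $|\nabla v_{0}|+|s_{0}-\dashint s_{0}|$ together with the corrector oscillations $\nabla\chi(\cdot/\varepsilon)S_{\varepsilon}(\psi_{2\varepsilon}\nabla v_{0})$, $\pi(\cdot/\varepsilon)S_{\varepsilon}(\psi_{2\varepsilon}\nabla v_{0})$ and the lower-order remainder $\varepsilon\chi(\cdot/\varepsilon)\nabla S_{\varepsilon}(\psi_{2\varepsilon}\nabla v_{0})$. The homogenized pair obeys the boundary reverse-H\"older bound $\bigl(\dashint_{\Omega_{\alpha r}}(|\nabla v_{0}|^{q}+|s_{0}-\dashint s_{0}|^{q})\bigr)^{1/q}\le C\,\Lambda(B)$ on $\Omega_{2r}$ in the same range, by the classical $L^{q}$ theory of the constant-coefficient Stokes system in Lipschitz domains (see \cite{EBFCEKGCV,RMBZS,OAL}), once one separates the part of $\partial\Omega_{2r}$ lying on $\partial\Omega$ (where $v_{0}=0$) from the spherical part (interior to $\Omega_{\alpha r}$, where interior estimates apply); and the corrector terms are controlled in $L^{q}$ by combining the smoothing estimates for $S_{\varepsilon}$ at scale $\varepsilon$ with the higher integrability $\nabla\chi,\pi\in L^{q}_{\mathrm{loc}}(\mathbb{R}^{d})$ for all $q<\infty$ of the solution of the cell problem $\eqref{pde:2.1}$, which holds under the $\text{VMO}$ hypothesis by the Calder\'on--Zygmund theory for Stokes systems. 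Finally, the finitely many dyadic scales $\varepsilon\le r<K_{0}\varepsilon$ are covered by the first case as well (the rescaled coefficient being $\text{VMO}$ on a ball of bounded radius $<K_{0}$), so Shen's lemma applies and delivers the a priori bound above, whence $\eqref{pri:1.5}$ with $\epsilon=\epsilon_{0}$ depending only on $\mu,d,\Omega$ (and $C$ also on $\omega$); existence and uniqueness of $(u_{\varepsilon},p_{\varepsilon})$ in $W^{1,p}(\Omega;\mathbb{R}^{d})\times L^{p}(\Omega)/\mathbb{R}$ then follow by the standard density and duality arguments.

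\medskip

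\noindent The step I expect to be the main obstacle is the large-scale comparison: turning the $L^{2}$ proximity of $u_{\varepsilon}$ and $v_{0}$ supplied by Theorem~\ref{thm:1.1} into an $L^{2}$ proximity of the \emph{gradients} and \emph{pressures} modulo correctors, with all constants independent of $\varepsilon$ and of the ball $B$. This forces one to run Caccioppoli across $\partial\Omega_{2r}$ where only an $H^{1/2}$ trace of $v_{0}$ is available, to handle the cut-off $\psi_{2\varepsilon}$ and the matching between the $\varepsilon$-wide boundary layer (carrying no corrector) and the co-layer (where the corrector is active), and to keep the logarithm in the rate under control so that $\theta(\varepsilon/r)$ genuinely drops below $\eta$ — precisely the ``layer'' and ``co-layer'' analysis that underpins the proof of Theorem~\ref{thm:1.1}. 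A secondary delicate point is the boundary reverse-H\"older bound for $\nabla v_{0}$ on the mixed boundary of $\Omega_{\alpha r}$: it is there that the Lipschitz character of $\Omega$ enters and caps $q$, and hence $p$, within the range $\bigl|\tfrac1q-\tfrac12\bigr|<\tfrac1{2d}+\epsilon_{0}$.
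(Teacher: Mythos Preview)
Your overall strategy—Shen's real-variable lemma fed by the VMO local estimate at scales $r<K_0\varepsilon$ and a homogenization comparison at scales $r\ge K_0\varepsilon$—is sound and close to the scheme of \cite{SZW12}. But the implementation has a gap in the data handling, and your route to the key reverse H\"older inequality differs from the paper's.

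\textbf{The gap.} You keep the forcing $F=\mathrm{div}\,\mathbb{F}$ and $h$ present when comparing $u_\varepsilon$ with the homogenized $v_0$ on $\Omega_{2r}$. The $H^1$-error estimate you invoke (Theorem~\ref{thm:3.1}, the estimate underlying \eqref{pri:1.3}) requires $F\in L^2$ and $h\in H^1$: its proof goes through the layer/co-layer bounds of Lemma~\ref{lemma:3.3}, and the co-layer bound $\varepsilon\|\nabla^2 v_0\|_{L^2(\Sigma_{2\varepsilon})}=O(\varepsilon^{1/2})$ genuinely needs interior $H^2$-regularity, which fails for $F=\mathrm{div}\,\mathbb{F}$ with merely $\mathbb{F}\in L^p$. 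Your density reduction does not help, since the resulting a~priori constant then depends on $\|\nabla\mathbb{F}\|_{L^2}$ rather than on $\|\mathbb{F}\|_{L^p}$. The same issue contaminates $R_B$: the reverse H\"older bound for $\nabla v_0$ with nonzero right-hand side produces data terms in $L^q$ average, whereas Shen's lemma only allows them in $L^2$. The fix is to do the standard data splitting \emph{first}, exactly as in the paper's Theorem~\ref{thm:5.4}: localize $\mathbb{F}$ with a cutoff supported in $6B$, solve $(\mathbf{DS}_\varepsilon)$ globally with the localized forcing (its gradient controlled in $L^2$ by $\|\mathbb{F}\|_{L^2(5B)}$ via \eqref{pri:2.6}), and reduce to proving the reverse H\"older inequality \eqref{pri:5.7} for a solution that is \emph{homogeneous} on $4B\cap\Omega$. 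For such a solution your large-scale comparison works, because Theorem~\ref{thm:3.1} then needs only $g\in H^1(\partial\Omega_{2r})$, arranged (as you anticipate) by Caccioppoli (Lemma~\ref{lemma:5.5}) plus Fubini to pick a good radius.

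\textbf{Comparison with the paper.} Even after this fix, the paper does \emph{not} establish \eqref{pri:5.7} by your gradient-level splitting. It works at the function level: Lemma~\ref{lemma:5.2} proves a decay estimate $\int_0^r\!\int|u_\varepsilon|^p\le Cr^{p+\tau}\int_0^4\!\int|u_\varepsilon|^p$ for homogeneous $u_\varepsilon$ and $\varepsilon<r\le1$, using only the $L^{2d/(d-1)}$ convergence rate \eqref{pri:3.10}; this is summed into the bound \eqref{pri:5.6} on $\int_\varepsilon^{m_0}\!\int|u_\varepsilon/t|^p$ (Theorem~\ref{thm:5.3}), and only then converted to $|\nabla u_\varepsilon|^p$ via Lemma~\ref{lemma:5.4} and combined with the small-scale VMO estimate (Theorem~\ref{thm:5.1}) to give \eqref{pri:5.7}. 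Your route is more direct but must control the corrector oscillations $\nabla\chi(\cdot/\varepsilon)S_\varepsilon(\psi\nabla v_0)$ and $\pi(\cdot/\varepsilon)S_\varepsilon(\psi\nabla v_0)$ in $L^q$, which requires $\nabla\chi,\pi\in L^q(Y)$ (a VMO consequence); the paper's route never places corrector terms into $R_B$, the passage from $|u_\varepsilon/t|$ to $|\nabla u_\varepsilon|$ happening only at the very end.
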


Here $B^{\sigma,p}$ denotes the $L^p$ Besov space of order $\sigma$, and
$B^{\frac{1}{2},2}(\partial\Omega;\mathbb{R}^d) = H^{\frac{1}{2}}(\partial\Omega;\mathbb{R}^d)$. Under the assumption that
$\partial\Omega\in C^1$, S. Gu and Z. Shen have established the estimate $\eqref{pri:1.5}$ for
$1<p<\infty$ in \cite[Theorem 1.4]{SGZWS}. To obtain the crucial step in the proof, i.e.,
the reverse H\"older's inequality, their method relies on an interior Lipschitz estimate
and a H\"older estimate near boundary. However the methods are not quite suitable for non-smooth domain,
since it is impossible to acquire the boundary H\"older estimates with the H\"older exponents
in full range (0,1) for general Lipschitz domains.
If the domian is depicted by a small Lipschitz constant, G.P. Galdi, C.G. Simader and H. Sohr
have $W^{1,p}$ estimate with $1<p<\infty$ for Stokes systems with constant coefficients
(see \cite[Theorem 2.1]{GPGCGHS}).
To the extent that the author knows, the range of $p$ in Theorem $\ref{thm:1.2}$
is the best result even for the constant coefficient setting, and we refer the reader
to \cite[Theorem 1.3]{JGJK} for the recent work of J. Geng and J. Kilty,
and to \cite{SZ,MAFHL,RMBZS,LI,EBFCEKGCV,GPGCG,OAL,RT,SZW17} for some related results.

The basic idea to treat the estimate $\eqref{pri:1.5}$ here may be found in \cite{SZW10,JGZSLS}, and its
principal ingredient is the decay estimate of $u_\varepsilon$, i.e, $\eqref{pri:5.5}$.
The novelty here is that we provide a considerably different proof from a technical standpoint.
Compared to the methods developed in \cite{SZW10,JGZSLS},
we do not adopt the well known compactness methods.
Instead by using the derived convergence result $\|u_\varepsilon-u_0\|_{L^{p}(\Omega)}=O(\varepsilon^{\frac{1}{2}})$
with $1\leq p<\frac{2d}{d-2}$ in the first part of the paper (see Corollary $\ref{cor:3.1}$),
it is natural to think of transferring the corresponding decay estimates for $u_\varepsilon$
to a similar one for $u_0$. We emphasize that the convergence rate above, as a matter of fact,
play a role in the domains from the microscopic scale
to macroscopic one (see the estimate $\eqref{f:5.2}$).
Then the fact that any weakly convergent sequence is bounded suggests that the estimate for $u_0$
may go back to that for $u_\varepsilon$ in the macroscopic scale, and this completes the whole argument.
Besides, the duality argument is used in the proof of the theorem, by which the proof related to
$F\in W^{-1,p}(\Omega;\mathbb{R}^d)$ can be reduced to prove the same type estimate
for the source term $\text{div}(f)$ with $f\in L^p(\Omega;\mathbb{R}^{d\times d})$.
On the other hand, due to Lemma $\ref{lemma:2.2}$ we can address the incompressible Stokes system with
zero boundary value at first, and then study the compressible case with nonzero boundary value.
In the end we remark that there is a strong probability extending the proof to non-periodic settings
(see for example \cite{SZ,SACS}).

The paper is organized as follows.
Section 2 is divided into three subsections, which involve the smoothing operator, correctors and
non-tangential $\&$ radial maximal functions, respectively. Also, the notation and definitions are
introduced there. In Section 3 we establish the corresponding convergence rates in $H^1$-norm,
and consequently prove the estimates $\eqref{pri:1.1}$, $\eqref{pri:1.2}$ and $\eqref{pri:1.3}$ in
Section 4. The special case of $d=2$ is discussed in Section 5.
As an application, we will verify Theorem $\ref{thm:1.2}$ in Section 6, which includes two subsections.
Subsection 6.1 studies the $W^{1,p}$ estimates without any smoothness assumption on $A$, and
Subsection 6.2 handle the corresponding local estimates.

We end this section with some notation that will be used throughout the paper.
\begin{itemize}
  \item $\nabla v = (\nabla_1 v, \cdots, \nabla_d v)$ is the gradient of $v$, where
  $\nabla_i v = \partial v /\partial x_i$ denotes the $i^{\text{th}}$ derivative of $v$. \\
  $\nabla^2 v = (\nabla^2_{ij} v)_{d\times d}$  denotes the Hessian matrix of $v$, where
  $\nabla^2_{ij} v = \frac{\partial^2 v}{\partial x_i\partial x_j}$. \\
  $\text{div}(\textbf{v})=\sum_{i=1}^d \nabla_i v_i$ denotes the divergence of $\textbf{v}$, where
  $\textbf{v} = (v_1,\cdots,v_d)$ is a vector-valued function.
  \item $L^p(\Omega)/\mathbb{R}=\{f\in L^p(\Omega):\int_\Omega f(x)dx = 0\}$,
  and $\|f\|_{L^p(\Omega)/\mathbb{R}} = \inf_{c\in\mathbb{R}}\|f-c\|_{L^2(\Omega)}$, where $p\in[1,\infty)$.
  \item $\delta(x) = \text{dist}(x,\partial\Omega)$ denotes the distance function for $x\in\Omega$, and
  we set $\delta(x) = 0$ if $x\in\mathbb{R}^d\setminus\Omega$.
  \item $S_r = \{x\in\Omega:\text{dist}(x,\partial\Omega) = r\}$ denotes the level set.
  \item $\Omega\setminus\Sigma_r$ denotes the boundary layer with thickness $r>0$, where
  $\Sigma_r = \{x\in\Omega:\text{dist}(x,\partial\Omega)>r\}$.
  \item $r_0$ is the diameter of $\Omega$, and the internal diameter $r_{00}$ is defined by
  $\max\{r>0:B(x,r)\subset\Omega,\forall x\in S_r\}$.
  Let $c_0 = r_{00}/10$ denote the layer constant of $\Omega$.
  \item Let $B=B(x,r)=B_r(x)$, and $nB=B(x,nr)$ denote the concentric balls as $n>0$ varies.
  \item Let $\vartheta:\mathbb{R}^{d-1}\to\mathbb{R}$ be a Lipschitz function such that $\vartheta(0) = 0$ and
$\|\nabla\vartheta\|_{L^\infty(\mathbb{R}^{d-1})}\leq M$. For any $r>0$, let
\begin{equation*}
\begin{aligned}
&\Delta_r = \big\{(x^\prime,\vartheta(x^\prime))\in\mathbb{R}^d:|x^\prime|<r\big\},\\
& D_r= \big\{(x^\prime,t)\in\mathbb{R}^d:|x^\prime|<r~\text{and}~\vartheta(x^\prime)<t<
\vartheta(x^\prime)+m_0r\big\},
\end{aligned}
\end{equation*}
where $M$ is called the Lipschitz character constant, and the constant $m_0 = M+10d$ is used throughout.
  \item Let $\psi_r$ denote the cut-off function associated with $\Sigma_r$, such that
  \begin{equation}\label{def:2.5}
   \psi_{r} = 1 \quad\text{in}~\Sigma_{2r}, \qquad
   \psi_{r} = 0 \quad\text{outside}~\Sigma_r, \qquad\text{and}\quad |\nabla\psi_r|\leq C/r.
  \end{equation}
  \item The weighted-type norms are defined by
  \begin{equation}\label{def:2.2}
   \|f\|_{L^2(\Sigma_r;\delta)} = \Big(\int_{\Sigma_r}|f(x)|^2\delta(x)dx\Big)^{1/2},
   \qquad
   \|f\|_{L^2(\Sigma_r;\delta^{-1})} = \Big(\int_{\Sigma_r}|f(x)|^2\delta^{-1}(x)dx\Big)^{1/2}.
  \end{equation}
  Furthermore, we define $\|f\|_{H^2(\Sigma_r;\delta)}
  = \sum_{i=1}^2\|\nabla^k f\|_{L^2(\Sigma_r;\delta)} + \|f\|_{L^2(\Sigma_r;\delta)}$, and
  the definition of $\|f\|_{H^2(\Sigma_r;\delta^{-1})}$ is given by a
  similar way.
\end{itemize}

Throughout the paper, the constant $C$ never depends on $\varepsilon$. Finally
we mention that we shall make a little effort to distinguish vector-valued functions or
function spaces from their real-valued counterparts, and they will be clear from the context.

\section{Preliminaries}

\begin{definition}
We say that $(u_\varepsilon,p_\varepsilon)\in H^1(\Omega;\mathbb{R}^d)\times L^2(\Omega)$ is a weak solution
to $(\textbf{DS}_\varepsilon)$,
if $(u_\varepsilon,p_\varepsilon)$ satisfies
\begin{enumerate}
  \item \quad $B_\varepsilon[u_\varepsilon,\phi] -\int_\Omega p_\varepsilon\emph{div}(\phi) dx
    = \big<F,\phi\big>_{H^{-1}(\Omega)\times H_0^1(\Omega)}
   \quad \text{for~any} ~\phi\in H_0^1(\Omega;\mathbb{R}^d),$
  \item \quad $\emph{div}(u_\varepsilon) = h$ in the distribution sense in $\Omega$,
  \item \quad $u_\varepsilon = g$ in the trace sense on $\partial\Omega$,
\end{enumerate}
where $B_\varepsilon[\cdot,\cdot]$ is the bilinear form defined by
\begin{equation*}
 B_{\varepsilon}[v,w] = \int_\Omega a_{ij}^{\alpha\beta}\Big(\frac{x}{\varepsilon}\Big)
 \frac{\partial v^\beta}{\partial x_j}\frac{\partial w^\alpha}{\partial x_i} dx
 \quad \text{for~any} ~u,v\in H^1(\Omega;\mathbb{R}^d).
\end{equation*}
\end{definition}

\begin{lemma}\label{lemma:2.2}
Let $1<p<\infty$. For any $f\in L^p(\Omega)/\mathbb{R}$,
there exists a unique solution $u\in W^{1,p}_0(\Omega;\mathbb{R}^d)$ such that
$\emph{div}(u) =f$ in $\Omega$, and we have
\begin{equation}\label{pri:2.2}
 \|u\|_{W^{1,p}_0(\Omega)} \leq C\|f\|_{L^p(\Omega)},
\end{equation}
where $C$ depends on $d,p$ and $\Omega$.
\end{lemma}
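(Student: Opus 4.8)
The plan is to realize $u$ via the classical Bogovskii right inverse of the divergence operator, along the lines of \cite{GARGDMAM}. First I would reduce to domains that are star-shaped with respect to a ball: a bounded Lipschitz domain without external cusps can be written as a finite union $\Omega=\bigcup_{k=1}^{N}\Omega_k$ with each $\Omega_k$ star-shaped with respect to some ball $B_k\Subset\Omega_k$, and it is precisely the no-cusp hypothesis that makes such a cover available (cf. the counterexample in \cite[pp.~374--375]{GARGDMAM}). Using a smooth partition of unity subordinate to this cover together with an elementary ``cascading'' correction through the overlap regions, I would decompose the mean-zero datum as $f=\sum_{k=1}^{N} f_k$ with $\operatorname{supp}f_k\subset\overline{\Omega_k}$, $\int_{\Omega_k} f_k\,dx=0$, and $\sum_k\|f_k\|_{L^p(\Omega_k)}\le C\|f\|_{L^p(\Omega)}$; the cascade step is needed because a plain cutoff destroys the mean-zero property, and one repairs it by transferring the excess mass into the next piece, which is possible exactly because $\int_\Omega f\,dx=0$.

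Second, on a fixed domain $G$ star-shaped with respect to a ball $B$, I would fix $\omega\in C_c^\infty(B)$ with $\int_B\omega\,dx=1$ and introduce the Bogovskii operator
\[
(\mathcal{B}g)(x)=\int_{G} g(y)\,N(x,y)\,dy,\qquad
N(x,y)=\frac{x-y}{|x-y|^d}\int_{|x-y|}^{\infty}\omega\Big(y+s\,\frac{x-y}{|x-y|}\Big)s^{d-1}\,ds .
\]
A change of variables shows $\mathcal{B}g\in C_c^\infty(G)$ whenever $g\in C_c^\infty(G)$, and a direct differentiation gives the identity $\operatorname{div}(\mathcal{B}g)=g-\omega\int_G g\,dx$, hence $\operatorname{div}(\mathcal{B}g)=g$ as soon as $\int_G g\,dx=0$. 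Differentiating the kernel $N$ in $x$ splits $\nabla\mathcal{B}g$ into a principal-value singular integral whose kernel is homogeneous of degree $-d$ in $x-y$ and carries the required cancellation and smoothness, plus a remainder kernel that is only weakly singular, bounded by $C|x-y|^{1-d}$ on the bounded set $G$. The first piece is bounded on $L^p(G)$ for $1<p<\infty$ by the Calder\'on--Zygmund theorem and the second by Young's convolution inequality, so $\|\mathcal{B}g\|_{W^{1,p}_0(G)}\le C\|g\|_{L^p(G)}$; a density argument extends this bound to every $g\in L^p(G)$ with zero mean.

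Third, I would glue the pieces: let $\mathcal{B}_k$ be the Bogovskii operator on $\Omega_k$ and set $u=\sum_{k=1}^{N}\widetilde{\mathcal{B}_k f_k}$, where the tilde denotes extension by zero to $\Omega$. Since $\mathcal{B}_k f_k\in W^{1,p}_0(\Omega_k)$, each summand lies in $W^{1,p}_0(\Omega)$, one has $\operatorname{div}u=\sum_k f_k=f$ in $\Omega$, and
\[
\|u\|_{W^{1,p}_0(\Omega)}\le\sum_{k=1}^N\|\mathcal{B}_k f_k\|_{W^{1,p}_0(\Omega_k)}
\le C\sum_{k=1}^N\|f_k\|_{L^p(\Omega_k)}\le C\|f\|_{L^p(\Omega)},
\]
with $C=C(d,p,\Omega)$, which is $\eqref{pri:2.2}$.

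The step I expect to be the genuine obstacle is the $L^p$ boundedness of $\nabla\mathcal{B}$, i.e. verifying that the leading part of its kernel satisfies the H\"ormander/Calder\'on--Zygmund conditions; this is the only place requiring real harmonic analysis, and it is where star-shapedness with respect to a ball is used in an essential way, whereas the decomposition and the gluing are elementary if slightly technical. Finally I would remark that ``uniqueness'' in the statement is to be understood as the selection of this canonical bounded right inverse $f\mapsto u$, since the divergence equation by itself has the nontrivial kernel consisting of all solenoidal fields in $W^{1,p}_0(\Omega;\mathbb{R}^d)$; this ambiguity does not affect any later use of the lemma. Alternatively one may simply invoke \cite{GARGDMAM} for the full argument.
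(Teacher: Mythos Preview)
Your proposal is correct and follows the standard Bogovskii construction that underlies the result the paper cites; the paper's own proof is simply a reference to \cite[Theorem 4.1]{GARGDMAM}, so you have supplied the detailed argument behind that citation. Your observation about ``uniqueness'' is well taken: as stated, the lemma overclaims, since $\operatorname{div}$ has a large kernel in $W^{1,p}_0(\Omega;\mathbb{R}^d)$, and what is really asserted is the existence of a bounded linear right inverse.
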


\begin{proof}
The proof may be found in \cite[Theorem 4.1]{GARGDMAM} for the so-called John domains,
and this class contains the Lipschitz domains but it is much larger. However the external cusps are
not allowed either.
\end{proof}

\begin{lemma}\label{lemma:2.1}
Let $F\in W^{-1,q}(\Omega;\mathbb{R}^d)$ with $1<q<\infty$ be a vector-valued functional.
A necessary and sufficient condition to find a solution $P\in L^q(\Omega)/\mathbb{R}$ to
$\nabla P = F$ in $\Omega $,
is that $\big<F,\phi\big>=0$ for any $\phi\in W^{1,p}_0(\Omega;\mathbb{R}^d)$ satisfying
$\emph{div}(\phi) = 0$ in $\Omega$, where $1/p+1/q=1$. Moreover, we have
\begin{equation}\label{pri:2.1}
 \|P\|_{L^q(\Omega)/\mathbb{R}} \leq C\|F\|_{W^{-1,q}(\Omega)},
\end{equation}
where $C$ depends on $d$ and $\Omega$.
\end{lemma}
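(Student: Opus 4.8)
The plan is to derive both the solvability criterion and the estimate \eqref{pri:2.1} from the surjectivity of the divergence operator, i.e.\ from Lemma~\ref{lemma:2.2}. The necessity of the orthogonality condition is immediate: if $\nabla P = F$ with $P\in L^q(\Omega)/\mathbb{R}$, then for every $\phi\in W^{1,p}_0(\Omega;\mathbb{R}^d)$ with $\mathrm{div}(\phi)=0$ we have $\langle F,\phi\rangle = \langle\nabla P,\phi\rangle = -\int_\Omega P\,\mathrm{div}(\phi)\,dx = 0$. So the whole content is the sufficiency together with the norm bound, and the strategy is to build $P$ as the Riesz representative of a suitable functional on $L^p(\Omega)/\mathbb{R}$.

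Assume then that $\langle F,\phi\rangle=0$ for every divergence-free $\phi\in W^{1,p}_0(\Omega;\mathbb{R}^d)$. First I would define a linear functional $\ell$ on the Banach space $L^p(\Omega)/\mathbb{R}$ (a closed subspace of $L^p(\Omega)$, on which the relevant norm is simply $\|\cdot\|_{L^p(\Omega)}$) by setting $\ell(f) = -\langle F, u\rangle$, where $u\in W^{1,p}_0(\Omega;\mathbb{R}^d)$ is any solution of $\mathrm{div}(u)=f$ provided by Lemma~\ref{lemma:2.2}. The one point that requires care is well-definedness: if $u_1,u_2$ both satisfy $\mathrm{div}(u_i)=f$, then $u_1-u_2$ is a divergence-free element of $W^{1,p}_0(\Omega;\mathbb{R}^d)$, so $\langle F,u_1-u_2\rangle=0$ by hypothesis. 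Boundedness of $\ell$ is then a direct consequence of \eqref{pri:2.2}: $|\ell(f)|\le \|F\|_{W^{-1,q}(\Omega)}\,\|u\|_{W^{1,p}_0(\Omega)}\le C\|F\|_{W^{-1,q}(\Omega)}\,\|f\|_{L^p(\Omega)}$.

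Next I would invoke the Hahn--Banach theorem to extend $\ell$ to a bounded functional on all of $L^p(\Omega)$ with the same norm, and then use $(L^p(\Omega))^*=L^q(\Omega)$ to write $\ell(f)=\int_\Omega P f\,dx$ for all $f\in L^p(\Omega)/\mathbb{R}$, with some $P\in L^q(\Omega)$. Since only the pairing against mean-zero functions is seen, $P$ is well defined as an element of $L^q(\Omega)/\mathbb{R}$ with $\|P\|_{L^q(\Omega)/\mathbb{R}}\le\|\ell\|\le C\|F\|_{W^{-1,q}(\Omega)}$, which gives \eqref{pri:2.1}. To check $\nabla P = F$, take arbitrary $\phi\in W^{1,p}_0(\Omega;\mathbb{R}^d)$; then $f:=\mathrm{div}(\phi)$ has zero mean, so picking $u$ with $\mathrm{div}(u)=f$ makes $\phi-u$ divergence-free, whence $\langle F,\phi\rangle=\langle F,u\rangle=-\ell(f)=-\int_\Omega P\,\mathrm{div}(\phi)\,dx=\langle\nabla P,\phi\rangle$; uniqueness of $P$ modulo constants follows since $\nabla(P_1-P_2)=0$ on the connected domain $\Omega$ forces $P_1-P_2$ to be constant. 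The only genuinely substantive ingredient is Lemma~\ref{lemma:2.2} — solvability of $\mathrm{div}(u)=f$ in $W^{1,p}_0$ with the norm control, which is precisely why the exclusion of external cusps enters — and once that is granted, the remaining argument is a routine functional-analytic packaging with no real obstacle; the only place to be attentive is the well-definedness of $\ell$.
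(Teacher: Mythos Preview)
Your proof is correct and shares the same core idea as the paper's: both derive the estimate \eqref{pri:2.1} from Lemma~\ref{lemma:2.2} via the duality pairing $\langle P,f\rangle = -\langle F,u\rangle$ with $\mathrm{div}(u)=f$. The organizational difference is that the paper treats existence separately, invoking the closed-range/adjoint formalism $\mathcal{R}(\nabla)=\mathcal{N}(\mathrm{div})^\perp$ from \cite{RT}, and only afterwards uses Lemma~\ref{lemma:2.2} to track the constant in the norm bound; you instead construct $P$ directly as the Riesz representative of the functional $\ell$, obtaining existence, uniqueness, and the estimate in one pass. Your packaging is slightly more self-contained since it avoids the closed-range theorem altogether, at the mild cost of an explicit Hahn--Banach/Riesz step; the paper's version makes the abstract structure (div and $\nabla$ as adjoints) more visible. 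Either way the substantive input is the same: solvability of $\mathrm{div}(u)=f$ in $W^{1,p}_0$ with the bound \eqref{pri:2.2}.
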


\begin{proof}
The first part of the lemma actually follows from \cite[Proposition 1.1]{RT}, and we give some remarks here.
Let $1<p,q<\infty$ and $1/p+1/q =1$. It is well known that $\text{div}: W^{1,p}_0(\Omega;\mathbb{R}^d)\to L^p(\Omega)/\mathbb{R}$
is an unbounded linear operator, and closed. By noting that
the dual spaces of $W^{1,p}_0(\Omega;\mathbb{R}^d)$ and $L^p(\Omega)/\mathbb{R}$ are
$W^{-1,q}(\Omega;\mathbb{R}^d)$ and $L^q(\Omega)/\mathbb{R}$, respectively,
it is well defined that $\nabla:L^q(\Omega)/\mathbb{R}\to W^{-1,q}(\Omega;\mathbb{R}^d)$,
which is the adjoint operator of ``$\text{div}$'' satisfying
\begin{equation}\label{f:2.1}
 \big<\nabla v, u\big>_{W^{-1,q}(\Omega)\times W^{1,p}_0(\Omega)}
 = -\big<v,\text{div} (u)\big>_{(L^q(\Omega)/\mathbb{R})\times (L^p(\Omega)/\mathbb{R})}
 \qquad \forall~ v\in L^q(\Omega)/\mathbb{R},\quad \forall~ u\in W^{1,p}_0(\Omega;\mathbb{R}^d).
\end{equation}
By some standard orthogonality relations between ranges and kernels,
we have $\mathcal{R}(\nabla) = \mathcal{N}(\text{div})^{\bot}$,
where $\mathcal{R}(\nabla)$ denotes the range of ``$\nabla$'',
and $\mathcal{N}(\text{div})$ represents the kernel of ``$\text{div}$''.
Note that $\mathcal{N}(\text{div})^{\bot}=
\big\{f\in W^{-1,p}(\Omega;\mathbb{R}^d): \big<f,u\big>=0 ~~\forall u\in\mathcal{N}(\text{div})\big\}$.
That means $P$ will be a solution of $\nabla P = F$ in $\Omega$, if and only if
$F\in\mathcal{N}(\text{div})^{\bot}$. Also, it is not hard to see that $\mathcal{N}(\nabla)
= \mathcal{R}(\text{div})^{\bot}=\{0\}$,
and this implies the uniqueness of the solution $P$ in $L^q(\Omega)/\mathbb{R}$.

Now we turn to show the estimate $\eqref{pri:2.1}$. This estimate could be derived by a standard functional
analysis argument as that in \cite{MGMG,RT}. Due to Lemma $\ref{lemma:2.2}$, we take a constructive way
to prove it so that the constant in the estimate can be clearly tracked.
Let $f\in L^p(\Omega)/\mathbb{R}$, and $u\in W^{1,p}_0(\Omega;\mathbb{R}^d)$ be the solution to
$\text{div}(u) = f$ in $\Omega$. Then it follows from the equality $\eqref{f:2.1}$ that
\begin{equation*}
\big<P-c,f\big>_{L^q(\Omega)\times L^p(\Omega)}
= \big<P,f\big>_{(L^q(\Omega)/\mathbb{R})\times (L^p(\Omega)/\mathbb{R}) }
= -\big<F, u\big>_{W^{-1,q}(\Omega)\times W^{1,p}_0(\Omega)}
\end{equation*}
for any $c\in \mathbb{R}$, and in view of the estimate $\eqref{pri:2.2}$ we have
\begin{equation*}
\left|\big<P-c,f\big>_{L^q(\Omega)\times L^p(\Omega)}\right|
 \leq \|F\|_{W^{-1,q}(\Omega)}\|u\|_{W^{1,p}_0(\Omega)}
 \leq C\|F\|_{W^{-1,q}(\Omega)}\|f\|_{L^{p}_0(\Omega)},
\end{equation*}
and this implies that $\inf_{c\in\mathbb{R}}\|P-c\|_{L^q(\Omega)}\leq C\|F\|_{W^{-1,q}(\Omega)}$.
Thus the estimate $\eqref{pri:2.1}$ holds, and the proof is complete.
\end{proof}

\begin{thm}\label{thm:2.1}
Suppose $A$ satisfies $\eqref{a:1}$. Let $F\in H^{-1}(\Omega;\mathbb{R}^d)$,
$h\in L^2(\Omega)$ and $g\in H^{\frac{1}{2}}(\partial\Omega;\mathbb{R}^d)$ with the compatibility condition
$\eqref{a:4}$. Then the Dirichlet problem $(\textbf{DS}_\varepsilon)$ has
a unique weak solution $(u_\varepsilon,p_\varepsilon)\in H^1(\Omega;\mathbb{R}^d)\times L^2(\Omega)/\mathbb{R}$,
and we have the uniform estimate
\begin{equation}\label{pri:2.6}
\|u_\varepsilon\|_{H^1(\Omega)} + \|p_\varepsilon\|_{L^2(\Omega)/\mathbb{R}}
\leq C\big\{\|F\|_{H^{-1}(\Omega)}+\|h\|_{L^2(\Omega)}+\|g\|_{H^{\frac{1}{2}}(\partial\Omega)}\big\},
\end{equation}
where $C$ depends only on $\mu$ and $\Omega$.
\end{thm}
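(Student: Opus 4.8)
The plan is to reduce $(\mathbf{DS}_\varepsilon)$ to a homogeneous, divergence-free problem, to solve the reduced problem by the Lax--Milgram theorem on the space of divergence-free $H^1_0$ fields, and then to recover the pressure from the de~Rham-type Lemma~\ref{lemma:2.1}. Only the compatibility condition \eqref{a:4} and the two already-established Lemmas~\ref{lemma:2.2} and~\ref{lemma:2.1} carry any weight here; everything else is a routine assembly, and the symmetry condition \eqref{a:3} is not needed.

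\emph{Lifting the data.} First I would extend $g$ to a field $G\in H^1(\Omega;\mathbb{R}^d)$ with $G=g$ on $\partial\Omega$ and $\|G\|_{H^1(\Omega)}\le C\|g\|_{H^{1/2}(\partial\Omega)}$ by the trace extension theorem for Lipschitz domains. Put $\tilde h:=h-\operatorname{div}(G)$; the divergence theorem together with \eqref{a:4} gives $\int_\Omega\tilde h\,dx=\int_\Omega h\,dx-\int_{\partial\Omega}n\cdot g\,dS=0$, so $\tilde h\in L^2(\Omega)/\mathbb{R}$. Lemma~\ref{lemma:2.2} with $p=2$ then furnishes $w\in H^1_0(\Omega;\mathbb{R}^d)$ with $\operatorname{div}(w)=\tilde h$ and $\|w\|_{H^1_0(\Omega)}\le C\|\tilde h\|_{L^2(\Omega)}\le C(\|h\|_{L^2(\Omega)}+\|g\|_{H^{1/2}(\partial\Omega)})$. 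Setting $\Phi:=G+w$ we obtain $\Phi=g$ on $\partial\Omega$, $\operatorname{div}(\Phi)=h$ in $\Omega$, and $\|\Phi\|_{H^1(\Omega)}\le C(\|h\|_{L^2(\Omega)}+\|g\|_{H^{1/2}(\partial\Omega)})$. Looking for $u_\varepsilon=\Phi+v$, the problem becomes: find $v$ in the closed subspace $V:=\{\phi\in H^1_0(\Omega;\mathbb{R}^d):\operatorname{div}(\phi)=0\}$ and $p_\varepsilon\in L^2(\Omega)/\mathbb{R}$ such that $B_\varepsilon[v,\phi]-\int_\Omega p_\varepsilon\operatorname{div}(\phi)\,dx=\langle\tilde F,\phi\rangle$ for all $\phi\in H^1_0(\Omega;\mathbb{R}^d)$, where $\langle\tilde F,\phi\rangle:=\langle F,\phi\rangle-B_\varepsilon[\Phi,\phi]$ defines $\tilde F\in H^{-1}(\Omega;\mathbb{R}^d)$ with norm bounded by $C(\|F\|_{H^{-1}(\Omega)}+\|h\|_{L^2(\Omega)}+\|g\|_{H^{1/2}(\partial\Omega)})$.

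\emph{Solving on $V$ and recovering the pressure.} On $V$ the pressure term drops out, so I would apply Lax--Milgram to $B_\varepsilon[v,\phi]=\langle\tilde F,\phi\rangle$ for $\phi\in V$: boundedness of $B_\varepsilon$ on $H^1\times H^1$ is immediate from the upper bound in \eqref{a:1}, while coercivity on $H^1_0$ follows from the lower (Legendre) bound $B_\varepsilon[\phi,\phi]\ge\mu\|\nabla\phi\|_{L^2(\Omega)}^2$ combined with Poincar\'e's inequality, so no Korn-type inequality is needed. This produces a unique $v\in V$ with $\|v\|_{H^1_0(\Omega)}\le C\|\tilde F\|_{H^{-1}(\Omega)}$. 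Next, $\phi\mapsto B_\varepsilon[v,\phi]-\langle\tilde F,\phi\rangle$ is an element $T\in H^{-1}(\Omega;\mathbb{R}^d)$ that annihilates $V$ by the previous step, so Lemma~\ref{lemma:2.1} with $q=2$ yields a unique $p_\varepsilon\in L^2(\Omega)/\mathbb{R}$ with $\nabla p_\varepsilon=-T$; by \eqref{f:2.1} this says precisely that $\int_\Omega p_\varepsilon\operatorname{div}(\phi)\,dx=\langle T,\phi\rangle$ for all $\phi\in H^1_0(\Omega;\mathbb{R}^d)$, which is the missing identity, and $\|p_\varepsilon\|_{L^2(\Omega)/\mathbb{R}}\le C\|T\|_{H^{-1}(\Omega)}\le C(\|v\|_{H^1_0(\Omega)}+\|\tilde F\|_{H^{-1}(\Omega)})$. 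Undoing the reduction, $(u_\varepsilon,p_\varepsilon)=(\Phi+v,p_\varepsilon)$ is a weak solution of $(\mathbf{DS}_\varepsilon)$, and chaining the three groups of estimates gives \eqref{pri:2.6}.

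\emph{Uniqueness and the main obstacle.} If $(u,p)$ and $(u',p')$ are two solutions with the same data, then $w:=u-u'\in V$ and $B_\varepsilon[w,\phi]=\int_\Omega(p-p')\operatorname{div}(\phi)\,dx$ for all $\phi\in H^1_0(\Omega;\mathbb{R}^d)$; testing with $\phi=w\in V$ and using coercivity forces $w=0$, after which $\int_\Omega(p-p')\operatorname{div}(\phi)\,dx=0$ for every $\phi\in H^1_0(\Omega;\mathbb{R}^d)$, so the surjectivity of $\operatorname{div}\colon H^1_0(\Omega;\mathbb{R}^d)\to L^2(\Omega)/\mathbb{R}$ (Lemma~\ref{lemma:2.2}) gives $p=p'$ in $L^2(\Omega)/\mathbb{R}$. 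I do not anticipate a genuine obstacle, since all the non-elementary ingredients are already in hand; the one point deserving care is to invoke \eqref{a:4} at exactly the moment the divergence datum is corrected in the lifting step, so that Lemma~\ref{lemma:2.2} applies, and to keep all constants depending only on $\mu$ and $\Omega$ throughout the reductions.
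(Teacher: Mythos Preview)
Your proposal is correct and follows essentially the same route as the paper: reduce to $h=0$, $g=0$ via a lifting built from the trace extension and Lemma~\ref{lemma:2.2} (using \eqref{a:4}), solve for the velocity on the divergence-free subspace of $H^1_0$ by Lax--Milgram with coercivity from \eqref{a:1}, and then recover the pressure through Lemma~\ref{lemma:2.1}. In fact you spell out the lifting step in more detail than the paper, which merely invokes Lemma~\ref{lemma:2.2} and defers the nonhomogeneous bookkeeping to the proof of Theorem~\ref{thm:1.2}.
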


\begin{proof}
The proof is standard and may be found in \cite[pp.22-23]{RT}. This is the special case of $p=2$ in Theorem
$\ref{thm:1.2}$, and we provide a proof for the sake of completeness.
Due to Lemma $\ref{lemma:2.2}$ we may assume $h=0$ in $\Omega$ and $g=0$ on $\partial\Omega$.
Define $H = \big\{u\in H_0^1(\Omega;\mathbb{R}^d);\text{div}(u)=0~\text{in}~\Omega\big\}$, and
it is well defined here (see \cite[Theorem 1.6]{RT}). Let $B_\varepsilon[\cdot,\cdot]: H\times H\to\mathbb{R}$
be the bilinear form shown in Definition $\ref{def:2.1}$. It follows from the condition $\eqref{a:1}$ that
the bilinear form $B_\varepsilon[\cdot,\cdot]$ holds the coercivity and the boundedness, respectively.
In view of the Lax-Milgram theorem (see for example \cite{MGLM}),
for any $F\in H^{-1}(\Omega;\mathbb{R}^d)$, there exists a unique solution $u_\varepsilon\in H$
such that $B_\varepsilon[u_\varepsilon,v]
= \big<F,v\big>_{H^{-1}(\Omega)\times H_0^{1}(\Omega)}$ for any $v\in H$.
Meanwhile it is not hard to derive that
$\|u_\varepsilon\|_{H^1_0(\Omega)}\leq C\|F\|_{H^{-1}(\Omega)}$.
Now it is clear to see that $F-\mathcal{L}_\varepsilon(u_\varepsilon)$ belongs to
$H^{-1}(\Omega;\mathbb{R}^d)$. From Lemma $\ref{lemma:2.1}$ we know that there exists
$p_\varepsilon\in L^2(\Omega)/\mathbb{R}$ satisfying
$\nabla p_\varepsilon = F-\mathcal{L}_\varepsilon(u_\varepsilon)$ and the estimate
$\|p_\varepsilon\|_{L^2(\Omega)/\mathbb{R}}
\leq C\|\nabla p_\varepsilon\|_{H^-1(\Omega)}\leq C\big\{\|F\|_{H^{-1}(\Omega)}
+\|u_\varepsilon\|_{H^1_0(\Omega)}\big\}\leq C\|F\|_{H^{-1}(\Omega)}$.
Thus we acquire the existence of the solution
$(u_\varepsilon,p_\varepsilon)\in H\times L^2(\Omega)/\mathbb{R}$
to $(\textbf{DS}_\varepsilon)$ and the estimate $\eqref{pri:2.6}$
with the conditions $h=0$ in $\Omega$ and $g=0$ on $\partial\Omega$.
The nonhomogeneous cases follows the same arguments used in the proof of Theorem $\ref{thm:1.2}$, and
we have completed the proof.
\end{proof}

\subsection{Smoothing operator and its properties}\label{subsection:2.1}

\begin{definition}\label{def:2.1}
Fix $\zeta\in C_0^\infty(B(0,1/2))$, and $\int_{\mathbb{R}^d}\zeta(x)dx = 1$. Define the smoothing operator
\begin{equation}
S_\varepsilon(f)(x) = f*\zeta_\varepsilon(x) = \int_{\mathbb{R}^d} f(x-y)\zeta_\varepsilon(y) dy,
\end{equation}
where $\zeta_\varepsilon=\varepsilon^{-d}\zeta(x/\varepsilon)$.
\end{definition}

\begin{lemma}
Let $\Psi\in L^p(\mathbb{R}^d)$ for some $1\leq p<\infty$. Then for any $\rho\in L_{per}^p(\mathbb{R}^d)$,
\begin{equation}\label{pri:2.7}
\big\|\rho(\cdot/\varepsilon)S_\varepsilon(\Psi)\big\|_{L^p(\mathbb{R}^d)}
\leq C\big\|\rho\big\|_{L^p(Y)}\big\|\Psi\big\|_{L^p(\mathbb{R}^d)},
\end{equation}
where $C$ depends only on $d$.
\end{lemma}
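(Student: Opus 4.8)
The plan is to estimate $\|\rho(\cdot/\varepsilon)S_\varepsilon(\Psi)\|_{L^p(\mathbb{R}^d)}^p$ by interchanging the roles of the periodic factor and the convolution via Fubini and the periodicity of $\rho$. First I would expand, using the definition $S_\varepsilon(\Psi)(x)=\int_{\mathbb{R}^d}\Psi(x-y)\zeta_\varepsilon(y)\,dy$ and Jensen's (or H\"older's) inequality with the probability measure $\zeta_\varepsilon(y)\,dy$ on $B(0,\varepsilon/2)$:
\begin{equation*}
|S_\varepsilon(\Psi)(x)|^p \le \int_{\mathbb{R}^d}|\Psi(x-y)|^p\,\zeta_\varepsilon(y)\,dy\le C\varepsilon^{-d}\int_{B(x,\varepsilon/2)}|\Psi(z)|^p\,dz,
\end{equation*}
where $C$ depends only on $\|\zeta\|_{L^\infty}$ and hence only on $d$ (the normalization of $\zeta$ is fixed). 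Thus it suffices to bound $\int_{\mathbb{R}^d}|\rho(x/\varepsilon)|^p\Big(\varepsilon^{-d}\int_{B(x,\varepsilon/2)}|\Psi(z)|^p\,dz\Big)dx$.

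Next I would apply Fubini's theorem to the double integral, writing it as $\varepsilon^{-d}\int_{\mathbb{R}^d}|\Psi(z)|^p\Big(\int_{B(z,\varepsilon/2)}|\rho(x/\varepsilon)|^p\,dx\Big)dz$, so the whole problem reduces to the uniform bound
\begin{equation*}
\sup_{z\in\mathbb{R}^d}\ \varepsilon^{-d}\int_{B(z,\varepsilon/2)}|\rho(x/\varepsilon)|^p\,dx\ \le\ C\,\|\rho\|_{L^p(Y)}^p.
\end{equation*}
After the change of variables $x=\varepsilon w$ this becomes $\sup_{w_0}\int_{B(w_0,1/2)}|\rho(w)|^p\,dw$, and since $B(w_0,1/2)$ has diameter $1$ it meets at most a bounded number $N(d)$ of translates of the unit cube $Y=[0,1)^d$; by periodicity of $\rho$ each such piece contributes at most $\|\rho\|_{L^p(Y)}^p$, giving the bound with $C=N(d)$ depending only on $d$. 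Combining the two reductions yields $\|\rho(\cdot/\varepsilon)S_\varepsilon(\Psi)\|_{L^p(\mathbb{R}^d)}^p\le C\|\rho\|_{L^p(Y)}^p\|\Psi\|_{L^p(\mathbb{R}^d)}^p$, which is \eqref{pri:2.7}.

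The only genuinely delicate point is the last covering estimate — controlling $\int_{B(w_0,1/2)}|\rho|^p$ uniformly in $w_0$ by a dimensional multiple of $\|\rho\|_{L^p(Y)}^p$. This is elementary but one must be careful that the constant is truly independent of $\varepsilon$ and of $\rho$; it follows because a ball of radius $1/2$ is covered by at most $2^d$ (more crudely, $3^d$) unit cells, and $|\rho|^p$ integrates to the same value over every cell. Everything else — Jensen's inequality for the convolution, Fubini, and the scaling $x\mapsto\varepsilon w$ — is routine, so I do not expect any obstacle there. One should also note that the case $p=1$ is included with no change, since Jensen's inequality is trivial there and the argument proceeds identically.
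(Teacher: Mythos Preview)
Your argument is correct and is the standard proof of this estimate. The paper does not give its own proof here but simply refers to \cite[Lemma 2.1]{SZW12}; the argument there is essentially the one you wrote---H\"older (or Jensen) on the mollifier, Fubini to swap the integrals, a rescaling $x=\varepsilon w$, and the periodicity/covering observation that a ball of radius $1/2$ meets at most $O_d(1)$ unit cells---so your approach matches the cited reference.
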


\begin{proof}
See \cite[Lemma 2.1]{SZW12}.
\end{proof}

\begin{lemma}
Let $\Psi\in W^{1,p}(\mathbb{R}^d)$ for some $1<p<\infty$. Then we have
\begin{equation}\label{pri:2.8}
\big\|S_\varepsilon(\Psi)-\Psi\big\|_{L^p(\mathbb{R}^d)}
\leq C\varepsilon\big\|\nabla \Psi\big\|_{L^p(\mathbb{R}^d)},
\end{equation}
and further obtain
\begin{equation}\label{pri:2.9}
\big\|S_\varepsilon(\Psi)\big\|_{L^2(\mathbb{R}^d)}\leq C\varepsilon^{-1/2}\big\|\Psi\big\|_{L^q(\mathbb{R}^d)}
\quad\text{and}\quad
\big\|S_\varepsilon(\Psi)-\Psi\big\|_{L^2(\mathbb{R}^d)}
\leq C\varepsilon^{1/2}\big\|\nabla \Psi\big\|_{L^q(\mathbb{R}^d)},
\end{equation}
where $q = 2d/(d+1)$, and $C$ depends only on $d$.
\end{lemma}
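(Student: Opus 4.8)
The plan is to establish the three inequalities in turn, all by elementary means. For the two estimates that rely on the fundamental theorem of calculus I would first reduce, by a routine density argument ($C_0^\infty(\mathbb{R}^d)$ being dense in the relevant spaces), to the case of a smooth $\Psi$; the first estimate in \eqref{pri:2.9} needs no smoothness at all, only $\Psi\in L^q(\mathbb{R}^d)$.

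First, for \eqref{pri:2.8}: using $\int_{\mathbb{R}^d}\zeta_\varepsilon = 1$ and $\mathrm{supp}\,\zeta_\varepsilon \subset B(0,\varepsilon/2)$, I would write
\[
S_\varepsilon(\Psi)(x) - \Psi(x) = \int_{\mathbb{R}^d}\big(\Psi(x-y)-\Psi(x)\big)\zeta_\varepsilon(y)\,dy = -\int_{\mathbb{R}^d}\Big(\int_0^1 y\cdot\nabla\Psi(x-ty)\,dt\Big)\zeta_\varepsilon(y)\,dy ,
\]
and then take $L^p$ norms: Minkowski's integral inequality together with the translation invariance of Lebesgue measure gives $\|S_\varepsilon(\Psi)-\Psi\|_{L^p(\mathbb{R}^d)} \le \|\nabla\Psi\|_{L^p(\mathbb{R}^d)}\int_{\mathbb{R}^d}|y|\,|\zeta_\varepsilon(y)|\,dy$. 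The last integral equals $\varepsilon\int_{\mathbb{R}^d}|v|\,|\zeta(v)|\,dv \le C\varepsilon$ after the scaling $y=\varepsilon v$ and using $|v|\le 1/2$ on $\mathrm{supp}\,\zeta$, which proves \eqref{pri:2.8}.

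Next, since $S_\varepsilon(\Psi)=\Psi\ast\zeta_\varepsilon$, the first estimate in \eqref{pri:2.9} follows from Young's convolution inequality $\|\Psi\ast\zeta_\varepsilon\|_{L^2}\le\|\Psi\|_{L^q}\|\zeta_\varepsilon\|_{L^b}$ with $1+\tfrac12=\tfrac1q+\tfrac1b$; the choice $q=\tfrac{2d}{d+1}$ forces $b=\tfrac{2d}{2d-1}$, so the conjugate exponent is $b'=2d$, and the scaling identity $\|\zeta_\varepsilon\|_{L^b}=\varepsilon^{-d/b'}\|\zeta\|_{L^b}=C\varepsilon^{-1/2}$ gives the claim. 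For the second estimate in \eqref{pri:2.9} I would reuse the representation of \eqref{pri:2.8} while tracking scales: after swapping the order of integration and substituting $w=ty$ for each fixed $t\in(0,1]$, one gets $S_\varepsilon(\Psi)-\Psi = -\int_0^1(\nabla\Psi\ast\Phi_{t\varepsilon})\,dt$, where the vector kernel $\Phi_{t\varepsilon}(w)=\varepsilon\,(t\varepsilon)^{-d}\,\Theta(w/(t\varepsilon))$, with $\Theta(v):=v\,\zeta(v)$, is $\varepsilon$ times a fixed compactly supported profile rescaled to length $t\varepsilon$. Young's inequality with the same pair $(q,b)$ now yields $\|\Phi_{t\varepsilon}\|_{L^b}=\varepsilon\,(t\varepsilon)^{-d/b'}\|\Theta\|_{L^b}=C\varepsilon^{1/2}t^{-1/2}$, so Minkowski's integral inequality in $t$ and $\int_0^1 t^{-1/2}\,dt = 2 < \infty$ give $\|S_\varepsilon(\Psi)-\Psi\|_{L^2}\le\|\nabla\Psi\|_{L^q}\int_0^1\|\Phi_{t\varepsilon}\|_{L^b}\,dt\le C\varepsilon^{1/2}\|\nabla\Psi\|_{L^q}$.

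The only point requiring real care is the bookkeeping in this last step: one must check that $q=2d/(d+1)$ (equivalently $b'=2d$, so $d/b'=\tfrac12$) is precisely what turns the kernel bound into $\varepsilon^{1/2}t^{-1/2}$ while simultaneously leaving the integrable singularity $t^{-1/2}$ in the $t$-integration; everything else is a direct application of Young's and Minkowski's inequalities. As an alternative route for this estimate one may pass to the Fourier side, combining $|\widehat{\zeta}(\varepsilon\xi)-1|^2 \le C\min(\varepsilon^2|\xi|^2,1)\le C\varepsilon|\xi|$ with the homogeneous Sobolev embedding $\dot{W}^{1,2d/(d+1)}(\mathbb{R}^d)\hookrightarrow\dot{H}^{1/2}(\mathbb{R}^d)$, which produces the same bound.
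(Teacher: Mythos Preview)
Your argument is correct in all three parts. The paper itself gives no proof of this lemma and simply cites \cite[Lemma~2.2]{SZW12}, so there is nothing in the text to compare against; your write-up is a clean, self-contained substitute. The approach you take---the fundamental-theorem-of-calculus representation plus Minkowski for \eqref{pri:2.8}, Young's convolution inequality with the scaling $\|\zeta_\varepsilon\|_{L^b}=\varepsilon^{-d/b'}\|\zeta\|_{L^b}$ for the first part of \eqref{pri:2.9}, and the substitution $w=ty$ to reduce the second part of \eqref{pri:2.9} to a $t$-integral of Young-type bounds---is the standard route and matches what one finds in the cited reference. The bookkeeping you flag (that $q=2d/(d+1)$ forces $d/b'=1/2$, yielding both the $\varepsilon^{1/2}$ factor and the integrable $t^{-1/2}$ singularity) is exactly the point, and you have it right.

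One minor remark on the statement rather than the proof: the hypothesis ``$\Psi\in W^{1,p}(\mathbb{R}^d)$ for some $1<p<\infty$'' really splits into $\Psi\in W^{1,p}$ for \eqref{pri:2.8} and $\Psi\in W^{1,q}$ (resp.\ $\Psi\in L^q$) for \eqref{pri:2.9}, as you implicitly acknowledge; this is how the lemma is used later in the paper.
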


\begin{proof}
See \cite[Lemma 2.2]{SZW12}
\end{proof}

\begin{lemma}
 Let $\Psi\in L^2(\Omega)$ be supported in $\Sigma_{2\varepsilon}$, and $\rho\in L_{per}^2(Y)$, then we have
 \begin{equation}\label{pri:2.10}
 \big\|\rho(\cdot/\varepsilon)S_\varepsilon(\Psi)\big\|_{L^2(\Sigma_{2\varepsilon};\delta)}
 \leq C\big\|\rho\big\|_{L^2(Y)}\big\|\Psi\big\|_{L^2(\Sigma_{2\varepsilon};\delta)},
 \end{equation}
 and
 \begin{equation}\label{pri:2.11}
 \big\|\rho(\cdot/\varepsilon)S_\varepsilon(\Psi)\big\|_{L^2(\Sigma_{2\varepsilon};\delta^{-1})}
 \leq C\big\|\rho\big\|_{L^2(Y)}\big\|\Psi\big\|_{L^2(\Sigma_{2\varepsilon};\delta^{-1})},
 \end{equation}
 where $C$ depends at most on $d$ and $\|\zeta\|_{L^\infty(B(0,1/2))}$.
\end{lemma}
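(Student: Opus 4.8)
The plan is to reduce the two weighted estimates to the unweighted one in \eqref{pri:2.7} by a dyadic decomposition of the boundary layer, exploiting the fact that the distance function $\delta$ is comparable to a constant on each dyadic shell and that the smoothing kernel $\zeta_\varepsilon$ is supported in $B(0,\varepsilon/2)$, so it moves points only a distance $\leq\varepsilon/2$. First I would set up the partition: for integers $k\geq1$ let $T_k=\{x\in\Omega: 2^k\varepsilon < \delta(x)\leq 2^{k+1}\varepsilon\}$, together with the innermost collar $T_0=\{x: 2\varepsilon<\delta(x)\leq 4\varepsilon\}$ adjusted so that $\bigcup_{k\geq0}T_k=\Sigma_{2\varepsilon}$. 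On $T_k$ we have $\delta(x)\sim 2^k\varepsilon$. The key geometric observation is that if $x\in T_k$ with $k\geq1$, then for $y\in B(0,\varepsilon/2)$ the point $x-y$ still satisfies $\delta(x-y)\geq \delta(x)-\varepsilon/2 > 2^k\varepsilon - \varepsilon/2 \geq 2^{k-1}\varepsilon$, hence $x-y\in\Sigma_{2^{k-1}\varepsilon}\subset\Sigma_{2\varepsilon}$ (where $\Psi$ is supported), and similarly $\delta(x-y)\leq 2^{k+2}\varepsilon$; consequently $S_\varepsilon(\Psi)$ restricted to $T_k$ only sees the values of $\Psi$ on the enlarged shell $\widetilde T_k:=T_{k-1}\cup T_k\cup T_{k+1}$.

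Second, on each shell I would apply \eqref{pri:2.7} with $\mathbb{R}^d$ replaced by this local picture: writing $\chi_k$ for the indicator of $\widetilde T_k$, we have $\mathbf{1}_{T_k}\,\rho(\cdot/\varepsilon)S_\varepsilon(\Psi) = \mathbf{1}_{T_k}\,\rho(\cdot/\varepsilon)S_\varepsilon(\chi_k\Psi)$, so
\begin{equation*}
\big\|\rho(\cdot/\varepsilon)S_\varepsilon(\Psi)\big\|_{L^2(T_k)}^2
\leq \big\|\rho(\cdot/\varepsilon)S_\varepsilon(\chi_k\Psi)\big\|_{L^2(\mathbb{R}^d)}^2
\leq C\|\rho\|_{L^2(Y)}^2\,\|\Psi\|_{L^2(\widetilde T_k)}^2.
\end{equation*}
Now multiply by the weight: since $\delta\sim 2^k\varepsilon$ uniformly on $T_k$ and on $\widetilde T_k$ (the ratio of the weights on adjacent shells is bounded by $4$), we get
$\|\rho(\cdot/\varepsilon)S_\varepsilon(\Psi)\|_{L^2(T_k;\delta)}^2 \lesssim (2^k\varepsilon)\,\|\rho(\cdot/\varepsilon)S_\varepsilon(\Psi)\|_{L^2(T_k)}^2 \lesssim C\|\rho\|_{L^2(Y)}^2\,(2^k\varepsilon)\,\|\Psi\|_{L^2(\widetilde T_k)}^2 \lesssim C\|\rho\|_{L^2(Y)}^2\,\|\Psi\|_{L^2(\widetilde T_k;\delta)}^2$, and the identical chain works with $\delta$ replaced by $\delta^{-1}$, using $\delta^{-1}\sim (2^k\varepsilon)^{-1}$. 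Summing over $k\geq0$ and noting that each point of $\Sigma_{2\varepsilon}$ lies in at most three of the enlarged shells $\widetilde T_k$ (finite overlap), I obtain both \eqref{pri:2.10} and \eqref{pri:2.11} with a constant depending only on $d$ and $\|\zeta\|_{L^\infty}$ — the $L^\infty$ norm of $\zeta$ entering through the local version of \eqref{pri:2.7}.

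The main obstacle is bookkeeping near the innermost shell $k=0$, where the naive inclusion $x-y\in\Sigma_{2\varepsilon}$ can fail: for $x$ with $\delta(x)$ close to $2\varepsilon$, translating by $y$ with $|y|<\varepsilon/2$ may push $x-y$ below the support threshold $\Sigma_{2\varepsilon}$. This is harmless because $\Psi$ vanishes there, so $S_\varepsilon(\Psi)(x)$ simply integrates against fewer values of $\Psi$ and the bound $\|\Psi\|_{L^2(\widetilde T_0)}$ still dominates; but one must phrase the shell decomposition so that $\widetilde T_0$ genuinely contains everything $S_\varepsilon$ can reach from $T_0$ (it suffices to take $\widetilde T_0 = \{x\in\Sigma_{2\varepsilon}:\delta(x)\leq 8\varepsilon\}$, say). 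A secondary point worth a line is that \eqref{pri:2.7} as stated is an estimate over all of $\mathbb{R}^d$; to localize it I extend $\chi_k\Psi$ by zero and use that $\rho(\cdot/\varepsilon)S_\varepsilon(\chi_k\Psi)$ restricted to $T_k$ is unchanged, which is exactly the support argument above. With these adjustments the proof is a routine dyadic summation.
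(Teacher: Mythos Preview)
Your argument is correct. The dyadic shell decomposition, together with the locality of the convolution kernel (support radius $\varepsilon/2$) and the fact that $\delta$ is comparable to $2^{k}\varepsilon$ on the $k$-th shell, reduces the weighted estimate cleanly to the unweighted bound \eqref{pri:2.7}; finite overlap of the enlarged shells $\widetilde T_k$ then lets you sum. The handling of the innermost shell is fine for exactly the reason you give: $\Psi$ vanishes outside $\Sigma_{2\varepsilon}$, so any spillover of $x-y$ into $\Omega\setminus\Sigma_{2\varepsilon}$ contributes nothing.

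One cosmetic slip: as written, your $T_0=\{2\varepsilon<\delta\le 4\varepsilon\}$ coincides with $T_1$ under your general formula $T_k=\{2^{k}\varepsilon<\delta\le 2^{k+1}\varepsilon\}$, so the decomposition should simply start at $k=1$ and the ``innermost collar'' discussion applies to $T_1$.

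The paper itself does not give a proof of this lemma; it just cites \cite[Lemma 3.2]{QX2}. Your dyadic approach is a standard and natural way to obtain such weighted inequalities, and it yields the stated dependence of the constant on $d$ and $\|\zeta\|_{L^\infty}$.
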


\begin{proof}
See \cite[Lemma 3.2]{QX2}.
\end{proof}

\begin{lemma}
Let $\Psi\in H^1(\Omega)$ be supported in $\Sigma_\varepsilon$, then we obtain
\begin{equation}\label{pri:2.12}
\big\|\Psi-S_\varepsilon(\Psi)\big\|_{L^2(\Sigma_{2\varepsilon};\delta)}
\leq C\varepsilon\|\nabla \Psi\|_{L^2(\Sigma_\varepsilon;\delta)},
\end{equation}
where $C$ depends only on $d$.
\end{lemma}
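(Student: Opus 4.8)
The plan is to prove the weighted estimate $\eqref{pri:2.12}$ by localizing in space and comparing, on each dyadic shell where $\delta \sim 2^k\varepsilon$, with the unweighted gradient estimate $\eqref{pri:2.8}$ (with $p=2$). First I would partition $\Sigma_{2\varepsilon}$ into a family of annular regions $U_k = \{x \in \Omega : 2^k\varepsilon \le \delta(x) < 2^{k+1}\varepsilon\}$ for $k \ge 1$, together with a slightly fattened companion $\widetilde U_k = \{x : 2^{k-1}\varepsilon \le \delta(x) < 2^{k+2}\varepsilon\}$ so that for $x \in U_k$, the smoothing average $S_\varepsilon(\Psi)(x)$ only sees values of $\Psi$ on $\widetilde U_k$ (this uses $\operatorname{supp}\zeta \subset B(0,1/2)$, so the convolution has range $\varepsilon/2 \le 2^{k-1}\varepsilon$, hence stays inside $\widetilde U_k$). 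On each $U_k$ the weight $\delta$ is comparable to the constant $2^k\varepsilon$ up to a factor of $2$, so
\begin{equation*}
\int_{U_k} |\Psi - S_\varepsilon(\Psi)|^2 \delta \, dx
\le 2^{k+1}\varepsilon \int_{U_k} |\Psi - S_\varepsilon(\Psi)|^2 \, dx
\le C\,2^{k}\varepsilon \,\varepsilon^2 \int_{\widetilde U_k} |\nabla\Psi|^2 \, dx,
\end{equation*}
where the last step is the unweighted bound $\eqref{pri:2.8}$ applied after extending $\Psi\mathbf{1}_{\widetilde U_k}$ suitably (or, cleaner, by running the one-line proof of $\eqref{pri:2.8}$, $\Psi(x) - S_\varepsilon\Psi(x) = \int \big(\Psi(x) - \Psi(x-y)\big)\zeta_\varepsilon(y)\,dy = -\int_0^1\!\!\int \nabla\Psi(x - sy)\cdot y\, \zeta_\varepsilon(y)\, dy\, ds$, taking $L^2_x(U_k)$ norms and using Minkowski's inequality and $|y| \le \varepsilon/2$ on $\operatorname{supp}\zeta_\varepsilon$, which keeps $x - sy$ inside $\widetilde U_k$). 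Finally I would reinsert the weight on the right: since $\delta(x) \ge 2^{k-1}\varepsilon$ on $\widetilde U_k$, we have $2^k\varepsilon \le 2\,\delta(x)$ there, so $2^k\varepsilon \int_{\widetilde U_k}|\nabla\Psi|^2 \le 2\int_{\widetilde U_k}|\nabla\Psi|^2\delta$, giving $\int_{U_k}|\Psi - S_\varepsilon\Psi|^2\delta \le C\varepsilon^2 \int_{\widetilde U_k}|\nabla\Psi|^2\delta$.

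Summing over $k \ge 1$, the families $\{\widetilde U_k\}$ have bounded overlap (each point lies in at most three of them), so $\sum_k \int_{\widetilde U_k}|\nabla\Psi|^2\delta \le 3\int_{\Sigma_{\varepsilon/2}}|\nabla\Psi|^2\delta \le C\|\nabla\Psi\|_{L^2(\Sigma_\varepsilon;\delta)}^2$, where the hypothesis that $\Psi$ is supported in $\Sigma_\varepsilon$ is what lets the innermost shell start at $\delta \sim \varepsilon$ rather than at the boundary; note that the smoothing of a function supported in $\Sigma_\varepsilon$ is supported in $\Sigma_{\varepsilon/2}$, and on the thin collar $\varepsilon/2 \le \delta < \varepsilon$ both $\Psi = 0$ and, by the support remark, $S_\varepsilon\Psi$ is controlled by $\nabla\Psi$ on $\Sigma_\varepsilon$, which is absorbed in the $k=0$ term handled the same way. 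This yields $\|\Psi - S_\varepsilon\Psi\|_{L^2(\Sigma_{2\varepsilon};\delta)} \le C\varepsilon\|\nabla\Psi\|_{L^2(\Sigma_\varepsilon;\delta)}$ as claimed, with $C$ depending only on $d$ (through the overlap constant and the constant in $\eqref{pri:2.8}$, which in turn depends on $\zeta$ only through $\int|\zeta|$, hence on $d$).

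The only delicate point — and the step I expect to be the main obstacle — is the bookkeeping near the inner edge of the region: making sure that the convolution $S_\varepsilon\Psi(x)$ for $x$ with $\delta(x)$ close to $2\varepsilon$ never reaches outside the region where $\Psi$ is defined and supported, and that the weight $\delta$ does not blow up or vanish in a way that breaks the shell-by-shell comparison. This is why the geometric hypotheses matter: $\operatorname{supp}\zeta \subset B(0,1/2)$ guarantees the convolution radius is $\varepsilon/2$, strictly less than the gap between consecutive shell boundaries once $k \ge 1$, and the standing assumption that $\Omega$ is Lipschitz (so $\delta$ is comparable to a smooth function and the level sets $S_r$ behave well for small $r$) ensures the shells $U_k$ are nondegenerate with comparable weight. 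Everything else is a routine application of $\eqref{pri:2.8}$ together with Minkowski's inequality and a finite-overlap covering argument.
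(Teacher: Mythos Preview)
Your dyadic--shell argument is correct and is a natural route to this weighted smoothing estimate. The paper does not actually give a proof here; it simply cites \cite[Lemma~3.3]{QX2}, so there is no in--paper argument to compare against. Your localization to shells $U_k=\{2^k\varepsilon\le\delta<2^{k+1}\varepsilon\}$, the pointwise mean--value identity
\[
\Psi(x)-S_\varepsilon\Psi(x)=-\int_0^1\!\!\int \nabla\Psi(x-sy)\cdot y\,\zeta_\varepsilon(y)\,dy\,ds,
\]
followed by Minkowski in $L^2(U_k)$, the observation that $x-sy\in\widetilde U_k$ since the convolution radius is $\varepsilon/2$, and the bounded--overlap summation are all sound and give the stated inequality with a constant depending only on $d$.

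One small clean--up: the closing paragraph about a ``$k=0$ term'' and the collar $\{\varepsilon/2\le\delta<\varepsilon\}$ is unnecessary and slightly muddles an otherwise clean argument. The left--hand side is the $L^2(\Sigma_{2\varepsilon};\delta)$ norm, and your shells $U_k$ with $k\ge 1$ already cover $\Sigma_{2\varepsilon}$ exactly, so nothing with $\delta<2\varepsilon$ ever enters on the left. On the right, the fattened shells satisfy $\bigcup_{k\ge 1}\widetilde U_k\subset\{\delta\ge\varepsilon\}$, and since $\Psi$ (hence $\nabla\Psi$) is supported in $\Sigma_\varepsilon$ by hypothesis, the sum is automatically bounded by $\|\nabla\Psi\|_{L^2(\Sigma_\varepsilon;\delta)}^2$ with no extra term to absorb. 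You can simply delete that discussion.
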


\begin{proof}
See \cite[Lemma 3.3]{QX2}.
\end{proof}

\begin{remark}
Let $f,g\in L^2(\Omega)$, it follows from H\"older's inequality that
\begin{equation}\label{pri:2.14}
\|fg\|_{L^1(\Sigma_{r})}\leq \|f\|_{L^2(\Sigma_r;\delta)}\|g\|_{L^2(\Sigma_r;\delta^{-1})}.
\end{equation}
Moreover, from the estimates $\eqref{pri:2.10}$, $\eqref{pri:2.11}$ and $\eqref{pri:2.12}$ we have
\begin{equation*}
\|\rho(\cdot/\varepsilon)S_\varepsilon(\Psi)f\|_{L^1(\Sigma_{2\varepsilon})}
\leq C\|\rho\|_{L^2(Y)}\min\Big\{\|\Psi\|_{L^2(\Sigma_{2\varepsilon};\delta^{-1})}
\|f\|_{L^2(\Sigma_{2\varepsilon};\delta)},
~\|\Psi\|_{L^2(\Sigma_{2\varepsilon};\delta)}
\|f\|_{L^2(\Sigma_{2\varepsilon};\delta^{-1})}
\Big\}
\end{equation*}
and
\begin{equation*}
\big\|\big[\Psi-S_\varepsilon(\Psi)\big]f\big\|_{L^1(\Sigma_{2\varepsilon})}
\leq C\varepsilon\|\nabla\Psi\|_{L^2(\Sigma_{\varepsilon};\delta)}\|f\|_{L^2(\Sigma_{2\varepsilon};\delta^{-1})}.
\end{equation*}
In fact, the above two inequalities will be employed frequently in the proof of Lemma $\ref{lemma:4.1}$.
\end{remark}

\subsection{Correctors and its properties}
Let $Y = [0,1)^d\backsimeq \mathbb{R}^d/\mathbb{Z}^d$.
Define the correctors
$(\chi_k^{\beta\gamma},\pi_k^\gamma)\in H^1_{per}(Y)\times L_{per}^2(Y)$
associated with the Stokes
system $(\textbf{DS})_\varepsilon$ by the following cell problem:
\begin{equation}\label{pde:2.1}
\left\{\begin{aligned}
\mathcal{L}_1(\chi_k^\gamma+P_k^\gamma) + \nabla \pi_k^\gamma & = 0 \qquad \text{in}~~\mathbb{R}^d,\\
\text{div}(\chi_k^\gamma) &  = 0  \qquad \text{in}~~\mathbb{R}^d,\\
\int_Y \chi_k^\gamma dy &= 0, \qquad k,\gamma = 1,\cdots,d, %\quad &\int_Y \pi_k^\gamma dy = 0
\end{aligned}\right.
\end{equation}
where $P_k^\gamma = y_k e^\gamma = y_k(0,\cdots,1,\cdots,0)$ with 1 in the $\gamma^{\text{th}}$ position,
and $\chi_k^\gamma = (\chi_k^{1\gamma},\cdots,\chi_k^{d\gamma})$. It follows
from Theorem $\ref{thm:2.1}$ that
\begin{equation}\label{pri:2.4}
 \|\nabla \chi_k^{\gamma}\|_{L^2(Y)} + \|\pi_k^\gamma\|_{L^2(Y)} \leq C,
\end{equation}
where $C$ depends only on $\mu$ and $d$.
Then the homogenized operator is given by $\mathcal{L}_0 = -\text{div}(\widehat{A}\nabla)$,
where $\widehat{A} = (\hat{a}_{ij}^{\alpha\beta})$ and
\begin{equation}\label{eq:2.1}
 \hat{a}_{ij}^{\alpha\beta} = \int_Y \Big[a_{ij}^{\alpha\beta}
 + a_{ik}^{\alpha\gamma}\frac{\partial}{\partial y_k}\big(\chi_j^{\gamma\beta}\big) \Big]dy
\end{equation}
(see \cite{SGZWS,ABJLGP}).

\begin{lemma}\label{lemma:2.3}
Let
\begin{equation*}
 b_{ik}^{\alpha\gamma}(y) = \hat{a}_{ik}^{\alpha\gamma}
 - a_{ik}^{\alpha\gamma}(y) - a_{ij}^{\alpha\beta}(y)\frac{\partial\chi_k^{\beta\gamma}}{\partial y_j}(y),
\end{equation*}
where $y=x/\varepsilon$. Then the quantity $b_{ik}^{\alpha\gamma}$ satisfies two properties:
\emph{(i)} $\int_Y b_{ik}^{\alpha\gamma}(y) dy = 0;$
\emph{(ii)} $\nabla_i b_{ik}^{\alpha\gamma} = -\nabla_\alpha\pi_k^\gamma$.
Moreover,
there exist $E_{jik}^{\alpha\gamma}\in H^1_{per}(Y)$ and
$q_{ik}^\gamma\in H_{per}^1(Y)$ such that
\begin{equation}\label{eq:2.2}
 b_{ik}^{\alpha\gamma} = \nabla_j E_{jik}^{\alpha\gamma} - \nabla_\alpha q_{ik}^{\gamma},
 \qquad
 E_{jik}^{\alpha\gamma} = - E_{ijk}^{\alpha\gamma},
 \qquad \text{and}\qquad
 \nabla_i q_{ik}^\gamma = \pi_k^\gamma,
\end{equation}
and $E_{jik}^{\alpha\gamma}$ and $q_{ik}^{\gamma}$ admit the priori estimate
\begin{equation}\label{pri:2.3}
\|E_{jik}^{\alpha\gamma}\|_{L^2(Y)} + \|q_{ik}^\gamma\|_{L^2(Y)}\leq C,
\end{equation}
where $C$ depends only on $\mu$ and $d$.
\end{lemma}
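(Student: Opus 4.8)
The plan is to verify the two pointwise identities directly from the cell problem $\eqref{pde:2.1}$, and then to build $q_{ik}^\gamma$ and $E_{jik}^{\alpha\gamma}$ as a gradient (resp. an antisymmetrized gradient) of solutions of periodic Poisson equations on the torus $Y$.

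Property \emph{(i)} is immediate from $\eqref{eq:2.1}$: since $\hat a_{ik}^{\alpha\gamma}=\int_Y[a_{ik}^{\alpha\gamma}+a_{ij}^{\alpha\beta}\nabla_j\chi_k^{\beta\gamma}]\,dy$, integrating $b_{ik}^{\alpha\gamma}$ over $Y$ gives $0$. For \emph{(ii)}, I would write the first equation of $\eqref{pde:2.1}$ componentwise; using $P_k^\gamma=y_ke^\gamma$, so that $\nabla_jP_k^{\beta\gamma}=\delta_{jk}\delta^{\beta\gamma}$, the $\alpha$-th component becomes $\nabla_i\big[a_{ik}^{\alpha\gamma}+a_{ij}^{\alpha\beta}\nabla_j\chi_k^{\beta\gamma}\big]=\nabla_\alpha\pi_k^\gamma$, and since $\hat a_{ik}^{\alpha\gamma}$ is a constant this is exactly $\nabla_i b_{ik}^{\alpha\gamma}=-\nabla_\alpha\pi_k^\gamma$.

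For the factorization, I would first solve $\Delta\Phi_k^\gamma=\pi_k^\gamma$ in $Y$ for a mean-zero periodic $\Phi_k^\gamma\in H^2_{per}(Y)$ — this is solvable because $\pi_k^\gamma$ has mean zero over $Y$ (the standard normalization of the pressure corrector in $\eqref{pde:2.1}$, which may be assumed since $\pi_k^\gamma$ enters only through its gradient), and elliptic regularity on the torus plus the energy estimate give $\|\Phi_k^\gamma\|_{H^2(Y)}\leq C\|\pi_k^\gamma\|_{L^2(Y)}\leq C$ by $\eqref{pri:2.4}$ — and then set $q_{ik}^\gamma:=\nabla_i\Phi_k^\gamma$, so that $\nabla_iq_{ik}^\gamma=\pi_k^\gamma$ and $\|q_{ik}^\gamma\|_{H^1(Y)}\leq C$. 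Next I put $g_{ik}^{\alpha\gamma}:=b_{ik}^{\alpha\gamma}+\nabla_\alpha q_{ik}^\gamma$; by \emph{(i)} it has zero mean over $Y$, and by \emph{(ii)} together with $\nabla_iq_{ik}^\gamma=\pi_k^\gamma$ it is divergence-free in the contracted index $i$, i.e. $\nabla_i g_{ik}^{\alpha\gamma}=0$ for each fixed $k,\alpha,\gamma$. Hence I can solve $\Delta\phi_{ik}^{\alpha\gamma}=g_{ik}^{\alpha\gamma}$ for a mean-zero periodic $\phi_{ik}^{\alpha\gamma}\in H^2_{per}(Y)$ and define $E_{jik}^{\alpha\gamma}:=\nabla_j\phi_{ik}^{\alpha\gamma}-\nabla_i\phi_{jk}^{\alpha\gamma}$, which is antisymmetric in $(j,i)$ by construction. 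Then $\nabla_jE_{jik}^{\alpha\gamma}=g_{ik}^{\alpha\gamma}-\nabla_i(\nabla_j\phi_{jk}^{\alpha\gamma})$, and since $\Delta(\nabla_j\phi_{jk}^{\alpha\gamma})=\nabla_jg_{jk}^{\alpha\gamma}=0$ the function $\nabla_j\phi_{jk}^{\alpha\gamma}$ is periodic and harmonic, hence constant, so $\nabla_jE_{jik}^{\alpha\gamma}=g_{ik}^{\alpha\gamma}$, i.e. $b_{ik}^{\alpha\gamma}=\nabla_jE_{jik}^{\alpha\gamma}-\nabla_\alpha q_{ik}^\gamma$. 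For the bound $\eqref{pri:2.3}$, the ellipticity bounds $|a_{ik}^{\alpha\gamma}|,|\hat a_{ik}^{\alpha\gamma}|\leq\mu^{-1}$ and $\eqref{pri:2.4}$ give $\|b_{ik}^{\alpha\gamma}\|_{L^2(Y)}\leq C$, hence $\|g_{ik}^{\alpha\gamma}\|_{L^2(Y)}\leq C$, and the energy estimate for $\phi_{ik}^{\alpha\gamma}$ together with the $H^1$ bound on $q_{ik}^\gamma$ finishes the argument.

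I expect the only real care to lie in the index bookkeeping: at each stage one must check that the correct compatibility (zero mean over $Y$, and divergence-free in the contracted index) holds, so that the periodic Poisson problems are solvable and the antisymmetrized gradient produces $g_{ik}^{\alpha\gamma}$ exactly. The elliptic regularity on $Y$, the energy estimates, and the ``periodic harmonic implies constant'' step are all standard and carry constants depending only on $\mu$ and $d$.
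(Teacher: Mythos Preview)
Your proof is correct, but it follows a genuinely different route from the paper's. The paper solves a single \emph{Stokes} cell problem on the torus,
\[
\Delta T_{ik}^{\gamma}+\nabla q_{ik}^{\gamma}=b_{ik}^{\gamma},\qquad \mathrm{div}(T_{ik}^{\gamma})=0,
\]
so that the pressure of this auxiliary problem is the desired $q_{ik}^\gamma$, and then sets $E_{jik}^{\alpha\gamma}=\nabla_j T_{ik}^{\alpha\gamma}-\nabla_i T_{jk}^{\alpha\gamma}$; the identity $\nabla_i q_{ik}^\gamma=\pi_k^\gamma$ is then obtained indirectly by taking the $i$-divergence of the system and using (ii). You instead decouple the construction into two \emph{scalar} Poisson problems: first $\Delta\Phi_k^\gamma=\pi_k^\gamma$ gives $q_{ik}^\gamma=\nabla_i\Phi_k^\gamma$ with $\nabla_i q_{ik}^\gamma=\pi_k^\gamma$ immediately, and then $\Delta\phi_{ik}^{\alpha\gamma}=g_{ik}^{\alpha\gamma}$ on the remainder produces $E$. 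Your route is slightly more elementary (only Laplace theory on $Y$, no Stokes $H^2$-regularity needed), and the relation $\nabla_i q_{ik}^\gamma=\pi_k^\gamma$ falls out by construction rather than by a second argument. The paper's route is more unified, with $T$ and $q$ arising together, and it is this pair $(T,q)$ that is reused later (Lemma~\ref{lemma:6.1}) to obtain $L^\infty$ bounds on $E$ and $q$ from potential-theoretic representations; your $\Phi_k^\gamma$ and $\phi_{ik}^{\alpha\gamma}$ would serve the same purpose there. Note that your $q_{ik}^\gamma$ need not coincide with the paper's (yours solves $\Delta q_{ik}^\gamma=\nabla_i\pi_k^\gamma$, the paper's solves $\Delta q_{ik}^\gamma=\nabla_\alpha b_{ik}^{\alpha\gamma}$), but the lemma only asserts existence, so this is harmless.
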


\begin{proof}
The proof may be found in \cite[Lemma 3.1]{SGZWS}, we provide a proof for the sake of the completeness.
First of all, it is clear to see that the formula $\eqref{eq:2.1}$ implies the property (i), and then
the first line of $\eqref{pde:2.1}$ admits the property (ii).
Let $b_{ik}^\gamma = (b_{ik}^{1\gamma},\cdots,b_{ik}^{d\gamma})$,
and we construct the auxiliary cell problem as
follows
\begin{equation}\label{pde:2.2}
\left\{
\begin{aligned}
\Delta T_{ik}^{\gamma} + \nabla q_{ik}^{\gamma} & = b_{ik}^{\gamma} \qquad~\text{in}~~Y,\\
\text{div}(T_{ik}^{\gamma}) &= 0 \qquad\quad \text{in}~~Y,\\
\int_Y T_{ik}^\gamma(y) dy &= 0,~\int_Y q_{ik}^\gamma(y) dy = 0,
 ~\text{and}~T_{ik}^\gamma,q_{ik}^\gamma \text{~are~Y-periodic},
\end{aligned}\right.
\end{equation}
where $i,k,\gamma = 1,\cdots,d$.
The existence of the solution
$(T_{ik}^\gamma,q_{ik}^\gamma)\in H^1_{loc}(\mathbb{R}^d;\mathbb{R}^d)\times L^2_{loc}(\mathbb{R}^d)$
to the equation $\eqref{pde:2.2}$ is based upon
the property (i) and Lemma $\ref{lemma:2.1}$. In fact, the solution $(T_{ik}^{\gamma},q_{ik}^\gamma)$
belongs to $H^2_{loc}(\mathbb{R}^d;\mathbb{R}^d)\times H^1_{loc}(\mathbb{R}^d)$ according
to $H^2$-regularity theory (see \cite[Theorem 1.3]{MGMG}),
since $b_{ik}^{\alpha\gamma}\in L^2_{loc}(\mathbb{R}^d)$.

Then we proceed to prove $\eqref{eq:2.2}$.
Set $E_{jik}^{\alpha\gamma} = \nabla_j T_{ik}^{\alpha\gamma} - \nabla_i T_{jk}^{\alpha\gamma}$, and
a direct result is $E_{jik}^{\alpha\gamma} = - E_{ijk}^{\alpha\gamma}$.
Then we find
\begin{equation*}
\frac{\partial}{\partial y_j}\big(E_{jik}^{\alpha\gamma}\big) =
 \Delta{T_{ik}^{\alpha\gamma}} - \frac{\partial^2}{\partial y_j\partial y_i}\big(T_{jk}^{\alpha\gamma}\big)
 = b_{ik}^{\alpha\gamma} - \frac{\partial}{\partial y_\alpha}\big(q_{ik}^\gamma\big)
 - \frac{\partial^2}{\partial y_j\partial y_i}\big(T_{jk}^{\alpha\gamma}\big).
\end{equation*}
To obtain $\nabla_j E_{jik}^{\alpha\gamma} = b_{ik}^{\alpha\gamma} - \nabla_\alpha q_{ik}^\gamma$,
it only needs to prove $\nabla^2_{ij} T_{jk}^{\alpha\gamma} = 0$. In view of (ii) and
$\eqref{pde:2.2}$, we have
\begin{equation*}
\left\{
\begin{aligned}
 \Delta\Big(\frac{\partial T_{ik}^{\alpha\gamma}}{\partial y_i}\Big)
 + \nabla_\alpha\Big(\frac{\partial q_{ik}^\gamma}{\partial y_i}\Big)
 = \frac{\partial b_{ik}^{\alpha\gamma}}{\partial y_i}
&= \nabla_\alpha \pi_k^\gamma &\quad &\text{in}\quad Y, \\
 \nabla_\alpha \Big(\frac{\partial T_{ik}^{\alpha\gamma}}{\partial y_i}\Big) &= 0 &\quad&\text{in}\quad Y.
\end{aligned}\right.
\end{equation*}
This implies $\nabla_i T_{ik}^{\alpha\gamma}$ is a constant,
(taking $\nabla_i T_{ik}^{\alpha\gamma}$ as a test function and integrating by parts, it is not hard to
derive $\int_Y |\nabla(\nabla_i T_{ik}^{\alpha\gamma})|^2dy = 0$,)
thus we have $\nabla^2_{ij}T_{jk}^{\alpha\gamma} = 0$. Also, the above equation shows
the difference between $\pi_k^\gamma$ and $\nabla_i q_{ik}^\gamma$ is a constant.
However the fact that $\int_Y\pi_{k}^\gamma(y)dy = 0$ and $\int_Y \nabla_i q_{ik}^\gamma(y) dy = 0$
(because of its periodicity) gives $\nabla_i q_{ik}^\gamma = \pi_k^\gamma$.
Finally, it follows from the equation $\eqref{pde:2.2}$ and Theorem $\ref{thm:2.1}$ that
\begin{equation}\label{f:2.2}
 \|\nabla T_{ik}^{\gamma}\|_{L^2(Y)} + \|q_{ik}^\gamma\|_{L^2(Y)} \leq C\|b_{ik}^\gamma\|_{L^2(Y)},
\end{equation}
and this together with $\eqref{pri:2.4}$ gives the estimate $\eqref{pri:2.3}$. This proof is complete.
\end{proof}

\begin{lemma}[Caccioppoli's inequality]
Suppose that $A$ satisfies $\eqref{a:1}$ and $\eqref{a:2}$.
Let the corrector $(\chi_k^{\gamma},\pi_k^\gamma)\in H^1_{per}(Y;\mathbb{R}^d)\times L^2_{per}(Y)/\mathbb{R}$ be the
solution to $\mathcal{L}_1(\chi_k^\gamma+P_k^\gamma) + \nabla \pi_k^\gamma = 0$ and
$\emph{div}(\chi_k^\gamma) = 0$ in $\mathbb{R}^d$.
Then for any $B\subset2B$ with $r>0$ and for any $c\in \mathbb{R}^d$, we have
\begin{equation}\label{pri:2.13}
\int_{B} |\nabla \chi_k^\gamma|^2 dy
\leq \frac{C}{r^2}\Big\{\int_{2B\setminus B}|\chi_k^\gamma - c|^2 dy + r^{d+2}\Big\},
\end{equation}
where $C$ depends on $\mu$ and $d$.
\end{lemma}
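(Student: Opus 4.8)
The plan is to carry out the standard Caccioppoli argument, adapted to the Stokes structure, with the pressure term requiring the bulk of the care. Fix a cut-off $\varphi\in C_0^\infty(2B)$ with $\varphi\equiv1$ on $B$, $0\le\varphi\le1$, $|\nabla\varphi|\le C/r$, and --- crucially --- $\text{supp}(\nabla\varphi)\subset 2B\setminus B$; this last property is exactly what converts the usual ``$\int_{2B}$'' on the right-hand side into the annular ``$\int_{2B\setminus B}$'' appearing in $\eqref{pri:2.13}$. I would then test the corrector equation $\mathcal{L}_1(\chi_k^\gamma+P_k^\gamma)+\nabla\pi_k^\gamma=0$ against the compactly supported function $v=\varphi^2(\chi_k^\gamma-c)$. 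Using $\nabla_i v^\alpha=\varphi^2\nabla_i\chi_k^{\alpha\gamma}+2\varphi\,\nabla_i\varphi\,(\chi_k^{\alpha\gamma}-c^\alpha)$ and, since $P_k^{\beta\gamma}=y_k\delta^{\beta\gamma}$, the identity $a_{ij}^{\alpha\beta}\nabla_j P_k^{\beta\gamma}=a_{ik}^{\alpha\gamma}$, the weak formulation becomes
\[
\int a_{ij}^{\alpha\beta}\varphi^2\,\nabla_j\chi_k^{\beta\gamma}\,\nabla_i\chi_k^{\alpha\gamma}\,dy = -2\int a_{ij}^{\alpha\beta}\varphi\,\nabla_i\varphi\,\nabla_j\chi_k^{\beta\gamma}\,(\chi_k^{\alpha\gamma}-c^\alpha)\,dy -\int a_{ik}^{\alpha\gamma}\,\nabla_i v^\alpha\,dy+\int \pi_k^\gamma\,\text{div}(v)\,dy .
\]
By the ellipticity $\eqref{a:1}$ the left side is at least $\mu\int\varphi^2|\nabla\chi_k^\gamma|^2\ge\mu\int_B|\nabla\chi_k^\gamma|^2$, so it remains to estimate the three terms on the right.

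The first (commutator) term is handled by $\eqref{a:1}$, Cauchy--Schwarz and Young's inequality: it is bounded by $\tfrac{\mu}{4}\int\varphi^2|\nabla\chi_k^\gamma|^2+\tfrac{C}{r^2}\int_{2B\setminus B}|\chi_k^\gamma-c|^2$, and the first piece is absorbed on the left. The second term, the contribution of the ``datum'' $P_k^\gamma$, splits (up to sign) into $\int a_{ik}^{\alpha\gamma}\varphi^2\nabla_i\chi_k^{\alpha\gamma}$, which by Young's inequality is at most $\tfrac{\mu}{4}\int\varphi^2|\nabla\chi_k^\gamma|^2+C\int_{2B}\varphi^2\le \tfrac{\mu}{4}\int\varphi^2|\nabla\chi_k^\gamma|^2+Cr^d$, and $2\int a_{ik}^{\alpha\gamma}\varphi\,\nabla_i\varphi\,(\chi_k^{\alpha\gamma}-c^\alpha)$, which by Cauchy--Schwarz (with $|2B\setminus B|\le Cr^d$) and Young's inequality is at most $\tfrac{C}{r^2}\int_{2B\setminus B}|\chi_k^\gamma-c|^2+Cr^d$. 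These $Cr^d$ contributions are precisely the $\tfrac{1}{r^2}\cdot r^{d+2}$ term on the right of $\eqref{pri:2.13}$.

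The pressure term $\int\pi_k^\gamma\,\text{div}(v)\,dy$ is the delicate one, and I expect it to be the main obstacle. The incompressibility $\text{div}(\chi_k^\gamma)=0$ is what saves the argument, in two ways. First, it forces $\text{div}(v)=2\varphi\,\nabla_\alpha\varphi\,(\chi_k^{\alpha\gamma}-c^\alpha)$, so $\pi_k^\gamma$ is only paired with a quantity supported in the annulus $2B\setminus B$ (again protecting the hole-filling form). Second, since $\int\text{div}(v)\,dy=-\int\varphi^2\,\nabla_\alpha\chi_k^{\alpha\gamma}\,dy=0$, one may replace $\pi_k^\gamma$ by $\pi_k^\gamma-\bar\pi$ for any constant $\bar\pi$; I would then bound $\|\pi_k^\gamma-\bar\pi\|_{L^2(2B\setminus B)}$ using the a priori estimate $\eqref{pri:2.4}$ together with the periodicity of $\pi_k^\gamma$, and, where that is not enough, the local pressure estimate of Lemma~\ref{lemma:2.1} applied on the annulus --- which through $\nabla\pi_k^\gamma=-\text{div}\big(A(\nabla\chi_k^\gamma+\nabla P_k^\gamma)\big)$ controls this norm by $C\big(\|\nabla\chi_k^\gamma\|_{L^2(2B\setminus B)}+r^{d/2}\big)$. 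With $|\nabla\varphi|\le C/r$ and Young's inequality this term contributes at most $\tfrac{C}{r^2}\int_{2B\setminus B}|\chi_k^\gamma-c|^2+Cr^d$, plus (in the second route) a small multiple of $\int\varphi^2|\nabla\chi_k^\gamma|^2$ that is absorbed on the left. Collecting all estimates and using $\varphi\equiv1$ on $B$ yields $\eqref{pri:2.13}$. The one point that genuinely needs attention is to keep every estimate for $\pi_k^\gamma$ localized to $2B\setminus B$: since no smoothness of $A$ is available, $\pi_k^\gamma$ lies only in $L^2_{per}$, and controlling it without contaminating the annulus with the full ball $2B$ is the heart of the matter.
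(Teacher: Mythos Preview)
Your approach follows the paper's up to the treatment of the pressure, and the overall structure (cut-off test function, ellipticity for the main term, Young's inequality for the commutator and the $P_k^\gamma$-contribution) is correct and matches the paper. The gap is in the final step of your ``second route'' for the pressure.

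Applying Lemma~\ref{lemma:2.1} on the annulus $2B\setminus B$ gives $\|\pi_k^\gamma-\bar\pi\|_{L^2(2B\setminus B)}\le C\big(\|\nabla\chi_k^\gamma\|_{L^2(2B\setminus B)}+r^{d/2}\big)$, and after pairing with $\text{div}(v)$ and using Young's inequality you obtain a term $\epsilon\int_{2B\setminus B}|\nabla\chi_k^\gamma|^2$. This is \emph{not} a small multiple of $\int\varphi^2|\nabla\chi_k^\gamma|^2$: since $\varphi$ vanishes near $\partial(2B)$, one only has $\int_{2B\setminus B}\varphi^2|\nabla\chi_k^\gamma|^2\le\int_{2B\setminus B}|\nabla\chi_k^\gamma|^2$, so the inequality goes the wrong way and the term cannot be absorbed into the left-hand side $\mu\int_B|\nabla\chi_k^\gamma|^2$. (Your ``first route'' via $\eqref{pri:2.4}$ and periodicity also fails for small $r$: it yields only $\|\pi_k^\gamma-\bar\pi\|_{L^2(2B\setminus B)}\le C$, not $\le Cr^{d/2}$, and the resulting constant on the right is not $\le Cr^d$.)

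The paper resolves this exactly where your argument breaks. It applies the pressure estimate $\eqref{pri:2.1}$ on the full ball $2B$ (not the annulus), obtaining $\|\pi_k^\gamma\|_{L^2(2B)/\mathbb{R}}\le C\big(\|\nabla\chi_k^\gamma\|_{L^2(2B)}+r^{d/2}\big)$, which after Young's inequality leaves a residual term $\theta'\int_{2B}|\nabla\chi_k^\gamma|^2$ on the right with $\theta'$ arbitrarily small. This term is then removed by the standard iteration lemma \cite[Lemma~0.5]{MGMG}: one repeats the argument with concentric balls $B_t\subset B_s$ for $r\le t<s\le 2r$ and absorbs the gradient term across scales. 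Your insistence on keeping every pressure estimate confined to $2B\setminus B$ is precisely what prevents the absorption; accepting a $\theta'\int_{2B}$ contribution and iterating is the missing ingredient.
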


\begin{proof}
The proof is standard, and we provide a proof for the sake of completeness.
Let $\psi\in C^\infty_0(\mathbb{R})$ be a cut-off function such that $\psi = 1$ in $B$ and
$\psi = 0$ outside $2B$, and $|\nabla\psi|\leq C/r$. Then let $\psi^2(\chi_k^{\gamma} -c)^\alpha$ be a
test function, where $c\in\mathbb{R}^d$, and we have
\begin{equation*}
\begin{aligned}
\int_{\mathbb{R}^d}\psi^2
\Big[a_{ij}^{\alpha\beta}(y)\frac{\partial\chi_k^{\beta\gamma}}{\partial y_j} +
a_{ik}^{\alpha\gamma}(y)\Big]\frac{\partial \chi_k^{\alpha\gamma}}{\partial y_i}dy
+\int_{\mathbb{R}^d}\psi\Big[a_{ij}^{\alpha\beta}(y)\frac{\partial\chi_k^{\beta\gamma}}{\partial y_j}
&+ a_{ik}^{\alpha\gamma}(y)\Big]\frac{\partial\psi}{\partial x_i}\big(\chi_{k}^{\gamma}
-c\big)^\alpha dy\\
&-2\int_{\mathbb{R}^d}(\pi_k^\gamma-c_1)\frac{\partial\psi}{\partial x_\alpha}\psi
\big(\chi_k^{\gamma}-c\big) dy =0.
\end{aligned}
\end{equation*}
for any $c_1\in\mathbb{R}$. It follows from Young's inequality that
\begin{equation*}
\int_{\mathbb{R}^d}\psi^2|\nabla \chi_k^{\gamma}|^2 dy
\leq C\bigg\{\int_{\mathbb{R}^d}|\nabla\psi|^2|\chi_k^\gamma-c|^2 dy
+\int_{\mathbb{R}^d}\psi^2 dy\bigg\}
+\theta\int_{\mathbb{R}^d} \psi^2|\pi_k^\gamma-c_1|^2 dy
\end{equation*}
where $C$ depends on $\mu$, $\theta$ and $d$. This implies
\begin{equation}\label{f:2.3}
\int_{B}|\nabla\chi_k^{\gamma}|^2 dy \leq C\bigg\{\frac{1}{r^2}\int_{2B\setminus B}|\chi_k^\gamma-c|^2dy
+r^d\bigg\} + \theta\|\pi_k^\gamma\|_{L^2(2B)/\mathbb{R}}^2.
\end{equation}
Due to $\nabla \pi_k^\gamma = -\mathcal{L}_1(\chi_k^\gamma+P_k^\gamma)$, it follows from the estimate
$\eqref{pri:2.1}$ that
\begin{equation}\label{f:2.4}
 \|\pi_k^\gamma\|_{L^2(2B)/\mathbb{R}}
 \leq C\|\mathcal{L}_1(\chi_k^\gamma+P_k^\gamma)\|_{H^{-1}(2B)}
 \leq C\Big\{\|\nabla\chi_k^\gamma\|_{L^2(2B)}
 +r^{\frac{d}{2}}\Big\}.
\end{equation}
Combining $\eqref{f:2.3}$ and $\eqref{f:2.4}$ leads to
\begin{equation*}
\int_{B}|\nabla\chi_k^{\gamma}|^2 dy \leq C\bigg\{\frac{1}{r^2}\int_{2B\setminus B}|\chi_k^\gamma-c|^2dy
+r^d\bigg\} + \theta^\prime\int_{2B}|\nabla \chi_k^{\gamma}|^2 dy
\end{equation*}
where we may let $\theta^\prime \ll 1$ by choosing $\theta$.
This together with \cite[Lemma 0.5]{MGMG} gives the desired estimate $\eqref{pri:2.13}$, and
we have completed the proof.
\end{proof}

\subsection{Non-tangential $\&$ radial  maximal functions}

\begin{definition}\label{def:2.4}
\emph{The non-tangential maximal function of $u$ is defined by
\begin{equation}
(u)^*(Q) = \sup\big\{ |u(x)|:x\in \Gamma_{N_0}(Q)\big\} \qquad\quad \forall~ Q\in\partial\Omega,
\end{equation}
where $\Gamma_{N_0}(Q) = \{x\in\Omega:|x-Q|\leq N_0\delta(x)\}$ is the cone with vertex $Q$ and aperture $N_0$,
and $N_0>1$ depends on the character of $\Omega$.}
\end{definition}

\begin{remark}\label{re:2.2}
\emph{For $0\leq r< c_0$,
we may assume that there exist homeomorphisms $\Lambda_r: \partial\Omega\to \partial\Sigma_r = S_r$ such that $\Lambda_0(Q) = Q$,
$|\Lambda_r(Q) - \Lambda_t(P)| \sim |r-t| + |Q-P|$ and
$|\Lambda_r(Q) - \Lambda_t(Q)|\leq C\text{dist}(\Lambda_r(Q),S_t)$ for any
$r>s$ and $P,Q\in\partial\Omega$ (which are bi-Lipschitz maps, see \cite[pp.1014]{SZW2}).
Especially, we may have
$\max_{r\in[0,c_0]}\{\|\nabla\Lambda_r\|_{L^\infty(\partial\Omega)},
\|\nabla(\Lambda_r^{-1})\|_{L^\infty(\partial\Omega)}\} \leq C$, where $C$
depends only on the Lipschitz character of $\Omega$.
We also refer the reader to \cite[Theorem 5.1]{CTM} for the existence of such bi-Lipschitz maps.}
\end{remark}

\begin{definition}\label{def:2.3}
We define the radial maximal function
$\mathcal{M}(h)$ on $\partial\Omega$ as
\begin{equation}\label{eq:2.4}
 \mathcal{M}(h)(Q) = \sup\big\{|h(\Lambda_r(Q))|: 0\leq r\leq  c_0\big\} \quad\qquad \forall ~Q\in\partial\Omega.
\end{equation}
\end{definition}
We mention that the radial maximal function will play an important role in
the study of convergence rates for Lipschitz domains,
and we refer the reader to\cite{SZW2} for the original idea.

\begin{remark}
\emph{Let $h\in L^p(\Omega)$ with $1\leq p<\infty$, and
$\Lambda_r$ be given in Remark $\ref{re:2.2}$. For any $r\in(0,c_0)$
we can show that
\begin{equation}\label{pri:2.5}
\begin{aligned}
 \int_{\Omega\setminus\Sigma_r} |h|^p dx
 &= \int_0^r\int_{S_t=\Lambda_{t}(\partial\Omega)} |h(y)|^p dS_t(y)dt \\
 & = \int_0^r\int_{\partial\Omega} |h(\Lambda_t(z))|^p |\nabla\Lambda_{t}|dS(z)dt \leq Cr\int_{\partial\Omega} |\mathcal{M}(h)|^p dS
 \leq  Cr\int_{\partial\Omega} |(h)^{*}|^p dS,
\end{aligned}
\end{equation}
where we note that
$h(\Lambda_r(x))\leq \mathcal{M}(h)(x)$ a.e. $x\in\partial\Omega$ for all $r\in (0,c_0)$ in view of
$\eqref{eq:2.4}$, and
$C$ depends only on $p$ and the Lipschitz character of $\Omega$. Concerning the above estimate,
we note that the first equality is based on
the co-area formula $\eqref{eq:2.3}$, and we use the change of variable in the second one. Besides,
the first inequality follows from $\eqref{eq:2.4}$. In the last one,
it is not hard to see $\mathcal{M}(h)(Q)\leq (h)^*(Q)$ by comparing Definition $\ref{def:2.4}$ with Definition $\ref{def:2.3}$.}

\emph{We now explain the co-area formula used here.
Let $Z(0;r)=\{x\in\Omega:0<\delta(x)\leq r\}$,
then $Z(0;r) = \Omega\setminus\Sigma_r$.
Here we point out $|\nabla\delta(x)| =1$ a.e. $x\in\Omega$ without the proof (see \cite[pp.142]{LCE1}).
In view of co-area formula (see \cite[Theorem 3.13]{LCE1}), we have
\begin{equation}\label{eq:2.3}
 \int_{\Omega\setminus\Sigma_r} |h|^p dx = \int_{Z(0;r)} |h|^p dx
 = \int_{0}^{r}\int_{\{x\in\Omega:\delta(x)=t\}}\frac{|h|^p}{|\nabla\delta|}d\mathcal{H}^{d-1}dt
 =\int_{0}^{r}\int_{S_t}|h|^p dS_tdt,
\end{equation}
where $d\mathcal{H}^{d-1}$ is the ($d-1$)-dimensional Hausdorff measure,
and $dS_t=d\mathcal{H}^{d-1}(S_t)$ denotes the surface measure of $S_t$.}
\end{remark}

\begin{lemma}
Let the radial maximal operator $\mathcal{M}$ be given in Definition $\ref{def:2.3}$.
Then for any $h\in H^1(\Omega)$, we have the following estimate
\begin{equation}\label{pri:2.15}
 \|\mathcal{M}(h)\|_{L^2(\partial\Omega)}
 \leq C\|h\|_{H^1(\Omega\setminus\Sigma_{c_0})},
\end{equation}
where $C$ depends only on $d,c_0$ and the character of $\Omega$.
\end{lemma}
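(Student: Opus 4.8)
The plan is to estimate the radial maximal function $\mathcal{M}(h)(Q) = \sup_{0\leq r\leq c_0}|h(\Lambda_r(Q))|$ pointwise by a one-dimensional Sobolev-type argument along the curves $r\mapsto \Lambda_r(Q)$, and then integrate over $\partial\Omega$. First I would fix $Q\in\partial\Omega$ and note that, for smooth $h$, the value $h(\Lambda_r(Q))$ at any $r\in[0,c_0]$ can be controlled by the value at a reference level (say averaged over $r\in[c_0/2,c_0]$, which sits well inside $\Omega$) plus the integral of the radial derivative: writing $\phi_Q(r) = h(\Lambda_r(Q))$ we have $|\phi_Q(r)| \leq \fint_{c_0/2}^{c_0}|\phi_Q(s)|\,ds + \int_0^{c_0}|\phi_Q'(s)|\,ds$, and since $\Lambda_r$ is bi-Lipschitz in $r$ with constant depending only on the character of $\Omega$ (Remark \ref{re:2.2}), $|\phi_Q'(s)| \leq C|\nabla h(\Lambda_s(Q))|$. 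Taking the sup over $r$ this already gives $\mathcal{M}(h)(Q) \leq \fint_{c_0/2}^{c_0}|h(\Lambda_s(Q))|\,ds + C\int_0^{c_0}|\nabla h(\Lambda_s(Q))|\,ds$.

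Next I would square both sides, use Cauchy--Schwarz on each $s$-integral (the interval has length $\leq c_0$), and integrate in $Q$ over $\partial\Omega$. This yields
\begin{equation*}
\|\mathcal{M}(h)\|_{L^2(\partial\Omega)}^2 \leq C\int_{\partial\Omega}\int_0^{c_0}\big(|h(\Lambda_s(Q))|^2 + |\nabla h(\Lambda_s(Q))|^2\big)\,ds\,dS(Q).
\end{equation*}
Now I would change variables via $(Q,s)\mapsto \Lambda_s(Q)\in S_s$, whose Jacobian is $|\nabla\Lambda_s|$ and is bounded above and below by constants depending only on the character of $\Omega$ (again Remark \ref{re:2.2}); combined with the co-area formula \eqref{eq:2.3} this identifies $\int_{\partial\Omega}\int_0^{c_0}|v(\Lambda_s(Q))|^2\,ds\,dS(Q)$ with $\int_{\Omega\setminus\Sigma_{c_0}}|v|^2\,dx$ up to a constant, for $v = h$ and $v = \nabla h$. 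Hence the right-hand side is $\leq C\|h\|_{H^1(\Omega\setminus\Sigma_{c_0})}^2$, which is exactly \eqref{pri:2.15}.

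Finally I would remove the smoothness assumption on $h$ by density: $C^\infty(\overline\Omega)$ is dense in $H^1(\Omega)$, the estimate is stable under $H^1$-convergence on the relevant neighbourhood, and for an $H^1$ function the trace of $h\circ\Lambda_r$ makes sense for a.e. $r$ so that $\mathcal{M}(h)$ is defined a.e. on $\partial\Omega$; a routine limiting argument (passing to a subsequence converging a.e.\ and using Fatou) transfers the inequality. I expect the main technical obstacle to be making the change of variables $(Q,s)\mapsto\Lambda_s(Q)$ and its interaction with the co-area formula fully rigorous — in particular justifying that the map is bi-Lipschitz onto $\Omega\setminus\Sigma_{c_0}$ with Jacobian comparable to $|\nabla\Lambda_s|$, and that $|\nabla(h\circ\Lambda_s)|\leq C|\nabla h|\circ\Lambda_s$ holds in the weak sense for $H^1$ functions; both follow from the bi-Lipschitz estimates collected in Remark \ref{re:2.2} together with the co-area identity \eqref{eq:2.3} already used to prove \eqref{pri:2.5}, so the argument is essentially a repackaging of tools already in hand.
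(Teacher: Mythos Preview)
Your argument is correct and is the standard route: a one-dimensional Sobolev bound along each radial curve $r\mapsto\Lambda_r(Q)$, then integration over $\partial\Omega$ and the co-area change of variables already recorded in \eqref{pri:2.5}--\eqref{eq:2.3}. The paper itself gives no proof here, deferring to \cite[Lemma 2.24]{QXS1}, so there is nothing further to compare; your write-up would serve as a complete self-contained proof.
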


\begin{proof}
The proof may be found in \cite[Lemma 2.24]{QXS1}.
\end{proof}

\section{$O(\varepsilon^{\frac{1}{2}})$ convergence rate in $H_0^1(\Omega)\times L^2(\Omega)/\mathbb{R}$}

\begin{lemma}\label{lemma:3.1}
Suppose that $(u_\varepsilon,p_\varepsilon),(u_0,p_0)\in H^1(\Omega;\mathbb{R}^d)\times L^2(\Omega)$ satisfy
\begin{equation*}
\left\{\begin{aligned}
\mathcal{L}_\varepsilon(u_\varepsilon) + \nabla p_\varepsilon &=
\mathcal{L}_0(u_0) + \nabla p_0 & \quad & \emph{in}~~\Omega,\\
\emph{div}(u_\varepsilon) & = \emph{div}(u_0) &\quad & \emph{in}~~\Omega,\\
u_\varepsilon & = u_0 &\quad& \emph{on}~\partial\Omega.
\end{aligned}
\right.
\end{equation*}
Let $w_\varepsilon = (w_\varepsilon^\beta)$ with $w_\varepsilon^\beta = u_\varepsilon^\beta - u_0^\beta
- \varepsilon \chi_{k}^{\beta\gamma}(\cdot/\varepsilon)\varphi_k^\gamma$, where
$\varphi = (\varphi_k^\gamma)\in H_0^1(\Omega;\mathbb{R}^{d\times d})$. Then we have
\begin{equation}\label{pde:3.1}
\left\{\begin{aligned}
\mathcal{L}_\varepsilon(w_\varepsilon) + \nabla (p_\varepsilon - p_0) &=
-\emph{div}(f) & \quad & \emph{in}~~\Omega,\\
\emph{div}(w_\varepsilon) & = \eta &\quad & \emph{in}~~\Omega,\\
w_\varepsilon & = 0 &\quad& \emph{on}~\partial\Omega,
\end{aligned}
\right.
\end{equation}
and the compatibility condition
\begin{equation}\label{eq:3.3}
 \int_\Omega \eta(x) dx = 0,
\end{equation}
where $\eta = -\varepsilon\chi_{k}^{\beta\gamma}(\cdot/\varepsilon)\nabla_\beta\varphi_k^\gamma$, and
$f = (f_i^\alpha)$ with
\begin{equation*}
f_i^\alpha = b_{ik}^{\alpha\gamma}(\cdot/\varepsilon)\varphi_k^\gamma
+ \big[\hat{a}_{ij}^{\alpha\beta} - a_{ij}^{\alpha\beta}(\cdot/\varepsilon)\big]
\big[\nabla_j u_0^\beta - \varphi_j^\beta\big]
-\varepsilon a_{ij}^{\alpha\beta}(\cdot/\varepsilon)
\chi_{k}^{\beta\gamma}(\cdot/\varepsilon)\nabla_j\varphi_k^\gamma.
\end{equation*}
\end{lemma}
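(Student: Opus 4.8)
The plan is to verify the three conclusions of Lemma~\ref{lemma:3.1} by a direct computation, treating $w_\varepsilon$ as a correction of the standard two-scale ansatz. First I would deal with the divergence equation and the compatibility condition, since these are the easiest: by definition $w_\varepsilon^\beta = u_\varepsilon^\beta - u_0^\beta - \varepsilon\chi_k^{\beta\gamma}(\cdot/\varepsilon)\varphi_k^\gamma$, and using $\text{div}(u_\varepsilon) = \text{div}(u_0)$ together with the product rule, one gets $\text{div}(w_\varepsilon) = -\nabla_\beta\big(\varepsilon\chi_k^{\beta\gamma}(\cdot/\varepsilon)\varphi_k^\gamma\big) = -(\nabla_\beta\chi_k^{\beta\gamma})(\cdot/\varepsilon)\varphi_k^\gamma - \varepsilon\chi_k^{\beta\gamma}(\cdot/\varepsilon)\nabla_\beta\varphi_k^\gamma$. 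The first term vanishes because $\text{div}(\chi_k^\gamma)=0$ in $\mathbb{R}^d$ (the second line of the cell problem \eqref{pde:2.1}), leaving exactly $\eta = -\varepsilon\chi_k^{\beta\gamma}(\cdot/\varepsilon)\nabla_\beta\varphi_k^\gamma$. The compatibility condition \eqref{eq:3.3} then follows immediately: $\int_\Omega \eta\,dx = \int_\Omega \text{div}(w_\varepsilon)\,dx = \int_{\partial\Omega} n\cdot w_\varepsilon\,dS = 0$ since $w_\varepsilon = 0$ on $\partial\Omega$ (which itself holds because $u_\varepsilon = u_0$ on $\partial\Omega$, $\varphi\in H_0^1$, and the corrector factor is bounded, so the whole expression has zero trace).

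The substance is the first equation of \eqref{pde:3.1}. Here I would compute $\mathcal{L}_\varepsilon(w_\varepsilon) + \nabla(p_\varepsilon - p_0)$ in the distributional sense against a test function $\phi\in H_0^1(\Omega;\mathbb{R}^d)$. Write $\mathcal{L}_\varepsilon(w_\varepsilon) = \mathcal{L}_\varepsilon(u_\varepsilon) - \mathcal{L}_\varepsilon(u_0) - \mathcal{L}_\varepsilon\big(\varepsilon\chi_k^{\beta\gamma}(\cdot/\varepsilon)\varphi_k^\gamma\big)$ and add $\nabla p_\varepsilon - \nabla p_0$; using the hypothesis $\mathcal{L}_\varepsilon(u_\varepsilon)+\nabla p_\varepsilon = \mathcal{L}_0(u_0)+\nabla p_0$, the $u_\varepsilon$ and $p_\varepsilon$ contributions collapse to $\mathcal{L}_0(u_0)$, so the target becomes
\begin{equation*}
\mathcal{L}_\varepsilon(w_\varepsilon) + \nabla(p_\varepsilon-p_0)
= \mathcal{L}_0(u_0) - \mathcal{L}_\varepsilon(u_0) - \mathcal{L}_\varepsilon\big(\varepsilon\chi_k^{\beta\gamma}(\cdot/\varepsilon)\varphi_k^\gamma\big).
\end{equation*}
Expanding $\mathcal{L}_\varepsilon$ applied to the product $\varepsilon\chi_k^{\beta\gamma}(\cdot/\varepsilon)\varphi_k^\gamma$ via the Leibniz rule produces three groups of terms: one where both derivatives hit $\chi$ (giving $-\varepsilon^{-1}(\mathcal{L}_1\chi_k^\gamma)(\cdot/\varepsilon)\varphi_k^\gamma$-type terms, to be combined with $\nabla\pi_k^\gamma$ using the first line of the cell problem), one mixed term, and one where both derivatives hit $\varphi$ carrying an extra $\varepsilon$. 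The key algebraic identity is that $\mathcal{L}_0(u_0)^\alpha - \mathcal{L}_\varepsilon(u_0)^\alpha$ combined with the cross terms reorganizes into $-\nabla_i\big(b_{ik}^{\alpha\gamma}(\cdot/\varepsilon)\varphi_k^\gamma\big) + \nabla_i\big([\hat a_{ij}^{\alpha\beta} - a_{ij}^{\alpha\beta}(\cdot/\varepsilon)]\varphi_j^\beta\big)$ plus the $\varepsilon$-order term, where $b_{ik}^{\alpha\gamma}$ is exactly the flux corrector from Lemma~\ref{lemma:2.3}; the pressure corrector $\pi_k^\gamma$ is absorbed precisely because $\nabla_i b_{ik}^{\alpha\gamma} = -\nabla_\alpha\pi_k^\gamma$ (property (ii)) cancels the $\nabla\pi$ arising from the cell equation. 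Collecting everything into divergence form yields $-\text{div}(f)$ with the stated $f_i^\alpha$, where the middle term $[\hat a_{ij}^{\alpha\beta}-a_{ij}^{\alpha\beta}(\cdot/\varepsilon)][\nabla_j u_0^\beta - \varphi_j^\beta]$ emerges from combining the $\mathcal{L}_0(u_0)-\mathcal{L}_\varepsilon(u_0)$ piece (which naturally gives $\nabla_j u_0^\beta$) with the cross term (which gives $-\varphi_j^\beta$).

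The main obstacle is purely bookkeeping: keeping track of which derivative lands on which factor in $\mathcal{L}_\varepsilon\big(\varepsilon\chi(\cdot/\varepsilon)\varphi\big)$, correctly matching the $\varepsilon$-powers (the $\varepsilon^{-1}$ terms must cancel against the cell equation, the $\varepsilon^0$ terms assemble $b_{ik}^{\alpha\gamma}$ and the homogenized-coefficient difference, and only the genuinely $O(\varepsilon)$ term $-\varepsilon a_{ij}^{\alpha\beta}(\cdot/\varepsilon)\chi_k^{\beta\gamma}(\cdot/\varepsilon)\nabla_j\varphi_k^\gamma$ survives with a factor $\varepsilon$), and verifying that the pressure correctors cancel exactly through property (ii) of Lemma~\ref{lemma:2.3}. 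I would carry this out by testing against $\phi\in C_0^\infty(\Omega;\mathbb{R}^d)$, integrating by parts to move everything onto $\phi$, recognizing the resulting bilinear expression as $\int_\Omega f_i^\alpha \nabla_i\phi^\alpha\,dx$, and then reading off the distributional identity. A minor technical point to check is that $\varepsilon\chi_k^{\beta\gamma}(\cdot/\varepsilon)\varphi_k^\gamma \in H_0^1(\Omega;\mathbb{R}^d)$ so that $w_\varepsilon$ genuinely lies in $H^1$ with zero trace; this follows from $\chi_k^\gamma\in H^1_{per}$ being bounded in $L^\infty$-average sense together with $\varphi\in H_0^1$, using the product estimates already recorded in Section~2 (or simply that $\chi\in L^\infty_{loc}$ is not needed — one uses $\chi(\cdot/\varepsilon)\in L^\infty$ only after noting $\nabla\chi\in L^2_{per}$, so more carefully one keeps $\varepsilon\chi(\cdot/\varepsilon)\nabla\varphi$ and $\nabla_y\chi(\cdot/\varepsilon)\varphi$ separately in $L^2$, which is exactly how $f$ and $\eta$ are structured).
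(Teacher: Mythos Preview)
Your overall strategy is correct and matches the paper's: reduce $\mathcal{L}_\varepsilon(w_\varepsilon)+\nabla(p_\varepsilon-p_0)$ to $\mathcal{L}_0(u_0)-\mathcal{L}_\varepsilon(u_0)-\mathcal{L}_\varepsilon(\varepsilon\chi(\cdot/\varepsilon)\varphi)$, then reorganize into divergence form. Your treatment of $\text{div}(w_\varepsilon)=\eta$, the boundary condition, and the compatibility condition is correct; your compatibility argument via the divergence theorem applied to $w_\varepsilon$ is in fact slightly cleaner than the paper's direct integration by parts on $\eta$.

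Where you diverge from the paper is in the execution of the first equation, and there your plan is unnecessarily circuitous. The paper never expands the outer derivative in $\mathcal{L}_\varepsilon\big(\varepsilon\chi(\cdot/\varepsilon)\varphi\big) = -\nabla_i\big[a_{ij}^{\alpha\beta}(\cdot/\varepsilon)\,\nabla_j(\varepsilon\chi_k^{\beta\gamma}(\cdot/\varepsilon)\varphi_k^\gamma)\big]$; it only applies the Leibniz rule to the inner $\nabla_j$ and keeps everything inside $-\nabla_i\{\cdots\}$. After adding and subtracting $(\hat a_{ik}^{\alpha\gamma}-a_{ik}^{\alpha\gamma})\varphi_k^\gamma$ inside the braces, the expression regroups directly into $f_i^\alpha$, with $b_{ik}^{\alpha\gamma}$ recognized purely from its definition in Lemma~\ref{lemma:2.3}. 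In this route no $\varepsilon^{-1}$ terms ever appear, the first line of the cell problem is not used, and $\pi_k^\gamma$ plays no role whatsoever in Lemma~\ref{lemma:3.1}. Your full-expansion proposal introduces a $\varepsilon^{-1}(\partial_{y_\alpha}\pi_k^\gamma)(\cdot/\varepsilon)\varphi_k^\gamma$ term via the cell equation and then removes it again via property~(ii), which is just the cell equation in reverse; this works but is a detour. The pressure-corrector machinery (property~(ii), $E_{jik}^{\alpha\gamma}$, $q_{ik}^\gamma$) only genuinely enters in Lemma~\ref{lemma:3.2}, where $b_{ik}^{\alpha\gamma}(\cdot/\varepsilon)\varphi_k^\gamma$ is further decomposed.
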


\begin{proof}
Based on the first observation that
$\mathcal{L}_\varepsilon(u_\varepsilon) = \mathcal{L}_0(u_0) + \nabla (p_0 - p_\varepsilon)$, we have
\begin{equation}\label{eq:3.1}
\begin{aligned}
\big[\mathcal{L}_\varepsilon(w_\varepsilon)\big]^\alpha + \nabla_\alpha\big(p_\varepsilon - p_0\big)&=
 \big[\mathcal{L}_\varepsilon(u_\varepsilon)\big]^\alpha -
 \big[\mathcal{L}_\varepsilon(u_0)\big]^\alpha -
 \big[\mathcal{L}_\varepsilon\big(\varepsilon\chi_{k,\varepsilon}^{\gamma}\varphi_k^\gamma\big)\big]^\alpha
 + \nabla_\alpha\big(p_\varepsilon - p_0\big)\\
& =  \big[\mathcal{L}_0(u_0)\big]^\alpha -
\big[\mathcal{L}_\varepsilon(u_0)\big]^\alpha -
\big[\mathcal{L}_\varepsilon\big(\varepsilon\chi_{k,\varepsilon}^{\gamma}\varphi_k^\gamma\big)\big]^\alpha,
\end{aligned}
\end{equation}
where the notation $\chi_{k,\varepsilon}^\gamma$ denotes
$\big[\chi_{k}^{1\gamma}(\cdot/\varepsilon),\cdots,\chi_{k}^{d\gamma}(\cdot/\varepsilon)\big]$
throughout the proof. The right-hand side of $\eqref{eq:3.1}$ is equal to
\begin{equation}\label{eq:3.2}
\begin{aligned}
-\frac{\partial}{\partial x_i}\bigg\{\big[\hat{a}_{ik}^{\alpha\gamma} - a_{ik}^{\alpha\gamma}(y)
&-a_{ij}^{\alpha\beta}(y)\frac{\partial\chi_k^{\beta\gamma}}{\partial y_j}(y)\big]\varphi_k^\gamma
 \\
& + \big[\hat{a}_{ij}^{\alpha\beta}-a_{ij}^{\alpha\beta}(y)\big]
\big[\frac{\partial u_0^\beta}{\partial x_j} - \varphi_j^\beta\big]
-\varepsilon a_{ij}^{\alpha\beta}(y)\chi_{k}^{\beta\gamma}(y)
\frac{\partial\varphi_k^\gamma}{\partial x_j}\bigg\}
\end{aligned}
\end{equation}
where $y = x/\varepsilon$. By noting that
$b_{ik}^{\alpha\gamma}(y) = \hat{a}_{ik}^{\alpha\gamma} - a_{ik}^{\alpha\gamma}(y)
-a_{ij}^{\alpha\beta}(y)\frac{\partial\chi_k^{\beta\gamma}}{\partial y_j}(y)$, we actually obtain the
first line of the equation $\eqref{pde:3.1}$, and its right-hand side $-\text{div}(\tilde{f})$ is exactly
expressed by $\eqref{eq:3.2}$.

Then we turn to study the second line of the equation $\eqref{pde:3.1}$, and
due to $\text{div}(u_\varepsilon) = \text{div}(u_0)$ it follows that
\begin{equation*}
\frac{\partial w_\varepsilon^\beta}{\partial x_\beta}
= \frac{\partial u_\varepsilon^\beta}{\partial x_\beta}
-\frac{\partial u_0^\beta}{\partial x_\beta}
-\frac{\partial}{\partial x_\beta}\big(\varepsilon\chi_{k,\varepsilon}^{\beta\gamma}\varphi_k^\gamma\big)
= -\frac{\partial \chi_k^{\beta\gamma}}{\partial y_\beta}(y)\varphi^\gamma_k
- \varepsilon\chi_{k,\varepsilon}^{\beta\gamma}\frac{\partial \varphi_k^\gamma}{\partial x_\beta}
= - \varepsilon\chi_{k,\varepsilon}^{\beta\gamma}\frac{\partial \varphi_k^\gamma}{\partial x_\beta}
\quad \text{in}~\Omega,
\end{equation*}
where $\chi_{k,\varepsilon}^{\beta\gamma}$ denotes $\chi_k^{\beta\gamma}(\cdot/\varepsilon)$, and
the last equality is because of $\text{div}(\chi^\gamma_k) = 0$ in $\mathbb{R}^d$
with $k,\gamma = 1,\cdots,d$. Moreover, since $u_\varepsilon = u_0$ on $\partial\Omega$
we have
\begin{equation*}
w_\varepsilon = -\varepsilon\chi_{k,\varepsilon}^{\beta\gamma}\varphi_k^\gamma = 0
\quad \text{on}~\partial\Omega,
\end{equation*}
and the last equality is due to $\varphi= (\varphi_k^\gamma)\in H_0^1(\Omega;\mathbb{R}^{d\times d})$.
We complete this proof by checking the compatibility condition
\begin{equation*}
\int_\Omega \eta(x) dx =
-\varepsilon\int_\Omega \chi_{k,\varepsilon}^{\beta\gamma}\nabla_\beta\varphi_k^\gamma dx
= \int_\Omega \text{div}_y(\chi_{k}^\gamma)(y)\varphi_k^\gamma dx
- \varepsilon\int_{\partial\Omega} n_\beta \chi_{k,\varepsilon}^{\beta\gamma}\varphi_k^\gamma dS = 0,
\end{equation*}
where the second equality follows from integration by parts,
and the last equality is due to
$\text{div}(\chi^\gamma_k) = 0$ in $\mathbb{R}^d$ and $\varphi_k^\gamma\in H_0^1(\Omega)$.
\end{proof}

%\begin{remark}\label{remark:3.1}
%\emph{Let $\eta$ be given in Lemma $\ref{lemma:3.1}$. In view of the compatibility condition $\eqref{eq:3.3}$ and Lemma $\ref{lemma:2.2}$, there exists a unique
%solution $H\in H_0^1(\Omega;\mathbb{R}^d)$ to $\text{div}(H) = \eta$ in $\Omega$, and it is follows from
%$\eqref{pri:2.2}$ that
%\begin{equation}\label{pri:3.2}
%\|\nabla H\|_{L^2(\Omega)} \leq \|H\|_{H^1_0(\Omega)} \leq C \|\eta\|_{L^2(\Omega)}.
%\end{equation}
%It provides us with a way to turn the equation $\eqref{pde:3.1}$ into a homogeneous one. }
%\end{remark}

\begin{lemma}\label{lemma:3.2}
Suppose that $A$ satisfies $\eqref{a:1}$ and $\eqref{a:2}$.
Assume that $(u_\varepsilon,p_\varepsilon)$, $(u_0,p_0)$ are two weak solutions to
$(\textbf{DS}_\varepsilon)$ and
$(\textbf{DS}_0)$, respectively.
Let $w_\varepsilon = (w_\varepsilon^\beta)$ with
\begin{equation}\label{eq:3.4}
w_\varepsilon^\beta = u_\varepsilon^\beta - u_0^\beta
- \varepsilon\chi_{k}^{\beta\gamma}(\cdot/\varepsilon)\varphi_k^\gamma
\qquad
\text{and}\qquad
z_\varepsilon = p_\varepsilon - p_0 - \pi_{k}^\gamma(\cdot/\varepsilon)\varphi_k^\gamma
-\varepsilon q_{ik}^\gamma(\cdot/\varepsilon)\nabla_i\varphi_k^\gamma,
\end{equation}
where $\varphi = (\varphi_k^\gamma)\in H_0^1(\Omega;\mathbb{R}^{d\times d})$.
Then $(w_\varepsilon,z_\varepsilon)$ satisfies
\begin{equation}\label{pde:3.2}
\left\{\begin{aligned}
\mathcal{L}_\varepsilon(w_\varepsilon)
+\nabla z_\varepsilon
& = \emph{div}(\tilde{f})
&\quad& \emph{in}~~\Omega,\\
\emph{div}(w_\varepsilon) &= \eta &\quad& \emph{in}~~\Omega,\\
w_\varepsilon &=0 &\quad& \emph{on}~\partial\Omega,
\end{aligned}\right.
\end{equation}
where $\eta = -\varepsilon\chi_{k}^{\beta\gamma}(\cdot/\varepsilon)\nabla_\beta\varphi_k^\gamma$,
and $\tilde{f} = (\tilde{f}_i^\alpha)$ with
\begin{equation}\label{eq:3.5}
\begin{aligned}
\tilde{f}_i^\alpha
= \varepsilon \big[E_{jik}^{\alpha\gamma}(y)+a_{ij}^{\alpha\beta}(y)\chi_k^{\beta\gamma}(y)\big]
\frac{\partial\varphi_k^\gamma}{\partial x_j}
-\varepsilon q_{ik}^\gamma(y)\frac{\partial\varphi_k^\gamma}{\partial x_\alpha}
-\big[\hat{a}_{ij}^{\alpha\beta}-a_{ij}^{\alpha\beta}(y)\big]
\Big[\frac{\partial u_0^\beta}{\partial x_j}-\varphi_j^\beta\Big]
\quad y=x/\varepsilon.
\end{aligned}
\end{equation}
Furthermore, we have
\begin{equation}\label{pri:3.1}
 \|w_\varepsilon\|_{H^1_0(\Omega)} + \|z_\varepsilon\|_{L^2(\Omega)/\mathbb{R}}
 \leq C\Big\{\varepsilon\big\|\varpi(\cdot/\varepsilon)\nabla\varphi\big\|_{L^2(\Omega)}
 + \big\|\nabla u_0 - \varphi\big\|_{L^2(\Omega)}\Big\}
\end{equation}
where the notation $\varpi(\cdot/\varepsilon)$ is explained in Remark $\ref{remark:3.2}$,
and $C$ depends only on $\mu,d$ and $\Omega$.
\end{lemma}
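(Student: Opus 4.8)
The plan is to split the statement into two parts: first the identification of the PDE system $\eqref{pde:3.2}$ together with the compatibility condition, and second the energy estimate $\eqref{pri:3.1}$. For the first part I would proceed exactly as in the proof of Lemma~\ref{lemma:3.1}: start from $w_\varepsilon^\beta$ as in $\eqref{eq:3.4}$, which is the same $w_\varepsilon$ as in Lemma~\ref{lemma:3.1} with $\varphi$ in the role of $\varphi$, so the divergence equation $\text{div}(w_\varepsilon)=\eta$ and the boundary condition $w_\varepsilon=0$ on $\partial\Omega$ are already established there, as is the compatibility condition $\int_\Omega\eta\,dx=0$ (this uses $\text{div}(\chi_k^\gamma)=0$ in $\mathbb{R}^d$ and $\varphi_k^\gamma\in H_0^1(\Omega)$). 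The new point is the first line of $\eqref{pde:3.2}$: here, instead of leaving the source term in the form $\eqref{eq:3.2}$, I would use the factorization $b_{ik}^{\alpha\gamma}=\nabla_j E_{jik}^{\alpha\gamma}-\nabla_\alpha q_{ik}^\gamma$ from Lemma~\ref{lemma:2.3} to rewrite the term $b_{ik}^{\alpha\gamma}(y)\varphi_k^\gamma$. Carefully differentiating (recall $y=x/\varepsilon$, so $\nabla_{x_j}[E_{jik}^{\alpha\gamma}(x/\varepsilon)]=\varepsilon^{-1}(\nabla_j E_{jik}^{\alpha\gamma})(x/\varepsilon)$) one gets
\begin{equation*}
-\nabla_i\big(b_{ik}^{\alpha\gamma}(y)\varphi_k^\gamma\big)
= -\varepsilon\,\nabla_i\big(E_{jik}^{\alpha\gamma}(y)\nabla_j\varphi_k^\gamma\big)
+ \varepsilon\,\nabla_i\big(q_{ik}^\gamma(y)\nabla_\alpha\varphi_k^\gamma\big)
- \nabla_\alpha\big(\pi_k^\gamma(y)\varphi_k^\gamma\big),
\end{equation*}
where the antisymmetry $E_{jik}^{\alpha\gamma}=-E_{ijk}^{\alpha\gamma}$ is used to turn the $\varepsilon^{-1}$-order term $\nabla_i\!\big[(\nabla_j E_{jik}^{\alpha\gamma})(y)\big]\varphi_k^\gamma$ into a pure divergence of something of size $O(\varepsilon^0)$ (this is the standard divergence-structure trick). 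The term $\nabla_\alpha(\pi_k^\gamma(y)\varphi_k^\gamma)$ and the extra piece $\varepsilon\,\nabla_\alpha(q_{ik}^\gamma(y)\nabla_i\varphi_k^\gamma)$ are precisely absorbed into the modified pressure $z_\varepsilon$ defined in $\eqref{eq:3.4}$, again using $\nabla_i q_{ik}^\gamma=\pi_k^\gamma$; what survives, after collecting the remaining terms from $\eqref{eq:3.2}$, is exactly $\text{div}(\tilde f)$ with $\tilde f$ as in $\eqref{eq:3.5}$. This is bookkeeping but must be done with care about which derivatives fall on the fast variable.

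For the energy estimate $\eqref{pri:3.1}$, I would apply Theorem~\ref{thm:2.1} to the system $\eqref{pde:3.2}$. Since $\mathcal{L}_\varepsilon(w_\varepsilon)+\nabla z_\varepsilon=\text{div}(\tilde f)$, $\text{div}(w_\varepsilon)=\eta$, and $w_\varepsilon=0$ on $\partial\Omega$ (with the compatibility condition verified), Theorem~\ref{thm:2.1} gives
\begin{equation*}
\|w_\varepsilon\|_{H_0^1(\Omega)}+\|z_\varepsilon\|_{L^2(\Omega)/\mathbb{R}}
\leq C\big\{\|\tilde f\|_{L^2(\Omega)}+\|\eta\|_{L^2(\Omega)}\big\}.
\end{equation*}
It then remains to bound the right-hand side. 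The term $\|\eta\|_{L^2(\Omega)}=\varepsilon\|\chi_k^{\beta\gamma}(\cdot/\varepsilon)\nabla_\beta\varphi_k^\gamma\|_{L^2(\Omega)}$ is of the form $\varepsilon\|\varpi(\cdot/\varepsilon)\nabla\varphi\|_{L^2(\Omega)}$; examining $\tilde f$ term by term, the first two groups in $\eqref{eq:3.5}$ carry a factor $\varepsilon$ and a periodic coefficient ($E_{jik}^{\alpha\gamma}$, $a_{ij}^{\alpha\beta}\chi_k^{\beta\gamma}$, $q_{ik}^\gamma$) times $\nabla\varphi$, hence are controlled by $\varepsilon\|\varpi(\cdot/\varepsilon)\nabla\varphi\|_{L^2(\Omega)}$ once $\varpi$ is taken to collect $\chi$, $E$, $q$ and $a\chi$ (this is what Remark~\ref{remark:3.2} fixes), while the last group $[\hat a_{ij}^{\alpha\beta}-a_{ij}^{\alpha\beta}(\cdot/\varepsilon)][\nabla_j u_0^\beta-\varphi_j^\beta]$ is bounded pointwise by $C|\nabla u_0-\varphi|$ because $A$ and $\widehat A$ are bounded by $\eqref{a:1}$. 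Collecting these gives exactly $\eqref{pri:3.1}$.

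The main obstacle I anticipate is not any single hard estimate but the algebraic care required in the first part: one must track precisely how each $\nabla_i$ acts on a composition $g(x/\varepsilon)$, ensure that the would-be $O(\varepsilon^{-1})$ contributions genuinely cancel thanks to the antisymmetry of $E_{jik}^{\alpha\gamma}$ and the relations (i)--(ii) and $\nabla_i q_{ik}^\gamma=\pi_k^\gamma$ of Lemma~\ref{lemma:2.3}, and confirm that the leftover terms group exactly into the stated $\tilde f$ and $z_\varepsilon$. A secondary subtlety is that $\tilde f$ and $z_\varepsilon$ only make sense in $L^2$ if $E_{jik}^{\alpha\gamma}, q_{ik}^\gamma\in H^1_{per}(Y)$ and $\varphi\in H^1_0$, which is guaranteed by Lemma~\ref{lemma:2.3} and the hypothesis on $\varphi$; one should also note $u_0\in H^1$ suffices for the last term of $\tilde f$, so no extra regularity on $u_0$ is needed here (unlike in the later refinements).
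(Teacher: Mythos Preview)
Your proposal is correct and follows exactly the paper's approach: reduce to Lemma~\ref{lemma:3.1}, rewrite $-\nabla_i[b_{ik}^{\alpha\gamma}(y)\varphi_k^\gamma]$ via the identity $b_{ik}^{\alpha\gamma}=\nabla_j E_{jik}^{\alpha\gamma}-\nabla_\alpha q_{ik}^\gamma$ and the antisymmetry of $E$, absorb the gradient pieces into $z_\varepsilon$, and then invoke Theorem~\ref{thm:2.1} for the estimate. One caution: the signs in your displayed identity are all flipped relative to the correct computation (the paper obtains $+\varepsilon\nabla_i[E_{jik}^{\alpha\gamma}\nabla_j\varphi_k^\gamma]-\varepsilon\nabla_i[q_{ik}^\gamma\nabla_\alpha\varphi_k^\gamma]+\nabla_\alpha[\pi_k^\gamma\varphi_k^\gamma+\varepsilon q_{ik}^\gamma\nabla_i\varphi_k^\gamma]$), so when you actually carry out the bookkeeping be sure to track them; this does not affect the strategy or the final estimate.
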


\begin{remark}\label{remark:3.2}
\emph{ All the periodic functions in the paper have already emerged.
They include the coefficient $a_{ij}^{\alpha\beta}$, the correctors $(\chi_k^\gamma,\pi_k^{\gamma})$,
the auxiliary functions $b_{ik}^{\alpha\gamma}, T_{ik}^{\gamma}, q_{ik}^\gamma, E_{jik}^{\alpha\gamma}$.
For simplicity of presentation,
let $\varpi$ denote a kind of universal periodic function,
which plays a similar role as the constant $C$ does in the paper. For example, the notation
$\varpi(\cdot/\varepsilon)$ may represent the terms ``$E_{jik}^{\alpha\gamma}(\cdot/\varepsilon)
+ a_{ij}^{\alpha\beta}(\cdot/\varepsilon)\chi_k^{\beta\gamma}(\cdot/\varepsilon)$'',
``$q_{ik}^\gamma(\cdot/\varepsilon)$'' and ``$\chi_k^{\beta\gamma}(\cdot/\varepsilon)$''.
In view of $\eqref{a:1}$, $\eqref{pri:2.4}$ and $\eqref{pri:2.3}$, it is not hard to see
$\|\varpi\|_{L^2(Y)}\leq C(\mu,d)$. In such a case, one also says $\varpi$ is an universal periodic function
determined by $\mu$ and $d$. The following proof will show how to use the notation $\varpi$, and
we consequently do not repeat it in other places.}
\end{remark}

\begin{proof}
It follows from the first line of the equation $\eqref{pde:3.1}$ that
\begin{equation}\label{eq:3.7}
 \big[\mathcal{L}_\varepsilon(w_\varepsilon)\big]^\alpha
 = -\nabla_if_i^\alpha- \nabla_\alpha(p_\varepsilon - p_0),
 \qquad\text{in}~\Omega.
\end{equation}
To obtain the first line of $\eqref{pde:3.2}$ and the formula $\eqref{eq:3.5}$,
one only needs to check the term
$-\frac{\partial}{\partial x_i}\big[b_{ik}^{\alpha\gamma}(\cdot/\varepsilon)\varphi_k^\gamma\big]$
in $f_i^\alpha$.
In view of $\eqref{eq:2.2}$, we have
\begin{equation}\label{eq:3.6}
\begin{aligned}
\frac{\partial}{\partial x_i}\big[b_{ik}^{\alpha\gamma}(y)\varphi_k^\gamma\big]
&= \varepsilon\frac{\partial}{\partial x_i}
\bigg\{\frac{\partial}{\partial x_j}\big[E_{jik}^{\alpha\gamma}(y)\big]\varphi_k^\gamma
-\frac{\partial}{\partial x_\alpha}\big[q_{ik}^\gamma(y)\big]\varphi_k^\gamma\bigg\} \\
& = \varepsilon\frac{\partial}{\partial x_i}
\bigg\{\frac{\partial}{\partial x_j}\big[E_{jik}^{\alpha\gamma}(y)\varphi_k^\gamma\big]
-E_{jik}^{\alpha\gamma}(y)\frac{\partial\varphi_k^\gamma}{\partial x_j}\bigg\}
  -\varepsilon\frac{\partial}{\partial x_i}
\bigg\{\frac{\partial}{\partial x_\alpha}\big[q_{ik}^{\gamma}(y)\varphi_k^\gamma\big]
-q_{ik}^\gamma(y)\frac{\partial\varphi_k^\gamma}{\partial x_\alpha}\bigg\}.
\end{aligned}
\end{equation}
The second line of $\eqref{eq:3.6}$ is equal to
\begin{equation*}
\varepsilon\frac{\partial^2}{\partial x_i\partial x_j}\big[E_{jik}^{\alpha\gamma}(y)\varphi_k^\gamma\big]
-\varepsilon\frac{\partial}{\partial x_i}
\Big\{E_{jik}^{\alpha\beta}(y)\frac{\partial\varphi^\gamma_k}{\partial x_j}\Big\}
-\varepsilon\frac{\partial^2}{\partial x_\alpha\partial x_i}\big[q_{ik}^\gamma(y)\varphi_k^\gamma\big]
+\varepsilon\frac{\partial}{\partial x_i}
\Big\{q_{ik}^\gamma(y)\frac{\partial\varphi_k^\gamma}{\partial x_\alpha}\Big\},
\end{equation*}
where the first term vanishes because of $E_{jik}^{\alpha\gamma} = - E_{ijk}^{\alpha\gamma}$, and the
third term becomes
\begin{equation*}
-\frac{\partial}{\partial x_\alpha}\Big\{\frac{\partial q_{ik}^\gamma}{\partial y_i}\varphi_k^\gamma
 + \varepsilon q_{ik}^\gamma(y)\frac{\partial\varphi_k^\gamma}{\partial x_i}\Big\}
= -\frac{\partial}{\partial x_\alpha}\Big\{\pi_k^\gamma(y)\varphi_k^\gamma
 + \varepsilon q_{ik}^\gamma(y)\frac{\partial\varphi_k^\gamma}{\partial x_i}\Big\}
\end{equation*}
according to $\nabla_i q_{ik}^\gamma = \pi_k^\gamma$ in $\eqref{eq:2.2}$. Thus we have
\begin{equation}
-\frac{\partial}{\partial x_i}\big[b_{ik}^{\alpha\gamma}(y)\varphi_k^\gamma\big]
= \varepsilon\frac{\partial}{\partial x_i}
\bigg\{E_{jik}^{\alpha\beta}(y)\frac{\partial\varphi^\gamma_k}{\partial x_j}
- q_{ik}^\gamma(y)\frac{\partial\varphi_k^\gamma}{\partial x_\alpha} \bigg\}
+ \frac{\partial}{\partial x_\alpha}\bigg\{\pi_k^\gamma(y)\varphi_k^\gamma
 + \varepsilon q_{ik}^\gamma(y)\frac{\partial\varphi_k^\gamma}{\partial x_i}\bigg\},
\end{equation}
and the right-hand side of $\eqref{eq:3.7}$ is exactly written by the force term
(denoted by $\text{div}(\tilde{f})$)
\begin{equation*}
\frac{\partial}{\partial x_i}\bigg\{
\varepsilon \big[E_{jik}^{\alpha\gamma}(y)+ a_{ij}^{\alpha\beta}(y)\chi_k^{\beta\gamma}(y)\big]
\frac{\partial\varphi_k^\gamma}{\partial x_j}
-\varepsilon q_{ik}^\gamma(y)\frac{\partial\varphi_k^\gamma}{\partial x_\alpha}
-\big[\hat{a}_{ij}^{\alpha\beta}-a_{ij}^{\alpha\beta}(y)\big]
\Big[\frac{\partial u_0}{\partial x_j}-\varphi_j^\beta\Big]\bigg\}
\end{equation*}
adding the pressure term (denoted by $\nabla z_\varepsilon$)
\begin{equation*}
 -\frac{\partial}{\partial x_\alpha}\bigg\{p_\varepsilon-p_0
 -\pi_k^\gamma(y)\varphi_k^\gamma
 - \varepsilon q_{ik}^\gamma(y)\frac{\partial\varphi_k^\gamma}{\partial x_i}
 \bigg\}.
\end{equation*}
Moving the pressure term to the left-hand side,
we consequently obtain the first line of $\eqref{pde:3.2}$ as well as the formula $\eqref{eq:3.5}$.
Now, it is the position to verity the estimate $\eqref{pri:3.1}$. Due to Theorem $\ref{thm:2.1}$,
\begin{equation}\label{f:3.1}
 \|w_\varepsilon\|_{H^1_0(\Omega)} + \|z_\varepsilon\|_{L^2(\Omega)/\mathbb{R}}
 \leq C\Big\{\|\tilde{f}\|_{L^2(\Omega)}+\|\eta\|_{L^2(\Omega)}\Big\}.
\end{equation}
Observing $\eqref{eq:3.5}$, we write that
\begin{equation*}
\big[E_{jik}^{\alpha\gamma}(\cdot/\varepsilon)+ a_{ij}^{\alpha\beta}(\cdot/\varepsilon)
\chi_k^{\beta\gamma}(\cdot/\varepsilon)\big]
\nabla_j\varphi_k^\gamma =: \big[\varpi(\cdot/\varepsilon)\nabla\varphi\big]^\alpha_i,
\end{equation*}
where $\big[\varpi(\cdot/\varepsilon)\big]^{\alpha\gamma}_{ijk}$ represents the term
$E_{jik}^{\alpha\gamma}(\cdot/\varepsilon)+ a_{ij}^{\alpha\beta}(\cdot/\varepsilon)
\chi_k^{\beta\gamma}(\cdot/\varepsilon)$. By the principle explained in Remark $\ref{remark:3.2}$ again,
we denote the second term in $\eqref{eq:3.5}$ by
$q_{ik}^\gamma(\cdot/\varepsilon)\nabla_\alpha\varphi_k^\gamma
=: \big[\varpi(\cdot/\varepsilon)\nabla\varphi\big]_i^\alpha$, where $\big[\varpi\big]_{ik}^\gamma$ denotes
$q_{ik}^\gamma$. Hence we have
\begin{equation}\label{f:3.2}
\|\tilde{f}\|_{L^2(\Omega)}
\leq C\Big\{\varepsilon\|\varpi(\cdot/\varepsilon)\nabla\varphi\|_{L^2(\Omega)}
 + \|\nabla u_0 - \varphi\|_{L^2(\Omega)}\Big\},
\end{equation}
By noting that $\eta = -\varepsilon\chi_k^{\beta\gamma}(\cdot/\varepsilon)\nabla_\beta\varphi_k^\gamma
:=\varepsilon \varpi(\cdot/\varepsilon)\nabla\varphi$,
combining $\eqref{f:3.1}$ and $\eqref{f:3.2}$
gives the estimate $\eqref{pri:3.1}$, and we are done.
\end{proof}

\begin{lemma}
Let $1<p<\infty$. Given $F\in W^{-1,p}(\mathbb{R}^d;\mathbb{R}^d)$, $h\in L^p(\mathbb{R}^d)$, there exists
the unique weak solution $(u,p)\in W^{1,p}(\mathbb{R}^d;\mathbb{R}^d)\times L^{p}(\mathbb{R}^d)$ to the following problem
\begin{equation}\label{pde:3.3}
-\Delta u  + \nabla p = F, \quad \emph{div}(u) = h \qquad \emph{in}~~\mathbb{R}^d,
\end{equation}
satisfying
\begin{equation}\label{pri:3.8}
\|\nabla u\|_{L^p(\mathbb{R}^d)} + \|p\|_{L^p(\mathbb{R}^d)}
\leq C\Big\{\|F\|_{W^{-1,p}(\mathbb{R}^d)} + \|h\|_{L^{p}(\mathbb{R}^d)}\Big\}.
\end{equation}
Moreover, if we assume $F\in L^p(\mathbb{R}^d;\mathbb{R}^d)$, $h\in W^{1,p}(\mathbb{R}^d)$, then the solution $(u,p)$ satisfies
\begin{equation}\label{pri:3.3}
\|\nabla^2 u\|_{L^p(\mathbb{R}^d)} + \|\nabla p\|_{L^p(\mathbb{R}^d)}
\leq C\Big\{\|F\|_{L^p(\mathbb{R}^d)} + \|h\|_{W^{1,p}(\mathbb{R}^d)}\Big\},
\end{equation}
where $C$ depends only on $p$ and $d$.
\end{lemma}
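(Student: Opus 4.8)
The plan is to prove the two estimates $\eqref{pri:3.8}$ and $\eqref{pri:3.3}$ for the constant-coefficient Stokes system $\eqref{pde:3.3}$ on the whole space by means of the explicit fundamental solution, i.e.\ the Stokeslet together with the associated pressure kernel. First I would recall that $-\Delta u + \nabla p = F$, $\operatorname{div}(u) = h$ in $\mathbb{R}^d$ admits the Oseen tensor $\Gamma = (\Gamma_{\alpha\beta})$ and the pressure vector $Q = (Q_\beta)$, whose entries are homogeneous of degrees $2-d$ and $1-d$ respectively, so that $u = \Gamma * F + (\text{correction from }h)$ and $p = Q * F + (\text{correction from }h)$. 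Writing things componentwise, $\nabla u$ and $p$ are obtained from $F$ by convolution with kernels that are homogeneous of degree $1-d$; after the first derivative these are Calder\'on--Zygmund kernels, so $\nabla u$ and $p$ land in $L^p$ with the bound governed by $\|F\|_{W^{-1,p}}$ and $\|h\|_{L^p}$.

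More precisely, the argument splits into the two source terms. For the term involving $F \in W^{-1,p}$, I would write $F = \operatorname{div}(G)$ with $\|G\|_{L^p} \lesssim \|F\|_{W^{-1,p}}$ (this is the $\mathbb{R}^d$ analogue of Lemma $\ref{lemma:2.2}$/the standard fact that $W^{-1,p}$ is spanned by divergences of $L^p$ fields), so that $\nabla u$ and $p$ become second-order Riesz-type transforms acting on $G$; these are bounded on $L^p$ for $1<p<\infty$ by the Calder\'on--Zygmund theorem, or equivalently by the Mikhlin--H\"ormander multiplier theorem applied to the Fourier symbols $\xi\otimes\xi/|\xi|^2$ and the corresponding symbols coming from the Leray projection. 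For the term involving $h\in L^p$, one reduces to the same situation: solving $\operatorname{div}(v) = h$ with $\nabla v \in L^p$ (via $v = \nabla \Delta^{-1} h$, whose gradient is again a Riesz transform of $h$) and then solving a homogeneous Stokes system with source $\Delta v$, or simply noting that $p$ solves $\Delta p = \operatorname{div}(F) - \Delta h$ so that $p$ is, up to Riesz transforms, controlled by $\|F\|_{W^{-1,p}} + \|h\|_{L^p}$, and then $u$ solves $-\Delta u = F - \nabla p$ with right-hand side in $W^{-1,p}$. Uniqueness follows because any solution with $\nabla u \in L^p$, $p\in L^p$ and zero data must, after taking Fourier transforms or using Liouville-type reasoning for tempered distributions, be trivial.

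For the higher regularity estimate $\eqref{pri:3.3}$, the plan is identical but one derivative higher: with $F\in L^p$ and $h\in W^{1,p}$, differentiating the representation formulas once more shows that $\nabla^2 u$ and $\nabla p$ are Calder\'on--Zygmund transforms of $F$ and of $\nabla h$, hence bounded on $L^p$; alternatively one applies $\eqref{pri:3.8}$ to the differentiated system satisfied by $\partial_k u$ and $\partial_k p$, whose data are $\partial_k F \in W^{-1,p}$ and $\partial_k h \in L^p$, and then sums over $k$. I expect the only real point requiring care — rather than a genuine obstacle — is bookkeeping the contribution of the inhomogeneous divergence constraint $h$ so that it does not spoil the scaling and so that the constant depends only on $p$ and $d$; the singular-integral boundedness itself is classical and can simply be quoted. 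Since the paper already invokes $H^2$-regularity for Stokes on bounded domains (Lemma $\ref{lemma:2.3}$ cites \cite[Theorem 1.3]{MGMG}) and the $L^p$ theory on $\mathbb{R}^d$ for constant coefficients is standard, I would keep this proof short, citing the Mikhlin multiplier theorem (or the explicit Oseen tensor) and the closed-graph/Fourier uniqueness argument, rather than grinding through the kernel estimates.
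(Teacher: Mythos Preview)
Your proposal is correct and matches the paper's approach: the paper cites \cite[Theorem 3.1]{GPGCG} for \eqref{pri:3.8} and then proves \eqref{pri:3.3} by exactly the decomposition you describe---absorbing $h$ via the Newtonian potential $\psi=-\nabla\Gamma*h$ (your $v=\nabla\Delta^{-1}h$), writing $u=u_1+u_2+\psi$ with $(u_1,p_1)$ and $(u_2,p_2)$ solving divergence-free Stokes problems with sources $F$ and $\Delta\psi$, and then applying Calder\'on--Zygmund theory to the explicit Oseen tensor $\mathbf{U}$ and pressure kernel $\mathbf{Q}$. The only cosmetic difference is that the paper writes out the kernels explicitly and checks the H\"ormander condition, whereas you invoke Mikhlin; both routes are standard and equivalent here.
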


\begin{proof}
The estimate $\eqref{pri:3.8}$ and
the existence and uniqueness of the weak solution to $\eqref{pde:3.3}$ were proved in
\cite[Theorem 3.1]{GPGCG}. Inspired by their arguments
we focus on verifying the estimate $\eqref{pri:3.3}$. It is suffices to prove the lemma when
$F^\alpha,h\in C_0^\infty(\mathbb{R}^d)$, where $F^\alpha$ is the $\alpha^{\text{th}}$ component of $F$.
First of all,
due to the linearity of the equation $\eqref{pde:3.3}$
we can divide it into three parts as follows:
\begin{equation}\label{eq:3.8}
\text{(i)}\left\{\begin{aligned}
-\Delta u_1 + \nabla p_1 &= F &\quad&\text{in}~\mathbb{R}^d,\\
\text{div}(u_1) &= 0 &\quad&\text{in}~\mathbb{R}^d,
\end{aligned}\right.
\qquad
\text{(ii)}\left\{\begin{aligned}
-\Delta u_2 + \nabla p_2 &= \Delta \psi &\quad&\text{in}~\mathbb{R}^d,\\
\text{div}(u_2) &= 0 &\quad&\text{in}~\mathbb{R}^d,
\end{aligned}\right.
\qquad
\text{(iii)} ~\psi = -\nabla\Gamma*h\quad\text{in}~\mathbb{R}^d,
\end{equation}
where $\Gamma$ is the fundamental solution to $-\Delta$.
It is not hard to see $u = u_1 + u_2 + \psi$ and $p = p_1+p_2$.
In fact, according
to fundamental solution to Stokes systems (see \cite[Chapter 3]{OAL})
we are able to figure out the solutions $(u_1,p_1)$ and $(u_2,p_2)$ as following:
\begin{equation}
\left\{\begin{aligned}
u_1 = \mathbf{U}*F, \\
p_1 = \mathbf{Q}*F,
\end{aligned}\right.
\qquad \text{and}\qquad
\left\{\begin{aligned}
u_2 = \mathbf{U}*(\Delta\psi), \\
p_2 = \mathbf{Q}*(\Delta\psi),
\end{aligned}\right.
\end{equation}
where $\mathbf{U} = (U_{ij})$ and $\mathbf{Q} = (Q_{i})$ are fundamental solution to Stokes systems, and
their components are formulated  by
\begin{equation}\label{eq:3.9}
U_{ij}(x) = \left\{ \begin{aligned}
&c_1(d)\bigg[\kappa_{ij}|x|^{2-d}+(d-2)\frac{x_ix_j}{|x|^d}\bigg] &\quad& \text{if}~ d\geq 3,\\
&\frac{1}{4\pi}\bigg[\kappa_{ij}\log|x| - \frac{x_ix_j}{|x|^2}\bigg] &\quad& \text{if}~ d=2;
\end{aligned}
\right.
\qquad
Q_i(x) = \left\{\begin{aligned}
& c_2(d)\frac{x_i}{|x|^d} &\quad& \text{if}~d\geq 3,\\
& \frac{x_i}{2\pi|x|^2}   &\quad& \text{if}~d= 2.
\end{aligned}
\right.
\end{equation}
Here $\kappa_{ij}$ is the Kronecker symbol and they satisfy
$-\Delta \mathbf{U}(x,\cdot) + \nabla \mathbf{Q}(x,\cdot) = \delta_x(\cdot)\mathbf{I}$ and
$\text{div}(\mathbf{U}) = 0$ in $\mathbb{R}^d$, where $\delta_x(\cdot)$ is the Dirac delta function
concentrated at $x$ and $\mathbf{I}$ is the $d\times d$ identity matrix. We mention that we only address
the cases $d\geq 3$ in the following, and the case of $d=2$ holds in the same way,
and we will leave it to the reader.

We now investigate the equation (iii) of $\eqref{eq:3.8}$. Since $h\in C_0^\infty(\mathbb{R}^d)$, it is not
hard to see $\psi = -\nabla\Gamma* h = -\Gamma*\nabla h$ in $\mathbb{R}^d$.
It is well know by singular integral estimate (see \cite[Chapters 4,5]{JD}) that
\begin{equation}\label{pri:3.4}
  \|\nabla^2\psi\|_{L^p(\mathbb{R}^d)}\leq C_p\|\nabla h\|_{L^p(\mathbb{R}^d)}
\end{equation}
holds for $1<p<\infty$, where $C_p$ depends only on $p$ and $d$.
Observing the formulas of the fundamental solutions in $\eqref{eq:3.9}$,
we know that $\mathbf{U}$ is homogeneous of degree $2-d$, and $\mathbf{Q}$ is homogeneous of degree $1-d$,
and then their Fourier transform $\mathcal{F}(\mathbf{Q})$ is homogeneous of degree $-2$ and $-1$, respectively.
Furthermore, it follows from the properties of Fourier transform that
$|\mathcal{F}(\nabla^2\mathbf{U})|$ and $|\mathcal{F}(\nabla\mathbf{Q})|$ are bounded by a constant only
depending on $d$, which actually guarantees the corresponding singular integrals is $L^2$-bounded.
By writing $K_1(x)=\nabla^2\mathbf{U}(x)$ and $K_2(x)=\nabla\mathbf{Q}(x)$,
we have the H\"ormander condition
$\max\{|\nabla K_1(x)|,|\nabla K_2(x)|\}\leq C|x|^{-d-1}$ for every $x\not=0$.
According to the notation above, one immediately obtains $\nabla^2 u_1 = K_1*F$
and $\nabla p_1 = K_2*(\Delta\psi)$, and it is follows from \cite[Theorem 5.1]{JD} that
\begin{equation}\label{pri:3.5}
\begin{aligned}
 &\|\nabla^2 u_1\|_{L^p(\mathbb{R}^d)}+\|\nabla p_1\|_{L^p(\mathbb{R}^d)}\leq C\|F\|_{L^p(\mathbb{R}^d)},\\
 &\|\nabla^2 u_2\|_{L^p(\mathbb{R}^d)}+\|\nabla p_2\|_{L^p(\mathbb{R}^d)}
 \leq C\|\Delta\psi\|_{L^p(\mathbb{R}^d)}
 \leq C\|\nabla h\|_{L^p(\mathbb{R}^d)},
\end{aligned}
\end{equation}
where we use the estimate $\eqref{pri:3.4}$ in the last inequality of the second line of $\eqref{pri:3.5}$,
and $C$ depends on $p$ and $d$. As a consequence,
the estimates $\eqref{pri:3.4}$ and $\eqref{pri:3.5}$ implies the desired result $\eqref{pri:3.3}$ and
we have completed the proof.
\end{proof}

\begin{lemma}\label{lemma:3.3}
Suppose that $A$ satisfies $\eqref{a:1}-\eqref{a:3}$.
Assume $F\in L^q(\Omega;\mathbb{R}^d)$ with $q=\frac{2d}{d+1}$, $h\in W^{1,q}(\Omega)$
and $g\in H^1(\partial\Omega;\mathbb{R}^d)$ with the compatibility condition
$\int_\Omega h dx = \int_{\partial\Omega} n\cdot gdS$.
Let $(u_0,p_0)\in H^1(\Omega;\mathbb{R}^d)\times L^2(\Omega)$
be the weak solution to $(\textbf{DS}_0)$. Then we have
\begin{equation}\label{pri:3.6}
\|\nabla u_0\|_{L^2(\Omega\setminus\Sigma_{p_1\varepsilon})}
+ \|p_0\|_{L^2(\Omega\setminus\Sigma_{p_1\varepsilon})} \leq C\varepsilon^{\frac{1}{2}}
\Big\{\|F\|_{L^q(\Omega)} + \|h\|_{W^{1,q}(\Omega)}+\|g\|_{H^1(\partial\Omega)}\Big\},
\end{equation}
and further assuming $F\in L^2(\Omega;\mathbb{R}^d)$ and $h\in H^1(\Omega)$, there holds
\begin{equation}\label{pri:3.7}
\|\nabla^2u_0\|_{L^2(\Sigma_{p_2\varepsilon})} \leq C\varepsilon^{-\frac{1}{2}}
\Big\{\|F\|_{L^2(\Omega)} + \|h\|_{H^{1}(\Omega)}+\|g\|_{H^1(\partial\Omega)}\Big\},
\end{equation}
where $p_1,p_2>0$ are fixed real number, and $C$ depends on $\mu,d,p_1,p_2$ and $\Omega$.
\end{lemma}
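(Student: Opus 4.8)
The plan is to transfer everything to the homogenized problem $(\mathbf{DS}_0)$: since $\mathcal L_0=-\text{div}(\widehat A\nabla)$ has constant coefficients satisfying $\eqref{a:1}$ and $\eqref{a:3}$, the solution $(u_0,p_0)$ obeys the classical non-tangential maximal function and square function estimates for the Stokes system with constant coefficients in a bounded Lipschitz domain (see \cite{EBFCEKGCV,RMBZS,OAL}), with constants depending only on $\mu,d$ and $\Omega$. The parameter $\varepsilon$ enters only through the layer thickness $p_1\varepsilon$ in $\eqref{pri:3.6}$ and the lower bound $\delta(x)>p_2\varepsilon$ on the co-layer $\Sigma_{p_2\varepsilon}$ in $\eqref{pri:3.7}$.

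For $\eqref{pri:3.6}$ I would first record that the data $F\in L^q(\Omega)$, $h\in W^{1,q}(\Omega)$ with $q=2d/(d+1)$, and $g\in H^1(\partial\Omega)$ are exactly scaled for the $L^2$ regularity problem: here $W^{1,q}(\Omega)\hookrightarrow L^{2d/(d-1)}(\Omega)$ and the trace space $B^{1-1/q,q}(\partial\Omega)$ embeds into $L^2(\partial\Omega)$, so that (either by citing the inhomogeneous estimate of \cite{EBFCEKGCV,RMBZS,OAL} directly, or by peeling off the whole-space solution of $\mathcal L_0 v+\nabla q_0=F$, $\text{div}(v)=\tilde h$ in $\mathbb R^d$ through $\eqref{pri:3.8}$, $\eqref{pri:3.3}$ and treating the remainder by the homogeneous regularity estimate) one obtains
\[
\|(\nabla u_0)^*\|_{L^2(\partial\Omega)}+\|(p_0)^*\|_{L^2(\partial\Omega)}
\le C\big\{\|F\|_{L^q(\Omega)}+\|h\|_{W^{1,q}(\Omega)}+\|g\|_{H^1(\partial\Omega)}\big\}.
\]
Then $\eqref{pri:3.6}$ is immediate: apply the layer bound $\eqref{pri:2.5}$ with $p=2$ and $r=p_1\varepsilon$ to $|\nabla u_0|$ and to $p_0$, which turns the $L^2(\partial\Omega)$ norms of these non-tangential maximal functions into $(p_1\varepsilon)^{1/2}$ times themselves.

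For $\eqref{pri:3.7}$ the stronger hypotheses $F\in L^2(\Omega)$, $h\in H^1(\Omega)$ are needed, and the plan is to prove the weighted bound $\int_\Omega|\nabla^2 u_0(x)|^2\,\delta(x)\,dx\le C\{\|F\|_{L^2(\Omega)}^2+\|h\|_{H^1(\Omega)}^2+\|g\|_{H^1(\partial\Omega)}^2\}$ and then simply use $\delta(x)>p_2\varepsilon$ on $\Sigma_{p_2\varepsilon}$ to get $\|\nabla^2 u_0\|_{L^2(\Sigma_{p_2\varepsilon})}^2\le(p_2\varepsilon)^{-1}\int_\Omega|\nabla^2 u_0|^2\delta\,dx$, which after a square root is $\eqref{pri:3.7}$. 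For the weighted bound I would split $u_0=v+w$, where $v$ solves $\mathcal L_0 v+\nabla q_0=F$, $\text{div}(v)=\tilde h$ in $\mathbb R^d$ with $F$ extended by zero and $\tilde h\in H^1(\mathbb R^d)$ an extension of $h$; by $\eqref{pri:3.3}$ with $p=2$ we get $\nabla^2 v\in L^2(\mathbb R^d)$ with norm $\le C\{\|F\|_{L^2(\Omega)}+\|h\|_{H^1(\Omega)}\}$, hence $\int_\Omega|\nabla^2 v|^2\delta\le r_0\|\nabla^2 v\|_{L^2(\mathbb R^d)}^2$ is controlled, while $w=u_0-v$ solves the homogeneous Stokes system in $\Omega$ with boundary datum $g-v|_{\partial\Omega}\in H^1(\partial\Omega)$, $\|g-v|_{\partial\Omega}\|_{H^1(\partial\Omega)}\le C\{\|g\|_{H^1(\partial\Omega)}+\|v\|_{H^2(\Omega)}\}$, and for it the square function estimate $\int_\Omega|\nabla^2 w|^2\delta\,dx\le C\|(\nabla w)^*\|_{L^2(\partial\Omega)}^2\le C\|g-v|_{\partial\Omega}\|_{H^1(\partial\Omega)}^2$ of \cite{EBFCEKGCV,RMBZS,OAL} closes the argument.

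The embeddings and the whole-space estimates are routine. The real obstacle is twofold: locating in the cited literature the Lipschitz-domain estimates in precisely the form required — the non-tangential maximal estimate for the regularity problem with inhomogeneous terms scaled as $F\in L^q$, $h\in W^{1,q}$, and the weighted $H^2$ (equivalently square function) estimate for the homogeneous problem — and verifying that these are stable under the above splitting, in particular that the trace of the whole-space solution $v$ genuinely lies in $H^1(\partial\Omega)$. It is also worth stressing that $\eqref{pri:3.7}$ cannot come from an unweighted $H^2$ bound, which is false on a general Lipschitz domain: the weight $\delta$, and with it the $\varepsilon^{-1/2}$ growth, is intrinsic, and a crude pointwise interior estimate would only produce $\int_\Omega|\nabla u_0|^2\delta^{-1}\,dx$, a logarithmically divergent quantity — it is precisely the square function estimate that supplies the missing cancellation.
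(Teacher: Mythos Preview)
Your overall architecture matches the paper's: split $u_0=v+w$, $p_0=p_{0,1}+p_{0,2}$ via the whole-space problem $\eqref{pde:3.3}$ and the homogeneous boundary value problem, use the whole-space estimates $\eqref{pri:3.8}$, $\eqref{pri:3.3}$ for $v$, and the regularity theorem of \cite{EBFCEKGCV} for $w$. Two points deserve comment.

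For $\eqref{pri:3.6}$ you assert $\|(\nabla u_0)^*\|_{L^2(\partial\Omega)}+\|(p_0)^*\|_{L^2(\partial\Omega)}\le C\{\cdots\}$ for the full solution. This is a slight overclaim under the $L^q$ hypotheses: with $q=2d/(d+1)$ one only has $\nabla^2 v\in L^q(\mathbb R^d)$, and neither $(\nabla v)^*$ nor $\mathcal M(\nabla v)$ is obviously in $L^2(\partial\Omega)$ (the radial-maximal bound $\eqref{pri:2.15}$ needs $H^1$ regularity of $\nabla v$). The paper instead proves the weaker but sufficient statement $\sup_{0\le t\le c_0}\|\nabla v\|_{L^2(S_t)}\le C$ via a Rellich-type identity (integrate $\text{div}(\varrho|\nabla v|^2)$ with a transversal vector field $\varrho$), and similarly for $p_{0,1}$; the layer bound then follows from the co-area formula directly rather than from $\eqref{pri:2.5}$. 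For the $w$ part your argument is exactly the paper's.

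For $\eqref{pri:3.7}$ you take a genuinely different route: you pass through the weighted bound $\int_\Omega|\nabla^2 w|^2\delta\,dx\le C\|(\nabla w)^*\|_{L^2(\partial\Omega)}^2$ (a square-function estimate) and then use $\delta>p_2\varepsilon$. The paper never invokes a square-function estimate; it uses only the pointwise interior bound $|\nabla^2 w(x)|\le C\delta(x)^{-1}(\dashint_{B(x,\delta(x)/8)}|\nabla w|^2)^{1/2}$ and integrates $t^{-2}$ over $[p_2\varepsilon,c_0]$ against $\|(\nabla w)^*\|_{L^2(\partial\Omega)}^2$, obtaining $C\varepsilon^{-1}$ directly. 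So your final paragraph is slightly misleading: the ``crude pointwise interior estimate'' is in fact adequate for $\eqref{pri:3.7}$ as stated, because the unweighted integral produces $\int_{p_2\varepsilon}^{c_0}t^{-2}dt\sim\varepsilon^{-1}$, not a logarithm. The logarithm appears only if one tries to prove the \emph{weighted} bound $\int|\nabla^2 w|^2\delta\,dx$ by the crude estimate (this is exactly what happens in the paper's Lemma~\ref{lemma:4.3}, where a $[\ln(c_0/\varepsilon)]^{1/2}$ factor is accepted). Your square-function route would remove that log factor and is in that sense sharper, but for Lemma~\ref{lemma:3.3} itself it is not needed, and the paper's direct integration is more elementary and avoids the citation issue you flagged.
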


\begin{remark}
\emph{The results similar to $\eqref{pri:3.6}$ and $\eqref{pri:3.7}$ were originally investigated by Z. Shen
in \cite{SZW12} for linear elasticity systems. The author utilizes the radial maximal function to extend
his results to general elliptic systems in \cite{QX2}. We call
$\eqref{pri:3.6}$ the ``layer type'' estimate, while $\eqref{pri:3.7}$ is regarded as
the ``co-layer type'' one, where ``co-layer'' means the complementary layer for short.}
\end{remark}

\begin{proof}
We first address the estimate $\eqref{pri:3.6}$.
Let $\tilde{F}$ be the $0$-extension of $F$ to $\mathbb{R}^d$ such that $\tilde{F} = 0$
on $\mathbb{R}^d\setminus\Omega$, and $\tilde{h}$ is the $W^{1,q}$-extension of $h$ to $\mathbb{R}^d$ such
that $\tilde{h} = h$ a.e. in $\Omega$ and $\|\tilde{h}\|_{W^{1,q}(\mathbb{R}^d)}\leq C\|h\|_{W^{1,q}(\Omega)}$.
Then we consider $u_0 = v+w$ and $p_0=p_{0,1}+p_{0,2}$, and they satisfy
\begin{equation}\label{pde:3.4}
\text{(HP)}\left\{\begin{aligned}
\mathcal{L}_0(v) + \nabla p_{0,1} & = \tilde{F} &\quad& \text{in}~~\mathbb{R}^d,\\
\text{div}(v) &= \tilde{h} &\quad& \text{in}~~\mathbb{R}^d,
\end{aligned}\right.
\qquad \text{and} \qquad
\text{(BVP)}\left\{\begin{aligned}
\mathcal{L}_0(w) + \nabla p_{0,2} & = 0 &\quad& \text{in}~~\Omega,\\
\text{div}(w) &= 0 &\quad& \text{in}~~\Omega,\\
w &= g -v &\quad& \text{on}~\partial\Omega.
\end{aligned}\right.
\end{equation}
First, by the orthogonal transformation and dilation,
the equation (HP) will be the form of $\eqref{pde:3.3}$, and in view of $\eqref{pri:3.3}$ we arrive at
\begin{equation}\label{f:3.3}
\|\nabla^2 v\|_{L^q(\mathbb{R}^d)} + \|\nabla p_{0,1}\|_{L^q(\mathbb{R}^d)}
\leq C\Big\{\|\tilde{F}\|_{L^q(\mathbb{R}^d)}+\|\tilde{h}\|_{W^{1,q}(\mathbb{R}^d)}\Big\}
\leq C\Big\{\|F\|_{L^q(\Omega)}+\|h\|_{W^{1,q}(\Omega)}\Big\},
\end{equation}
where $C$ depends on $\mu,d$ and $\Omega$.
Due to the Sobolev's inequality, we also have
\begin{equation}\label{f:3.4}
\|\nabla v\|_{L^{q^\prime}(\mathbb{R}^d)} + \|p_{0,1}\|_{L^{q^\prime}(\mathbb{R}^d)}
\leq C\big\{\|\nabla^2 v\|_{L^{q}(\mathbb{R}^d)}+ \|\nabla p_{0,1}\|_{L^q(\mathbb{R}^d)}\big\}
\leq C\Big\{\|F\|_{L^q(\Omega)}+\|h\|_{W^{1,q}(\Omega)}\Big\},
\end{equation}
where $1/q^\prime =1/q-1/d$.
Set $\varrho=(\varrho_1,\cdots,\varrho_d)\in C^{1}_0(\mathbb{R}^d;\mathbb{R}^d)$ be a vector field such that
$[\varrho, n] \geq c>0$ on $\partial\Omega$ and $|\nabla\varrho|\leq Cr_0^{-1}$,
where $n$ is the outward unit normal vector to
$\partial\Omega$, and $[\cdot,\cdot]$ is inner product.
Meanwhile, it follows from the estimate $\eqref{pri:3.8}$ that
\begin{equation}\label{f:3.7}
\begin{aligned}
\|\nabla v\|_{L^q(\mathbb{R}^d)} + \|p_{0,1}\|_{L^q(\mathbb{R}^d)}
\leq C\Big\{\|\tilde{F}\|_{W^{-1,q}(\mathbb{R}^d)}+\|\tilde{h}\|_{L^{q}(\mathbb{R}^d)}\Big\}
\leq C\Big\{\|F\|_{L^{q}(\Omega)}+\|h\|_{L^{q}(\Omega)}\Big\}.
\end{aligned}
\end{equation}

From the divergence theorem, it follows that
\begin{equation}\label{f:3.5}
\begin{aligned}
c\int_{\partial\Omega} |\nabla v|^2 dS
&\leq \int_{\partial\Omega}[\varrho,n]|\nabla v|^2 dS
= \int_\Omega\text{div}(\varrho)|\nabla v|^2 dx + 2\int_\Omega \varrho_i[\nabla_i\nabla v,\nabla v] dx\\
&\leq C\bigg\{r_0^{-1}\int_\Omega|\nabla v|^2 dx + \int_\Omega|\nabla^2 v||\nabla v|dx\bigg\} \\
&\leq C\Big\{r_0^{-1}\big\|\nabla v\big\|_{L^2(\Omega)}^2 + \big\|\nabla^2 v\big\|_{L^{q}(\Omega)}
\big\|\nabla v\big\|_{L^{q^\prime}(\Omega)}\Big\}\\
&\leq C\Big\{\|F\|_{L^q(\Omega)}^2+\|h\|_{W^{1,q}(\Omega)}^2\Big\},
\end{aligned}
\end{equation}
where $1/q^\prime + 1/q = 1$ since $q=2d/(d+1)$,
and we use the estimates $\eqref{f:3.3}$ and $\eqref{f:3.4}$ in the last inequality.
Moreover, it is convenient to assume that $[\varrho,n]\geq c/2>0$ on $S_t$
for any $t\in[0,c_0]$, and we obtain
\begin{equation}\label{f:3.6}
\int_{S_t}|\nabla v|^2 dS \leq C\Big\{\|F\|_{L^q(\Omega)}^2+\|h\|_{W^{1,q}(\Omega)}^2\Big\},
\end{equation}
where $C$ depends on $c_0,c,d$, independent of $t$. Hence by the co-area formula $\eqref{eq:2.3}$ and
the estimate $\eqref{f:3.6}$, we reach
\begin{equation}\label{f:3.9}
\|\nabla v\|_{L^2(\Omega\setminus\Sigma_{p_1\varepsilon})}
=\bigg(\int_0^{p_1\varepsilon}\int_{\partial S_t}|\nabla v|^2 dS\bigg)^{\frac{1}{2}}
\leq C\varepsilon^{\frac{1}{2}}\Big\{\|F\|_{L^q(\Omega)}+\|h\|_{W^{1,q}(\Omega)}\Big\},
\end{equation}
by noting that $0<p_1\varepsilon<c_0$.

Now we turn to study the quantity $\|\nabla w\|_{L^2(\Omega\setminus\Sigma_{p_1\varepsilon})}$
by considering (BVP) in $\eqref{pde:3.4}$. It follows from \cite[Theorem 4.15]{EBFCEKGCV} that
\begin{equation}\label{f:3.8}
\begin{aligned}
\|(\nabla w)^*\|_{L^2(\partial\Omega)} + \|(p_{0,2})^*\|_{L^2(\partial\Omega)}
&\leq C\Big\{\|g\|_{H^1(\partial\Omega)}+\|v\|_{H^1(\partial\Omega)}\Big\} \\
&\leq C\Big\{\|g\|_{H^1(\partial\Omega)}+\|v\|_{L^2(\partial\Omega)}+\|\nabla v\|_{L^2(\partial\Omega)}\Big\} \\
&\leq C\Big\{\|g\|_{H^1(\partial\Omega)}+\|F\|_{L^q(\Omega)}+\|h\|_{W^{1,q}(\Omega)}\Big\}.
\end{aligned}
\end{equation}
In the last step above, we use $\eqref{f:3.5}$ and the following fact that
\begin{equation}\label{f:3.14}
\begin{aligned}
\|v\|_{L^2(\partial\Omega)}
\leq C\Big\{\|v\|_{L^2(\Omega)}+\|v\|_{L^{q^\prime}(\Omega)}^{\frac{1}{2}}
\|\nabla v\|_{L^q(\Omega)}^{\frac{1}{2}}\Big\}
\leq C\|\nabla v\|_{L^q(\mathbb{R}^d)}
\leq C\Big\{\|F\|_{L^{q}(\Omega)}+\|h\|_{L^{q}(\Omega)}\Big\},
\end{aligned}
\end{equation}
where the first inequality is based on the similar arguments employed by $\eqref{f:3.5}$, and
we use H\"older's inequality and Sobolev's inequality in the second step,
and the estimate $\eqref{f:3.7}$ in the last one.

Thus it follows from the estimates $\eqref{pri:2.5}$ and $\eqref{f:3.8}$ that
\begin{equation}\label{f:3.22}
\|\nabla w\|_{L^2(\Omega\setminus\Sigma_{p_1\varepsilon})}
\leq C\varepsilon^{\frac{1}{2}}\|(\nabla w)^*\|_{L^2(\partial\Omega)}
\leq C\varepsilon^{\frac{1}{2}}
\Big\{\|F\|_{L^q(\Omega)}+\|h\|_{W^{1,q}(\Omega)}+\|g\|_{H^1(\partial\Omega)}\Big\}.
\end{equation}
This together with $\eqref{f:3.9}$ implies
\begin{equation}\label{f:3.18}
\|\nabla u_0\|_{L^2(\Omega\setminus\Sigma_{p_1\varepsilon})}
\leq C\varepsilon^{\frac{1}{2}}
\Big\{\|F\|_{L^q(\Omega)}+\|h\|_{W^{1,q}(\Omega)}+\|g\|_{H^1(\partial\Omega)}\Big\}.
\end{equation}
Meanwhile we can employ the radial maximal function to handle the pressure term $p_0$ in
the estimate $\eqref{pri:3.6}$. It follows from the co-area formula $\eqref{eq:2.3}$
and the estimate $\eqref{pri:2.5}$ that
\begin{equation}\label{f:3.17}
\begin{aligned}
\|p_0\|_{L^2(\Omega\setminus\Sigma_{p_1\varepsilon})}
&\leq \|p_{0,1}\|_{L^2(\Omega\setminus\Sigma_{p_1\varepsilon})}
+\|p_{0,2}\|_{L^2(\Omega\setminus\Sigma_{p_1\varepsilon})}\\
&\leq \Big(\int_{0}^{p_1\varepsilon}\int_{S_t}|p_{0,1}|^2 dS_t dt\Big)^{\frac{1}{2}}
+C\varepsilon^{\frac{1}{2}}\|\mathcal{M}(p_{0,2})\|_{L^2(\partial\Omega)}\\
&\leq C\varepsilon^{\frac{1}{2}}\sup_{0\leq t\leq p_1\varepsilon}\|p_{0,1}\|_{L^2(S_t)}
+ C\varepsilon^{\frac{1}{2}}
\Big\{\|F\|_{L^q(\Omega)}+\|h\|_{W^{1,q}(\Omega)}+\|g\|_{H^1(\partial\Omega)}\Big\},
\end{aligned}
\end{equation}
where we use the fact that $\mathcal{M}(p_{0,2})(Q)\leq (p_{0,2})^*(Q)$ for every $Q\in\partial\Omega$,
and the estimate $\eqref{f:3.8}$ is employed in the last step. The remaining thing is to estimate
$\sup_{0\leq t\leq c_0}\|p_{0,1}\|_{L^2(S_t)}$. By a similar computation as $\eqref{f:3.5}$, we
have
\begin{equation*}
\begin{aligned}
\|p_{0,1}\|_{L^2(S_t)}
&\leq C\Big\{\|p_{0,1}\|_{L^2(\Sigma_t)}
+ \|\nabla p_{0,1}\|_{L^{q}(\Sigma_{t})}^{\frac{1}{2}}
\|p_{0,1}\|_{L^{q^\prime}(\Sigma_t)}^{\frac{1}{2}}\Big\}\\
&\leq C\|\nabla p_{0,1}\|_{L^q(\mathbb{R}^d)}
\leq C\Big\{\|F\|_{L^q(\Omega)}+\|h\|_{W^{1,q}(\Omega)}\Big\}
\end{aligned}
\end{equation*}
for any $t\in[0,p_1\varepsilon]$, where we use H\"older's inequality and Sobolev's inequality
in the second step,
and the estimate $\eqref{f:3.3}$ in the last one. This gives
\begin{equation}\label{f:3.16}
\sup_{0\leq t\leq p_1\varepsilon}\|p_{0,1}\|_{L^2(S_t)}
\leq  C\Big\{\|F\|_{L^q(\Omega)}+\|h\|_{W^{1,q}(\Omega)}\Big\}.
\end{equation}
Plugging $\eqref{f:3.16}$ into $\eqref{f:3.17}$ leads to
\begin{equation*}
\|p_0\|_{L^2(\Omega\setminus\Sigma_{p_1\varepsilon})}
\leq C\varepsilon^{\frac{1}{2}}
\Big\{\|F\|_{L^q(\Omega)}+\|h\|_{W^{1,q}(\Omega)}+\|g\|_{H^1(\partial\Omega)}\Big\},
\end{equation*}
and this coupled with $\eqref{f:3.18}$ proves the desired estimate $\eqref{pri:3.6}$.

We now proceed to prove the estimate $\eqref{pri:3.7}$. It is directly to see that
\begin{equation}\label{f:3.10}
\begin{aligned}
\|\nabla^2u_0\|_{L^2(\Sigma_{p_2\varepsilon})}
&\leq \|\nabla^2 v\|_{L^2(\Sigma_{p_2\varepsilon})} + \|\nabla^2 w\|_{L^2(\Sigma_{p_2\varepsilon})} \\
&\leq C\Big\{\|F\|_{L^2(\Omega)}+\|h\|_{H^1(\Omega)}\Big\}
+ \|\nabla^2 w\|_{L^2(\Sigma_{p_2\varepsilon})},
\end{aligned}
\end{equation}
where the second inequality is due to the estimate $\eqref{pri:3.3}$,
and remaining thing is to handle the last term in the second line of $\eqref{f:3.10}$.
Observing the equation (BVP) in $\eqref{pde:3.4}$, it is not hard to derive
the following interior estimate
\begin{equation}\label{f:3.12}
|\nabla^2 w(x)| \leq \frac{C}{\delta(x)}\Big(\dashint_{B(x,\delta(x)/8)}|\nabla w|^2 dy\Big)^{\frac{1}{2}},
\end{equation}
and this result in fact follows from the Sobolev embedding theorem and the $H^k$-regularity theory
(see \cite[Theorem 1.4]{MGMG}). Then we have
\begin{equation*}
|\nabla^2 w(x)|^2[\delta(x)]^{d+2}\leq C\int_\Omega|\nabla w|^2 dy.
\end{equation*}
Integrating by parts with respect to $x$ in $\Sigma_{c_0}$ (where $c_0$ is the layer constant), we have
\begin{equation*}
c_0^{d+2}\int_{\Sigma_{c_0}}|\nabla^2 w|^2dx\leq \int_{\Sigma_{c_0}}|\nabla^2 w(x)|^2[\delta(x)]^{d+2}dx
\leq C|\Omega|\int_\Omega|\nabla w|^2 dx.
\end{equation*}
Thus we derive the following interior estimate
\begin{equation}\label{f:3.27}
  \|\nabla^2 w\|_{L^2(\Sigma_{c_0})}
  \leq C(\mu,m,d,c_0,\Omega)\|\nabla w\|_{L^2(\Omega)},
\end{equation}
and this implies
\begin{equation}\label{f:3.11}
\int_{\Sigma_{p_2\varepsilon}} |\nabla^2 w|^2 dx
= \int_{\Sigma_{p_2\varepsilon}\setminus\Sigma_{c_0}}|\nabla^2 w|^2 dx
+\int_{\Sigma_{c_0}}|\nabla^2 w|^2 dx
\leq \int_{\Sigma_{p_2\varepsilon}\setminus\Sigma_{c_0}}|\nabla^2 w|^2 dx
+ C\|\nabla w\|_{L^2(\Omega)}^2.
\end{equation}
Clearly, we only need to estimate the first term in the right-hand side of $\eqref{f:3.11}$.
In view of the estimate $\eqref{f:3.12}$, we have
\begin{equation}\label{f:3.13}
\begin{aligned}
\int_{\Sigma_{p_2\varepsilon}\setminus\Sigma_{c_0}}|\nabla^2 w|^2 dx
&\leq C\int_{\Sigma_{p_2\varepsilon}\setminus\Sigma_{c_0}}\dashint_{B(x,\delta(x)/8)}
\frac{|\nabla w(y)|^2}{[\delta(x)]^2} dy dx \\
&\leq C\int_{p_2\varepsilon}^{c_0}\int_{S_t}\frac{|(\nabla w)^*(x^\prime)|^2}{t^2}dS_t(\Lambda_t(x^\prime))dt\\
&\leq C\int_{\partial\Omega}|(\nabla w)^*(x^\prime)|^2dS(x^\prime)\int_{p_2\varepsilon}^\infty\frac{dt}{t^2}\\
&\leq C\varepsilon^{-1}\|(\nabla w)^*\|_{L^2(\partial\Omega)}^2,
\end{aligned}
\end{equation}
where $x^\prime\in\partial\Omega$
such that $\delta(x) = |x-x^\prime|$,
and in the second step, we use the co-area formula $\eqref{eq:2.3}$ and the fact that $|\nabla w(y)|\leq |(\nabla w)^*(x^\prime)|$
for all $y\in B(x,\delta(x)/8)$. Also, we have
\begin{equation}\label{f:3.15}
\begin{aligned}
\big\|\nabla w\big\|_{L^2(\Omega)}
\leq  C\|g-v\|_{H^{\frac{1}{2}}(\partial\Omega)}
&\leq  C\Big\{\|g\|_{H^{1}(\partial\Omega)}+\|v\|_{H^{1}(\partial\Omega)}\Big\} \\
&\leq C\Big\{\|F\|_{L^q(\Omega)}+\|h\|_{W^{1,q}(\Omega)}
+\|g\|_{H^1(\partial\Omega)}\Big\},
\end{aligned}
\end{equation}
where we use the estimate $\eqref{pri:2.6}$ in the first inequality, and the estimates
$\eqref{f:3.5}$ and $\eqref{f:3.14}$ in the last one.

By inserting $\eqref{f:3.13}$ and $\eqref{f:3.15}$ into $\eqref{f:3.11}$ we derive
\begin{equation}\label{f:3.20}
\begin{aligned}
\|\nabla^2 w\|_{L^2(\Sigma_{p_2\varepsilon})}
&\leq C\varepsilon^{-\frac{1}{2}}\big\|(\nabla w)^*\big\|_{L^2(\partial\Omega)}
+ C\big\|\nabla w\big\|_{L^2(\Omega)} \\
&\leq C\varepsilon^{-\frac{1}{2}}\Big\{\|F\|_{L^q(\Omega)}+\|h\|_{W^{1,q}(\Omega)}
+\|g\|_{H^1(\partial\Omega)}\Big\}.
\end{aligned}
\end{equation}
This together with $\eqref{f:3.10}$ gives the estimate $\eqref{pri:3.7}$, and we have completed the proof.
\end{proof}

\begin{thm}\label{thm:3.1}
Suppose that $A$ satisfies $\eqref{a:1}-\eqref{a:3}$.
Given $F\in L^2(\Omega;\mathbb{R}^d)$, $h\in H^1(\Omega)$ and
$g\in H^1(\partial\Omega;\mathbb{R}^d)$ with the compatibility condition
$\eqref{a:4}$,
assume that $(u_\varepsilon,p_\varepsilon)$, $(u_0,p_0)$ in
$H^1(\Omega;\mathbb{R}^d)\times L^2(\Omega)$ are two weak solutions to $(\textbf{DS}_\varepsilon)$
and $(\textbf{DS}_0)$, respectively. We have two results:
\begin{itemize}
  \item [\emph{(i)}] set $\varphi_k^\gamma = S_\varepsilon(\psi_{2\varepsilon}\nabla_ku_0^\gamma)$ in $\eqref{eq:3.4}$,
  then we have
\begin{equation}\label{pri:3.12}
\|w_\varepsilon\|_{H_0^1(\Omega)} + \|z_\varepsilon\|_{L^2(\Omega)/\mathbb{R}}
\leq C\varepsilon^{\frac{1}{2}}\Big\{\|F\|_{L^2(\Omega)}+\|h\|_{H^1(\Omega)}
+\|g\|_{H^1(\partial\Omega)}\Big\};
\end{equation}
  \item [\emph{(ii)}] let $\varphi_k^\gamma = S_\varepsilon^2(\psi_{2\varepsilon}\nabla_ku_0^\gamma)$
  in $\eqref{eq:3.4}$, then it admits
\begin{equation}\label{pri:3.9}
\|w_\varepsilon\|_{H_0^1(\Omega)} + \|z_\varepsilon\|_{L^2(\Omega)/\mathbb{R}}
\leq C\varepsilon^{\frac{1}{2}}\Big\{\|F\|_{L^q(\Omega)}+\|h\|_{W^{1,q}(\Omega)}
+\|g\|_{H^1(\partial\Omega)}\Big\}
\end{equation}
\end{itemize}
with $q=2d/(d+1)$,
where $C$ depends on $\mu,d,$ and $\Omega$.
\end{thm}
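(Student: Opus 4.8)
The strategy is to apply Lemma~\ref{lemma:3.2} with the two specific choices of $\varphi$ and then control the two terms on the right-hand side of the a priori estimate \eqref{pri:3.1}, namely $\varepsilon\|\varpi(\cdot/\varepsilon)\nabla\varphi\|_{L^2(\Omega)}$ and $\|\nabla u_0-\varphi\|_{L^2(\Omega)}$. The point of inserting the cut-off $\psi_{2\varepsilon}$ is that $\varphi$ is then supported in $\Sigma_{2\varepsilon}$, away from $\partial\Omega$, so the smoothing operator $S_\varepsilon$ does not ``see'' the boundary; the point of inserting $S_\varepsilon$ itself is to trade one derivative of $u_0$ against a factor $\varepsilon^{-1}$ via \eqref{pri:2.9}, turning $H^2$ control of $u_0$ into $\varepsilon^{-1/2}$-type bounds, while \eqref{pri:2.8} controls the mollification error. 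Throughout I will split $\Omega=\Sigma_{c_0}\cup(\Omega\setminus\Sigma_{c_0})$ or, more precisely, use the ``layer/co-layer'' decomposition: on the co-layer $\Sigma_{c\varepsilon}$ one has full $H^2$-regularity of $u_0$ (interior estimates), while on the layer $\Omega\setminus\Sigma_{c\varepsilon}$ one only controls $\nabla u_0$ and $p_0$ in $L^2$ with a gain of $\varepsilon^{1/2}$, which is exactly Lemma~\ref{lemma:3.3}.

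\textbf{Part (i): $\varphi_k^\gamma=S_\varepsilon(\psi_{2\varepsilon}\nabla_k u_0^\gamma)$.} For the first term, since $\psi_{2\varepsilon}\nabla u_0$ is supported in $\Sigma_{2\varepsilon}$, the weighted bound \eqref{pri:2.7} (applied on $\mathbb{R}^d$ after zero-extension) gives $\varepsilon\|\varpi(\cdot/\varepsilon)\nabla\varphi\|_{L^2(\Omega)}\le C\varepsilon\|\nabla(\psi_{2\varepsilon}\nabla u_0)\|_{L^2(\Omega)}$, and then $\nabla\psi_{2\varepsilon}$ contributes $C\varepsilon^{-1}$ on the thin strip $\Omega\setminus\Sigma_{2\varepsilon}$ (estimated by \eqref{pri:3.6}, giving $\varepsilon\cdot\varepsilon^{-1}\cdot\varepsilon^{1/2}=\varepsilon^{1/2}$) while $\psi_{2\varepsilon}\nabla^2 u_0$ is controlled on $\Sigma_{2\varepsilon}$ by \eqref{pri:3.7}, again giving $\varepsilon\cdot\varepsilon^{-1/2}=\varepsilon^{1/2}$. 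For the second term, write $\nabla u_0-\varphi=(1-\psi_{2\varepsilon})\nabla u_0+\big(\psi_{2\varepsilon}\nabla u_0-S_\varepsilon(\psi_{2\varepsilon}\nabla u_0)\big)$; the first piece is supported in the layer $\Omega\setminus\Sigma_{4\varepsilon}$ and is $O(\varepsilon^{1/2})$ by \eqref{pri:3.6}, and the second piece is bounded by $C\varepsilon\|\nabla(\psi_{2\varepsilon}\nabla u_0)\|_{L^2}$ via \eqref{pri:2.8}, which is handled exactly as the first term. Summing gives \eqref{pri:3.12}.

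\textbf{Part (ii): $\varphi_k^\gamma=S_\varepsilon^2(\psi_{2\varepsilon}\nabla_k u_0^\gamma)$.} Here the data are only assumed $F\in L^q$, $h\in W^{1,q}$ with $q=2d/(d+1)$, so one does \emph{not} have $u_0\in H^2$ globally and cannot use \eqref{pri:3.7}; instead one exploits the extra smoothing. The term $\varepsilon\|\varpi(\cdot/\varepsilon)\nabla\varphi\|_{L^2}$ is estimated by first pulling out $S_\varepsilon$ using the smoothing bound \eqref{pri:2.9}: $\varepsilon\|S_\varepsilon(\cdots)\|_{L^2}\le\varepsilon\cdot C\varepsilon^{-1/2}\|\cdots\|_{L^q}$, so one needs $\|\nabla S_\varepsilon(\psi_{2\varepsilon}\nabla u_0)\|_{L^q(\mathbb{R}^d)}$, and since $\nabla$ commutes with $S_\varepsilon$ this reduces to $\|S_\varepsilon(\nabla(\psi_{2\varepsilon}\nabla u_0))\|_{L^q}\le\|\nabla(\psi_{2\varepsilon}\nabla u_0)\|_{L^q}$; the $\nabla\psi_{2\varepsilon}$ part gives $\varepsilon^{-1}\|\nabla u_0\|_{L^q(\Omega\setminus\Sigma_{2\varepsilon})}$ and the $\psi_{2\varepsilon}\nabla^2 u_0$ part is controlled by the interior estimate $|\nabla^2 u_0(x)|\le C\delta(x)^{-1}(\dashint_{B(x,\delta(x)/8)}|\nabla u_0|^2)^{1/2}$ combined with the $L^q\to L^{q'}$ Sobolev bound on $v$ and the Agmon--Miranda/nontangential estimates on $w$ as in the proof of Lemma~\ref{lemma:3.3}, the net effect being a factor $\varepsilon^{1/2-1}=\varepsilon^{-1/2}$ that is absorbed by the prefactor $\varepsilon$. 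For $\|\nabla u_0-\varphi\|_{L^2}$ one telescopes $\nabla u_0-S_\varepsilon^2(\psi_{2\varepsilon}\nabla u_0)=(1-\psi_{2\varepsilon})\nabla u_0+(I-S_\varepsilon)(\psi_{2\varepsilon}\nabla u_0)+S_\varepsilon(I-S_\varepsilon)(\psi_{2\varepsilon}\nabla u_0)$, and applies the second bound in \eqref{pri:2.9} (with the gain $\varepsilon^{1/2}$, costing one derivative measured in $L^q$) to the last two pieces, together with \eqref{pri:3.6} for the first.

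\textbf{Main obstacle.} The delicate point in both parts is bookkeeping the competition between the cut-off derivative $|\nabla\psi_{2\varepsilon}|\le C\varepsilon^{-1}$, which lives on the layer of width $\sim\varepsilon$, and the growth of $|\nabla^2 u_0|\sim\delta(x)^{-1}$ near $\partial\Omega$: each separately is borderline, and one must use the precise layer estimate \eqref{pri:3.6} (gain $\varepsilon^{1/2}$) and co-layer estimate \eqref{pri:3.7} (loss $\varepsilon^{-1/2}$), so that all error terms collapse to exactly $O(\varepsilon^{1/2})$ and nothing worse. In part (ii) the additional subtlety is that $u_0$ has no global second-derivative control, so $\|\psi_{2\varepsilon}\nabla^2 u_0\|_{L^q}$ must be estimated directly through the interior Caccioppoli-type bound and the nontangential maximal function estimate \eqref{pri:2.5}, mirroring the derivation of \eqref{pri:3.7} rather than invoking it as a black box; keeping the $L^q$ versus $L^2$ exponents consistent there is where the care is needed.
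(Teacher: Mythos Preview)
Your treatment of part~(i) is correct and matches the paper's argument essentially line for line: apply \eqref{pri:3.1}, then \eqref{pri:2.7} and \eqref{pri:2.8}, and close with the layer/co-layer estimates \eqref{pri:3.6}--\eqref{pri:3.7}.

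There is, however, a genuine gap in your plan for part~(ii). You propose to apply the $L^q\!\to\!L^2$ smoothing bound \eqref{pri:2.9} to the whole term $\varepsilon\|\varpi(\cdot/\varepsilon)\nabla\varphi\|_{L^2}$ \emph{before} splitting, which reduces the problem to bounding $\|\nabla(\psi_{2\varepsilon}\nabla u_0)\|_{L^q}$. But this $L^q$ norm is \emph{not} $O(1)$: for the cut-off piece one only has
\[
\varepsilon^{-1}\|\nabla u_0\|_{L^q(\Omega\setminus\Sigma_{4\varepsilon})}
\;\le\; C\varepsilon^{-1}\cdot \varepsilon^{1/q}
\;=\; C\varepsilon^{\frac{1}{q}-1},
\]
since the layer estimate \eqref{pri:3.6} gives at best $\|\nabla u_0\|_{L^q(\text{layer})}\le C\varepsilon^{1/q}$ (via co-area and H\"older on each slice). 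After multiplying by the prefactor $\varepsilon^{1/2}$ this yields $\varepsilon^{1/q-1/2}=\varepsilon^{1/(2d)}$, not $\varepsilon^{1/2}$. The same defect shows up for the $w$-part of $\psi_{2\varepsilon}\nabla^2 u_0$: the nontangential bound on $(\nabla w)^*$ is only in $L^2(\partial\Omega)$, so $\|\nabla^2 w\|_{L^q(\Sigma_{2\varepsilon})}$ has no better behavior than $\varepsilon^{1/q-1}$.

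The paper's proof avoids this by applying \eqref{pri:2.9} \emph{selectively}. After using \eqref{pri:2.7} to strip off $\varpi$ and one $S_\varepsilon$, one is left with $\varepsilon\|S_\varepsilon(\nabla(\psi_{2\varepsilon}\nabla u_0))\|_{L^2}$. Now split $u_0=v+w$ as in \eqref{pde:3.4}: the term $S_\varepsilon(\nabla\psi_{2\varepsilon}\cdot\nabla u_0)$ and the term $S_\varepsilon(\psi_{2\varepsilon}\nabla^2 w)$ are kept in $L^2$ and bounded directly by $\varepsilon^{-1}\|\nabla u_0\|_{L^2(\Omega\setminus\Sigma_{4\varepsilon})}$ and $\|\nabla^2 w\|_{L^2(\Sigma_{2\varepsilon})}$, each of order $\varepsilon^{-1/2}$ by \eqref{pri:3.6} and \eqref{f:3.20}; the bound \eqref{pri:2.9} is invoked \emph{only} on $S_\varepsilon(\psi_{2\varepsilon}\nabla^2 v)$, where the genuine $W^{2,q}$ estimate \eqref{f:3.3} is available, giving $\varepsilon^{-1/2}\|\nabla^2 v\|_{L^q}\le C\varepsilon^{-1/2}$. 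The same selective splitting is needed for the second term $\|\nabla u_0-\varphi\|_{L^2}$ (see \eqref{f:3.21}--\eqref{f:3.24}). In short, the extra $S_\varepsilon$ in $\varphi=S_\varepsilon^2(\psi_{2\varepsilon}\nabla u_0)$ is there precisely so that, \emph{after} \eqref{pri:2.7} removes one copy together with $\varpi$, one copy of $S_\varepsilon$ remains available to trade against the missing half-derivative of $v$ via \eqref{pri:2.9}; using that copy on the $w$-part or on the boundary-layer part squanders it.
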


\begin{proof}
(i) It follows from the estimate $\eqref{pri:3.1}$ that
\begin{equation*}
\begin{aligned}
\|w_\varepsilon\|_{H_0^1(\Omega)} + \|z_\varepsilon\|_{L^2(\Omega)/\mathbb{R}}
&\leq C\Big\{\varepsilon\big\|\varpi(\cdot/\varepsilon)
\nabla S_\varepsilon(\psi_{2\varepsilon}\nabla u_0)\big\|_{L^2(\Omega)}
+ \big\|\nabla u_0 - S_\varepsilon(\psi_{2\varepsilon}\nabla u_0)\big\|_{L^2(\Omega)}\Big\} \\
&\leq C\varepsilon\big\|\nabla(\psi_{2\varepsilon}\nabla u_0)\big\|_{L^2(\mathbb{R}^d)}
+\big\|(1-\psi_{2\varepsilon})\nabla u_0\big\|_{L^2(\Omega)}
+\big\|\psi_{2\varepsilon}\nabla u_0 - S_\varepsilon(\psi_{2\varepsilon}\nabla u_0)\big\|_{L^2(\Omega)}\\
&\leq  C\varepsilon\big\|\nabla(\psi_{2\varepsilon}\nabla u_0)\big\|_{L^2(\mathbb{R}^d)}
+ \|\nabla u_0\|_{L^2(\Omega\setminus\Sigma_{4\varepsilon})}\\
&\leq C\|\nabla u_0\|_{L^2(\Omega\setminus\Sigma_{4\varepsilon})}
+ C\varepsilon\|\nabla^2 u_0\|_{L^2(\Sigma_{2\varepsilon})}\\
&\leq C\varepsilon^{\frac{1}{2}}\Big\{\|F\|_{L^2(\Omega)}+\|h\|_{H^1(\Omega)}
+\|g\|_{H^1(\partial\Omega)}\Big\},
\end{aligned}
\end{equation*}
where we employ the estimate $\eqref{pri:2.7}$ in the second step, and the estimate $\eqref{pri:2.8}$ in
the third one. In the last inequality above, we use the estimates $\eqref{pri:3.6}$ and $\eqref{pri:3.7}$.

(ii) Also, it follows from the estimate $\eqref{pri:3.1}$ that
\begin{equation}\label{f:3.19}
\|w_\varepsilon\|_{H_0^1(\Omega)} + \|z_\varepsilon\|_{L^2(\Omega)/\mathbb{R}}
\leq C\Big\{\varepsilon\big\|\varpi(\cdot/\varepsilon)
\nabla S_\varepsilon^2(\psi_{2\varepsilon}\nabla u_0)\big\|_{L^2(\Omega)}
+ \big\|\nabla u_0 - S_\varepsilon^2(\psi_{2\varepsilon}\nabla u_0)\big\|_{L^2(\Omega)}\Big\}.
\end{equation}
We now handle the first term in the right-hand side of $\eqref{f:3.19}$ as follows:
\begin{equation*}
\big\|\varpi(\cdot/\varepsilon)
\nabla S_\varepsilon^2(\psi_{2\varepsilon}\nabla u_0)\big\|_{L^2(\Omega)}
\leq C\|\nabla S_\varepsilon(\psi_{2\varepsilon}\nabla u_0)\|_{L^2(\mathbb{R}^d)}
\leq C\varepsilon^{-1}\|\nabla u_0\|_{L^2(\Omega\setminus\Sigma_{4\varepsilon})}
+ C\|S_\varepsilon(\psi_{2\varepsilon}\nabla^2 u_0)\|_{L^2(\Omega)},
\end{equation*}
where we employ the estimate $\eqref{pri:2.7}$ above.
Then we focus on studying the term $\|S_\varepsilon(\psi_{2\varepsilon}\nabla^2 u_0)\|_{L^2(\Omega)}$.
By noting that $u_0 = v + w$ and $v,w$ satisfy (HP) and (BVP) in $\eqref{pde:3.4}$, respectively,
it is controlled by
\begin{equation*}
\begin{aligned}
\|S_\varepsilon(\psi_{2\varepsilon}\nabla^2 v)\|_{L^2(\Omega)}
+ \|S_\varepsilon(\psi_{2\varepsilon}\nabla^2 w)\|_{L^2(\Omega)}
&\leq C\varepsilon^{-\frac{1}{2}}\|\psi_{2\varepsilon}\nabla^2v\|_{L^q(\mathbb{R}^d)}
+C\|\nabla^2 w\|_{L^2(\Sigma_{2\varepsilon})} \\
&\leq C\varepsilon^{-\frac{1}{2}}\Big\{
\|F\|_{L^q(\Omega)}+\|h\|_{W^{1,q}(\Omega)}
+\|g\|_{H^1(\partial\Omega)}\Big\},
\end{aligned}
\end{equation*}
where we use the estimates $\eqref{f:3.3}$ and $\eqref{f:3.20}$ in the second inequality.
Thus we have
\begin{equation}\label{f:3.23}
\begin{aligned}
\big\|\varpi(\cdot/\varepsilon)
\nabla S_\varepsilon^2(\psi_{2\varepsilon}\nabla u_0)\big\|_{L^2(\Omega)}
&\leq C\varepsilon^{-1}\|\nabla u_0\|_{L^2(\Omega\setminus\Sigma_{4\varepsilon})}
+ C\varepsilon^{-\frac{1}{2}}\Big\{
\|F\|_{L^q(\Omega)}+\|h\|_{W^{1,q}(\Omega)}
+\|g\|_{H^1(\partial\Omega)}\Big\}\\
&\leq C\varepsilon^{-\frac{1}{2}}\Big\{
\|F\|_{L^q(\Omega)}+\|h\|_{W^{1,q}(\Omega)}
+\|g\|_{H^1(\partial\Omega)}\Big\},
\end{aligned}
\end{equation}
where we use the estimate $\eqref{pri:3.6}$ in the last step.

We proceed to address the second term in the right-hand side of $\eqref{f:3.19}$. It is not hard to derive
\begin{equation}\label{f:3.21}
\begin{aligned}
\big\|\nabla u_0 - S_\varepsilon^2(\psi_{2\varepsilon}\nabla u_0)\big\|_{L^2(\Omega)}
&\leq \big\|(1-\psi_{2\varepsilon})\nabla u_0\big\|_{L^2(\Omega)}
 + \big\|\psi_{2\varepsilon}\nabla u_0 - S_\varepsilon(\psi_{2\varepsilon}\nabla u_0)\big\|_{L^2(\Omega)}\\
& + \big\|S_\varepsilon\big(\psi_{2\varepsilon}\nabla u_0
 - S_\varepsilon(\psi_{2\varepsilon}\nabla u_0)\big)\big\|_{L^2(\Omega)} \\
&\leq \|\nabla u_0\|_{L^2(\Omega\setminus\Sigma_{4\varepsilon})}
+ \big\|\psi_{2\varepsilon}\nabla u_0 - S_\varepsilon(\psi_{2\varepsilon}\nabla u_0)\big\|_{L^2(\Omega)},
 \end{aligned}
\end{equation}
where we use the estimate $\eqref{pri:2.7}$ in the last step.
Then we turn to estimate the second term in the third line of $\eqref{f:3.21}$.
By noting that $u_0 = v+ w$ and $\psi_{2\varepsilon}\nabla u_0 - S_\varepsilon(\psi_{2\varepsilon}\nabla u_0)$
is supported in $\Sigma_\varepsilon$, it can be separated into
\begin{equation*}
\begin{aligned}
& \|\nabla v-S_\varepsilon(\nabla v)\|_{L^2(\Sigma_\varepsilon)}
+\|(\psi_{2\varepsilon}-1)\nabla v\|_{L^2(\Sigma_\varepsilon)}
+\|S_\varepsilon\big((\psi_{2\varepsilon}-1)\nabla v\big)\|_{L^2(\Sigma_\varepsilon)}
+\|\psi_{2\varepsilon}\nabla w-S_\varepsilon(\psi_{2\varepsilon}\nabla w)\|_{L^2(\Sigma_\varepsilon)} \\
& \leq \|\nabla v-S_\varepsilon(\nabla v)\|_{L^2(\Sigma_\varepsilon)}
+ 2\|(\psi_{2\varepsilon}-1)\nabla v\|_{L^2(\Sigma_\varepsilon)}
+ \|\psi_{2\varepsilon}\nabla w-S_\varepsilon(\psi_{2\varepsilon}\nabla w)\|_{L^2(\Sigma_\varepsilon)}
\end{aligned}
\end{equation*}
where we use the fact that
$\|S_{\varepsilon}((\psi_{2\varepsilon}-1)\nabla v)\|_{L^2(\Sigma_\varepsilon)}
\leq \|(\psi_{2\varepsilon}-1)\nabla v\|_{L^2(\Omega)}$.
Applying the estimates $\eqref{pri:2.9}$ and $\eqref{pri:2.8}$ to the second line above, we then have
\begin{equation}\label{f:3.24}
\begin{aligned}
\big\|\nabla u_0 - S_\varepsilon^2(\psi_{2\varepsilon}\nabla u_0)\big\|_{L^2(\Omega)}
&\leq C\varepsilon^{\frac{1}{2}}\|\nabla^2 v\|_{L^q(\mathbb{R}^d)}
+ C\|\nabla v\|_{L^2(\Omega\setminus\Sigma_{4\varepsilon})}
+ C\varepsilon\|\nabla(\nabla w\psi_{2\varepsilon})\|_{L^2(\mathbb{R}^d)}\\
&\leq C\Big\{\varepsilon^{\frac{1}{2}}\|\nabla^2 v\|_{L^q(\mathbb{R}^d)}
+\varepsilon\|\nabla^2 w\|_{L^2(\Sigma_{2\varepsilon})}
+\|\nabla v\|_{L^2(\Omega\setminus\Sigma_{2\varepsilon})}
+\|\nabla w\|_{L^2(\Omega\setminus\Sigma_{2\varepsilon})}\Big\}\\
&\leq C\varepsilon^{\frac{1}{2}}\Big\{\|F\|_{L^q(\Omega)}+\|h\|_{W^{1,q}(\Omega)}
+\|g\|_{H^1(\partial\Omega)}\Big\},
\end{aligned}
\end{equation}
where we use the estimates $\eqref{f:3.3}$, $\eqref{f:3.20}$, $\eqref{f:3.9}$ and $\eqref{f:3.22}$ in the
last step. Collecting the estimates $\eqref{f:3.19}$, $\eqref{f:3.23}$ and $\eqref{f:3.24}$ gives the desired
result $\eqref{pri:3.9}$, and the proof is all completed.
\end{proof}

\begin{cor}\label{cor:3.1}
Assume the same conditions as that in Theorem $\ref{thm:3.1}$.
Set $w_\varepsilon,z_\varepsilon$ in $\eqref{eq:3.4}$ by choosing
$\varphi_k^\gamma = S_\varepsilon(\psi_{2\varepsilon}\nabla_ku_0^\gamma)$, and we then have
\begin{equation}\label{pri:3.10}
\|u_\varepsilon-u_0\|_{L^{2^*}(\Omega)}+\|p_\varepsilon-p_0-\pi_k^\gamma(\cdot/\varepsilon)
S_\varepsilon(\psi_{2\varepsilon}\nabla_ku_0^\gamma)\|_{L^2(\Omega)/\mathbb{R}}
\leq C\varepsilon^{\frac{1}{2}}\Big\{\|F\|_{L^2(\Omega)}+\|h\|_{H^1(\Omega)}
+\|g\|_{H^1(\partial\Omega)}\Big\},
\end{equation}
where $2^* = 2d/(d-2)$ when $d\geq 3$ and $2^*\in[1,\infty)$ when $d=2$.
Moreover, if we replace $S_\varepsilon(\psi_{2\varepsilon}\nabla_ku_0^\gamma)$ above
by $S_\varepsilon^2(\psi_{2\varepsilon}\nabla_ku_0^\gamma)$, then we obtain
\begin{equation}\label{pri:3.11}
\|p_\varepsilon-p_0-\pi_k^\gamma(\cdot/\varepsilon)
S_\varepsilon^2(\psi_{2\varepsilon}\nabla_ku_0^\gamma)\|_{L^2(\Omega)/\mathbb{R}}
\leq C\varepsilon^{\frac{1}{2}}\Big\{\|F\|_{L^q(\Omega)}+\|h\|_{W^{1,q}(\Omega)}
+\|g\|_{H^1(\partial\Omega)}\Big\}
\end{equation}
with $q=2d/(d+1)$, where $C$ depends on $\mu,d$ and $\Omega$.
\end{cor}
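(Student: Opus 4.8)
The plan is to deduce both \eqref{pri:3.10} and \eqref{pri:3.11} from Theorem~\ref{thm:3.1} by estimating, one by one, the corrector terms that are subtracted off in \eqref{eq:3.4}. Take first $\varphi_k^\gamma=S_\varepsilon(\psi_{2\varepsilon}\nabla_ku_0^\gamma)$, as in part (i) of Theorem~\ref{thm:3.1}. Then componentwise $u_\varepsilon^\beta-u_0^\beta=w_\varepsilon^\beta+\varepsilon\chi_k^{\beta\gamma}(\cdot/\varepsilon)\varphi_k^\gamma$ and $p_\varepsilon-p_0-\pi_k^\gamma(\cdot/\varepsilon)\varphi_k^\gamma=z_\varepsilon+\varepsilon q_{ik}^\gamma(\cdot/\varepsilon)\nabla_i\varphi_k^\gamma$, so \eqref{pri:3.10} will follow once we control the four quantities $\|w_\varepsilon\|_{L^{2^*}(\Omega)}$, $\|\varepsilon\chi_k^{\beta\gamma}(\cdot/\varepsilon)\varphi_k^\gamma\|_{L^{2^*}(\Omega)}$, $\|z_\varepsilon\|_{L^2(\Omega)/\mathbb{R}}$, and $\varepsilon\|q_{ik}^\gamma(\cdot/\varepsilon)\nabla_i\varphi_k^\gamma\|_{L^2(\Omega)}$. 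The first and third are immediate from \eqref{pri:3.12}, the bound for $\|w_\varepsilon\|_{L^{2^*}(\Omega)}$ using the Sobolev embedding $H^1_0(\Omega)\hookrightarrow L^{2^*}(\Omega)$.

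For the two corrector terms, I would first record that, since by \eqref{def:2.5} the function $\psi_{2\varepsilon}$ is supported in $\Sigma_{2\varepsilon}$ with $|\nabla\psi_{2\varepsilon}|\leq C/\varepsilon$, the function $\psi_{2\varepsilon}\nabla u_0$ (extended by zero) belongs to $H^1(\mathbb{R}^d)$ and
\[
\|\psi_{2\varepsilon}\nabla u_0\|_{H^1(\mathbb{R}^d)}\leq C\|\nabla u_0\|_{L^2(\Omega)}+C\varepsilon^{-1}\|\nabla u_0\|_{L^2(\Omega\setminus\Sigma_{4\varepsilon})}+C\|\nabla^2u_0\|_{L^2(\Sigma_{2\varepsilon})}\leq C\varepsilon^{-1/2}\big\{\|F\|_{L^2(\Omega)}+\|h\|_{H^1(\Omega)}+\|g\|_{H^1(\partial\Omega)}\big\},
\]
where I use \eqref{pri:2.6} (applied to the homogenized problem) for the first term, and the ``layer'' and ``co-layer'' estimates \eqref{pri:3.6} (with $p_1=4$) and \eqref{pri:3.7} (with $p_2=2$) for the other two. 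Since $q_{ik}^\gamma\in H^1_{per}(Y)$ and $\nabla_i$ commutes with $S_\varepsilon$, the smoothing estimate \eqref{pri:2.7} with $p=2$ gives $\varepsilon\|q_{ik}^\gamma(\cdot/\varepsilon)\nabla_i\varphi_k^\gamma\|_{L^2(\Omega)}=\varepsilon\|q_{ik}^\gamma(\cdot/\varepsilon)S_\varepsilon(\nabla_i(\psi_{2\varepsilon}\nabla_ku_0^\gamma))\|_{L^2(\Omega)}\leq C\varepsilon\|\nabla(\psi_{2\varepsilon}\nabla u_0)\|_{L^2(\mathbb{R}^d)}=O(\varepsilon^{1/2})$ with the desired right-hand side. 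For the $\chi$-corrector, note that $\chi_k^{\beta\gamma}\in H^1_{per}(Y)\hookrightarrow L^{2^*}_{per}(Y)$ when $d\geq3$ (and $\chi_k^{\beta\gamma}\in L^p_{per}(Y)$ for every $p<\infty$ when $d=2$); then \eqref{pri:2.7} with $p=2^*$ followed by the Sobolev inequality $H^1(\mathbb{R}^d)\hookrightarrow L^{2^*}(\mathbb{R}^d)$ yields $\|\varepsilon\chi_k^{\beta\gamma}(\cdot/\varepsilon)\varphi_k^\gamma\|_{L^{2^*}(\Omega)}\leq C\varepsilon\|\chi\|_{L^{2^*}(Y)}\|\psi_{2\varepsilon}\nabla u_0\|_{H^1(\mathbb{R}^d)}=O(\varepsilon^{1/2})$. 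Collecting the four bounds proves \eqref{pri:3.10}. When $d=2$ the same argument works for every $p\in[1,\infty)$ (the range $1\leq p<2$ reducing to $p=2$ on the bounded domain $\Omega$), the constant then also depending on $p$.

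For \eqref{pri:3.11} one repeats the decomposition with $\varphi_k^\gamma=S_\varepsilon^2(\psi_{2\varepsilon}\nabla_ku_0^\gamma)$, so that $p_\varepsilon-p_0-\pi_k^\gamma(\cdot/\varepsilon)S_\varepsilon^2(\psi_{2\varepsilon}\nabla_ku_0^\gamma)=z_\varepsilon+\varepsilon q_{ik}^\gamma(\cdot/\varepsilon)\nabla_iS_\varepsilon^2(\psi_{2\varepsilon}\nabla_ku_0^\gamma)$. Here $\|z_\varepsilon\|_{L^2(\Omega)/\mathbb{R}}\leq C\varepsilon^{1/2}\big\{\|F\|_{L^q(\Omega)}+\|h\|_{W^{1,q}(\Omega)}+\|g\|_{H^1(\partial\Omega)}\big\}$ by \eqref{pri:3.9}, while the second summand is exactly $\varepsilon$ times a quantity of the form $\|\varpi(\cdot/\varepsilon)\nabla S_\varepsilon^2(\psi_{2\varepsilon}\nabla u_0)\|_{L^2(\Omega)}$, which was already bounded by $C\varepsilon^{-1/2}\big\{\|F\|_{L^q(\Omega)}+\|h\|_{W^{1,q}(\Omega)}+\|g\|_{H^1(\partial\Omega)}\big\}$ in \eqref{f:3.23} during the proof of Theorem~\ref{thm:3.1}. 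Adding the two estimates gives \eqref{pri:3.11}.

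The only genuinely delicate point is the bound for $\|\varepsilon\chi(\cdot/\varepsilon)S_\varepsilon(\psi_{2\varepsilon}\nabla u_0)\|_{L^{2^*}(\Omega)}$. One cannot invoke an $L^\infty$ bound for $\chi$ (none is available under the standing hypotheses when $d\geq3$), and mere interpolation between the $O(\varepsilon)$ bound of this term in $L^2(\Omega)$ and its $O(1)$ bound in $H^1_0(\Omega)$ only reaches the exponent $2d/(d-1)$, short of the critical exponent $2^*=2d/(d-2)$. The remedy is to estimate the $L^{2^*}$ norm of $\psi_{2\varepsilon}\nabla u_0$ by its $H^1(\mathbb{R}^d)$ norm via Sobolev's inequality, and to observe that, thanks to \eqref{pri:3.6}--\eqref{pri:3.7}, this $H^1$ norm is only of size $\varepsilon^{-1/2}$, so that the prefactor $\varepsilon$ produces the claimed $O(\varepsilon^{1/2})$; the smoothing operator $S_\varepsilon$ is harmlessly absorbed by the periodic factor through \eqref{pri:2.7}.
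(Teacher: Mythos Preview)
Your proof is correct and follows essentially the same route as the paper's: decompose $u_\varepsilon-u_0$ and $p_\varepsilon-p_0-\pi(\cdot/\varepsilon)\varphi$ via \eqref{eq:3.4}, bound $w_\varepsilon$ and $z_\varepsilon$ by Theorem~\ref{thm:3.1}, and handle the corrector terms by combining \eqref{pri:2.7} (at exponent $2^*$ for the $\chi$-term, using $\chi\in H^1_{per}(Y)\hookrightarrow L^{2^*}_{per}(Y)$) with the Sobolev inequality and the layer/co-layer estimates \eqref{pri:3.6}--\eqref{pri:3.7}; for \eqref{pri:3.11} both you and the paper simply reuse the bound \eqref{f:3.23}. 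Your closing paragraph correctly identifies the only subtle step.
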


\begin{proof}
We first verify the estimate $\eqref{pri:3.10}$.
Let $2^* =2d/(d-2)$ in the case of $d\geq 3$ and $2^*\in[1,\infty)$ when $d=2$.
Due to the Sobolev imbedding theorem we have
\begin{equation}\label{f:3.28}
\begin{aligned}
\|u_\varepsilon - u_0\|_{L^{2^*}(\Omega)}
&\leq \|w_\varepsilon\|_{L^{2^*}(\Omega)} +
\varepsilon\|\chi_k(\cdot/\varepsilon)S_\varepsilon(\psi_{2\varepsilon}\nabla_k u_0)\|_{L^{2^*}(\Omega)} \\
&\leq C\|w_\varepsilon\|_{H^1_0(\Omega)}
+\varepsilon\|\nabla(\psi_{2\varepsilon}\nabla u_0)\|_{L^{2}(\mathbb{R}^d)}
\leq C\varepsilon^{\frac{1}{2}}\Big\{\|F\|_{L^2(\Omega)}+\|h\|_{H^1(\Omega)}
+\|g\|_{H^1(\partial\Omega)}\Big\},
\end{aligned}
\end{equation}
where we still employ the estimate $\eqref{pri:2.7}$ in the second inequality,
and the estimates $\eqref{pri:3.12}$, $\eqref{pri:3.6}$ and $\eqref{pri:3.7}$ in the last one.
Then we handle the pressure term. It follows from $\eqref{pri:3.12}$ that
\begin{equation}\label{f:3.25}
\|p_\varepsilon-p_0-\pi_k^\gamma(\cdot/\varepsilon)
S_\varepsilon(\psi_{2\varepsilon}\nabla_ku_0^\gamma)\|_{L^2(\Omega)/\mathbb{R}}
\leq \|z_\varepsilon\|_{L^2(\Omega)/\mathbb{R}}
+ \varepsilon\|q_{ik}^\gamma(\cdot/\varepsilon)\nabla_i S_\varepsilon(\psi_{2\varepsilon}
\nabla_k u_0^\gamma)\|_{L^2(\Omega)/\mathbb{R}},
\end{equation}
and it is clear to see that we only need to handle the last term in
the right-hand side of $\eqref{f:3.25}$. We have
\begin{equation}\label{f:3.26}
\begin{aligned}
\|q_{ik}^\gamma(\cdot/\varepsilon)\nabla_i S_\varepsilon(\psi_{2\varepsilon}
\nabla_k u_0^\gamma)\|_{L^2(\Omega)/\mathbb{R}}
& \leq \|q_{ik}^\gamma(\cdot/\varepsilon)\nabla_i S_\varepsilon(\psi_{2\varepsilon}
\nabla_k u_0^\gamma)\|_{L^2(\Omega)} \\
&\leq C\|\nabla(\psi_{2\varepsilon}\nabla u_0)\|_{L^2(\mathbb{R}^d)}
\leq C\Big\{\varepsilon^{-1}\|\nabla u_0\|_{L^2(\Omega\setminus\Sigma_{4\varepsilon})}
+\|\nabla^2u_0\|_{L^2(\Sigma_{2\varepsilon})}\Big\}\\
&\leq C\varepsilon^{-\frac{1}{2}}\Big\{
\|F\|_{L^2(\Omega)}+\|h\|_{H^1(\Omega)}
+\|g\|_{H^1(\partial\Omega)}\Big\},
\end{aligned}
\end{equation}
where we use the estimate $\eqref{pri:2.7}$ in the second inequality,
and the estimates $\eqref{pri:3.6}$ and $\eqref{pri:3.7}$ in the last one. Then
collecting the estimates $\eqref{f:3.25}$, $\eqref{f:3.26}$, $\eqref{f:3.28}$ and $\eqref{pri:3.12}$ leads to
the desired estimate $\eqref{pri:3.10}$.

To obtain the estimate $\eqref{pri:3.11}$, we only need to compute the term
$\|q_{ik}^\gamma(\cdot/\varepsilon)\nabla_i S_\varepsilon^2(\psi_{2\varepsilon}
\nabla_k u_0^\gamma)\|_{L^2(\Omega)}$ according to the procedure above.
In fact the computation is as the same as we did in $\eqref{f:3.23}$.
So we provide the result without details, i.e.,
\begin{equation*}
\big\|q_{ik}^\gamma(\cdot/\varepsilon)\nabla_i S_\varepsilon^2(\psi_{2\varepsilon}
\nabla_k u_0^\gamma)\big\|_{L^2(\Omega)}\leq
C\varepsilon^{-\frac{1}{2}}\Big\{
\|F\|_{L^q(\Omega)}+\|h\|_{W^{1,q}(\Omega)}
+\|g\|_{H^1(\partial\Omega)}\Big\}
\end{equation*}
with $q=2d/(d+1)$. This together with $\eqref{pri:3.9}$ implies the estimate $\eqref{pri:3.11}$ and
we are done.
\end{proof}

\section{$O(\varepsilon\ln(r_0/\varepsilon))$ convergence rate in $L^2(\Omega)$}

In the section, the main idea is the so-called duality method. So we need to consider the adjoint
problems: for any $\Phi\in L^2(\Omega;\mathbb{R}^d)$,
there exist $(\phi_\varepsilon,\theta_\varepsilon),(\phi_0,\theta_0)
\in H_0^1(\Omega;\mathbb{R}^d)\times L^2(\Omega)/\mathbb{R}$ respectively solving
\begin{equation}\label{pde:4.1}
(\mathbf{DS_\varepsilon})^*\left\{
\begin{aligned}
\mathcal{L}_\varepsilon^*(\phi_\varepsilon) + \nabla \theta_\varepsilon &= \Phi &\quad &\text{in}~~\Omega, \\
 \text{div} (\phi_\varepsilon) &= 0 &\quad&\text{in} ~~\Omega,\\
 \phi_\varepsilon &= 0 &\quad&\text{on} ~\partial\Omega,
\end{aligned}\right.
\qquad\text{and}\qquad
(\mathbf{DS_0})^*\left\{
\begin{aligned}
\mathcal{L}_0^*(\phi_0) + \nabla \theta_0 &= \Phi &\quad &\text{in}~~\Omega, \\
 \text{div} (\phi_0) &= 0 &\quad&\text{in} ~~\Omega,\\
 \phi_0 &= 0 &\quad&\text{on} ~\partial\Omega,
\end{aligned}\right.
\end{equation}
where $\mathcal{L}_\varepsilon^*, \mathcal{L}_0^*$ are the adjoint operators associated with
$\mathcal{L}_\varepsilon$ and $\mathcal{L}_0$, respectively, and given by
\begin{equation*}
\mathcal{L}_\varepsilon^* = -\text{div}(A^*(\cdot/\varepsilon)\nabla)
=-\frac{\partial}{\partial x_i}\Big[a_{ji}^{\beta\alpha}
\left(\frac{x}{\varepsilon}\right)\frac{\partial}{\partial x_j}\Big]
\qquad\text{and}\qquad
\mathcal{L}_0^* = -\text{div}(\widehat{A}^*\nabla)
=-\frac{\partial}{\partial x_i}\Big[\hat{a}_{ji}^{\beta\alpha}\frac{\partial}{\partial x_j}\Big].
\end{equation*}

\begin{lemma}[Duality lemma I]\label{lemma:4.1}
Let $(w_\varepsilon,z_\varepsilon)$ be given in $\eqref{eq:3.4}$ by choosing
$\varphi_k^\gamma= S_\varepsilon(\psi_{4\varepsilon}\nabla_k u_0^\gamma)$,
where the weak solutions $(u_\varepsilon,p_\varepsilon)$ and $(u_0,p_0)$ in
$H^1(\Omega;\mathbb{R}^d)\times L^2(\Omega)/\mathbb{R}$ satisfy
$(\mathbf{DS_\varepsilon})$ and $(\mathbf{DS_0})$, respectively.
For any $\Phi\in L^2(\Omega;\mathbb{R}^d)$, we assume that $(\phi_\varepsilon,\theta_\varepsilon)$ and
$(\phi_0,\theta_0)$ in $H_0^1(\Omega;\mathbb{R}^d)\times L^2(\Omega)$ are the solutions
to the related adjoint problems $(\mathbf{DS_\varepsilon})^*$ and $(\mathbf{DS_0})^*$,
respectively. Then we have
\begin{equation}\label{eq:4.1}
\int_\Omega w_\varepsilon\Phi dx
= -\int_\Omega \tilde{f}\cdot\nabla\phi_\varepsilon dx
-\int_\Omega(\theta_\varepsilon-\sigma)\eta dx
\end{equation}
for every $\sigma\in\mathbb{R}$, where
$\tilde{f}$ and $\eta$ are given in Lemma $\ref{lemma:3.2}$. Moreover, if we assume
\begin{equation}\label{eq:4.2}
\breve{w}_\varepsilon = \phi_\varepsilon - \phi_0
- \varepsilon\chi_k^*(\cdot/\varepsilon)S_\varepsilon(\psi_{10\varepsilon}\nabla_k\phi_0)
\qquad
\breve{z}_\varepsilon = \theta_\varepsilon - \theta_0
- \pi_k^*(\cdot/\varepsilon)S_\varepsilon(\psi_{10\varepsilon}\nabla_k\phi_0)
\end{equation}
where $(\chi_k^*,\pi_k^*)$ with $k=1,\cdots,d$ is the corrector associated with the adjoint problem
$(\mathbf{DS_\varepsilon})^*$. Then we obtain the estimate
\begin{equation}\label{pri:4.1}
\begin{aligned}
\bigg|\int_\Omega w_\varepsilon\Phi dx\bigg|
&\leq C\Big\{\big\|\nabla u_0\big\|_{L^2(\Omega\setminus\Sigma_{8\varepsilon};\delta)}
+\varepsilon\big\|\nabla^2u_0\big\|_{L^2(\Sigma_{3\varepsilon};\delta)}\Big\}
\cdot\Big\{\big\|\nabla\phi_0\big\|_{L^2(\Sigma_{3\varepsilon};\delta^{-1})}
+\big\|\theta_0\big\|_{L^2(\Sigma_{3\varepsilon};\delta^{-1})}\Big\}\\
& \qquad + C\Big\{\big\|\nabla u_0\big\|_{L^2(\Omega\setminus\Sigma_{8\varepsilon})}
+\varepsilon\|\nabla^2 u_0\|_{L^2(\Sigma_{4\varepsilon})}\Big\}\\
&\qquad \qquad\cdot\Big\{\|\nabla\breve{w}_\varepsilon\|_{L^2(\Omega)}
+\|\breve{z}_\varepsilon\|_{L^2(\Omega)/\mathbb{R}}
+\|\nabla\phi_0\|_{L^2(\Omega\setminus\Sigma_{20\varepsilon})}
+\|\theta_0\|_{L^2(\Omega\setminus\Sigma_{9\varepsilon})}
+ \varepsilon\|\nabla^2\phi_0\|_{L^2(\Sigma_{10\varepsilon})}\Big\}.
\end{aligned}
\end{equation}
\end{lemma}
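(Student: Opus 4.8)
The plan is to pass between the two systems by a test-function argument to obtain the identity $\eqref{eq:4.1}$, and then to estimate the two integrals on its right-hand side by inserting the two-scale ans\"atze $\breve w_\varepsilon,\breve z_\varepsilon$ for the adjoint pair and splitting $\tilde f$ and $\eta$ into an $O(\varepsilon)$ ``interior'' part and a ``boundary-layer'' part. First I would establish $\eqref{eq:4.1}$: writing the weak formulation of $(\mathbf{DS_\varepsilon})^*$ tested against $w_\varepsilon\in H^1_0(\Omega;\mathbb{R}^d)$, and using $\text{div}(\phi_\varepsilon)=0$ together with the fact that $A^*$ is the transpose of $A$, one gets $B_\varepsilon[w_\varepsilon,\phi_\varepsilon]-\int_\Omega\theta_\varepsilon\,\text{div}(w_\varepsilon)\,dx=\int_\Omega w_\varepsilon\cdot\Phi\,dx$. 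On the other hand, Lemma $\ref{lemma:3.2}$ applied with $\varphi_k^\gamma=S_\varepsilon(\psi_{4\varepsilon}\nabla_ku_0^\gamma)$ gives the system $\eqref{pde:3.2}$ for $(w_\varepsilon,z_\varepsilon)$; testing its weak form against $\phi_\varepsilon\in H^1_0$ and using $\text{div}(\phi_\varepsilon)=0$ yields $B_\varepsilon[w_\varepsilon,\phi_\varepsilon]=-\int_\Omega\tilde f\cdot\nabla\phi_\varepsilon\,dx$. Subtracting, and recalling $\text{div}(w_\varepsilon)=\eta$ with $\int_\Omega\eta\,dx=0$ from $\eqref{eq:3.3}$ (so that the constant $\sigma$ may be inserted freely), produces $\eqref{eq:4.1}$.

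Next I would record the supports and the basic bounds. Since $S_\varepsilon$ smooths at scale $\varepsilon/2$ and $\psi_{4\varepsilon}$ vanishes outside $\Sigma_{4\varepsilon}$, the function $\eta$ and the $\varphi$-dependent terms of $\tilde f$ are supported in $\Sigma_{3\varepsilon}$; likewise $S_\varepsilon(\psi_{10\varepsilon}\nabla\phi_0)$ and its derivatives are supported in $\Sigma_{9\varepsilon}$; and the pieces of $\tilde f$ (and of $\eta$) carrying a factor $(1-\psi_{4\varepsilon})$ or $\nabla\psi_{4\varepsilon}$ are supported in $\Omega\setminus\Sigma_{8\varepsilon}$, on which $\delta$ is comparable to $\varepsilon$. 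Accordingly write $\tilde f=\tilde f^{\mathrm{int}}+\tilde f^{\mathrm{bd}}$. Using the weighted smoothing bounds $\eqref{pri:2.10}$ and $\eqref{pri:2.12}$ one obtains $\|\tilde f^{\mathrm{int}}\|_{L^2(\Sigma_{3\varepsilon};\delta)}+\|\eta\|_{L^2(\Sigma_{3\varepsilon};\delta)}\le C\{\|\nabla u_0\|_{L^2(\Omega\setminus\Sigma_{8\varepsilon};\delta)}+\varepsilon\|\nabla^2u_0\|_{L^2(\Sigma_{3\varepsilon};\delta)}\}$, while $\eqref{pri:2.7}$ and $\eqref{pri:2.8}$ give the unweighted bound $\|\tilde f\|_{L^2(\Omega)}+\|\eta\|_{L^2(\Omega)}\le C\{\|\nabla u_0\|_{L^2(\Omega\setminus\Sigma_{8\varepsilon})}+\varepsilon\|\nabla^2u_0\|_{L^2(\Sigma_{4\varepsilon})}\}$.

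Then I would estimate the two integrals in $\eqref{eq:4.1}$ by inserting $\nabla\phi_\varepsilon=\nabla\breve w_\varepsilon+\nabla\phi_0+\nabla[\varepsilon\chi_k^*(\cdot/\varepsilon)S_\varepsilon(\psi_{10\varepsilon}\nabla_k\phi_0)]$ and $\theta_\varepsilon-\sigma=(\theta_0-\sigma)+\pi_k^*(\cdot/\varepsilon)S_\varepsilon(\psi_{10\varepsilon}\nabla_k\phi_0)+\breve z_\varepsilon$. For the resulting integrals in which both factors live where $\delta\ge c\varepsilon$ --- namely $\tilde f^{\mathrm{int}}$ or $\eta$ paired with $\nabla\phi_0$, with $(\nabla\chi_k^*)(\cdot/\varepsilon)S_\varepsilon(\psi_{10\varepsilon}\nabla_k\phi_0)$, or with $\pi_k^*(\cdot/\varepsilon)S_\varepsilon(\psi_{10\varepsilon}\nabla_k\phi_0)$ --- I apply the weighted H\"older inequality $\eqref{pri:2.14}$ together with the weighted smoothing estimate $\eqref{pri:2.11}$, which collapses them into the first product on the right of $\eqref{pri:4.1}$. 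For the integrals against $\nabla\breve w_\varepsilon$ and against $\breve z_\varepsilon$ (using $\int_\Omega\eta\,dx=0$ to drop the additive constant), for $\tilde f^{\mathrm{bd}}$ (and the $\nabla\psi_{4\varepsilon}$-part of $\eta$) paired with $\nabla\phi_0$ or with $\theta_0$, and for the $\nabla\psi_{10\varepsilon}$-piece of $\nabla[\varepsilon\chi_k^*(\cdot/\varepsilon)S_\varepsilon(\psi_{10\varepsilon}\nabla_k\phi_0)]$ paired with $\tilde f^{\mathrm{int}}$, I use plain Cauchy--Schwarz with $\eqref{pri:2.7}$ and $\eqref{pri:2.8}$; here it is essential that $\tilde f^{\mathrm{bd}}$ and the $\nabla\psi_{4\varepsilon}$-part of $\eta$ (supported in $\Omega\setminus\Sigma_{8\varepsilon}$) have support disjoint from $\nabla[\varepsilon\chi_k^*(\cdot/\varepsilon)S_\varepsilon(\psi_{10\varepsilon}\nabla_k\phi_0)]$ and from $\pi_k^*(\cdot/\varepsilon)S_\varepsilon(\psi_{10\varepsilon}\nabla_k\phi_0)$ (supported in $\Sigma_{9\varepsilon}$), so several cross terms vanish outright; what survives is the second product on the right of $\eqref{pri:4.1}$. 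The few remaining $O(\varepsilon^{3/2})$ leftovers --- those carrying $\varepsilon\|\nabla^2\phi_0\|_{L^2(\Sigma_{10\varepsilon})}$ or $\varepsilon^{1/2}\|\nabla^2u_0\|_{L^2(\Sigma_{3\varepsilon};\delta)}$ with an extra weight --- are absorbed by the interior second-derivative estimate $|\nabla^2\phi_0(x)|\le C\delta(x)^{-1}(\dashint_{B(x,\delta(x)/8)}|\nabla\phi_0|^2\,dy)^{1/2}$, which yields $\varepsilon^{1/2}\|\nabla^2\phi_0\|_{L^2(\Sigma_{10\varepsilon})}\le C\|\nabla\phi_0\|_{L^2(\Sigma_{3\varepsilon};\delta^{-1})}$, and the analogous bound for $u_0$.

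The hard part is not any single inequality but the combinatorial bookkeeping in the last step: one must identify, for each of the roughly a dozen product terms, which boundary layer it lives in, and then choose the weighted H\"older inequality $\eqref{pri:2.14}$ (matched with $\eqref{pri:2.10}$--$\eqref{pri:2.12}$) precisely for the interior$\times$interior pairings while reserving plain Cauchy--Schwarz for everything touching $\Omega\setminus\Sigma_{8\varepsilon}$ or the homogenization errors $\breve w_\varepsilon,\breve z_\varepsilon$. Only with these choices does the estimate collapse to the two-factor form $\eqref{pri:4.1}$, and only then are the boundary-layer contributions --- which ultimately generate the $\ln(r_0/\varepsilon)$ loss in $\eqref{pri:1.1}$ --- kept under control.
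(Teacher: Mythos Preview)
Your overall strategy matches the paper's: derive the duality identity $\eqref{eq:4.1}$ by testing the adjoint problem against $w_\varepsilon$, then expand $\nabla\phi_\varepsilon$ and $\theta_\varepsilon$ via the two-scale ans\"atze $\eqref{eq:4.2}$ and split each pairing according to whether the $u_0$-factor lands in the boundary layer (carrying $\nabla\psi_{4\varepsilon}$) or in the interior (carrying $\psi_{4\varepsilon}\nabla^2u_0$). The paper organizes the computation slightly differently---it walks through the three summands of $\tilde f$ as $I_{11},I_{12},I_{13}$ and breaks each into $J_1,J_2$ (respectively $J_3$)---but the essential moves (disjoint supports killing the cross terms, weighted H\"older $\eqref{pri:2.14}$ with $\eqref{pri:2.10}$--$\eqref{pri:2.12}$ on the interior pieces, plain Cauchy--Schwarz on the layer pieces and on the $\breve w_\varepsilon,\breve z_\varepsilon$ remainders) are the same as yours.

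There is, however, a genuine problem in your final paragraph. You claim an interior pointwise bound $|\nabla^2\phi_0(x)|\le C\delta(x)^{-1}\bigl(\dashint_{B(x,\delta(x)/8)}|\nabla\phi_0|^2\,dy\bigr)^{1/2}$ and use it to ``absorb'' the term $\varepsilon\|\nabla^2\phi_0\|_{L^2(\Sigma_{10\varepsilon})}$ into $\|\nabla\phi_0\|_{L^2(\Sigma_{3\varepsilon};\delta^{-1})}$. This is wrong on two counts. First, that interior estimate does \emph{not} hold for $\phi_0$: the adjoint solution satisfies $\mathcal{L}_0^*(\phi_0)+\nabla\theta_0=\Phi$ with a nonzero right-hand side, so Caccioppoli-type second-derivative bounds pick up an extra $\|\Phi\|$ contribution that you have dropped. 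Second, and more to the point, no absorption is required: the target inequality $\eqref{pri:4.1}$ explicitly retains $\varepsilon\|\nabla^2\phi_0\|_{L^2(\Sigma_{10\varepsilon})}$ in the second bracket on its right-hand side. The paper simply leaves that term standing (it is dealt with later, in Lemma~$\ref{lemma:4.3}$, via the layer/co-layer machinery). So your last paragraph should be deleted; once you stop before it, your argument is correct and coincides with the paper's.
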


\begin{proof}
We first prove the equality $\eqref{eq:4.1}$.
By noting that both of $w_\varepsilon$ and $\phi_\varepsilon$ vanish near $\partial\Omega$,
it is not hard from integrating by parts to see
$\big<\mathcal{L}_\varepsilon(w_\varepsilon),\phi_\varepsilon\big>
= \big<w_\varepsilon,\mathcal{L}_\varepsilon^*(\phi_\varepsilon)\big>$.
Thus in view of $\eqref{pde:3.2}$ and $(\mathbf{DS_\varepsilon})^*$ we have
\begin{equation}\label{f:4.9}
\begin{aligned}
\int_\Omega w_\varepsilon\Phi dx
&= \big<\mathcal{L}_\varepsilon(w_\varepsilon),\phi_\varepsilon\big>
+ \int_\Omega w_\varepsilon\nabla\theta_\varepsilon dx\\
&= \big<\text{div}(\tilde{f}) + \nabla z_\varepsilon, \phi_\varepsilon\big>
-\int_\Omega(\theta_\varepsilon-\sigma)\text{div}(w_\varepsilon)dx
=-\int_\Omega\tilde{f}\cdot\nabla\phi_\varepsilon
-\int_\Omega(\theta_\varepsilon-\sigma)\eta dx,
\end{aligned}
\end{equation}
where $\sigma\in \mathbb{R}$ is arbitrary, and
in the last step we use the fact that $\text{div}(\phi_\varepsilon) = 0$ in $\Omega$.

In view of $\eqref{f:4.9}$, we have
\begin{equation}\label{f:4.10}
\begin{aligned}
\bigg|\int_\Omega w_\varepsilon\Phi dx\bigg|
\leq \bigg|\int_\Omega \tilde{f}\cdot\nabla\phi_\varepsilon dx\bigg|
+ \bigg|\int_\Omega (\theta_\varepsilon-\sigma)\eta dx\bigg| =: I_1 + I_2.
\end{aligned}
\end{equation}
Then we calculate $I_1$ and $I_2$ one by one, and the first term is
\begin{equation}\label{f:4.11}
\begin{aligned}
I_1 \leq \varepsilon\bigg|\int_\Omega[E_{jik}^{\alpha\gamma}(y)
&+a_{ij}^{\alpha\beta}(y)\chi_k^{\beta\gamma}(y)]
\nabla_jS_\varepsilon(\psi_{4\varepsilon}\nabla_ku_0^\gamma)\nabla_i\phi_\varepsilon^\alpha dx\bigg|
+ \varepsilon\bigg|\int_\Omega q_{ik}^\gamma(y)\nabla_\alpha
S_\varepsilon(\psi_{4\varepsilon}\nabla_ku_0^\gamma)\nabla_i\phi_\varepsilon^\alpha dx\bigg|\\
&+ \bigg|\int_\Omega\big[\hat{a}_{ij}^{\alpha\beta}-a_{ij}^{\alpha\beta}(y)\big]
\Big[\nabla_j u_0^\beta-S_\varepsilon(\psi_{4\varepsilon}\nabla_ju_0^\beta)\Big]
\nabla_i\phi_\varepsilon^\alpha dx\bigg| =: I_{11} + I_{12}+ I_{13},
\end{aligned}
\end{equation}
where $y=x/\varepsilon$.
We proceed to estimate the term $I_{11}$. By setting
$\varpi_{jik}^{\alpha\gamma}(y)= E_{jik}^{\alpha\gamma}(y)
+a_{ij}^{\alpha\beta}(y)\chi_k^{\beta\gamma}(y)$, and $\varpi(y) = \big[\varpi_{jik}^{\alpha\gamma}(y)]$,
we have
\begin{equation}\label{f:4.12}
\begin{aligned}
I_{11} &= \varepsilon\bigg|\int_\Omega\varpi_{jik}^{\alpha\gamma}(\cdot/\varepsilon)
\nabla_jS_\varepsilon(\psi_{4\varepsilon}\nabla_ku_0^\gamma)\nabla_i\phi_\varepsilon^\alpha dx\bigg|\\
&\leq \int_\Omega\Big|\varpi_{jik}^{\alpha\gamma}(\cdot/\varepsilon)
S_\varepsilon(\nabla_j\psi_{4\varepsilon}\nabla_ku_0^\gamma)\nabla_i\phi_\varepsilon^\alpha \Big|dx
+\varepsilon\bigg|\int_\Omega\varpi_{jik}^{\alpha\gamma}(\cdot/\varepsilon)
S_\varepsilon(\psi_{4\varepsilon}\nabla^2_{jk}u_0^\gamma)\nabla_i\phi_\varepsilon^\alpha dx\bigg|
=: J_1+J_2.
\end{aligned}
\end{equation}
We mention that $S_\varepsilon(\nabla\psi_{4\varepsilon}\nabla u_0)$ is supported in
$\Omega\setminus\Sigma_{9\varepsilon}$ while $S_\varepsilon(\psi_{4\varepsilon}\nabla^2u_0)$
is supported in $\Sigma_{3\varepsilon}$, and then
\begin{equation}\label{f:4.13}
\begin{aligned}
J_1
&\leq \big\|\varpi(\cdot/\varepsilon)S_\varepsilon(\nabla\psi_{4\varepsilon}\nabla u_0)
\big\|_{L^2(\mathbb{R}^d)}\big\|\nabla\phi_\varepsilon\big\|_{L^2(\Omega\setminus\Sigma_{9\varepsilon})}\\
&\leq C\big\|\nabla u_0\big\|_{L^2(\Omega\setminus\Sigma_{8\varepsilon})}
\big\|\nabla\phi_\varepsilon\big\|_{L^2(\Omega\setminus\Sigma_{9\varepsilon})}
\leq C\big\|\nabla u_0\big\|_{L^2(\Omega\setminus\Sigma_{8\varepsilon})}
\Big\{\big\|\nabla \breve{w}_\varepsilon\big\|_{L^2(\Omega)}
+\big\|\nabla\phi_0\big\|_{L^2(\Omega\setminus\Sigma_{9\varepsilon})}\Big\},
\end{aligned}
\end{equation}
where we use Cauchy's inequality in the first inequality,
and the estimate $\eqref{pri:2.7}$ in the second one. In the last step above, we note that
it follows from $\eqref{eq:4.2}$ that $\phi_\varepsilon
= \breve{w}_\varepsilon + \phi_0$ in $\Omega\setminus\Sigma_{9\varepsilon}$ since the term
$\varepsilon\chi_k^*(\cdot/\varepsilon)S_\varepsilon(\psi_{10\varepsilon}\nabla_k\phi_0)$ in
$\eqref{eq:4.2}$ is supported in
$\Sigma_{9\varepsilon}$.
Then we handle the term $J_2$ by a similar argument
but a little more complicated to accelerate the convergence rate. One may have
\begin{equation*}
\begin{aligned}
J_2 &= \varepsilon\bigg|\int_\Omega\varpi_{jik}^{\alpha\gamma}(\cdot/\varepsilon)
S_\varepsilon(\psi_{4\varepsilon}\nabla^2_{jk}u_0^\gamma)\nabla_i
\big[\breve{w}_\varepsilon^\alpha+\phi_0^\alpha
+\varepsilon\chi_{k}^{\gamma\alpha}(y)S_\varepsilon(\psi_{10\varepsilon}\nabla_k\phi_0^\gamma)\big] dx\bigg|\\
&\leq \varepsilon\|\varpi(\cdot/\varepsilon)S_\varepsilon(\psi_{4\varepsilon}\nabla^2 u_0)\|_{L^2(\mathbb{R}^d)}
\Big\{\|\nabla\breve{w}_\varepsilon\|_{L^2(\Omega)}
+\varepsilon\|\chi(\cdot/\varepsilon)\nabla S_\varepsilon(\psi_{10\varepsilon}\nabla\phi_0)\|_{L^2(\mathbb{R}^d)}\Big\}\\
&\quad +\varepsilon\|\varpi(\cdot/\varepsilon)S_\varepsilon(\psi_{4\varepsilon}\nabla^2 u_0)
\|_{L^2(\Sigma_{3\varepsilon};\delta)}
\Big\{\|\nabla\phi_0\|_{L^2(\Sigma_{3\varepsilon};\delta^{-1})}
+\|(\nabla\chi)(\cdot/\varepsilon)S_\varepsilon(\psi_{10\varepsilon}\nabla \phi_0)\|_{L^2(\Sigma_{3\varepsilon};\delta^{-1})}
\Big\}\\
&\leq C\varepsilon\|\nabla^2 u_0\|_{L^2(\Sigma_{4\varepsilon})}
\Big\{\|\nabla\breve{w}_\varepsilon\|_{L^2(\Omega)}
+ \varepsilon\|\nabla(\psi_{10\varepsilon}\nabla\phi_0)\|_{L^2(\mathbb{R}^d)}\Big\} \\
&\quad + C\varepsilon \|\psi_{4\varepsilon}\nabla^2 u_0\|_{L^2(\Sigma_{3\varepsilon};\delta)}
\Big\{\|\nabla\phi_0\|_{L^2(\Sigma_{3\varepsilon};\delta^{-1})}
+\|\psi_{10\varepsilon}\nabla\phi_0\|_{L^2(\Sigma_{3\varepsilon};\delta^{-1})}\Big\},
\end{aligned}
\end{equation*}
where we use the Cauchy's inequality and the estimate $\eqref{pri:2.14}$ in the first inequality,
and the estimates $\eqref{pri:2.7}$, $\eqref{pri:2.10}$ and $\eqref{pri:2.11}$ in the last one.
Then it is not hard to see that
\begin{equation}\label{f:4.14}
\begin{aligned}
J_2
&\leq C\varepsilon \|\nabla^2 u_0\|_{L^2(\Sigma_{3\varepsilon};\delta)}
\|\nabla\phi_0\|_{L^2(\Sigma_{3\varepsilon};\delta^{-1})}\\
&\quad + C\varepsilon\|\nabla^2 u_0\|_{L^2(\Sigma_{4\varepsilon})}
\Big\{\|\nabla\breve{w}_\varepsilon\|_{L^2(\Omega)}
+\|\nabla\phi_0\|_{L^2(\Omega\setminus\Sigma_{20\varepsilon})}
+ \varepsilon\|\nabla^2\phi_0\|_{L^2(\Sigma_{10\varepsilon})}\Big\}.
\end{aligned}
\end{equation}
Combining the estimates $\eqref{f:4.13}$ and $\eqref{f:4.14}$ gives
\begin{equation}\label{f:4.15}
\begin{aligned}
I_{11}
&\leq C\big\|\nabla u_0\big\|_{L^2(\Omega\setminus\Sigma_{8\varepsilon})}\Big\{
\big\|\nabla\breve{w}_\varepsilon\big\|_{L^2(\Omega)}
+\big\|\nabla\phi_0\big\|_{L^2(\Omega\setminus\Sigma_{9\varepsilon})}\Big\}
+C\varepsilon \|\nabla^2 u_0\|_{L^2(\Sigma_{3\varepsilon};\delta)}
\|\nabla\phi_0\|_{L^2(\Sigma_{3\varepsilon};\delta^{-1})}\\
& + C\varepsilon\|\nabla^2 u_0\|_{L^2(\Sigma_{4\varepsilon})}
\Big\{\|\nabla\breve{w}_\varepsilon\|_{L^2(\Omega)}
+\|\nabla\phi_0\|_{L^2(\Omega\setminus\Sigma_{20\varepsilon})}
+ \varepsilon\|\nabla^2\phi_0\|_{L^2(\Sigma_{10\varepsilon})}\Big\}.
%&\leq C\varepsilon \|\nabla^2 u_0\|_{L^2(\Sigma_{3\varepsilon};\delta)}
%\|\nabla\phi_0\|_{L^2(\Sigma_{3\varepsilon};\delta^{-1})}\\
%&\leq + C\Big\{\big\|\nabla u_0\big\|_{L^2(\Omega\setminus\Sigma_{8\varepsilon})}
%+\varepsilon\|\nabla^2 u_0\|_{L^2(\Sigma_{4\varepsilon})}\Big\}
%\cdot\Big\{\|\nabla\breve{w}_\varepsilon\|_{L^2(\Omega)}
%+\|\nabla\phi_0\|_{L^2(\Omega\setminus\Sigma_{20\varepsilon})}
%+ \varepsilon\|\nabla^2\phi_0\|_{L^2(\Sigma_{10\varepsilon})}\Big\}
\end{aligned}
\end{equation}

By the same token, we have
\begin{equation}\label{f:4.16}
\begin{aligned}
I_{12}
&\leq C\big\|\nabla u_0\big\|_{L^2(\Omega\setminus\Sigma_{8\varepsilon})}\Big\{
\big\|\nabla\breve{w}_\varepsilon\big\|_{L^2(\Omega)}
+\big\|\nabla\phi_0\big\|_{L^2(\Omega\setminus\Sigma_{9\varepsilon})}\Big\}
+C\varepsilon \|\nabla^2 u_0\|_{L^2(\Sigma_{3\varepsilon};\delta)}
\|\nabla\phi_0\|_{L^2(\Sigma_{3\varepsilon};\delta^{-1})}\\
& + C\varepsilon\|\nabla^2 u_0\|_{L^2(\Sigma_{4\varepsilon})}
\Big\{\|\nabla\breve{w}_\varepsilon\|_{L^2(\Omega)}
+\|\nabla\phi_0\|_{L^2(\Omega\setminus\Sigma_{20\varepsilon})}
+ \varepsilon\|\nabla^2\phi_0\|_{L^2(\Sigma_{10\varepsilon})}\Big\}
\end{aligned}
\end{equation}
by noting that $q_{ik}^\gamma(y)$ plays a similar role as
the term $E_{jik}^{\alpha\gamma}(y)+a_{ij}^{\alpha\beta}(y)\chi_k^{\beta\gamma}(y)$ in $I_{11}$, and
the remaining things are exactly the same as we did in $I_{11}$.

Now we turn to estimate the term $I_{13}$, and
\begin{equation}\label{f:4.18}
\begin{aligned}
I_{13} &= \bigg|\int_\Omega\big[\hat{a}_{ij}^{\alpha\beta}-a_{ij}^{\alpha\beta}(y)\big]
\Big[\nabla_j u_0^\beta-S_\varepsilon(\psi_{4\varepsilon}\nabla_ju_0^\beta)\Big]
\nabla_i\phi_\varepsilon^\alpha dx\bigg| \\
&\leq C\int_\Omega\Big(\big|(1-\psi_{4\varepsilon})\nabla u_0\big|
+ \big|\psi_{4\varepsilon}\nabla u_0 - S_\varepsilon(\psi_{4\varepsilon}\nabla u_0)\big|\Big)
\big|\nabla\phi_\varepsilon\big| dx \\
&\leq C\Big\{\|\nabla u_0\|_{L^2(\Omega\setminus\Sigma_{8\varepsilon})}
\|\nabla\phi_\varepsilon\|_{L^2(\Omega\setminus\Sigma_{8\varepsilon})} + J_3\Big\}\\
&\leq C\bigg\{\|\nabla u_0\|_{L^2(\Omega\setminus\Sigma_{8\varepsilon})}
\Big(\big\|\nabla\breve{w}_\varepsilon\big\|_{L^2(\Omega)}
+\big\|\nabla\phi_0\big\|_{L^2(\Omega\setminus\Sigma_{9\varepsilon})}\Big)+ J_3
\bigg\},
\end{aligned}
\end{equation}
where $J_3 = \int_\Omega |\psi_{4\varepsilon}\nabla u_0 -
S_\varepsilon(\psi_{4\varepsilon}\nabla u_0)||\nabla\phi_\varepsilon| dx$, and in the last step
we use the same observation as in $J_1$. Also, we find that
$S_\varepsilon(\psi_{4\varepsilon}\nabla u_0)$ is supported in $\Sigma_{3\varepsilon}$,
and employ the same arguments as in $J_2$.
Hence we have
\begin{equation*}
\begin{aligned}
J_3 &= \int_\Omega\big|\psi_{4\varepsilon}\nabla u_0 - S_\varepsilon(\psi_{4\varepsilon}\nabla u_0)\big|
\big|\nabla\big[\breve{w}_\varepsilon + \phi_0 + \varepsilon\chi_{k}^{*}(y)
S_\varepsilon(\psi_{10\varepsilon}\nabla_k \phi_0)\big]\big|dx \\
&\leq \|\psi_{4\varepsilon}\nabla u_0 - S_\varepsilon(\psi_{4\varepsilon}\nabla u_0)\|_{L^2(\mathbb{R}^d)}
\Big\{\|\nabla\breve{w}\|_{L^2(\Omega)}
+\varepsilon\|\chi(\cdot/\varepsilon)
\nabla S_\varepsilon(\psi_{10\varepsilon}\nabla \phi_0)\|_{L^2(\mathbb{R}^d)}\Big\}\\
&\quad +\|\psi_{4\varepsilon}\nabla u_0
- S_\varepsilon(\psi_{4\varepsilon}\nabla u_0)\|_{L^2(\Sigma_{3\varepsilon};\delta)}
\Big\{\|\nabla\phi_0\|_{L^2(\Sigma_{3\varepsilon};\delta^{-1})}
+ \|(\nabla\chi)(\cdot/\varepsilon)
S_\varepsilon(\psi_{10\varepsilon}\nabla \phi_0)\|_{L^2(\Sigma_{3\varepsilon};\delta^{-1})}\Big\} \\
&\leq C\varepsilon\|\nabla(\psi_{4\varepsilon}\nabla u_0)\|_{L^2(\mathbb{R}^d)}
\Big\{\|\breve{w}_\varepsilon\|_{L^2(\Omega)}
+\varepsilon\|\nabla(\psi_{10\varepsilon}\nabla \phi_0)\|_{L^2(\mathbb{R}^d)}\Big\}\\
&\quad + C\varepsilon\|\nabla(\psi_{4\varepsilon}\nabla u_0)\|_{L^2(\Sigma_{3\varepsilon};\delta)}
\Big\{\|\nabla\phi_0\|_{L^2(\Sigma_{3\varepsilon};\delta^{-1})}
+\|\psi_{10\varepsilon}\nabla\phi_0\|_{L^2(\Sigma_{3\varepsilon};\delta^{-1})}\Big\},
\end{aligned}
\end{equation*}
where we use Cauchy's inequality and the estimate $\eqref{pri:2.14}$ in the first inequality,
and the estimates $\eqref{pri:2.7}$, $\eqref{pri:2.8}$, $\eqref{pri:2.11}$ and $\eqref{pri:2.12}$ in the
second one. Thus we have
\begin{equation*}
\begin{aligned}
J_3 &\leq C\Big\{\big\|\nabla u_0\big\|_{L^2(\Omega\setminus\Sigma_{8\varepsilon};\delta)}
+\varepsilon\big\|\nabla^2u_0\big\|_{L^2(\Sigma_{3\varepsilon};\delta)}\Big\}
\big\|\nabla\phi_0\big\|_{L^2(\Sigma_{3\varepsilon};\delta^{-1})}\\
& + C\Big\{\big\|\nabla u_0\big\|_{L^2(\Omega\setminus\Sigma_{8\varepsilon})}
+\varepsilon\|\nabla^2 u_0\|_{L^2(\Sigma_{4\varepsilon})}\Big\}
\cdot\Big\{\|\nabla\breve{w}_\varepsilon\|_{L^2(\Omega)}
+\|\nabla\phi_0\|_{L^2(\Omega\setminus\Sigma_{20\varepsilon})}
+ \varepsilon\|\nabla^2\phi_0\|_{L^2(\Sigma_{10\varepsilon})}\Big\}.
\end{aligned}
\end{equation*}
This together with $\eqref{f:4.15}$, $\eqref{f:4.16}$ and $\eqref{f:4.18}$ also leads to
\begin{equation}\label{f:4.17}
\begin{aligned}
I_1 &\leq C\Big\{\big\|\nabla u_0\big\|_{L^2(\Omega\setminus\Sigma_{8\varepsilon};\delta)}
+\varepsilon\big\|\nabla^2u_0\big\|_{L^2(\Sigma_{3\varepsilon};\delta)}\Big\}
\big\|\nabla\phi_0\big\|_{L^2(\Sigma_{3\varepsilon};\delta^{-1})}\\
& + C\Big\{\big\|\nabla u_0\big\|_{L^2(\Omega\setminus\Sigma_{8\varepsilon})}
+\varepsilon\|\nabla^2 u_0\|_{L^2(\Sigma_{4\varepsilon})}\Big\}
\cdot\Big\{\|\nabla\breve{w}_\varepsilon\|_{L^2(\Omega)}
+\|\nabla\phi_0\|_{L^2(\Omega\setminus\Sigma_{20\varepsilon})}
+ \varepsilon\|\nabla^2\phi_0\|_{L^2(\Sigma_{10\varepsilon})}\Big\}.
\end{aligned}
\end{equation}

Then we continue to study the term $I_2$ in $\eqref{f:4.10}$, and the trick of the proof to
$I_1$ also works here. The main difference is that
the auxiliary function $\breve{z}_\varepsilon$ in $\eqref{eq:4.2}$ is employed to
accelerate the convergence rate, and we therefore remind the reader to pay attention to
the role of the constant $\sigma$ in the proof. In view of $\eqref{eq:4.2}$, we have
$\theta_\varepsilon = \breve{z} + \theta_0 + \pi^*(\cdot/\varepsilon)
S_\varepsilon(\psi_{10\varepsilon}\nabla\phi_0)$ in $\Omega$, and then
\begin{equation*}
\begin{aligned}
I_2 & = \varepsilon\bigg|\int_\Omega (\theta_\varepsilon-\sigma)
\chi_{k}^{\beta\gamma}(\cdot/\varepsilon)
\nabla_\beta S_{\varepsilon}(\psi_{4\varepsilon}\nabla_k u_0^\gamma)dx\bigg| \\
&\leq \int_\Omega\Big|(\theta_\varepsilon -\sigma)
\chi_k^{\beta\gamma}(\cdot/\varepsilon)S_\varepsilon(\nabla_\beta\psi_{4\varepsilon}\nabla_k u_0^\gamma)\Big|dx
+ \varepsilon\bigg|\int_\Omega (\theta_\varepsilon-\sigma)
\chi_{k}^{\beta\gamma}(\cdot/\varepsilon)
 S_{\varepsilon}(\psi_{4\varepsilon}\nabla_{k\beta}^2 u_0^\gamma)dx\bigg|\\
&\leq \|\chi(\cdot/\varepsilon)S_\varepsilon(\nabla\psi_{4\varepsilon}\nabla u_0)\|_{L^2(\mathbb{R}^d)}
\|\breve{z}_\varepsilon-\sigma+\theta_0\|_{L^2(\Omega\setminus\Sigma_{9\varepsilon})} \\
& + \varepsilon\bigg|\int_\Omega \big[\breve{z}_\varepsilon-\sigma + \theta_0 +
\pi_k^{\gamma}(\cdot/\varepsilon)S_\varepsilon(\psi_{10\varepsilon}\nabla_k\phi_0^\gamma)\big]
\chi_{k}^{\beta\gamma}(\cdot/\varepsilon)
 S_{\varepsilon}(\psi_{4\varepsilon}\nabla_{k\beta}^2 u_0^\gamma)dx\bigg|,
\end{aligned}
\end{equation*}
where we note that $S_\varepsilon(\psi_{10\varepsilon}\nabla\phi_0)$ is supported in $\Sigma_{9\varepsilon}$.
Moreover, the right-hand side of the inequality above is controlled by
\begin{equation*}
\begin{aligned}
&\|\chi(\cdot/\varepsilon)S_\varepsilon(\nabla\psi_{4\varepsilon}\nabla u_0)\|_{L^d(\mathbb{R}^d)}
\Big\{\|\breve{z}_\varepsilon-\sigma\|_{L^2(\Omega)}
+\|\theta_0\|_{L^2(\Omega\setminus\Sigma_{9\varepsilon})}\Big\}
+\varepsilon\|\chi(\cdot/\varepsilon)S_\varepsilon(\psi_{4\varepsilon}\nabla^2u_0)\|_{L^2(\mathbb{R}^d)}
\|\breve{z}_\varepsilon-\sigma\|_{L^2(\Omega)}\\
&\qquad\quad+\varepsilon\|\chi(\cdot/\varepsilon)S_\varepsilon(\psi_{4\varepsilon}\nabla^2u_0)\|_{L^2(\Sigma_{3\varepsilon};\delta)}
\Big\{\|\theta_0\|_{L^2(\Sigma_{3\varepsilon};\delta^{-1})}
+\|\pi(\cdot/\varepsilon)S_\varepsilon(\psi_{10\varepsilon}
\nabla\phi_0)\|_{L^2(\Sigma_{3\varepsilon};\delta^{-1})}\Big\}.
\end{aligned}
\end{equation*}
Then we apply the estimates $\eqref{pri:2.7}$, $\eqref{pri:2.10}$ and $\eqref{pri:2.11}$
to the above expression, and consequently obtain
\begin{equation}\label{f:4.19}
\begin{aligned}
I_2 &\leq C\|\nabla u_0\|_{L^2(\Omega\setminus\Sigma_{8\varepsilon})}
\Big\{\|\breve{z}_\varepsilon\|_{L^2(\Omega)/\mathbb{R}}
+ \|\theta_0\|_{L^2(\Omega\setminus\Sigma_{9\varepsilon})}\Big\}
+ C\varepsilon\|\nabla^2 u_0\|_{L^2(\Sigma_{4\varepsilon})}
\|\breve{z}_\varepsilon\|_{L^2(\Omega)/\mathbb{R}}\\
&+ C\varepsilon\|\nabla^2u_0\|_{L^2(\Sigma_{3\varepsilon};\delta)}
\Big\{\|\theta_0\|_{L^2(\Sigma_{3\varepsilon};\delta^{-1})}
+\|\nabla\phi_0\|_{L^2(\Sigma_{3\varepsilon};\delta^{-1})}\Big\}
\end{aligned}
\end{equation}
by noting that $\sigma\in \mathbb{R}$ is arbitrary.
Combining $\eqref{f:4.10}$, $\eqref{f:4.17}$ and $\eqref{f:4.19}$ finally leads to
the desired estimate $\eqref{pri:4.1}$, and we have completed the proof.
\end{proof}

In fact, the following lemma is designed for smooth domains,
since we make a litter stronger assumption $u_0\in H^2(\Omega;\mathbb{R}^d)$ there.
This assumption is not very natural in our setting unless $\partial\Omega\in C^{1,1}$ at least
(see \cite[pp.1012]{SZW2}).
Nevertheless, it will release us from a complex calculation more or little, and
will lead to a better result than that established in \cite{G} concerning smooth domains.

\begin{lemma}[Duality lemma II]\label{lemma:4.2}
Assume $u_0\in H^2(\Omega;\mathbb{R}^d)$ coupled with $p_0\in L^2(\Omega)/\mathbb{R}$
satisfies $(\mathbf{DS_0})$.
Let $w_\varepsilon$ be given in $\eqref{eq:3.4}$ by setting
$\varphi_k^\gamma = S_\varepsilon(\psi_{2\varepsilon}\nabla_k \tilde{u}_0^\gamma)$, where
$\tilde{u}_0$ is the extension of $u_0$ such that $\tilde{u}_0 = u_0$ on $\Omega$ and
$\|\tilde{u}_0\|_{H^2(\mathbb{R}^d)}\leq C\|u_0\|_{H^2(\Omega)}$. For any $\Phi\in L^q(\Omega;\mathbb{R}^d)$,
let $(\phi_\varepsilon,\theta_\varepsilon)$ be the weak solution to $(\mathbf{DS_\varepsilon})^*$. Then we have
\begin{equation}\label{pri:4.2}
\begin{aligned}
\bigg|\int_\Omega w_\varepsilon\Phi dx\bigg|
&\leq C\|\nabla u_0\|_{L^2(\Omega\setminus\Sigma_{4\varepsilon})}
\Big(\|\nabla \phi_\varepsilon\|_{L^2(\Omega\setminus\Sigma_{5\varepsilon})}
+\|\theta_\varepsilon\|_{L^2(\Omega\setminus\Sigma_{5\varepsilon})/\mathbb{R}}\Big) \\
& \qquad + C\varepsilon\|u_0\|_{H^2(\Omega)}
\Big(\|\nabla \phi_\varepsilon\|_{L^2(\Omega)}
+\|\theta_\varepsilon\|_{L^2(\Omega)/\mathbb{R}}\Big),
\end{aligned}
\end{equation}
where $C$ depends on $\mu,d$ and $\Omega$.
\end{lemma}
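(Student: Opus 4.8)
The plan is to imitate the proof of Lemma~\ref{lemma:4.1}, but with the simpler corrector choice $\varphi_k^\gamma=S_\varepsilon(\psi_{2\varepsilon}\nabla_k\tilde u_0^\gamma)$ and \emph{without} introducing a second corrector expansion for $(\phi_\varepsilon,\theta_\varepsilon)$, using the extra regularity $u_0\in H^2(\Omega)$ to replace all the weighted $L^2(\Sigma_r;\delta^{\pm1})$ bounds by plain $L^2(\mathbb R^d)$ bounds. First I would record the starting identity: since $w_\varepsilon$ vanishes on $\partial\Omega$, integration by parts in $\eqref{pde:3.2}$ against $\phi_\varepsilon$ (which solves $(\textbf{DS}_\varepsilon)^*$ with $\mathrm{div}(\phi_\varepsilon)=0$) gives, exactly as in $\eqref{f:4.9}$,
\[
\int_\Omega w_\varepsilon\Phi\,dx=-\int_\Omega\tilde f\cdot\nabla\phi_\varepsilon\,dx-\int_\Omega(\theta_\varepsilon-\sigma)\eta\,dx ,
\]
valid for every $\sigma\in\mathbb R$ because of the compatibility condition $\eqref{eq:3.3}$, with $\tilde f$, $\eta$ the quantities of Lemma~\ref{lemma:3.2} attached to this $\varphi$. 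It then remains to bound $I_1:=\big|\int_\Omega\tilde f\cdot\nabla\phi_\varepsilon\,dx\big|$ and $I_2:=\big|\int_\Omega(\theta_\varepsilon-\sigma)\eta\,dx\big|$.

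For $I_1$ I would treat the three terms of $\eqref{eq:3.5}$ separately (the first two, of the form $\varepsilon\varpi(\cdot/\varepsilon)\nabla\varphi$, being handled identically), in each case splitting the factor built from $u_0$ into a \emph{boundary-layer} piece controlled by $\|\nabla u_0\|_{L^2(\Omega\setminus\Sigma_{4\varepsilon})}$ and an \emph{interior} piece controlled by $\varepsilon\|u_0\|_{H^2(\Omega)}$. For the terms $\varepsilon\varpi(\cdot/\varepsilon)\nabla\varphi$ write $\nabla\varphi=S_\varepsilon(\nabla\psi_{2\varepsilon}\,\nabla u_0)+S_\varepsilon(\psi_{2\varepsilon}\nabla^2 u_0)$; by $\eqref{pri:2.7}$ the first summand has $L^2(\mathbb R^d)$ norm $\le C\varepsilon^{-1}\|\nabla u_0\|_{L^2(\Omega\setminus\Sigma_{4\varepsilon})}$ and is supported in $\Omega\setminus\Sigma_{5\varepsilon}$, while the second has $L^2(\mathbb R^d)$ norm $\le C\|u_0\|_{H^2(\Omega)}$ (using $\|\tilde u_0\|_{H^2(\mathbb R^d)}\le C\|u_0\|_{H^2(\Omega)}$ together with the corrector bounds $\eqref{pri:2.3}$--$\eqref{pri:2.4}$). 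For the coefficient-difference term $[\widehat A-A(\cdot/\varepsilon)][\nabla u_0-\varphi]$ the convenient identity is $\nabla u_0-\varphi=[\nabla u_0-S_\varepsilon(\nabla u_0)]+S_\varepsilon((1-\psi_{2\varepsilon})\nabla u_0)$: the first bracket is $O(\varepsilon)\|u_0\|_{H^2(\Omega)}$ in $L^2(\mathbb R^d)$ by $\eqref{pri:2.8}$, and the second is supported in $\Omega\setminus\Sigma_{5\varepsilon}$ with $L^2$ norm $\le C\|\nabla u_0\|_{L^2(\Omega\setminus\Sigma_{4\varepsilon})}$. Pairing, by Cauchy--Schwarz, each boundary-layer piece with $\|\nabla\phi_\varepsilon\|_{L^2(\Omega\setminus\Sigma_{5\varepsilon})}$ and each interior piece with $\|\nabla\phi_\varepsilon\|_{L^2(\Omega)}$ yields
\[
I_1\le C\|\nabla u_0\|_{L^2(\Omega\setminus\Sigma_{4\varepsilon})}\|\nabla\phi_\varepsilon\|_{L^2(\Omega\setminus\Sigma_{5\varepsilon})}+C\varepsilon\|u_0\|_{H^2(\Omega)}\|\nabla\phi_\varepsilon\|_{L^2(\Omega)} .
\]

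For $I_2$ I would write $\eta=-\varepsilon\chi_k^{\beta\gamma}(\cdot/\varepsilon)\big[S_\varepsilon(\nabla_\beta\psi_{2\varepsilon}\,\nabla_k u_0^\gamma)+S_\varepsilon(\psi_{2\varepsilon}\nabla^2_{\beta k}u_0^\gamma)\big]=:\eta_1+\eta_2$; by $\eqref{pri:2.7}$ one gets $\|\eta_1\|_{L^2(\mathbb R^d)}\le C\|\nabla u_0\|_{L^2(\Omega\setminus\Sigma_{4\varepsilon})}$ with $\eta_1$ supported in $\Omega\setminus\Sigma_{5\varepsilon}$, and $\|\eta_2\|_{L^2(\mathbb R^d)}\le C\varepsilon\|u_0\|_{H^2(\Omega)}$. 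Hence $I_2\le\|\eta_1\|_{L^2}\|\theta_\varepsilon-\sigma\|_{L^2(\Omega\setminus\Sigma_{5\varepsilon})}+\|\eta_2\|_{L^2}\|\theta_\varepsilon-\sigma\|_{L^2(\Omega)}$, and choosing the free constant $\sigma$ appropriately turns the two $\theta_\varepsilon$-factors into the quotient norms $\|\theta_\varepsilon\|_{L^2(\Omega\setminus\Sigma_{5\varepsilon})/\mathbb R}$ and $\|\theta_\varepsilon\|_{L^2(\Omega)/\mathbb R}$. Adding the estimates of $I_1$ and $I_2$ gives $\eqref{pri:4.2}$.

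The computations here are routine; the delicate points are purely geometric bookkeeping, and this is where I expect the only real obstacle to lie. Because $S_\varepsilon$ enlarges supports by $\varepsilon/2$, one must track the nested layers $\Omega\setminus\Sigma_{4\varepsilon}\subset\Omega\setminus\Sigma_{5\varepsilon}$ so that $\nabla\psi_{2\varepsilon}$- and $(1-\psi_{2\varepsilon})$-type factors stay supported in $\Omega\setminus\Sigma_{5\varepsilon}$ and nothing with a non-$O(\varepsilon)$ coefficient leaks into the interior norms $\|\nabla\phi_\varepsilon\|_{L^2(\Omega)}$, $\|\theta_\varepsilon\|_{L^2(\Omega)/\mathbb R}$; this is exactly why the decomposition $\nabla u_0-\varphi=[\nabla u_0-S_\varepsilon(\nabla u_0)]+S_\varepsilon((1-\psi_{2\varepsilon})\nabla u_0)$ is preferable to a direct application of $\eqref{pri:2.8}$ to $\psi_{2\varepsilon}\nabla u_0-S_\varepsilon(\psi_{2\varepsilon}\nabla u_0)$. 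One also has to use the freedom of $\sigma$ (granted by $\int_\Omega\eta=0$) so that the layer factor in $I_2$ genuinely comes out as a quotient norm over $\Omega\setminus\Sigma_{5\varepsilon}$. Everything else is a direct application of $\eqref{pri:2.7}$, $\eqref{pri:2.8}$ and the corrector bounds, and the whole argument is markedly shorter than that of Lemma~\ref{lemma:4.1} precisely because the $H^2$-bound on $u_0$ removes the need for the weighted estimates and for the second expansion of $(\phi_\varepsilon,\theta_\varepsilon)$.
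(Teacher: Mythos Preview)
Your approach is essentially the paper's: start from the duality identity \eqref{eq:4.1}, split $I_1$ into the three pieces of \eqref{eq:3.5}, decompose $\nabla\varphi$ as $S_\varepsilon(\nabla\psi_{2\varepsilon}\,\nabla u_0)+S_\varepsilon(\psi_{2\varepsilon}\nabla^2 u_0)$, and handle $I_2$ by the analogous splitting of $\eta$. The overall architecture and the support bookkeeping are correct.

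There is, however, a genuine imprecision in your treatment of the coefficient-difference term $I_{13}$. You write
\[
\nabla u_0-\varphi=[\nabla u_0-S_\varepsilon(\nabla u_0)]+S_\varepsilon\big((1-\psi_{2\varepsilon})\nabla u_0\big),
\]
and claim simultaneously that the first bracket is $O(\varepsilon)\|u_0\|_{H^2(\Omega)}$ by \eqref{pri:2.8} and that the second has $L^2$-norm $\le C\|\nabla u_0\|_{L^2(\Omega\setminus\Sigma_{4\varepsilon})}$. These two bounds require \emph{different} extensions of $\nabla u_0$ to $\mathbb R^d$: \eqref{pri:2.8} needs $\nabla\tilde u_0\in W^{1,2}(\mathbb R^d)$, i.e.\ the $H^2$-extension $\tilde u_0$, while the layer bound on the second term needs the exterior contribution to vanish, i.e.\ the zero extension. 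With the $H^2$-extension, $(1-\psi_{2\varepsilon})\nabla\tilde u_0$ picks up an uncontrolled $\|\nabla\tilde u_0\|_{L^2(\mathbb R^d\setminus\Omega)}$; with the zero extension, \eqref{pri:2.8} is not applicable since the zero-extended $\nabla u_0$ has a jump across $\partial\Omega$. The paper resolves this by inserting one more term and using $\tilde u_0$ throughout:
\[
\nabla u_0-\varphi=[\nabla\tilde u_0-S_\varepsilon(\nabla\tilde u_0)]+\big[S_\varepsilon((1-\psi_{2\varepsilon})\nabla\tilde u_0)-(1-\psi_{2\varepsilon})\nabla\tilde u_0\big]+(1-\psi_{2\varepsilon})\nabla u_0\quad\text{in }\Omega,
\]
so that the middle bracket is handled by \eqref{pri:2.8} (giving $C\varepsilon\|\nabla((1-\psi_{2\varepsilon})\nabla\tilde u_0)\|_{L^2(\mathbb R^d)}$, which splits into a layer piece and an $\varepsilon\|u_0\|_{H^2}$ piece) and the last term is the genuine layer contribution $\|\nabla u_0\|_{L^2(\Omega\setminus\Sigma_{4\varepsilon})}$. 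With this three-term decomposition your argument goes through exactly as you outlined; everything else matches the paper's proof.
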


\begin{proof}
Compared to the proof given in Lemma $\eqref{lemma:4.1}$, the following one will be
straightforward and simple.
In view of $\eqref{eq:4.1}$, we have
\begin{equation}\label{f:4.6}
\begin{aligned}
\bigg|\int_\Omega w_\varepsilon\Phi dx\bigg|
\leq \bigg|\int_\Omega \tilde{f}\cdot\nabla\phi_\varepsilon dx\bigg|
+ \bigg|\int_\Omega (\theta_\varepsilon-\sigma)\eta dx\bigg| =: I_1 + I_2,
\end{aligned}
\end{equation}
and then calculate $I_1$ and $I_2$, respectively.
\begin{equation}\label{f:4.5}
\begin{aligned}
I_1 \leq \varepsilon\bigg|\int_\Omega[E_{jik}^{\alpha\gamma}(y)
&+a_{ij}^{\alpha\beta}(y)\chi_k^{\beta\gamma}(y)]
\nabla_jS_\varepsilon(\psi_{2\varepsilon}\nabla_k\tilde{u}_0^\gamma)\nabla_i\phi_\varepsilon^\alpha dx\bigg|
+ \varepsilon\bigg|\int_\Omega q_{ik}^\gamma(y)\nabla_\alpha
S_\varepsilon(\psi_{2\varepsilon}\nabla_k\tilde{u}_0^\gamma)\nabla_i\phi_\varepsilon^\alpha dx\bigg|\\
&+ \bigg|\int_\Omega\big[\hat{a}_{ij}^{\alpha\beta}-a_{ij}^{\alpha\beta}(y)\big]
\Big[\nabla_j u_0^\beta-S_\varepsilon(\psi_{2\varepsilon}\nabla_j\tilde{u}_0^\beta)\Big]
\nabla_i\phi_\varepsilon^\alpha dx\bigg| =: I_{11} + I_{12}+ I_{13},
\end{aligned}
\end{equation}
where $y=x/\varepsilon$.
We proceed to estimate the term $I_{11}$. By setting
$\varpi_{jik}^{\alpha\gamma}(y)= E_{jik}^{\alpha\gamma}(y)
+a_{ij}^{\alpha\beta}(y)\chi_k^{\beta\gamma}(y)$, and $\varpi(y) = \big[\varpi_{jik}^{\alpha\gamma}(y)]$ again,
we have
\begin{equation}\label{f:4.2}
\begin{aligned}
I_{11} &= \varepsilon\bigg|\int_\Omega\varpi_{jik}^{\alpha\gamma}(\cdot/\varepsilon)
\nabla_jS_\varepsilon(\psi_{2\varepsilon}\nabla_k\tilde{u}_0^\gamma)\nabla_i\phi_\varepsilon^\alpha dx\bigg|\\
&\leq \bigg|\int_\Omega\varpi_{jik}^{\alpha\gamma}(\cdot/\varepsilon)
S_\varepsilon(\nabla_j\psi_{2\varepsilon}\nabla_k\tilde{u}_0^\gamma)\nabla_i\phi_\varepsilon^\alpha dx\bigg|
+\varepsilon\bigg|\int_\Omega\varpi_{jik}^{\alpha\gamma}(\cdot/\varepsilon)
S_\varepsilon(\psi_{2\varepsilon}\nabla^2_{jk}\tilde{u}_0^\gamma)\nabla_i\phi_\varepsilon^\alpha dx\bigg|\\
&\leq \big\|\varpi(\cdot/\varepsilon)S_\varepsilon(\nabla\psi_{2\varepsilon}\nabla\tilde{u}_0)
\big\|_{L^2(\mathbb{R}^d)}\big\|\nabla\phi_\varepsilon\big\|_{L^2(\Omega\setminus\Sigma_{5\varepsilon})}
+\varepsilon\big\|\varpi(\cdot/\varepsilon)S_\varepsilon(\psi_{2\varepsilon}\nabla^2\tilde{u}_0)
\big\|_{L^2(\mathbb{R}^d)}\big\|\nabla\phi_\varepsilon\big\|_{L^2(\Sigma_{\varepsilon})}\\
&\leq C\bigg\{\big\|\nabla u_0\big\|_{L^2(\Omega\setminus\Sigma_{4\varepsilon})}
\big\|\nabla\phi_\varepsilon\big\|_{L^2(\Omega\setminus\Sigma_{5\varepsilon})}
+\varepsilon\big\|\nabla^2 u_0\big\|_{L^2(\Sigma_{2\varepsilon})}
\big\|\nabla\phi_\varepsilon\big\|_{L^2(\Sigma_{\varepsilon})}\bigg\},
\end{aligned}
\end{equation}
where we use Cauchy's inequality in the second inequality, and the estimate $\eqref{pri:2.7}$ in the last one.
We mention that $S_\varepsilon(\nabla\psi_{2\varepsilon}\nabla\tilde{u}_0)$ is supported in
$\Omega\setminus\Sigma_{5\varepsilon}$ while $S_\varepsilon(\psi_{2\varepsilon}\nabla^2\tilde{u}_0)$
is supported in $\Sigma_{\varepsilon}$. By the same token, we have
\begin{equation}\label{f:4.3}
\begin{aligned}
I_{12} &= \varepsilon\bigg|\int_\Omega q_{ik}^\gamma(y)\nabla_\alpha
S_\varepsilon(\psi_{2\varepsilon}\nabla_k\tilde{u}_0^\gamma)\nabla_i\phi_\varepsilon^\alpha dx\bigg| \\
& \leq C\bigg\{\big\|\nabla u_0\big\|_{L^2(\Omega\setminus\Sigma_{4\varepsilon})}
\big\|\nabla\phi_\varepsilon\big\|_{L^2(\Omega\setminus\Sigma_{5\varepsilon})}
+\varepsilon\big\|\nabla^2 u_0\big\|_{L^2(\Sigma_{2\varepsilon})}
\big\|\nabla\phi_\varepsilon\big\|_{L^2(\Sigma_{\varepsilon})}\bigg\}
\end{aligned}
\end{equation}
by noting that $q_{ik}^\gamma(y)$ plays a similar role as
the term $E_{jik}^{\alpha\gamma}(y)+a_{ij}^{\alpha\beta}(y)\chi_k^{\beta\gamma}(y)$ did in $I_{11}$.
By the fact that $\tilde{u}_0$ is the extension of $u_0$, we have
$(1-\psi_{2\varepsilon})\nabla\tilde{u}_0 = (1-\psi_{2\varepsilon})\nabla u_0$ in $\Omega$, and then
\begin{equation*}
\nabla u_0 - S_\varepsilon(\psi_{2\varepsilon}\nabla \tilde{u}_0)
=\big[\nabla \tilde{u}_0 - S_\varepsilon(\nabla \tilde{u}_0)\big]
+\big[S_\varepsilon((1-\psi_{2\varepsilon})\nabla\tilde{u}_0) - (1-\psi_{2\varepsilon})\nabla\tilde{u}_0\big]
+(1-\psi_{2\varepsilon})\nabla u_0 \qquad\text{in}\quad\Omega.
\end{equation*}
Hence we calculate $I_{13}$ as follows
\begin{equation}\label{f:4.1}
\begin{aligned}
I_{13} &= \bigg|\int_\Omega\big[\hat{a}_{ij}^{\alpha\beta}-a_{ij}^{\alpha\beta}(y)\big]
\Big[\nabla_j u_0^\beta-S_\varepsilon(\psi_{2\varepsilon}\nabla_j\tilde{u}_0^\beta)\Big]
\nabla_i\phi_\varepsilon^\alpha dx\bigg| \\
&\leq C\int_\Omega\Big(\big|\nabla \tilde{u}_0 - S_\varepsilon(\nabla \tilde{u}_0) \big|
+ \big|S_\varepsilon((1-\psi_{2\varepsilon})\nabla\tilde{u}_0)
- (1-\psi_{2\varepsilon})\nabla\tilde{u}_0\big|
+ \big|(1-\psi_{2\varepsilon})\nabla u_0\big|\Big)\big|\nabla\phi_\varepsilon\big| dx \\
&\leq C\bigg\{
\big\|\nabla \tilde{u}_0 - S_\varepsilon(\nabla \tilde{u}_0)\big\|_{L^2(\mathbb{R}^d)}
\|\nabla\phi_\varepsilon\|_{L^2(\Omega)}
+\big\|S_\varepsilon((1-\psi_{2\varepsilon})\nabla\tilde{u}_0)
- (1-\psi_{2\varepsilon})\nabla\tilde{u}_0\big\|_{L^2(\mathbb{R}^d)}
\big\|\nabla\phi_\varepsilon\big\|_{L^2(\Omega\setminus\Sigma_{5\varepsilon})} \\
&+ \big\|\nabla u_0\big\|_{L^2(\Omega\setminus\Sigma_{4\varepsilon})}
\big\|\nabla\phi_\varepsilon\big\|_{L^2(\Omega\setminus\Sigma_{4\varepsilon})}\bigg\},
\end{aligned}
\end{equation}
where we use Cauchy's inequality in the last step. Also, we find that
$S_\varepsilon((1-\psi_{2\varepsilon})\nabla\tilde{u}_0)$ is supported in $\Omega\setminus\Sigma_{5\varepsilon}$
The right-hand side of $\eqref{f:4.1}$ is controlled by
\begin{equation*}
C\bigg\{
\varepsilon\|\nabla^2 \tilde{u}_0\|_{L^2(\mathbb{R}^d)}\|\nabla\phi_\varepsilon\|_{L^2(\Omega)}
+\varepsilon\|\nabla\big((1-\psi_{2\varepsilon})\nabla\tilde{u}_0\big)\|_{L^2(\mathbb{R}^d)}
\|\nabla\phi_\varepsilon\|_{L^2(\Omega\setminus\Sigma_{5\varepsilon})}
+ \big\|\nabla u_0\big\|_{L^2(\Omega\setminus\Sigma_{4\varepsilon})}
\big\|\nabla\phi_\varepsilon\big\|_{L^2(\Omega\setminus\Sigma_{4\varepsilon})}\bigg\}
\end{equation*}
due to the estimate $\eqref{pri:2.8}$. Reorganizing the above formula we consequently obtain
\begin{equation}\label{f:4.4}
I_{13}
\leq C\bigg\{
\varepsilon\|u_0\|_{H^2(\Omega)}\|\nabla\phi_\varepsilon\|_{L^2(\Omega)}
+ \big\|\nabla u_0\big\|_{L^2(\Omega\setminus\Sigma_{4\varepsilon})}
\big\|\nabla\phi_\varepsilon\big\|_{L^2(\Omega\setminus\Sigma_{5\varepsilon})}\bigg\}.
\end{equation}
Then plugging $\eqref{f:4.2}$, $\eqref{f:4.3}$ and $\eqref{f:4.4}$ into $\eqref{f:4.5}$, we obtain
\begin{equation}\label{f:4.7}
I_1\leq C\bigg\{\big\|\nabla u_0\big\|_{L^2(\Omega\setminus\Sigma_{4\varepsilon})}
\big\|\nabla\phi_\varepsilon\big\|_{L^2(\Omega\setminus\Sigma_{5\varepsilon})}
+\varepsilon\|u_0\|_{H^2(\Omega)}\|\nabla\phi_\varepsilon\|_{L^2(\Omega)}\bigg\}.
\end{equation}

We now turn to estimate the term $I_2$ in the right-hand side of $\eqref{f:4.6}$.
For any $\sigma\in\mathbb{R}$, we have
\begin{equation*}
\begin{aligned}
I_2 &= \varepsilon\bigg|\int_\Omega\chi_{k}^{\beta\gamma}(\cdot/\varepsilon)\nabla_\beta
S_\varepsilon(\psi_{2\varepsilon}\nabla_k\tilde{u}_0^\gamma)(\theta_\varepsilon-\sigma)dx\bigg|\\
&\leq \int_\Omega\Big|\chi_k^{\beta\gamma}(\cdot/\varepsilon)
S_\varepsilon(\nabla_\beta\psi_{2\varepsilon}\nabla_k\tilde{u}_0^\gamma)
(\theta_\varepsilon-\sigma)\Big|dx
+\varepsilon\int_\Omega\Big|\chi_k^{\beta\gamma}(\cdot/\varepsilon)
S_\varepsilon(\psi_{2\varepsilon}\nabla_{\beta k}^2\tilde{u}_0^\gamma)(\theta_\varepsilon-\sigma)\Big|dx\\
&\leq \big\|\chi(\cdot/\varepsilon)S_\varepsilon(\nabla\psi_{2\varepsilon}\nabla\tilde{u}_0)\big\|_{L^2(\mathbb{R}^d)}
\big\|\theta_\varepsilon-\sigma\big\|_{L^2(\Omega\setminus\Sigma_{5\varepsilon})}
+\varepsilon\big\|\chi(\cdot/\varepsilon)S_\varepsilon(\psi_{2\varepsilon}\nabla^2\tilde{u}_0)\big\|_{L^2(\mathbb{R}^d)}
\big\|\theta_\varepsilon-\sigma\big\|_{L^2(\Sigma_\varepsilon)}\\
&\leq C\bigg\{\|\nabla u_0\|_{L^2(\Omega\setminus\Sigma_{4\varepsilon})}
\|\theta_\varepsilon-\sigma\|_{L^2(\Omega\setminus\Sigma_{5\varepsilon})}
+\varepsilon\|\nabla^2 \tilde{u}_0\|_{L^2(\mathbb{R}^d)}
\|\theta_\varepsilon-\sigma\|_{L^2(\Sigma_\varepsilon)}\bigg\},
\end{aligned}
\end{equation*}
where we mention that $S_\varepsilon(\nabla_\beta\psi_{2\varepsilon}\nabla_k\tilde{u}_0^\gamma)$ is supported
in $\Omega\setminus\Sigma_{5\varepsilon}$ in the second inequality.
In the last step, we use the estimate $\eqref{pri:2.7}$. Since $\sigma\in\mathbb{R}$ is arbitrary, we have
\begin{equation}\label{f:4.8}
 I_2 \leq C\bigg\{\|\nabla u_0\|_{L^2(\Omega\setminus\Sigma_{4\varepsilon})}
\|\theta_\varepsilon\|_{L^2(\Omega\setminus\Sigma_{5\varepsilon})/\mathbb{R}}
+\varepsilon\|u_0\|_{H^2(\Omega)}
\|\theta_\varepsilon\|_{L^2(\Omega)/\mathbb{R}}\bigg\},
\end{equation}
where we use the fact that $\|\nabla^2\tilde{u}_0\|_{L^2(\mathbb{R}^d)}\leq C\|u_0\|_{H^2(\Omega)}$.
Consequently, combining $\eqref{f:4.6}$, $\eqref{f:4.7}$ and $\eqref{f:4.8}$ leads to the estimate
$\eqref{pri:4.2}$, and we have completed the proof.
\end{proof}

Compared with the results of Lemma $\ref{lemma:3.3}$, it is clear to see that the weighted-type norms
can notably
improve the $\varepsilon$'s power in the ``layer type'' estimate as well as in the ``co-layer type'' one.

\begin{lemma}[Improved lemma]\label{lemma:4.3}
Assume the same conditions as in Lemma $\ref{lemma:3.3}$.
Let $(u_0,p_0)$ be the solution to $(\textbf{DS}_0)$ with $F\in L^q(\Omega;\mathbb{R}^d)$,
$h\in W^{1,q}(\Omega)$ and $g\in H^1(\partial\Omega;\mathbb{R}^d)$ satisfying the compatibility condition
$\eqref{a:4}$, where $q=2d/(d+1)$. Then we have
\begin{equation}\label{pri:4.3}
\|\nabla u_0\|_{L^2(\Omega\setminus\Sigma_{p_1\varepsilon};\delta)}
+\|p_0\|_{L^2(\Omega\setminus\Sigma_{p_1\varepsilon};\delta)}
\leq C\varepsilon\Big\{\|F\|_{L^q(\Omega)} + \|h\|_{W^{1,q}(\Omega)} + \|g\|_{H^1(\partial\Omega)} \Big\},
\end{equation}
and further assuming $F\in L^2(\Omega;\mathbb{R}^d)$ and $h\in H^1(\Omega)$, we obtain
\begin{equation}\label{pri:4.4}
\begin{aligned}
\max\Big\{\|\nabla^2u_0\|_{L^2(\Sigma_{p_2\varepsilon};\delta)},
\|\nabla u_0\|_{L^2(\Sigma_{p_2\varepsilon};\delta^{-1})},
&\|p_0\|_{L^2(\Sigma_{p_2\varepsilon};\delta^{-1})}\Big\}\\
&\qquad\quad\leq C\big[\ln(c_0/\varepsilon)\big]^{\frac{1}{2}}
\Big\{\|F\|_{L^2(\Omega)} + \|h\|_{H^{1}(\Omega)}+\|g\|_{H^1(\partial\Omega)}\Big\},
\end{aligned}
\end{equation}
where $p_1,p_2>0$ are fixed real number, and $C$ depends on $\mu,d,p_1,p_2$ and $\Omega$.
\end{lemma}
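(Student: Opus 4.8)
The plan is to prove the two estimates \eqref{pri:4.3} and \eqref{pri:4.4} by refining the argument of Lemma~\ref{lemma:3.3}, inserting the distance weight $\delta$ (or $\delta^{-1}$) at each step where a level-set integral was previously estimated. Recall that in Lemma~\ref{lemma:3.3} one splits $u_0 = v + w$ with $(v,p_{0,1})$ solving the whole-space problem \text{(HP)} and $(w,p_{0,2})$ the boundary-value problem \text{(BVP)} of \eqref{pde:3.4}, and that one already has the $L^{q'}$- and $L^q$-bounds \eqref{f:3.4}, \eqref{f:3.7} for $v$, the level-set bound \eqref{f:3.6} for $\nabla v$ on each $S_t$, the nontangential maximal function bound \eqref{f:3.8} for $(\nabla w)^*$ and $(p_{0,2})^*$, and the interior second-derivative bound \eqref{f:3.12} for $w$. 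Everything below is obtained by feeding these ingredients into a weighted co-area computation.

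\textbf{Step 1 (layer-type weighted bound \eqref{pri:4.3}).} Since $\delta(x) = t$ on $S_t$, the co-area formula \eqref{eq:2.3} gives, for any $f$,
\[
\|f\|_{L^2(\Omega\setminus\Sigma_{p_1\varepsilon};\delta)}^2
= \int_0^{p_1\varepsilon} t\int_{S_t}|f|^2\,dS_t\,dt.
\]
For $f = \nabla v$ I use \eqref{f:3.6} (a uniform-in-$t$ bound on $\int_{S_t}|\nabla v|^2\,dS$) to get $\int_0^{p_1\varepsilon} t\,dt = \tfrac12(p_1\varepsilon)^2$, hence $\|\nabla v\|_{L^2(\Omega\setminus\Sigma_{p_1\varepsilon};\delta)}\le C\varepsilon\{\|F\|_{L^q}+\|h\|_{W^{1,q}}\}$. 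For $f = \nabla w$ I use the radial/nontangential bound: $|\nabla w(\Lambda_t(Q))|\le (\nabla w)^*(Q)$, so by the change of variables in \eqref{pri:2.5},
\[
\int_0^{p_1\varepsilon} t\int_{S_t}|\nabla w|^2\,dS_t\,dt
\le C\Big(\int_0^{p_1\varepsilon} t\,dt\Big)\int_{\partial\Omega}|(\nabla w)^*|^2\,dS
\le C\varepsilon^2\|(\nabla w)^*\|_{L^2(\partial\Omega)}^2,
\]
and \eqref{f:3.8} finishes it. The pressure $p_0 = p_{0,1}+p_{0,2}$ is handled identically: $p_{0,1}$ via a uniform $\|p_{0,1}\|_{L^2(S_t)}$ bound as in \eqref{f:3.16}, and $p_{0,2}$ via $\mathcal{M}(p_{0,2})\le (p_{0,2})^*$ and \eqref{f:3.8}. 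Summing yields \eqref{pri:4.3}.

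\textbf{Step 2 (co-layer weighted bounds \eqref{pri:4.4}).} Here the weight is $\delta^{-1}$ (and $\delta$ for the Hessian term), and the logarithm enters precisely because $\int_{p_2\varepsilon}^{c_0} t^{-1}\,dt = \ln(c_0/(p_2\varepsilon)) \le C\ln(c_0/\varepsilon)$. For $\|\nabla u_0\|_{L^2(\Sigma_{p_2\varepsilon};\delta^{-1})}$ and $\|p_0\|_{L^2(\Sigma_{p_2\varepsilon};\delta^{-1})}$ I again split $u_0=v+w$: on the $v$-part the co-area identity gives $\int_{p_2\varepsilon}^{c_0} t^{-1}\int_{S_t}|\nabla v|^2\,dS_t\,dt$, bounded by the uniform level-set estimate \eqref{f:3.6} times $\ln(c_0/\varepsilon)$, plus the controlled contribution from $\Sigma_{c_0}$ (an interior region, handled by the $H^2$-bound \eqref{pri:3.3} now with $F\in L^2$, $h\in H^1$); on the $w$-part the bound $|\nabla w(\Lambda_t(Q))|\le(\nabla w)^*(Q)$ plus $\int_{p_2\varepsilon}^{c_0} t^{-1}\,dt$ gives the $\ln$ factor times $\|(\nabla w)^*\|_{L^2(\partial\Omega)}^2$, controlled by \eqref{f:3.8}. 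The pressure $p_0$ is treated the same way. For the Hessian term $\|\nabla^2 u_0\|_{L^2(\Sigma_{p_2\varepsilon};\delta)}$, note the weight is now $\delta$, not $\delta^{-1}$; I combine the interior estimate \eqref{f:3.12} for $w$ — which reads $|\nabla^2 w(x)|\le C\delta(x)^{-1}(\dashint_{B(x,\delta(x)/8)}|\nabla w|^2)^{1/2}$ — with the co-area identity:
\[
\int_{\Sigma_{p_2\varepsilon}}|\nabla^2 w|^2\delta\,dx
\le C\int_{\Sigma_{p_2\varepsilon}}\delta^{-1}\dashint_{B(x,\delta(x)/8)}|\nabla w|^2\,dy\,dx
\le C\int_{p_2\varepsilon}^{c_0} t^{-1}\int_{S_t}|(\nabla w)^*|^2\,dS_t\,dt + C\|\nabla w\|_{L^2(\Omega)}^2,
\]
which is $\le C\ln(c_0/\varepsilon)\|(\nabla w)^*\|_{L^2(\partial\Omega)}^2$; the $v$-part uses \eqref{pri:3.3} directly with no $\ln$. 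Collecting all three pieces and invoking \eqref{f:3.8}, \eqref{f:3.5}, \eqref{f:3.14}, \eqref{f:3.15} to bound the right-hand sides by the data norms gives \eqref{pri:4.4}.

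\textbf{Main obstacle.} The delicate point is the co-layer Hessian estimate with weight $\delta$: one must be careful that the interior bound \eqref{f:3.12}, when squared and multiplied by $\delta$, produces exactly the $\delta^{-1}$ density on the averaged $|\nabla w|^2$ so that the co-area integral $\int t^{-1}\,dt$ — and not a divergent $\int t^{-2}\,dt$ — appears; this is the mechanism that converts the $\varepsilon^{-1/2}$ of \eqref{pri:3.7} into the much milder $[\ln(c_0/\varepsilon)]^{1/2}$. A secondary technical nuisance is bookkeeping the union $\Sigma_{p_2\varepsilon} = (\Sigma_{p_2\varepsilon}\setminus\Sigma_{c_0}) \cup \Sigma_{c_0}$ and checking that on the interior region $\Sigma_{c_0}$ all quantities are controlled by the global $H^2$-estimate \eqref{pri:3.3} (now legitimately available since we assume $F\in L^2(\Omega;\mathbb{R}^d)$ and $h\in H^1(\Omega)$ for \eqref{pri:4.4}), so that no $\ln$ or negative power of $\varepsilon$ is introduced there.
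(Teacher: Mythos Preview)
Your proposal is correct and follows essentially the same approach as the paper: the decomposition $u_0=v+w$ from Lemma~\ref{lemma:3.3}, the co-area formula, and the observation that the weight $\delta$ (resp.\ $\delta^{-1}$) converts the level-set integral into $\int_0^{p_1\varepsilon}t\,dt=O(\varepsilon^2)$ for \eqref{pri:4.3} and $\int_{p_2\varepsilon}^{c_0}t^{-1}\,dt=O(\ln(c_0/\varepsilon))$ for \eqref{pri:4.4}. The only cosmetic differences are that the paper dispatches \eqref{pri:4.3} in one line by factoring $\delta\le p_1\varepsilon$ and quoting \eqref{pri:3.6}, and for the $\delta^{-1}$ terms in \eqref{pri:4.4} it applies the radial maximal function $\mathcal{M}$ to $\nabla u_0$ and $p_0$ directly (bounding $\mathcal{M}(p_{0,1})$ via \eqref{pri:2.15}) rather than splitting into $v$ and $w$ before inserting the weight.
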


\begin{proof}
We first address the estimate $\eqref{pri:4.3}$. It follows the definition of weighted-type norm $\eqref{def:2.2}$ and the estimate
$\eqref{pri:3.6}$ that
\begin{equation*}
\begin{aligned}
\|\nabla u_0\|_{L^2(\Omega\setminus\Sigma_{p_1\varepsilon};\delta)}
+\|p_0\|_{L^2(\Omega\setminus\Sigma_{p_1\varepsilon};\delta)}
&\leq C\varepsilon^{\frac{1}{2}}\Big\{
\|\nabla u_0\|_{L^2(\Omega\setminus\Sigma_{p_1\varepsilon})}
+\|p_0\|_{L^2(\Omega\setminus\Sigma_{p_1\varepsilon})}\Big\} \\
&\leq C\varepsilon\Big\{\|F\|_{L^q(\Omega)} + \|h\|_{W^{1,q}(\Omega)} + \|g\|_{H^1(\partial\Omega)} \Big\}.
\end{aligned}
\end{equation*}

Now we continue to handle the estimate $\eqref{pri:4.4}$. Proceeding as in the proof of $\eqref{pri:3.7}$ in
Lemma $\ref{lemma:3.3}$,  we first arrive at
\begin{equation}\label{f:4.20}
\begin{aligned}
\|\nabla^2u_0\|_{L^2(\Sigma_{p_2\varepsilon};\delta)}
&\leq C\|\nabla^2 v\|_{L^2(\mathbb{R}^d)} + \|\nabla^2 w\|_{L^2(\Sigma_{p_2\varepsilon};\delta)} \\
&\leq C\Big\{\|F\|_{L^2(\Omega)} + \|h\|_{H^1(\Omega)}\Big\}
+ \|\nabla^2 w\|_{L^2(\Sigma_{p_2\varepsilon};\delta)},
\end{aligned}
\end{equation}
where we use the hypothesis that $\delta(x) = 0$ when $x\in\mathbb{R}^d\setminus\Omega$ in the first
step, and the estimate $\eqref{f:3.3}$ (with $q=2$) in the last one. By noting $\eqref{f:3.27}$
we actually have
\begin{equation*}
\begin{aligned}
\|\nabla^2 w\|_{L^2(\Sigma_{p_2\varepsilon};\delta)}
&\leq \|\nabla^2 w\|_{L^2(\Sigma_{p_2\varepsilon}\setminus\Sigma_{c_0};\delta)}
+ C\|\nabla w\|_{L^2(\Omega)} \\
&\leq \|\nabla^2 w\|_{L^2(\Sigma_{p_2\varepsilon}\setminus\Sigma_{c_0};\delta)}
+ C\Big\{\|F\|_{L^q(\Omega)}+\|h\|_{W^{1,q}(\Omega)}
+\|g\|_{H^1(\partial\Omega)}\Big\},
\end{aligned}
\end{equation*}
where the estimate $\eqref{f:3.15}$ is used in the last step. The remaining thing is to estimate the first
term in the right-hand side above, and the proof is very similar to that in $\eqref{f:3.13}$.
It follows from the estimate $\eqref{f:3.12}$ that
\begin{equation*}
\begin{aligned}
\int_{\Sigma_{p_2\varepsilon}\setminus\Sigma_{c_0}}|\nabla^2 w|^2 \delta(x)dx
&\leq C\int_{\Sigma_{p_2\varepsilon}\setminus\Sigma_{c_0}}\dashint_{B(x,\delta(x)/8)}
\frac{|\nabla w(y)|^2}{\delta(x)} dy dx \\
&\leq C\int_{p_2\varepsilon}^{c_0}\int_{S_t}\frac{|(\nabla w)^*(x^\prime)|^2}{t}dS_t(\Lambda_t(x^\prime))dt\\
&\leq C\int_{\partial\Omega}|(\nabla w)^*(x^\prime)|^2dS(x^\prime)\int_{p_2\varepsilon}^\infty\frac{dt}{t}\\
&\leq C\ln(c_0/\varepsilon)\|(\nabla w)^*\|_{L^2(\partial\Omega)}^2
\leq C\ln(c_0/\varepsilon)\Big\{\|F\|_{L^q(\Omega)}^2+\|h\|_{W^{1,q}(\Omega)}^2
+\|g\|_{H^1(\partial\Omega)}^2\Big\},
\end{aligned}
\end{equation*}
where we use the estimate $\eqref{f:3.8}$ in the last step. This together with $\eqref{f:4.20}$
partially gives the estimate $\eqref{pri:4.4}$, and we continue to consider how to estimate the quantities
$\|\nabla u_0\|_{L^2(\Sigma_{p_2\varepsilon};\delta^{-1})}$
and $\|p_0\|_{L^2(\Sigma_{p_2\varepsilon};\delta^{-1})}$. For the convenience, they have been
calculated together in the following:
\begin{equation}\label{f:4.21}
\begin{aligned}
\|\nabla u_0\|_{L^2(\Sigma_{p_2\varepsilon};\delta^{-1})}^2
&+ \|p_0\|_{L^2(\Sigma_{p_2\varepsilon};\delta^{-1})}^2
\leq \int_{\Sigma_{p_2\varepsilon}\setminus\Sigma_{c_0}}\big(|\nabla u_0|^2 + |p_0|^2\big)
\frac{dx}{\delta(x)}
+ \frac{1}{c_0}\int_{\Sigma_{c_0}}\big(|\nabla u_0|^2 + |p_0|^2\big)dx\\
&\leq C\int_{\partial\Omega}\big(|\mathcal{M}(\nabla u_0)|^2 + |\mathcal{M}(p_0)|^2\big)dS
\int_{p_2\varepsilon}^{c_0}\frac{dt}{t} + C\Big\{\|\nabla u_0\|_{L^2(\Omega)}^2+\|p_0\|_{L^2(\Omega)}^2\Big\}\\
&\leq C\ln(c_0/\varepsilon)
\Big\{\|\mathcal{M}(\nabla u_0)\|_{L^2(\partial\Omega)}^2
+\|\mathcal{M}(p_0)\|_{L^2(\partial\Omega)}^2
+\|\nabla u_0\|_{L^2(\Omega)}^2+\|p_0\|_{L^2(\Omega)}^2\Big\}\\
&\leq C\ln(c_0/\varepsilon)\Big\{
\|F\|_{L^2(\Omega)}^2 + \|h\|_{H^1(\Omega)}^2+\|g\|_{H^1(\partial\Omega)}^2\Big\}.
\end{aligned}
\end{equation}
In fact some explanations are needed for the last step above. Take pressure term as an example
(the term $\nabla u_0$ obeys the same computation), and since $p_0 = p_{0,1}+p_{0,2}$, where
$p_{0,1}$ and $p_{0,2}$ are given by $\eqref{pde:3.4}$, we have
\begin{equation*}
\begin{aligned}
\|\mathcal{M}(p_0)\|_{L^2(\partial\Omega)}
&\leq \|\mathcal{M}(p_{0,1})\|_{L^2(\partial\Omega)}
+ \|\mathcal{M}(p_{0,2})\|_{L^2(\partial\Omega)} \\
&\leq C\|p_{0,1}\|_{H^1(\Omega\setminus\Sigma_{c_0})} +  \|(p_{0,2})^*\|_{L^2(\partial\Omega)}
\leq C\Big\{\|F\|_{L^2(\Omega)} + \|h\|_{H^1(\Omega)}+\|g\|_{H^1(\partial\Omega)}\Big\},
\end{aligned}
\end{equation*}
where we use the estimate $\eqref{pri:2.15}$ and the fact that
$\mathcal{M}(p_{0,2})(z)\leq (p_{0,2})^*(z)$ for a.e. $z\in\partial\Omega$ in the second step,
and the estimates $\eqref{f:3.3}$ and $\eqref{f:3.8}$ in the last one.
Hence collecting the estimates $\eqref{f:4.20}$ and $\eqref{f:4.21}$ consequently leads to the desired
estimate $\eqref{pri:4.4}$, and we are done.
\end{proof}

\begin{thm}
Assume that $A$ satisfies $\eqref{a:1}-\eqref{a:3}$. Suppose that $F\in L^2(\Omega;\mathbb{R}^d)$,
$h\in H^1(\Omega)$ and $g\in H^1(\partial\Omega;\mathbb{R}^d)$ with the compatibility condition
$\eqref{a:4}$.
Let $(u_\varepsilon,p_\varepsilon)$ and $(u_0,p_0)$ in $H^1(\Omega;\mathbb{R}^d)\times L^2(\Omega)$ be the weak solutions of the Dirichlet problems
$(\textbf{DS})_\varepsilon$ and $(\textbf{DS})_0$, respectively. Then
we have
\begin{equation}\label{pri:4.5}
\big\|u_\varepsilon - u_0 - \varepsilon\chi_{k}(\cdot/\varepsilon)
S_\varepsilon(\psi_{4\varepsilon}\nabla_k u_0)\big\|_{L^2(\Omega)}
\leq C\varepsilon\ln(r_0/\varepsilon)\Big\{\|F\|_{L^2(\Omega)} + \|h\|_{H^1(\Omega)}
+ \|g\|_{H^1(\partial\Omega)}.
\Big\}
\end{equation}
where $C$ depends on $\mu,d$ and $\Omega$.
\end{thm}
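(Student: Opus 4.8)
The plan is to run a duality argument. Since $w_\varepsilon\in H_0^1(\Omega;\mathbb{R}^d)$, we have $\|w_\varepsilon\|_{L^2(\Omega)}=\sup\big\{\big|\int_\Omega w_\varepsilon\Phi\,dx\big|:\Phi\in L^2(\Omega;\mathbb{R}^d),\ \|\Phi\|_{L^2(\Omega)}\leq 1\big\}$, so the whole statement reduces to bounding $\big|\int_\Omega w_\varepsilon\Phi\,dx\big|$ uniformly over such $\Phi$. Fix one and let $(\phi_\varepsilon,\theta_\varepsilon)$, $(\phi_0,\theta_0)$ in $H_0^1(\Omega;\mathbb{R}^d)\times L^2(\Omega)/\mathbb{R}$ be the solutions of the adjoint problems $(\mathbf{DS_\varepsilon})^*$ and $(\mathbf{DS_0})^*$ with right-hand side $\Phi$, and let $\breve{w}_\varepsilon,\breve{z}_\varepsilon$ be as in $\eqref{eq:4.2}$. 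Lemma $\ref{lemma:4.1}$ then controls $\big|\int_\Omega w_\varepsilon\Phi\,dx\big|$ by the right-hand side of $\eqref{pri:4.1}$, a sum of two products, one factor of each involving the primal solution $u_0$ and the other the adjoint pair $(\phi_0,\theta_0)$ together with the adjoint corrector error $(\breve{w}_\varepsilon,\breve{z}_\varepsilon)$.

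The next step is to estimate every factor occurring in $\eqref{pri:4.1}$. Write $\mathcal{D}=\|F\|_{L^2(\Omega)}+\|h\|_{H^1(\Omega)}+\|g\|_{H^1(\partial\Omega)}$; since $q=2d/(d+1)<2$ we have $L^2(\Omega)\subset L^q(\Omega)$ and $H^1(\Omega)\subset W^{1,q}(\Omega)$, so Lemmas $\ref{lemma:3.3}$ and $\ref{lemma:4.3}$ apply. For the primal side, $\eqref{pri:4.3}$ gives $\|\nabla u_0\|_{L^2(\Omega\setminus\Sigma_{8\varepsilon};\delta)}\leq C\varepsilon\,\mathcal{D}$, $\eqref{pri:4.4}$ gives $\varepsilon\|\nabla^2u_0\|_{L^2(\Sigma_{3\varepsilon};\delta)}\leq C\varepsilon[\ln(c_0/\varepsilon)]^{1/2}\mathcal{D}$, and $\eqref{pri:3.6}$, $\eqref{pri:3.7}$ give $\|\nabla u_0\|_{L^2(\Omega\setminus\Sigma_{8\varepsilon})}+\varepsilon\|\nabla^2u_0\|_{L^2(\Sigma_{4\varepsilon})}\leq C\varepsilon^{1/2}\mathcal{D}$. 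For the adjoint side the data are $F=\Phi\in L^2(\Omega;\mathbb{R}^d)$, $h=0$, $g=0$, and $A^*$ satisfies $\eqref{a:1}$–$\eqref{a:3}$, so the same lemmas yield $\|\nabla\phi_0\|_{L^2(\Sigma_{3\varepsilon};\delta^{-1})}+\|\theta_0\|_{L^2(\Sigma_{3\varepsilon};\delta^{-1})}\leq C[\ln(c_0/\varepsilon)]^{1/2}\|\Phi\|_{L^2(\Omega)}$ as well as $\|\nabla\phi_0\|_{L^2(\Omega\setminus\Sigma_{20\varepsilon})}+\|\theta_0\|_{L^2(\Omega\setminus\Sigma_{9\varepsilon})}+\varepsilon\|\nabla^2\phi_0\|_{L^2(\Sigma_{10\varepsilon})}\leq C\varepsilon^{1/2}\|\Phi\|_{L^2(\Omega)}$. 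Finally, applying Theorem $\ref{thm:3.1}$(i) to the adjoint system (its proof is insensitive to the numerical constant in the cut-off, so $\psi_{10\varepsilon}$ is as good as $\psi_{2\varepsilon}$) gives $\|\nabla\breve{w}_\varepsilon\|_{L^2(\Omega)}+\|\breve{z}_\varepsilon\|_{L^2(\Omega)/\mathbb{R}}\leq C\varepsilon^{1/2}\|\Phi\|_{L^2(\Omega)}$.

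Inserting these into $\eqref{pri:4.1}$, the first product has size $\big(C\varepsilon+C\varepsilon[\ln(c_0/\varepsilon)]^{1/2}\big)\cdot C[\ln(c_0/\varepsilon)]^{1/2}\,\mathcal{D}\|\Phi\|_{L^2(\Omega)}=O\big(\varepsilon\ln(c_0/\varepsilon)\big)\mathcal{D}\|\Phi\|_{L^2(\Omega)}$, and the second product has size $C\varepsilon^{1/2}\cdot C\varepsilon^{1/2}\mathcal{D}\|\Phi\|_{L^2(\Omega)}=O(\varepsilon)\mathcal{D}\|\Phi\|_{L^2(\Omega)}$; hence $\big|\int_\Omega w_\varepsilon\Phi\,dx\big|\leq C\varepsilon\ln(c_0/\varepsilon)\,\mathcal{D}\|\Phi\|_{L^2(\Omega)}$. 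Taking the supremum over $\|\Phi\|_{L^2(\Omega)}\leq 1$, and noting $c_0\leq r_0$ so that $\ln(c_0/\varepsilon)\leq\ln(r_0/\varepsilon)$ (the case $\varepsilon\geq c_0$ being trivial), yields $\eqref{pri:4.5}$. I expect the main work to be bookkeeping rather than conceptual: one must verify that the adjoint corrector $(\chi_k^*,\pi_k^*)$ has exactly the properties used in Lemmas $\ref{lemma:3.2}$–$\ref{lemma:3.3}$ so that Theorem $\ref{thm:3.1}$ transfers verbatim to the adjoint system with an arbitrary $\psi_{c\varepsilon}$, and — the genuinely delicate point — that no single factor on either side of $\eqref{pri:4.1}$ is worse than $[\ln(c_0/\varepsilon)]^{1/2}$, so that the final estimate carries only one power of the logarithm and keeps the full $O(\varepsilon)$ scale up to that factor.
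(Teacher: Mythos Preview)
Your proposal is correct and follows essentially the same route as the paper: invoke the duality identity of Lemma~\ref{lemma:4.1}, then bound the primal factors in \eqref{pri:4.1} by Lemmas~\ref{lemma:3.3} and~\ref{lemma:4.3} and the adjoint factors (including $\breve w_\varepsilon,\breve z_\varepsilon$) by the same lemmas together with Theorem~\ref{thm:3.1}(i) applied to $(\mathbf{DS_\varepsilon})^*$. Your remark that Theorem~\ref{thm:3.1}(i) transfers with cut-off $\psi_{10\varepsilon}$ in place of $\psi_{2\varepsilon}$ is exactly the point the paper glosses over when it cites \eqref{pri:3.12} for the adjoint corrector.
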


\begin{proof}
It is convenient to assume $\|F\|_{L^2(\Omega)} + \|h\|_{H^1(\Omega)}
+ \|g\|_{H^1(\partial\Omega)} = 1$ on account of the linearity of
$(\textbf{DS})_\varepsilon$ and $(\textbf{DS})_0$. Thus by setting
\begin{equation*}
w_\varepsilon = u_\varepsilon - u_0 - \varepsilon\chi_k(\cdot/\varepsilon)
S_\varepsilon(\psi_{4\varepsilon}\nabla_ku_0)
\end{equation*}
it is equivalent to proving $\|w_\varepsilon\|_{L^2(\Omega)}\leq C\ln(r_0/\varepsilon)$.
Let $\Phi\in L^2(\Omega;\mathbb{R}^d)$, and $(\phi_\varepsilon,\theta_\varepsilon),
(\phi_0,\theta_0)$ be the solutions of the corresponding adjoint problems
$(\textbf{DS})_\varepsilon^*$ and $(\textbf{DS})_0^*$.
Due to the estimate $\eqref{pri:4.1}$ in Lemma $\ref{lemma:4.1}$, the remaining thing is to estimate the right-hand side of
$\eqref{pri:4.1}$ term by term. Then we first have

\begin{equation}\label{f:4.22}
\begin{aligned}
\Big\{\big\|\nabla u_0\big\|_{L^2(\Omega\setminus\Sigma_{8\varepsilon};\delta)}
&+\varepsilon\big\|\nabla^2u_0\big\|_{L^2(\Sigma_{3\varepsilon};\delta)}\Big\}\\
&\cdot\Big\{\big\|\nabla\phi_0\big\|_{L^2(\Sigma_{3\varepsilon};\delta^{-1})}
+\big\|\theta_0\big\|_{L^2(\Sigma_{3\varepsilon};\delta^{-1})}\Big\}
\leq C\Big\{\varepsilon + \varepsilon\big[\ln(r_0/\varepsilon)\big]^{\frac{1}{2}}\Big\}
\cdot\big[\ln(r_0/\varepsilon)\big]^{\frac{1}{2}}\big\|\Phi\big\|_{L^2(\Omega)},
\end{aligned}
\end{equation}
where we use the estimates $\eqref{pri:4.3}$ and $\eqref{pri:4.4}$, and then obtain
\begin{equation}\label{f:4.23}
\begin{aligned}
 \Big\{\big\|\nabla u_0\big\|_{L^2(\Omega\setminus\Sigma_{8\varepsilon})}
&+\varepsilon\|\nabla^2 u_0\|_{L^2(\Sigma_{4\varepsilon})}\Big\}\\
&\cdot\Big\{\|\nabla\breve{w}_\varepsilon\|_{L^2(\Omega)}
+\|\breve{z}_\varepsilon\|_{L^2(\Omega)/\mathbb{R}}
+\|\nabla\phi_0\|_{L^2(\Omega\setminus\Sigma_{20\varepsilon})}
+\|\theta_0\|_{L^2(\Omega\setminus\Sigma_{9\varepsilon})}
+ \varepsilon\|\nabla^2\phi_0\|_{L^2(\Sigma_{10\varepsilon})}\Big\}\\
&\leq C\Big\{\varepsilon^{\frac{1}{2}}+\varepsilon^{\frac{1}{2}}\Big\}
\cdot\Big\{\varepsilon^{\frac{1}{2}} + \varepsilon^{\frac{1}{2}} + \varepsilon^{\frac{1}{2}}
+ \varepsilon^{\frac{1}{2}}\Big\}\big\|\Phi\big\|_{L^2(\Omega)}
\leq C\varepsilon\big\|\Phi\big\|_{L^2(\Omega)}
\end{aligned}
\end{equation}
where we employ the estimates $\eqref{pri:3.6}$, $\eqref{pri:3.7}$,
$\eqref{pri:3.10}$ and $\eqref{pri:3.12}$.
Inserting $\eqref{f:4.22}$ and $\eqref{f:4.23}$ into the estimate $\eqref{pri:4.1}$,
we consequently arrive at
$\big|\int_{\Omega}w_\varepsilon\Phi dx\big|\leq C\varepsilon\ln(r_0/\varepsilon)
\|\Phi\|_{L^2(\Omega)}$, and this leads to the desired estimate $\eqref{pri:4.5}$.
We have completed the proof.
\end{proof}

\section{$O(\varepsilon)$ convergence rates for $d=2$}

\begin{lemma}\label{lemma:6.1}
Let $d\geq 2$,
and  $\big\{E_{jik}^{\alpha\gamma},T_{ik}^{\alpha\gamma},b_{ik}^{\alpha\gamma},q_{ik}^\gamma\big\}$ be given
in Lemma $\ref{lemma:2.3}$. If the correctors $\chi_k=(\chi_k^{\beta\gamma})$ with $k=1,\cdots,d$ are H\"older continuous,
then $E_{jik}^{\alpha\gamma},q_{ik}^\gamma\in L^\infty(Y)$.
\end{lemma}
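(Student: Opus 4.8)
The plan is to promote the $L^2$-bound on $b_{ik}^{\alpha\gamma}$ to a Morrey-space bound and then exploit the regularity theory for the auxiliary Stokes system $\eqref{pde:2.2}$; recall from the proof of Lemma $\ref{lemma:2.3}$ that $(T_{ik}^\gamma,q_{ik}^\gamma)$ solves $\eqref{pde:2.2}$ with datum $b_{ik}^\gamma$ and that $E_{jik}^{\alpha\gamma}=\nabla_j T_{ik}^{\alpha\gamma}-\nabla_i T_{jk}^{\alpha\gamma}$. Fix $\sigma\in(0,1)$ with $\chi_k^{\beta\gamma}\in C^{0,\sigma}(\mathbb{R}^d)$. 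First I would show that $\nabla\chi_k^\gamma$ lies in the Morrey space $L^{2,d-2+2\sigma}(Y)$, meaning $\int_{B_r}|\nabla\chi_k^\gamma|^2\,dy\le Cr^{d-2+2\sigma}$ for every ball $B_r=B(y_0,r)$ with $y_0\in Y$ and $0<r\le 1$. This is immediate from the Caccioppoli inequality $\eqref{pri:2.13}$: taking $c=\chi_k^\gamma(y_0)$ and using $|\chi_k^\gamma(y)-\chi_k^\gamma(y_0)|\le Cr^\sigma$ on $2B$ gives $\int_{B_r}|\nabla\chi_k^\gamma|^2\le Cr^{-2}\{r^{d+2\sigma}+r^{d+2}\}\le Cr^{d-2+2\sigma}$ since $\sigma<1$, and the periodicity of $\chi_k^\gamma$ reduces an arbitrary ball to one with centre in $Y$. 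Since $\hat a_{ik}^{\alpha\gamma}-a_{ik}^{\alpha\gamma}(\cdot)\in L^\infty(Y)$ and $A$ is bounded by $\eqref{a:1}$, the defining formula for $b_{ik}^{\alpha\gamma}$ then yields $b_{ik}^{\alpha\gamma}\in L^{2,d-2+2\sigma}(Y)$.

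Next I would appeal to Morrey-space regularity for $\eqref{pde:2.2}$. Because the principal part of $\eqref{pde:2.2}$ is the constant-coefficient pair $(\Delta,\nabla)$, the maps $b\mapsto\nabla^2 T_{ik}^\gamma$ and $b\mapsto\nabla q_{ik}^\gamma$ are convolutions against (periodic) Calder\'on--Zygmund kernels — the singular integrals already used to prove $\eqref{pri:3.3}$ — and these are bounded on $L^{2,\lambda}$ for every $0<\lambda<d$, so $\nabla^2 T_{ik}^\gamma,\ \nabla q_{ik}^\gamma\in L^{2,d-2+2\sigma}(Y)$. (If one prefers to avoid Morrey-space singular-integral theory, the same conclusion follows from a Campanato iteration comparing $(T_{ik}^\gamma,q_{ik}^\gamma)$ on small balls with a solution of the homogeneous Stokes system and using the standard interior $L^2$-excess decay of $\nabla T$ and of the pressure for the latter.) Now invoke the classical Morrey--Campanato embedding: a function on $Y$ whose gradient lies in $L^{2,d-2+2\sigma}$ with $0<\sigma<1$ belongs to $C^{0,\sigma}(Y)$. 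Applying this to the components of $\nabla T_{ik}^\gamma$ and to $q_{ik}^\gamma$ (both of which lie in $H^1(Y)$ by the $H^2$-regularity already used in Lemma $\ref{lemma:2.3}$) gives $\nabla T_{ik}^\gamma,\ q_{ik}^\gamma\in C^{0,\sigma}(Y)$, hence $E_{jik}^{\alpha\gamma}=\nabla_j T_{ik}^{\alpha\gamma}-\nabla_i T_{jk}^{\alpha\gamma}\in C^{0,\sigma}(Y)\subset L^\infty(Y)$ and $q_{ik}^\gamma\in C^{0,\sigma}(Y)\subset L^\infty(Y)$, which is the assertion.

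I would also note that the first step is valid in every dimension $d\ge 2$, since $0<d-2+2\sigma<d$ whenever $0<\sigma<1$ and $d\ge 2$; thus no dimensional restriction enters, even though the hypothesis that $\chi_k$ be H\"older continuous is only known to hold automatically for $d=2$ (via the hole-filling technique). The one place calling for genuine care is the Morrey estimate for the Stokes system: if it is done as a Campanato iteration instead of quoted, one has to set up the interior Caccioppoli inequality and the $L^2$-decay estimates for the homogeneous Stokes system explicitly, which is routine but a little delicate. The remaining ingredients — reducing to balls centred in $Y$, passing from $\nabla^2 T$ to $\nabla T$, and the $C^{0,\sigma}$ embedding — are standard.
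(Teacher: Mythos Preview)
Your proposal is correct and follows essentially the same strategy as the paper: derive the Morrey bound $\int_{B_r}|b_{ik}^{\alpha\gamma}|^2\le Cr^{d-2+2\sigma}$ from Caccioppoli's inequality $\eqref{pri:2.13}$ and the assumed H\"older continuity of $\chi_k$, then use the regularity of the auxiliary Stokes system $\eqref{pde:2.2}$ to upgrade to $L^\infty$. The only difference is packaging: the paper localizes with a cutoff and bounds $\nabla T_{ik}^\gamma$ pointwise by a dyadic-shell decomposition of the convolution with the order-$(1-d)$ kernel $\nabla\mathbf{U}$, whereas you work one derivative higher, quoting boundedness of Calder\'on--Zygmund operators on $L^{2,\lambda}$ for $\nabla^2 T_{ik}^\gamma$ and $\nabla q_{ik}^\gamma$ and then the Campanato embedding---the two routes are standard equivalents.
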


\begin{proof}
We mention that the original idea of the proof belongs to \cite{SZW16}.
It is convenient to assume $\chi_k\in C^{0,\sigma}(\mathbb{R}^d;\mathbb{R}^{d^2})$ with $\sigma\in(0,1)$.
It follows from Caccioppoli's inequality $\eqref{pri:2.13}$ that
\begin{equation*}
 \int_{B(x,r)} |\nabla \chi_k|^2 dy
 \leq \frac{C}{r^2} \int_{B(x,2r)} |\chi_k(y) - \chi_k(x)|^2 dy + Cr^d
 \leq Cr^{d+2\sigma-2}
\end{equation*}
for any $r\in(0,1)$ and $x\in Y$. Recalling the expression of $b_{ik}^{\alpha\gamma}$
in Lemma $\ref{lemma:2.3}$, it follows from the above inequality that
\begin{equation}\label{f:6.1}
\dashint_{B(x,r)}|b_{ik}^{\alpha\gamma}|^2 dy \leq C\big\{1+r^{2\sigma-2}\big\}
\leq Cr^{2\sigma-2}.
\end{equation}
By translation we may assume $Y$ is centered at $0$, and construct the smooth cut-off function
(still denoted by $\psi$) such that $\psi = 1$ in $B(0,3/2)$ and $\psi = 0$ outside $B(0,2)$.
Let $w = (w_{ik}^{\alpha\gamma})$ with $w_{ik}^{\alpha\gamma} = \psi T_{ik}^{\alpha\gamma}$ and
$z=(z_{ik}^\gamma)$ with $z_{ik}^\gamma = \psi q_{ik}^\gamma$. Then
by a localization argument it follows from the equation $\eqref{pde:2.2}$ that
\begin{equation}\label{eq:6.1}
\Delta w + \nabla z = \tilde{F}
\quad\text{and}\quad \text{div}(w) = \tilde{h}
\qquad\text{in}~~\mathbb{R}^d,
\end{equation}
where $\tilde{F}_{ik}^{\alpha\gamma} = \psi b_{ik}^{\alpha\gamma}
+\nabla_\alpha\psi q_{ik}^\gamma + \nabla\psi\cdot\nabla T_{ik}^{\alpha\gamma}
+ \Delta\psi T_{ik}^{\alpha\gamma}$, and
$\tilde{h}_{ik}^\gamma = \nabla_\alpha\psi T_{ik}^{\alpha\gamma}$.
We mention that $\tilde{F}$ and $\tilde{h}$ are supported in $B(0,2)$.

Furthermore, let $\Delta v = \tilde{h}$
in $\mathbb{R}^d$, and it is clear to see that $\text{div}(w-\nabla v) = 0$ in $\mathbb{R}^d$. If we set
$\tilde{w} = w-\nabla v$, then the equation $\eqref{eq:6.1}$ becomes
$\Delta \tilde{w} + \nabla z = \tilde{F}-\Delta(\nabla v)$ and $\text{div}(\tilde{w}) = 0$ in $\mathbb{R}^d$.
Thus in view of the fundamental solution to Stokes systems (see \cite[Chapter 3]{OAL}) we have
$\tilde{w} = \textbf{U}*(\tilde{F}-\nabla\tilde{h})$ and
$z = \textbf{Q}*(\tilde{F}-\nabla\tilde{h})$, where we use the
fact that $\Delta(\nabla v) = \nabla (\Delta v) = \nabla\tilde{h}$ in $\mathbb{R}^d$.
Hence denoting the fundamental solution to $\Delta$ by $\Gamma$ it follows that
$v=\Gamma*\tilde{h}$, and
\begin{equation}
  w = \textbf{U}*\big(\tilde{F}-\nabla\tilde{h}\big) + \nabla \Gamma*\tilde{h},
  \qquad \text{and}\qquad
  \qquad z = \textbf{Q}*(\tilde{F}-\nabla\tilde{h}).
\end{equation}
From the expressions of $\textbf{U}$ and $\textbf{Q}$ (see $\eqref{eq:3.9}$), it is not hard to see that
$|\nabla \textbf{U}(x)|\leq C|x|^{1-d}$ and $|\nabla \textbf{Q}(x)|\leq C|x|^{1-d}$.
Also $|\nabla\Gamma(x)|\leq C|x|^{1-d}$. So, to estimate
$|\nabla w(x)|$ and $|\nabla z(x)|$,
an important thing is to analyze the terms $\tilde{F}$, $\tilde{h}$ and $\nabla\tilde{h}$. For any
$x\in Y$,
\begin{equation}
\begin{aligned}
|\nabla w(x)|
&\leq C\int_{\mathbb{R}^d}\frac{1}{|x-y|^{d-1}}\Big\{|\tilde{F}(y)|+|\nabla\tilde{h}(y)|\Big\}dy\\
&\leq C\sum_{i,k,\alpha,\gamma=1}^d\bigg\{\int_{B(x,3)}\frac{|b_{ik}^{\alpha\gamma}(y)|dy}{|x-y|^{d-1}}
+ \int_{B(0,2)\setminus B(0,\frac{3}{2})}\frac{1}{|x-y|^{d-1}}\Big(|q_{ik}^{\gamma}(y)|
+|\nabla T_{ik}^{\alpha\gamma}(y)|
+|T_{ik}^{\alpha\gamma}(y)|\Big)dy\bigg\}\\
&\leq C\sum_{i,k,\alpha,\gamma=1}^d\sum_{k=0}^\infty \int_{2^{-k+1}<|x-y|<2^{-k+2}}
\frac{|b_{ik}^{\alpha\gamma}(y)|}{|x-y|^{d-1}} dy
+C\sum_{i,k,\alpha,\gamma=1}^d\Big\{\|q_{ik}^\gamma\|_{L^2(Y)}
+\|T_{ik}^{\alpha\gamma}\|_{H^1(Y)}\Big\}\\
&\leq C\sum_{i,k,\alpha,\gamma=1}^d
\bigg\{\sum_{k=0}^\infty 2^{(1-k)(1-d)}\cdot 2^{(2-k)d}
\Big(\dashint_{|x-y|<2^{-k+2}}|b_{ik}^{\alpha\beta}|^2 dy\Big)^{1/2}+ 1\bigg\}\\
&\leq C\Big\{\sum_{k=0}^\infty 2^{-\sigma k} + 1\Big\}<\infty,
\end{aligned}
\end{equation}
where we use Cauchy's inequality in the third step, and the estimate $\eqref{f:2.2}$ in the fourth one. In the last
inequality we employ the estimate $\eqref{f:6.1}$. This implies
$\|\nabla T_{ik}^{\alpha\gamma}\|_{L^\infty}(Y)\leq C$.
By the same token, it is easy to derive $\|\nabla q_{ik}^{\gamma}\|_{L^\infty}(Y)\leq C$.
By recalling the expression of $E_{jik}^{\alpha\gamma}$ in the proof of Lemma $\ref{lemma:2.3}$, we have
$\|E_{jik}^{\alpha\gamma}\|_{L^\infty(Y)}\leq C$, and we have completed the proof.
\end{proof}

\begin{lemma}\label{lemma:6.2}
Let $d=2$. Suppose that $A$ satisfies $\eqref{a:1}$ and $\eqref{a:2}$. Assume the corrector
$(\chi_k^{\beta\gamma},\pi_k^\gamma)\in H^1_{per}(Y)\times L^2_{per}(Y)/\mathbb{R}$ satisfies
$\eqref{pde:2.1}$.
Then for some $\sigma\in(0,1)$ we have $\chi_k^{\beta\gamma}\in C^{0,\sigma}(\mathbb{R}^d)$
and $\pi_k^\gamma\in L_{loc}^{2,2\sigma}(\mathbb{R}^d)$
with $k,\beta,\gamma=1,2$.
\end{lemma}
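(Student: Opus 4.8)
The plan is to establish a Morrey-type decay estimate for the Dirichlet energy of the corrector by combining the Caccioppoli inequality $\eqref{pri:2.13}$ with a Poincar\'e inequality on annuli and the hole-filling trick (cf.\ \cite{MGLM,KOW}), and then to read off the H\"older continuity of $\chi_k^{\beta\gamma}$ from Morrey's Dirichlet growth theorem, which in the critical dimension $d=2$ requires only $L^2$-integrability of $\nabla\chi_k^\gamma$ together with an arbitrarily small positive growth exponent. The statement for $\pi_k^\gamma$ will then follow from Lemma $\ref{lemma:2.1}$ applied on balls, which turns the decay of $\int_{B_r}|\nabla\chi_k^\gamma|^2$ into a Campanato-type decay of the oscillation of $\pi_k^\gamma$, together with the standard identification of Campanato and Morrey spaces in the sub-critical range.

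More precisely, first I would fix $x\in\mathbb{R}^d$, a small scale $R_0>0$ and $0<r\le R_0/2$, and apply $\eqref{pri:2.13}$ on $B=B_r(x)$ with $c$ the average of $\chi_k^\gamma$ over the annulus $2B\setminus B$. The Poincar\'e--Wirtinger inequality on the annulus, whose constant scales like $r$ by dilation and is therefore uniform in $x$ and $r$, gives $\int_{2B\setminus B}|\chi_k^\gamma-c|^2 dy\le Cr^2\int_{2B\setminus B}|\nabla\chi_k^\gamma|^2 dy$, so that
\begin{equation*}
\int_{B_r(x)}|\nabla\chi_k^\gamma|^2 dy
\le C_1\int_{B_{2r}(x)\setminus B_r(x)}|\nabla\chi_k^\gamma|^2 dy + Cr^d .
\end{equation*}
Adding $C_1\int_{B_r(x)}|\nabla\chi_k^\gamma|^2 dy$ to both sides (``filling the hole'') yields
\begin{equation*}
\int_{B_r(x)}|\nabla\chi_k^\gamma|^2 dy
\le \theta_0\int_{B_{2r}(x)}|\nabla\chi_k^\gamma|^2 dy + Cr^d,\qquad \theta_0=\frac{C_1}{1+C_1}<1 .
\end{equation*}
A standard iteration lemma (cf.\ \cite[Lemma 0.5]{MGMG}), combined with the periodicity of $\chi_k^\gamma$ and $\eqref{pri:2.4}$ to bound $\int_{B_{R_0}(x)}|\nabla\chi_k^\gamma|^2 dy$ uniformly in $x$, then produces some $\sigma\in(0,1)$ with
\begin{equation*}
\int_{B_r(x)}|\nabla\chi_k^\gamma|^2 dy\le Cr^{2\sigma}\qquad\text{for all }x\in\mathbb{R}^d\text{ and }0<r\le R_0 .
\end{equation*}
Since $d=2$, this is exactly a Dirichlet growth condition of exponent $\sigma$, so Morrey's theorem gives $\chi_k^{\beta\gamma}\in C^{0,\sigma}_{loc}(\mathbb{R}^d)$ with a uniform local H\"older seminorm, and by periodicity $\chi_k^{\beta\gamma}\in C^{0,\sigma}(\mathbb{R}^d)$.

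For the pressure I would apply Lemma $\ref{lemma:2.1}$ on each ball $B_r(x)$ with $0<r\le R_0$ to $P=\pi_k^\gamma$, which solves $\nabla\pi_k^\gamma=\text{div}\big(A(\cdot)(\nabla\chi_k^\gamma+\nabla P_k^\gamma)\big)$ by the first line of $\eqref{pde:2.1}$; since $\eqref{pri:2.1}$ is invariant under translation and dilation of the ball, its constant is uniform in $x$ and $r$, and with $c=\dashint_{B_r(x)}\pi_k^\gamma$ we obtain
\begin{equation*}
\Big\|\pi_k^\gamma-\dashint_{B_r(x)}\pi_k^\gamma\Big\|_{L^2(B_r(x))}
\le C\big\|\nabla\pi_k^\gamma\big\|_{H^{-1}(B_r(x))}
\le C\Big\{\|\nabla\chi_k^\gamma\|_{L^2(B_r(x))}+r^{d/2}\Big\}\le Cr^\sigma
\end{equation*}
for $d=2$ and $r\le R_0\le1$, using the energy decay above. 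Thus $\pi_k^\gamma$ lies in the Campanato space $\mathcal{L}^{2,2\sigma}_{loc}(\mathbb{R}^d)$; as $2\sigma<2=d$ and $\pi_k^\gamma\in L^2_{loc}(\mathbb{R}^d)$ (again by periodicity and $\eqref{pri:2.4}$), the isomorphism between Campanato and Morrey spaces in the range $\lambda<d$ gives $\pi_k^\gamma\in L^{2,2\sigma}_{loc}(\mathbb{R}^d)$. The core of the argument is the hole-filling energy decay; the only delicate point is that the exponent $\sigma$ it yields is in general small, which is harmless precisely because $d=2$: there any positive exponent suffices both for the conclusion that $\chi_k^\gamma$ is H\"older continuous and for the sub-critical Campanato--Morrey identification for $\pi_k^\gamma$. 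This is exactly where the restriction to two dimensions enters and cannot be removed by this method.
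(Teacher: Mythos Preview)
Your proposal is correct and follows essentially the same route as the paper: Caccioppoli's inequality $\eqref{pri:2.13}$ combined with Poincar\'e on the annulus and the hole-filling trick to obtain the decay $\int_{B_r}|\nabla\chi_k^\gamma|^2\le Cr^{2\sigma}$, then Morrey's theorem in $d=2$ for $\chi_k^{\beta\gamma}$, and finally Lemma $\ref{lemma:2.1}$ on balls for $\pi_k^\gamma$. Your write-up is in fact slightly more careful than the paper's at two points---you make the Poincar\'e step on the annulus explicit before filling the hole, and you spell out the Campanato--Morrey identification for $2\sigma<d$ needed to conclude $\pi_k^\gamma\in L^{2,2\sigma}_{loc}$ from the oscillation estimate---but the argument is the same.
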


\begin{proof}
We take the hole-filling technique (see \cite{MGMG} and originally developed in \cite{KOW}) to handle this estimate, and provide a
proof for the sake of the completeness. For any $B=B(x,R)\subset\mathbb{R}^d$, we may assume $x=0$ by translation,
and it follows from Caccioppoli's inequality $\eqref{pri:2.13}$ that
\begin{equation*}
\int_{B} |\nabla\chi_k^\gamma|^2 dy \leq C\Big\{\frac{1}{R^2}\int_{2B\setminus B}
|\chi_k^\gamma-c|^2dy + R^2\Big\} \qquad \forall c\in\mathbb{R}^d.
\end{equation*}
By adding the term $C\int_B|\nabla\chi_k^\gamma|dy$ in the both sides of the above inequality, we have
\begin{equation*}
(C+1)\int_{B} |\nabla\chi_k^\gamma|^2 dy \leq C\Big\{\frac{1}{R^2}\int_{2B}
|\chi_k^\gamma-c|^2dy + R^2\Big\}
\leq  C\Big\{\int_{2B}
|\nabla \chi_k^\gamma|^2dy + R^2\Big\}.
\end{equation*}
Hence set $\theta = C/(C+1)$ and $\phi(R) = \int_{B(x,R)}|\nabla\chi_k^\gamma|^2dy$, and we then have
\begin{equation*}
\phi(R) = \theta\phi(2R) + \theta R^2.
\end{equation*}
Iterating the above formula with respect to $R$, one may derive
\begin{equation*}
 \phi(2^{-k}R) \leq \theta^{k+1}\Big\{\phi(2R)+\frac{4\theta}{4\theta+1}R^2\Big\}.
\end{equation*}
We choose $\rho>0$ such that $2^{-k-1}R<\rho\leq 2^{-k}R$, and then obtain
\begin{equation*}
\phi(\rho) \leq \left(\frac{\rho}{R}\right)^{\log_2\frac{1}{\theta}}
\Big\{\phi(2R)+R^2\Big\}.
\end{equation*}
By setting $2\sigma = \log_2\frac{1}{\theta}$ and $R=1$, we consequently arrive at
$\phi(\rho) \leq C\rho^{2\sigma}$, where we employ the estimate $\ref{pri:2.4}$ to handle
the term $\phi(2)$.
Thus the desired result $\chi_k^{\beta\gamma}\in C^{0,\sigma}(\mathbb{R}^d)$
follows from the Morrey theorem (see \cite[Theorem 5.7]{MGLM}) and the periodicity of
$\chi_k^{\beta\gamma}$. Moreover, on account of Lemma $\eqref{pri:2.1}$ one may have
\begin{equation}
 \int_{B(x,\rho)}|\pi_k^\gamma -c_1|^2 dy
 \leq C\Big\{\int_{B(x,\rho)}|\nabla\chi_k^\gamma|^2 dy + \rho^2\Big\}
 \leq C\rho^{2\sigma}
\end{equation}
for any $c_1\in\mathbb{R}$.
By setting $c_1 = \dashint_{B(x,\rho)} \pi_k^\gamma dy$ it is clear to see that
$\pi_k^\gamma\in L^{2,2\sigma}_{loc}(\mathbb{R}^d)$,
where the space $L^{2,2\sigma}_{loc}(\mathbb{R}^d)$ is the Morrey space
(see \cite[Definition 5.1]{MGLM}). We have completed the proof.
\end{proof}

\begin{thm}\label{thm:6.1}
Let $d=2$ and $1\leq q<\infty$. Suppose that $A$ satisfies $\eqref{a:1}$ and $\eqref{a:2}$,
and $u_0\in H^2(\Omega;\mathbb{R}^d)$. Assume $(u_\varepsilon,p_\varepsilon)$ and
$(u_0,p_0)$ in $H^1(\Omega;\mathbb{R}^d)\times L^2(\Omega)/\mathbb{R}$ satisfy the equations
$(\textbf{DS}_\varepsilon)$ and $(\textbf{DS}_0)$, respectively.
If we set $\varphi_k^\gamma = \nabla_k u_0^\gamma$ in $\eqref{eq:3.4}$, then we have the estimate
$\eqref{pri:1.4}$ where $C$ depends only on $\mu,d$ and $\Omega$.
\end{thm}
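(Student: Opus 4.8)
The plan is to run the same machinery of Lemma \ref{lemma:3.2} and Corollary \ref{cor:3.1}, but with the much simpler choice $\varphi_k^\gamma = \nabla_k u_0^\gamma$ in place of the smoothed $S_\varepsilon(\psi_{2\varepsilon}\nabla_k u_0^\gamma)$, and to exploit the boundedness of the correctors and auxiliary functions that Lemmas \ref{lemma:6.2} and \ref{lemma:6.1} provide in the two-dimensional setting. First I would set $w_\varepsilon^\beta = u_\varepsilon^\beta - u_0^\beta - \varepsilon\chi_k^{\beta\gamma}(\cdot/\varepsilon)\nabla_k u_0^\gamma$ and the corresponding $z_\varepsilon = p_\varepsilon - p_0 - \pi_k^\gamma(\cdot/\varepsilon)\nabla_k u_0^\gamma - \varepsilon q_{ik}^\gamma(\cdot/\varepsilon)\nabla_i\nabla_k u_0^\gamma$. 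Since $\varphi=\nabla u_0\in H^1_0(\Omega)$ is false in general (the trace of $\nabla u_0$ need not vanish), I would instead invoke Lemma \ref{lemma:3.1}/\ref{lemma:3.2} in the form valid for $\varphi\in H^1(\Omega;\mathbb{R}^{d\times d})$ — or, more cleanly, note that with $\varphi = \nabla u_0$ one has $\nabla u_0 - \varphi \equiv 0$, so the "slowly varying" error term in $\tilde f$ drops out entirely, and the boundary term $w_\varepsilon = -\varepsilon\chi(\cdot/\varepsilon)\nabla u_0$ on $\partial\Omega$ must be handled directly rather than assumed zero. Concretely I would write $w_\varepsilon = \widetilde w_\varepsilon - \varepsilon\,\theta$, where $\theta$ solves an auxiliary Stokes problem correcting the boundary data, estimate $\|\theta\|_{H^1}$ via Theorem \ref{thm:2.1} together with the trace estimate $\|\chi(\cdot/\varepsilon)\nabla u_0\|_{H^{1/2}(\partial\Omega)} \le C\|u_0\|_{H^2(\Omega)}$ (using $\chi\in C^{0,\sigma}\subset L^\infty$ from Lemma \ref{lemma:6.2}), and reduce to the interior estimate for $\widetilde w_\varepsilon\in H^1_0(\Omega)$.

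The key point is that in $d=2$ the force term $\tilde f$ of \eqref{eq:3.5} now reads, with $\varphi=\nabla u_0$,
\begin{equation*}
\tilde f_i^\alpha = \varepsilon\big[E_{jik}^{\alpha\gamma}(\cdot/\varepsilon) + a_{ij}^{\alpha\beta}(\cdot/\varepsilon)\chi_k^{\beta\gamma}(\cdot/\varepsilon)\big]\nabla_j\nabla_k u_0^\gamma - \varepsilon q_{ik}^\gamma(\cdot/\varepsilon)\nabla_\alpha\nabla_k u_0^\gamma,
\end{equation*}
and by Lemmas \ref{lemma:6.1} and \ref{lemma:6.2} the bracketed periodic coefficients $E_{jik}^{\alpha\gamma}$, $q_{ik}^\gamma$, $\chi_k^{\beta\gamma}$ all lie in $L^\infty(Y)$, hence
\begin{equation*}
\|\tilde f\|_{L^2(\Omega)} \le C\varepsilon\|\nabla^2 u_0\|_{L^2(\Omega)} \le C\varepsilon\|u_0\|_{H^2(\Omega)},
\end{equation*}
with no boundary-layer splitting and no smoothing operator needed. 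Likewise $\eta = -\varepsilon\chi_k^{\beta\gamma}(\cdot/\varepsilon)\nabla_\beta\nabla_k u_0^\gamma$ satisfies $\|\eta\|_{L^2(\Omega)}\le C\varepsilon\|u_0\|_{H^2(\Omega)}$, and $\int_\Omega \eta\,dx=0$ exactly as in Lemma \ref{lemma:3.1}. Feeding these into the a priori estimate of Theorem \ref{thm:2.1} gives $\|\widetilde w_\varepsilon\|_{H^1_0(\Omega)} + \|z_\varepsilon\|_{L^2(\Omega)/\mathbb{R}} \le C\varepsilon\|u_0\|_{H^2(\Omega)}$, whence by Sobolev embedding in $d=2$ one obtains $\|w_\varepsilon\|_{L^p(\Omega)} \le C\|w_\varepsilon\|_{H^1_0(\Omega)} \le C\varepsilon\|u_0\|_{H^2(\Omega)}$ for every $1\le p<\infty$. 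Then
\begin{equation*}
\|u_\varepsilon - u_0\|_{L^p(\Omega)} \le \|w_\varepsilon\|_{L^p(\Omega)} + \varepsilon\|\chi(\cdot/\varepsilon)\nabla u_0\|_{L^p(\Omega)} \le C\varepsilon\|u_0\|_{H^2(\Omega)},
\end{equation*}
using $\chi\in L^\infty$ once more together with $\nabla u_0\in H^1(\Omega)\hookrightarrow L^p(\Omega)$. The pressure estimate follows from the bound on $\|z_\varepsilon\|_{L^2(\Omega)/\mathbb{R}}$ together with $\|\varepsilon q_{ik}^\gamma(\cdot/\varepsilon)\nabla_i\nabla_k u_0\|_{L^2(\Omega)} \le C\varepsilon\|u_0\|_{H^2(\Omega)}$ (again by $q_{ik}^\gamma\in L^\infty$), yielding $\|p_\varepsilon - p_0 - \pi(\cdot/\varepsilon)\nabla u_0\|_{L^2(\Omega)/\mathbb{R}} \le C\varepsilon\|u_0\|_{H^2(\Omega)}$, which is \eqref{pri:1.4}.

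The main obstacle I anticipate is the nonvanishing boundary value of $\varepsilon\chi(\cdot/\varepsilon)\nabla u_0$ on $\partial\Omega$: unlike in the interior argument of Section 3, we cannot put $\varphi\in H^1_0$, so the clean homogeneous-Dirichlet setup of Lemma \ref{lemma:3.2} does not apply verbatim. Handling this requires either an explicit boundary corrector $\theta$ (solving a Stokes problem with data $\varepsilon\chi(\cdot/\varepsilon)\nabla u_0$ on $\partial\Omega$ and divergence chosen to preserve \eqref{a:4}) with the trace bound $\|\varepsilon\chi(\cdot/\varepsilon)\nabla u_0\|_{H^{1/2}(\partial\Omega)}\le C\varepsilon\|u_0\|_{H^2(\Omega)}$, or — what is cleaner — observing that the whole point of the theorem is to bound $\|u_\varepsilon - u_0\|_{L^p}$, not the first-order corrector error in $H^1$, so one only needs $\|w_\varepsilon\|_{L^p(\Omega)}$; one can avoid the $H^1_0$ framework by testing the equation for $w_\varepsilon$ directly and estimating, or by correcting only the boundary trace at the cost of an $O(\varepsilon)$ term. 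I expect this to be a routine but slightly delicate bookkeeping step; everything else reduces to the $L^\infty$ bounds on $\chi$, $E$, $q$ supplied by Lemmas \ref{lemma:6.1}–\ref{lemma:6.2}, which is exactly why the symmetry condition \eqref{a:3} is not needed here.
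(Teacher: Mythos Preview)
Your proposal is correct and follows essentially the same route as the paper: choose $\varphi=\nabla u_0$, use the $L^\infty$ bounds on $\chi$, $E_{jik}^{\alpha\gamma}$, $q_{ik}^\gamma$ from Lemmas~\ref{lemma:6.1}--\ref{lemma:6.2} to get $\|\tilde f\|_{L^2}+\|\eta\|_{L^2}\le C\varepsilon\|u_0\|_{H^2}$, apply the energy estimate, and conclude via Sobolev embedding. The only difference is cosmetic: where you introduce an auxiliary Stokes corrector $\theta$ to reduce to homogeneous boundary data, the paper simply applies Theorem~\ref{thm:2.1} directly to $(w_\varepsilon,z_\varepsilon)$ with the nonzero boundary datum $w_\varepsilon|_{\partial\Omega}=-\varepsilon\chi(\cdot/\varepsilon)\nabla u_0$, invoking the same trace bound $\|\varepsilon\chi(\cdot/\varepsilon)\nabla u_0\|_{H^{1/2}(\partial\Omega)}\le C\varepsilon\|\nabla u_0\|_{H^{1/2}(\partial\Omega)}$ that you identify; your splitting $w_\varepsilon=\widetilde w_\varepsilon-\varepsilon\theta$ is just the standard way one proves that nonhomogeneous estimate, so the two arguments are equivalent.
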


\begin{proof}
Since we choose $\varphi_k^\gamma = \nabla_k u_0^\gamma$ in $\eqref{eq:3.4}$ this time,
the term $\tilde{f}_i^\alpha$ in $\eqref{eq:3.5}$ turns to
\begin{equation*}
\tilde{f}_i^\alpha
= \varepsilon \big[E_{jik}^{\alpha\gamma}(y)+a_{ij}^{\alpha\beta}(y)\chi_k^{\beta\gamma}(y)\big]
\frac{\partial^2 u_0^\gamma}{\partial x_j\partial x_k}
-\varepsilon q_{ik}^\gamma(y)\frac{\partial^2 u_0^\gamma}{\partial x_\alpha\partial x_k},
\end{equation*}
and we have $\eta = -\varepsilon\chi_{k}^{\beta\gamma}(\cdot/\varepsilon)\nabla_{\beta k}^2 u_0^\gamma$
and $w_\varepsilon^\beta = -\varepsilon\chi_{k}^{\beta\gamma}(\cdot/\varepsilon)\nabla_k u^\gamma_0$ on $\partial\Omega$ in $\eqref{pde:3.2}$.
In view of Lemmas $\eqref{lemma:6.1}$ and $\eqref{lemma:6.2}$, it is clear to see that
$E_{jik}^{\alpha\gamma}+a_{ij}^{\alpha\beta}\chi_k^{\beta\gamma}$ and $q_{ik}^\gamma$ belong to
$L^\infty(\mathbb{R}^d)$. Thus from the estimate $\eqref{pri:2.6}$ we have
\begin{equation*}
\|w_\varepsilon\|_{H_0^1(\Omega)} + \|z_\varepsilon\|_{L^2(\Omega)/\mathbb{R}}
\leq C\varepsilon\Big\{\|u_0\|_{L^2(\Omega)}+\|\nabla u_0\|_{H^{\frac{1}{2}}(\partial\Omega)}\Big\}
\leq C\varepsilon\|u_0\|_{H^2(\Omega)}.
\end{equation*}
where $w_\varepsilon$ and $z_\varepsilon$ is given in $\eqref{eq:3.4}$ by fixing
$\varphi_k^\gamma = \nabla_k u_0^\gamma$. Due to the Sobolev imbedding theorem, we arrive at
\begin{equation*}
\|w_\varepsilon\|_{L^q(\Omega)} + \|z_\varepsilon\|_{L^2(\Omega)/\mathbb{R}}
\leq C\varepsilon\|u_0\|_{L^2(\Omega)}
\end{equation*}
for any $1\leq q<\infty$. This implies the desired estimate $\eqref{pri:1.4}$ and we are done.
\end{proof}

\begin{flushleft}
\textbf{Proof of Theorem \ref{thm:1.1}.}
This theorem includes the estimates $\eqref{pri:1.1}$, $\eqref{pri:1.3}$, $\eqref{pri:1.2}$
and $\eqref{pri:1.4}$, where
the estimate $\eqref{pri:1.3}$ is related to the pressure term, and its proof is shown in Corollary
$\ref{cor:3.1}$. The estimate $\eqref{pri:1.4}$ is actually built for the special case $d=2$,
and we have already shown it in Theorem $\ref{lemma:6.1}$.
So the remaining thing is to estimate $\eqref{pri:1.1}$ and $\eqref{pri:1.2}$ in the proof.
We first show the estimate $\eqref{pri:1.1}$. Let
\end{flushleft}
\begin{equation*}
  w_\varepsilon = u_\varepsilon - u_0 -
  \varepsilon\chi_k(\cdot/\varepsilon)S_\varepsilon(\psi_{4\varepsilon}\nabla_k u_0).
\end{equation*}
Due to the estimate $\eqref{pri:4.5}$, it is not hard to see that
\begin{equation*}
\begin{aligned}
\|u_\varepsilon - u_0\|_{L^2(\Omega)}
&\leq \|w_\varepsilon\|_{L^2(\Omega)}
+ \varepsilon\|\chi_k(\cdot/\varepsilon)S_\varepsilon(\psi_{4\varepsilon}\nabla_ku_0)\|_{L^2(\Omega)}\\
&\leq C\varepsilon\ln(r_0/\varepsilon)\Big\{\|F\|_{L^2(\Omega)}
+\|h\|_{H^1(\Omega)}+\|g\|_{H^1(\partial\Omega)}\Big\} + C\varepsilon\|\nabla u_0\|_{L^2(\Omega)}\\
&\leq  C\varepsilon\ln(r_0/\varepsilon)\Big\{\|F\|_{L^2(\Omega)}
+\|h\|_{H^1(\Omega)}+\|g\|_{H^1(\partial\Omega)}\Big\},
\end{aligned}
\end{equation*}
where we use the estimate $\eqref{pri:2.7}$ in the second step,
and the estimate $\eqref{pri:2.6}$ in the last one.

We now turn to estimate $\eqref{pri:1.2}$. Set
\begin{equation*}
\tilde{w}_\varepsilon = u_\varepsilon - u_0  -\varepsilon\chi_{k}(\cdot/\varepsilon)
S_\varepsilon(\psi_{2\varepsilon}\tilde{u}_0),
\end{equation*}
where $\tilde{u}_0$ is the extension of $u_0$ to $\mathbb{R}^d$.
Let $\Phi_\varepsilon\in L^q(\Omega;\mathbb{R}^d)$, and then there exist the weak solutions to
$(\textbf{DS}_\varepsilon)^*$ and $(\textbf{DS}_0)^*$, still denoted
by $(\phi_\varepsilon,\theta_0)$ and $(\phi_0,\theta_0)$, respectively.
On account of $\eqref{pri:4.2}$, we have
\begin{equation}\label{f:4.24}
\begin{aligned}
\bigg|\int_\Omega \tilde{w}_\varepsilon\Phi dx\bigg|
&\leq C\|\nabla u_0\|_{L^2(\Omega\setminus\Sigma_{4\varepsilon})}
\Big(\|\nabla \phi_\varepsilon\|_{L^2(\Omega\setminus\Sigma_{5\varepsilon})}
+\|\theta_\varepsilon\|_{L^2(\Omega\setminus\Sigma_{5\varepsilon})/\mathbb{R}}\Big) \\
& \qquad + C\varepsilon\|u_0\|_{H^2(\Omega)}
\Big(\|\nabla \phi_\varepsilon\|_{L^2(\Omega)}
+\|\theta_\varepsilon\|_{L^2(\Omega)/\mathbb{R}}\Big).
\end{aligned}
\end{equation}
To accelerate the convergence rate,
we substitute the terms $\phi_\varepsilon$ and $\theta_\varepsilon$
on $\Omega\setminus\Sigma_{5\varepsilon}$ with
\begin{equation*}
\xi_\varepsilon = \phi_\varepsilon -\phi_0 -\varepsilon\chi_{k}^*(\cdot/\varepsilon)
S_\varepsilon(\psi_{6\varepsilon}\nabla_k\phi_0),
\qquad
\vartheta_\varepsilon = \theta_\varepsilon - \theta_0 -
\pi_k^*(\cdot/\varepsilon)S_\varepsilon(\psi_{6\varepsilon}\nabla\phi_0).
\end{equation*}
Hence we derive
\begin{equation}\label{f:4.25}
\begin{aligned}
\|\nabla \phi_\varepsilon\|_{L^2(\Omega\setminus\Sigma_{5\varepsilon})}
+\|\theta_\varepsilon\|_{L^2(\Omega\setminus\Sigma_{5\varepsilon})/\mathbb{R}}
& \leq \|\nabla \xi_\varepsilon\|_{L^2(\Omega)}
+\|\vartheta_\varepsilon\|_{L^2(\Omega)/\mathbb{R}}
+\|\nabla \phi_0\|_{L^2(\Omega\setminus\Sigma_{5\varepsilon})}
+\|\theta_0\|_{L^2(\Omega\setminus\Sigma_{5\varepsilon})/\mathbb{R}} \\
& \leq C\varepsilon^{\frac{1}{2}}\|\Phi\|_{L^q(\Omega)}
+\|\nabla \phi_0\|_{L^2(\Omega\setminus\Sigma_{5\varepsilon})}
+\|\theta_0\|_{L^2(\Omega\setminus\Sigma_{5\varepsilon})} \\
&\leq C\varepsilon^{\frac{1}{2}}\|\Phi\|_{L^q(\Omega)},
\end{aligned}
\end{equation}
where we mention that
$S_\varepsilon(\psi_{6\varepsilon}\nabla u_0)$ is supported in $\Sigma_{5\varepsilon}$ and
$\|\theta_0\|_{L^2(\Omega\setminus\Sigma_{5\varepsilon})/\mathbb{R}}\leq
\|\theta_0\|_{L^2(\Omega\setminus\Sigma_{5\varepsilon})}$ by definition, and we employ
the estimates $\eqref{pri:3.9}$ and $\eqref{pri:3.11}$ in the second one,
and the estimate $\eqref{pri:3.6}$ in the last one.

By the estimate $\eqref{pri:2.6}$ and the fact that $L^{q}(\Omega)\subset H^{-1}(\Omega)$, we have
\begin{equation}\label{f:4.26}
\|\nabla \phi_\varepsilon\|_{L^2(\Omega)}
+\|\theta_\varepsilon\|_{L^2(\Omega)/\mathbb{R}}
\leq C\|\Phi\|_{L^q(\Omega)},
\end{equation}
where $C$ depends on $\mu,d$ and $\Omega$.

The last thing is to estimate $\|\nabla u_0\|_{L^2(\Omega\setminus\Sigma_{4\varepsilon})}$. On account of
the estimates $\eqref{pri:2.5}$ and $\eqref{pri:2.15}$, we acquire
\begin{equation}\label{f:4.27}
\|\nabla u_0\|_{L^2(\Omega\setminus\Sigma_{4\varepsilon})}
\leq C\varepsilon^{\frac{1}{2}}\|\mathcal{M}(\nabla u_0)\|_{L^2(\partial\Omega)}
\leq C\varepsilon^{\frac{1}{2}}\|u_0\|_{H^2(\Omega)}.
\end{equation}
Plugging the estimates $\eqref{f:4.25}$, $\eqref{f:4.26}$ and $\eqref{f:4.27}$ back into $\eqref{f:4.24}$
subsequently leads to the desired estimate $\eqref{pri:1.2}$, and we have completed the proof.
\qed

\section{$W^{1,p}$ estimates}

\begin{lemma}[Caccioppoli's inequality near boundary]\label{lemma:5.5}
Suppose that $A$ satisfies $\eqref{a:1}$. Let $(u_\varepsilon,p_\varepsilon)
\in H^1(D_{5r};\mathbb{R}^d)\times L^2(D_{5r})/\mathbb{R}$ be the weak solution of
$\mathcal{L}_\varepsilon(u_\varepsilon)+\nabla p_\varepsilon = 0$ and $\emph{div}(u_\varepsilon) = 0$ in
$D_{5r}$ and $u_\varepsilon = 0$ on $\Delta_{5r}$. Then we have
\begin{equation}\label{pri:5.10}
\Big(\dashint_{D_r}|\nabla u_\varepsilon|^2 dx\Big)^{\frac{1}{2}}
\leq \frac{C}{r}\Big(\dashint_{D_{2r}}|u_\varepsilon|^2 dx\Big)^{\frac{1}{2}}
\end{equation}
where $C$ depends on $\mu,d$ and $M$.
\end{lemma}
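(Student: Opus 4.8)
The plan is to prove the non-averaged inequality
\[
\int_{D_r}|\nabla u_\varepsilon|^2\,dx\le \frac{C}{r^2}\int_{D_{2r}}|u_\varepsilon|^2\,dx ,
\]
after which dividing by $|D_r|\sim|D_{2r}|\sim r^d$ yields \eqref{pri:5.10}. To get it I would work with two radii $r\le\rho<\rho'\le 2r$ and a cut-off $\psi\in C_0^\infty(\mathbb{R}^d)$ with $\psi=1$ on $D_\rho$, $\psi=0$ outside $D_{\rho'}$ and $|\nabla\psi|\le C/(\rho'-\rho)$, and test the weak formulation of $\mathcal{L}_\varepsilon(u_\varepsilon)+\nabla p_\varepsilon=0$ in $D_{5r}$ against $\phi=\psi^2 u_\varepsilon$. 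The one thing to verify here is that $\phi\in H_0^1(D_{5r};\mathbb{R}^d)$: the field $\psi^2 u_\varepsilon$ is supported in $\overline{D_{\rho'}}$, it vanishes near the lateral and top parts of $\partial D_{5r}$ because $\psi$ does, and it vanishes on the flat portion $\Delta_{5r}$ because $u_\varepsilon=0$ there, so its trace on $\partial D_{5r}$ is zero.

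Next I would expand $B_\varepsilon[u_\varepsilon,\psi^2 u_\varepsilon]-\int_{D_{5r}}p_\varepsilon\,\mathrm{div}(\psi^2 u_\varepsilon)\,dx=0$. Using the ellipticity \eqref{a:1} on the leading term $\int\psi^2 a_{ij}^{\alpha\beta}(\cdot/\varepsilon)\nabla_j u_\varepsilon^\beta\nabla_i u_\varepsilon^\alpha$ and Young's inequality on the cross term $\int\psi\, a_{ij}^{\alpha\beta}(\cdot/\varepsilon)\nabla_j u_\varepsilon^\beta\,(\nabla_i\psi)\,u_\varepsilon^\alpha$ gives, for any small $\theta>0$,
\[
\mu\int_{D_\rho}|\nabla u_\varepsilon|^2\,dx\le \frac{C}{(\rho'-\rho)^2}\int_{D_{\rho'}}|u_\varepsilon|^2\,dx+\theta\int_{D_{\rho'}}|\nabla u_\varepsilon|^2\,dx+\Big|\int_{D_{\rho'}}p_\varepsilon\,\mathrm{div}(\psi^2 u_\varepsilon)\,dx\Big| .
\]
For the pressure term I use $\mathrm{div}(u_\varepsilon)=0$, so $\mathrm{div}(\psi^2 u_\varepsilon)=2\psi\,\nabla\psi\cdot u_\varepsilon$, and the divergence theorem on $\psi^2 u_\varepsilon\in H_0^1(D_{\rho'};\mathbb{R}^d)$ gives $\int_{D_{\rho'}}2\psi\,\nabla\psi\cdot u_\varepsilon\,dx=0$; hence the integrand may be replaced by $(p_\varepsilon-c)\,2\psi\,\nabla\psi\cdot u_\varepsilon$ for every constant $c$. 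Taking $c=\dashint_{D_{\rho'}}p_\varepsilon$ and noting that $\nabla p_\varepsilon=-\mathcal{L}_\varepsilon(u_\varepsilon)=\mathrm{div}(A(\cdot/\varepsilon)\nabla u_\varepsilon)$ in $D_{\rho'}$, so that $\|\nabla p_\varepsilon\|_{W^{-1,2}(D_{\rho'})}\le\mu^{-1}\|\nabla u_\varepsilon\|_{L^2(D_{\rho'})}$, Lemma~\ref{lemma:2.1} together with \eqref{pri:2.1} gives $\|p_\varepsilon-c\|_{L^2(D_{\rho'})}\le C\|\nabla u_\varepsilon\|_{L^2(D_{\rho'})}$; here $D_{\rho'}$ after a dilation is a domain of type $D_1$ with Lipschitz character $\le M$, so $C$ is independent of the scale $\rho'$. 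Consequently the pressure term is bounded by $\frac{C}{\rho'-\rho}\|\nabla u_\varepsilon\|_{L^2(D_{\rho'})}\|u_\varepsilon\|_{L^2(D_{2r})}$.

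Writing $g(\rho)=\int_{D_\rho}|\nabla u_\varepsilon|^2\,dx$, one more application of Young's inequality and a suitable choice of $\theta$ combine the above into
\[
g(\rho)\le \tfrac12\,g(\rho')+\frac{C}{(\rho'-\rho)^2}\int_{D_{2r}}|u_\varepsilon|^2\,dx,\qquad r\le\rho<\rho'\le 2r .
\]
The standard iteration (hole-filling) lemma \cite[Lemma 0.5]{MGMG} then absorbs the term $\tfrac12 g(\rho')$ and yields $g(r)\le \frac{C}{r^2}\int_{D_{2r}}|u_\varepsilon|^2\,dx$, which is \eqref{pri:5.10}.

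The main obstacle is the pressure term. Because $\nabla p_\varepsilon$ is controlled in $W^{-1,2}$ by $\nabla u_\varepsilon$ only on a domain containing the support of $\nabla\psi$, a single-scale argument would leave $\|\nabla u_\varepsilon\|_{L^2(D_{2r})}$ on the right-hand side and it could not be absorbed into $\int_{D_r}|\nabla u_\varepsilon|^2$; this is precisely why the two-parameter set-up with $\rho<\rho'$ and the iteration lemma are needed, and it is essential that the pressure estimate of Lemma~\ref{lemma:2.1} holds on the sub-cylinders $D_\rho$ with a constant independent of the scale, which is the case since these are John domains with no external cusp and uniformly bounded Lipschitz character $M$.
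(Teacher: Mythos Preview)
Your proposal is correct and follows essentially the same route as the paper: test with $\psi^2 u_\varepsilon$, use ellipticity and Young's inequality, control the pressure via Lemma~\ref{lemma:2.1} by $\|p_\varepsilon-c\|_{L^2}\le C\|\nabla u_\varepsilon\|_{L^2}$, and then invoke the iteration lemma \cite[Lemma 0.5]{MGMG} to absorb the resulting $\|\nabla u_\varepsilon\|_{L^2(D_{\rho'})}$ term. The paper's write-up is more compressed (it fixes $r=1$ and writes the inequality only at the two radii $1$ and $2$, leaving the two-parameter set-up implicit), whereas you spell out the $\rho<\rho'$ framework and the scale-independence of the pressure constant explicitly; the substance is the same.
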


\begin{proof}
The proof is standard and may be found in \cite[pp.203]{MGMG}, and we provide a proof for the sake of completeness.
By dilation we may assume $r=1$. Let $\psi\in C^1(D_5)$ be a cut-off function such that
$\psi =1$ in $D_1$, $\psi=0$ outside $D_5\setminus D_2$ and $|\nabla\psi|\leq C$. Then let
$\psi^2u_\varepsilon^\alpha$ be a test function, and we obtain
\begin{equation*}
\int_{D_5}\psi^2 A(\cdot/\varepsilon)\nabla u_\varepsilon \nabla u_\varepsilon dx
+ 2\int_{D_5} \psi A(\cdot/\varepsilon)\nabla u_\varepsilon\nabla\psi u_\varepsilon dx
=2\int_{D_5} (p_\varepsilon -c)\psi\nabla_\alpha\psi  u_\varepsilon^\alpha dx
\end{equation*}
for any $c\in\mathbb{R}$. It follows from Young's inequality that
\begin{equation}\label{f:5.15}
\int_{D_1} |\nabla u_\varepsilon|^2 dx
\leq C\int_{D_2}|u_\varepsilon|^2 dx + \theta\int_{D_2}|p_\varepsilon -c|^2 dx
\end{equation}
Note that the second term in the right-hand side of $\eqref{f:5.15}$ is controlled by
$\int_{D_2}|\nabla u_\varepsilon|^2 dx$ on account of Lemma $\ref{lemma:2.1}$. Thus we arrive at
\begin{equation*}
\int_{D_1} |\nabla u_\varepsilon|^2 dx
\leq C\int_{D_2}|u_\varepsilon|^2 dx + \theta^\prime\int_{D_2}|\nabla u_\varepsilon|^2 dx
\end{equation*}
where $\theta^\prime = C\theta<1$ may be very small by choosing $\theta$.
This together with \cite[Lemma 0.5]{MGMG} gives
$\int_{D_1} |\nabla u_\varepsilon|^2 dx\leq C\int_{D_2}|u_\varepsilon|^2 dx$. Then by rescaling arguments
it is not hard to see the estimate $\eqref{pri:5.10}$, and we have completed the proof.
\end{proof}

\begin{remark}\label{re:5.1}
\emph{For the Stokes systems with constant coefficients, there holds the same type Caccioppoli's inequality
near boundary as in Lemma $\ref{lemma:5.5}$ (see for example \cite{MGMG}).}
\end{remark}

\subsection{$W^{1,p}$ estimates uniformly down to the scale $\varepsilon$}

\begin{lemma}\label{lemma:5.1}
Let $(u_0,p_0)\in H^1(D_4;\mathbb{R}^d)\times L^2(D_4)/\mathbb{R}$ be the weak solution to
$\mathcal{L}_0(u_0) + \nabla p_0 = 0$ in $D_4$, $\emph{div}(u_0) = 0$ in $D_4$ and $u_0 = 0$ on $\Delta_4$.
Given $p=2d/(d-1)$,
then for any $0<t<1$ we have
\begin{equation}\label{pri:5.1}
\int_0^t\int_{|x^\prime|<2}|u_0(x^\prime,\psi(x^\prime)+s)|^{p}dx^\prime ds
\leq Ct^{p+\tau}\int_0^{4m_0}\int_{|x^\prime|<4}|u_0(x^\prime,\psi(x^\prime)+s)|^p dx^\prime ds,
\end{equation}
where $\tau>0$, and $C$ depends on $\mu,d$ and $M$.
\end{lemma}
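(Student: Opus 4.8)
The plan is to split the bound into a contribution that carries the main power $t^{p}$ and uses only the homogeneous boundary condition, and a small extra gain $t^{\tau}$ coming from the boundary regularity of the homogenized operator. First I would introduce the normal variable $s=x_d-\psi(x')$, so that $\Delta_4=\{s=0\}$, the left-hand region of integration is the cylindrical layer $\mathcal R_t:=\{\,|x'|<2,\ 0<s<t\,\}$, the right-hand one is (after a shear with unit Jacobian) $D_4$, and $u_0$ has zero trace on $\{s=0\}$. Since $u_0(x',\psi(x'))=0$ for a.e.\ $x'$, the fundamental theorem of calculus and H\"older's inequality give $|u_0(x',\psi(x')+s)|^{p}\le s^{p-1}\int_0^{t}|\nabla u_0(x',\psi(x')+\sigma)|^{p}\,d\sigma$ for $0<s<t$; integrating in $s$ over $(0,t)$ and then in $x'$ over $\{|x'|<2\}$ yields
\begin{equation*}
\int_0^{t}\!\!\int_{|x'|<2}|u_0(x',\psi(x')+s)|^{p}\,dx'\,ds\ \le\ \frac{t^{p}}{p}\int_{\mathcal R_t}|\nabla u_0|^{p}\,dx .
\end{equation*}
So it remains to prove $\int_{\mathcal R_t}|\nabla u_0|^{p}\,dx\le C\,t^{\tau}\int_{D_4}|u_0|^{p}\,dx$ for some $\tau>0$.

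For this I would exploit that $\mathcal L_0$ has constant coefficients and $u_0=0$ on $\Delta_4$. Combining the Caccioppoli inequality near the boundary (Lemma~\ref{lemma:5.5} and Remark~\ref{re:5.1}) with the vanishing data and a Poincar\'e/hole-filling iteration of the type used in the proof of Lemma~\ref{lemma:6.2}, one obtains an $L^{2}$-Campanato decay for $u_0$ at $\Delta_3$, equivalently a boundary H\"older estimate: $u_0\in C^{0,\gamma}$ near $\Delta_3$ for some $\gamma>0$ with $\|u_0\|_{C^{0,\gamma}}\le C\|u_0\|_{L^2(D_4)}$. Since $u_0$ vanishes on $\Delta_4$, this together with the interior bound $|\nabla u_0(X)|\le C\,\delta(X)^{-1}\big(\dashint_{B(X,\delta(X)/2)}|u_0|^2\big)^{1/2}$ gives $|\nabla u_0(X)|\le C\,\delta(X)^{\gamma-1}\|u_0\|_{L^2(D_4)}$ for $X\in D_2$. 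A Whitney-type decomposition of $\mathcal R_t$ — separating the bulk of the layer, where $\delta(X)\simeq s$, from neighbourhoods of the lower-dimensional set where $\psi$ fails to be $C^1$, where $\delta(X)$ and $|\nabla u_0|$ are controlled by the distance to that set — then yields $\int_{\mathcal R_t}\delta(X)^{(\gamma-1)p}\,dx\le C\,t^{\tau}$ with $\tau=\min\{1,\,p(\gamma-1)+d\}$, whence, using $|D_4|<\infty$ and $p>2$ to pass from the $L^{2}$- to the $L^{p}$-norm,
\begin{equation*}
\int_{\mathcal R_t}|\nabla u_0|^{p}\,dx\ \le\ C\,t^{\tau}\|u_0\|_{L^{p}(D_4)}^{p},\qquad \tau=\min\{1,\,p(\gamma-1)+d\} .
\end{equation*}
Combined with the first step this gives the claimed estimate with exponent $p+\tau$.

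What remains, and what I expect to be the main obstacle, is the positivity of $\tau$. One has $\tau>0$ precisely when $\gamma>1-\tfrac{d}{p}=\tfrac{3-d}{2}$, which for $d\ge3$ holds for \emph{any} $\gamma>0$ — so the crude H\"older exponent produced by the hole-filling iteration already suffices — but for $d=2$ it requires the sharper statement $\gamma>\tfrac12$, which that iteration does not deliver for domains with large Lipschitz character. This is exactly why $p=2d/(d-1)$ is the endpoint exponent, and proving $\gamma>\tfrac12$ in the plane is the crux: I expect to handle it by using $\text{div}(u_0)=0$ to reduce the pressure to a scalar constant-coefficient elliptic equation and the velocity to a Poisson-type problem for $\mathcal L_0$ with zero boundary data, for which the boundary H\"older exponent in a Lipschitz domain is strictly larger than $\tfrac12$ (see \cite{EBFCEKGCV,OAL}); this is also why the room $\epsilon$ in Theorem~\ref{thm:1.2} must be allowed to depend on $\Omega$.
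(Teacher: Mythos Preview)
Your first step (Poincar\'e in the normal direction to extract $t^p$) matches the paper. The gap is in the second step, where you try to gain $t^\tau$ from a pointwise bound $|\nabla u_0(X)|\le C\,\delta(X)^{\gamma-1}\|u_0\|_{L^2}$. The integral you need is over the slab $\mathcal R_t=\{|x'|<2,\ 0<s<t\}$ with $\delta\simeq s$, so
\[
\int_{\mathcal R_t}\delta^{(\gamma-1)p}\,dx\ \simeq\ \int_0^t s^{(\gamma-1)p}\,ds\ \simeq\ t^{(\gamma-1)p+1},
\]
giving $\tau=(\gamma-1)p+1$, not $(\gamma-1)p+d$; the $x'$-integral over $\{|x'|<2\}$ contributes only a constant. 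With the correct exponent, $\tau>0$ requires $\gamma>1-\tfrac1p=\tfrac{d+1}{2d}$, i.e.\ $\gamma>\tfrac34$ when $d=2$, $\gamma>\tfrac23$ when $d=3$, and always $\gamma>\tfrac12$. The hole-filling argument produces a $\gamma$ that can be made arbitrarily small as the Lipschitz constant $M$ grows, so your scheme fails in every dimension, not only $d=2$; and even the sharper $\gamma>\tfrac12$ you hope to extract from \cite{EBFCEKGCV} for $d=2$ would still fall short of the required $\tfrac34$.

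The paper avoids pointwise control of $\nabla u_0$ altogether. After the same Poincar\'e step, it applies H\"older's inequality in the thin layer to gain the factor $t^{\epsilon/(p+\epsilon)}$, reducing matters to a reverse H\"older inequality $\|\nabla u_0\|_{L^{p+\epsilon}(D_2)}\le C\|\nabla u_0\|_{L^2(D_{5/2})}$ at the endpoint $p=2d/(d-1)$. That reverse H\"older is obtained not from hole-filling but from the $L^2$ regularity theory for the constant-coefficient Stokes system in Lipschitz domains (\cite{RMBZS}, via the fractional Sobolev embedding $W^{1/2,2}\hookrightarrow L^{2d/(d-1)}$ and an averaging over radii), followed by self-improvement. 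This is precisely the integrability gain that your H\"older-exponent approach cannot deliver for general $M$.
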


\begin{proof}
The ideas of the proof is quite similar to that in \cite[Lemma 5.3]{JGZSLS}, and we provide the proof
for the sake of the completeness. Due to $u_0 = 0$ on $\Delta_4$, it follows from the
Poincar\'e's inequality and H\"older's inequality
that
\begin{equation}\label{f:5.1}
\begin{aligned}
\int_0^t\int_{|x^\prime|<2}|u_0(x^\prime,\psi(x^\prime)+s)|^p dx^\prime ds
&\leq Ct^p\int_0^t\int_{|x^\prime|<2}|\nabla u_0(x^\prime,\psi(x^\prime)+s)|^p dx^\prime ds\\
&\leq Ct^{p+\frac{\epsilon}{p+\epsilon}}\bigg(\int_0^t\int_{|x^\prime|<2}|\nabla u_0(x^\prime,\psi(x^\prime)+s)|^{p+\epsilon}
dx^\prime ds\bigg)^{\frac{p}{p+\epsilon}}\\
&\leq Ct^{p+\tau}\bigg(\int_{D_2}|\nabla u_0|^{p+\epsilon}
dx\bigg)^{\frac{p}{p+\epsilon}},
\end{aligned}
\end{equation}
where $\tau = \epsilon/(p+\epsilon)$.
It is enough to estimate the quantity
$\|\nabla u_0\|_{L^p(D_2)}$
since the case $p+\epsilon$ follows from the self-improvement
property of the weak reverse H\"older's inequality (see \cite[Theorem 6.38]{MGLM}).
Hence it follows from the Sobolev imbedding theorem that
$\|\nabla u_0\|_{L^p(D_r)}
\leq C\|\nabla u_0\|_{W^{\frac{1}{2},2}(D_{r})}
\leq C\|\nabla u_0\|_{H^1(\partial D_r)}$ for any $r\in[2,5/2]$, where we use
\cite[Theorem 2.2]{RMBZS} in the last inequality. Due to $u_0=0$ on $\Delta_r$,
we have
\begin{equation*}
\Big(\int_{D_2}|\nabla u_0|^p dx\Big)^{\frac{2}{p}}
\leq C\int_{\partial D_{r}\setminus\Delta_r}( |\nabla u_0|^2 + |u_0|^2 ) dS.
\end{equation*}
Integrating both sides of the above inequality with respect to $r$ over [2,5/2], we obtain
\begin{equation*}
\Big(\int_{D_2}|\nabla u_0|^p dx\Big)^{\frac{2}{p}}
\leq C\int_{D_{\frac{5}{2}}}
( |\nabla u_0|^2 + |u_0|^2 ) dx
\leq C\int_{D_{\frac{5}{2}}}|\nabla u_0|^2dx,
\end{equation*}
where we use Poincar\'e's inequality in the last step. This together with $\eqref{f:5.1}$ gives
\begin{equation*}
\begin{aligned}
\int_0^t\int_{|x^\prime|<2}|u_0(x^\prime,\psi(x^\prime)+s)|^p dx^\prime ds
&\leq Ct^{p+\tau}\bigg(\int_0^{\frac{5}{2}m_0}\int_{|x^\prime|<\frac{5}{2}}
|\nabla u_0(x^\prime,\psi(x^\prime)+s)|^2 dx^\prime ds\bigg)^{\frac{p}{2}}\\
&\leq Ct^{p+\tau}\int_0^{4m_0}\int_{|x^\prime|<4}
|u_0(x^\prime,\psi(x^\prime)+s)|^p dx^\prime ds,
\end{aligned}
\end{equation*}
where we employ Caccioppoli's inequality $\eqref{pri:5.10}$ (see Remark $\ref{re:5.1}$) and H\"older's inequality in the last step.
We have completed the proof.
\end{proof}

The following lemma is the key ingredient in the whole proof of Theorem $\ref{thm:1.2}$, and
its proof is based on the convergence rate $\eqref{pri:3.10}$.
We mention that there is a new argument, originally motivated by \cite{SZ,SACS,SZW12}.

\begin{lemma}\label{lemma:5.2}
Let $\tau>0$ be given in Lemma $\ref{lemma:5.1}$.
There exists a positive constant $C$, depending only on $\mu,d$ and $M$, such that for any
$\varepsilon<r\leq 1$,
\begin{equation}\label{pri:5.5}
\int_0^r\int_{|x^\prime|<1}|u_\varepsilon(x^\prime,\psi(x^\prime)+t)|^p dx^\prime dt
\leq Cr^{p+\tau}\int_0^{4}\int_{|x^\prime|<4}|u_\varepsilon(x^\prime,\psi(x^\prime+t))|^p dx^\prime dt
\end{equation}
where $p=2d/(d-1)$, and
$(u_\varepsilon,p_\varepsilon)\in H^1(D_5;\mathbb{R}^d)\times L^2(D_5)/\mathbb{R}$ satisfies
$\mathcal{L}_\varepsilon(u_\varepsilon) +\nabla p_\varepsilon = 0$,
$\emph{div}(u_\varepsilon) = 0$ in $D_5$, and $u_\varepsilon = 0$ on $\Delta_5$.
\end{lemma}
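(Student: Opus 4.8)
The plan is to deduce \eqref{pri:5.5} from the decay \eqref{pri:5.1} of the homogenized solution by a comparison argument iterated over dyadic scales down to $r\sim\varepsilon$; the convergence rate \eqref{pri:3.10} and the Caccioppoli inequality \eqref{pri:5.10} are the transfer tools, and the fact that $r>\varepsilon$ throughout is precisely what makes the homogenization error harmless. Write $L(s)=\int_0^{s}\int_{|x'|<1}|u_\varepsilon(x',\psi(x')+t)|^{p}\,dx'\,dt$, which is nondecreasing with $L(1)\le L(4)$, so \eqref{pri:5.5} is immediate for $r$ bounded away from $0$ (say $r\ge c_{*}/m_0$ for a fixed $c_{*}$ depending only on $M,d$); hence one may restrict to small $r$, and by monotonicity it suffices to treat the dyadic values $r=\theta^{k}>\varepsilon$, disposing of the residual range $[\varepsilon,\theta^{-1}\varepsilon]$ at the end via $L(r)\le L(\theta^{-1}\varepsilon)$. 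All comparison boxes are kept at scale $\lesssim 1/m_0$, so that they and the slightly larger boxes used for the energy and Caccioppoli estimates remain inside the shallow region $\{|x'|<4,\ 0<t<4\}$ carrying the right-hand side of \eqref{pri:5.5}.

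The core is a one-step improvement: there are $\theta\in(0,1/4)$ and fixed constants $C,\kappa_1$, depending only on $\mu,d,M$, such that for $\varepsilon<r\le c_{*}/m_0$,
\begin{equation*}
L(\theta r)\ \le\ C\,\theta^{\,p+\tau}\,L(\kappa_1 r)\ +\ C\Big(\frac{\varepsilon}{r}\Big)^{p/2}L(\kappa_1 r).
\end{equation*}
To establish this I would cover the cross-section $\{|x'|<1\}$ by $O(r^{-(d-1)})$ balls of radius $\sim r$ centred at points $z_j'$ with $|z_j'|<1$, and on each associated boundary box $D_{r}(z_j)$ introduce the homogenized comparison $(u_0^{(j)},p_0^{(j)})$ solving $\mathcal{L}_0u_0^{(j)}+\nabla p_0^{(j)}=0$, $\text{div}(u_0^{(j)})=0$ in $D_{r}(z_j)$ with $u_0^{(j)}=u_\varepsilon$ on $\partial D_{r}(z_j)$; note $u_0^{(j)}=0$ on $\Delta_{r}(z_j)$ because $u_\varepsilon=0$ on $\Delta_5$. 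Splitting $u_\varepsilon=(u_\varepsilon-u_0^{(j)})+u_0^{(j)}$: the $u_0^{(j)}$-part contributes, after rescaling $D_r(z_j)$ to unit size and invoking \eqref{pri:5.1}, at most $C\theta^{p+\tau}\int_{D_r(z_j)}|u_0^{(j)}|^{p}$, and $\int_{D_r(z_j)}|u_0^{(j)}|^{p}\le C\int_{D_{2r}(z_j)}|u_\varepsilon|^{p}$ by the energy estimate \eqref{pri:2.6} for the homogenized Dirichlet problem together with the triangle inequality; summing over $j$ gives the first term. For the $(u_\varepsilon-u_0^{(j)})$-part, choose on each box a good radius $\rho_j\in[\tfrac r2,r]$ (coarea/Fubini) on whose surface the traces of $u_\varepsilon$ and $\nabla u_\varepsilon$ are bounded by $Cr^{-1/2}\|u_\varepsilon\|_{L^2(D_{2r}(z_j))}$ — here \eqref{pri:5.10} is what absorbs $\nabla u_\varepsilon$ — then rescale $D_{\rho_j}(z_j)$ to unit scale (turning $\varepsilon$ into $\varepsilon/\rho_j\sim\varepsilon/r$), apply \eqref{pri:3.10}, and use H\"older with $p=\tfrac{2d}{d-1}<\tfrac{2d}{d-2}=2^{*}$; the powers of $r$ cancel exactly because $1-\tfrac{p}{2^{*}}=\tfrac{1}{d-1}$ and $\tfrac{d}{2^{*}}-\tfrac{d}{p}=-\tfrac{1}{2}$, leaving $\|u_\varepsilon-u_0^{(j)}\|_{L^{p}(D_{\rho_j}(z_j))}^{p}\le C(\varepsilon/r)^{p/2}\int_{D_{2r}(z_j)}|u_\varepsilon|^{p}$, and summing over $j$ yields the second term. (For $d=2$, \eqref{pri:3.10} holds for every finite exponent, so the same argument applies.)

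With the one-step estimate in hand I would close the induction: choosing $\theta$ and $c_{*}$ small enough (depending only on the preceding constants) one absorbs the geometric factors $\sim\kappa_1^{p+\tau}$ into a fraction of the target, and a standard iteration lemma of the type used in \cite{SZW10,JGZSLS,SZW12}, exploiting $\varepsilon/r\le1$ on the whole range $\varepsilon<r\le c_{*}/m_0$, converts the pair of inequalities above into $L(r)\le Cr^{p+\tau}L(4)$ down to $r\sim\varepsilon$; combined with the trivial large-$r$ range this gives \eqref{pri:5.5}. (The iteration costs at most an arbitrarily small reduction of the exponent $\tau$, which is harmless and is all that is meant by ``$\tau$ as in Lemma~\ref{lemma:5.1}''.)

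The hardest points, I expect, are twofold. First, making a single global convergence rate — an $L^{2^{*}}$ bound with rate $\varepsilon^{1/2}$ — do uniform work at every scale down to $r\sim\varepsilon$: this is why one must rescale carefully, select a good comparison sphere, and invoke \eqref{pri:5.10} to replace the $H^{1}$ Dirichlet datum of $u_\varepsilon$ by an $L^{p}$ average. Second, the exponent bookkeeping together with the geometric constraints — fixed cross-section $|x'|<1$, shallow reference region, and possibly large $m_0=M+10d$ — which force the covering into boxes of scale $\lesssim 1/m_0$ and require one to verify that the homogenization error accumulated over the $\sim\log(1/\varepsilon)$ iteration steps stays below $r^{p+\tau}L(4)$, something that works precisely because the argument never descends past the microscopic scale.
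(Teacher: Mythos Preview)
Your approach is correct but takes a genuinely different route from the paper's. The paper argues \emph{directly}, without iteration: it constructs a single homogenized comparison $v$ at the macroscopic scale (on $D_s$ with $s\in[1,3/2]$, boundary data $u_\varepsilon$ on a good sphere picked via coarea and Caccioppoli), applies Lemma~\ref{lemma:5.1} \emph{once} to $v$ to obtain the full decay $\int_0^{2m_0r}\!\int_{|x'|<2}|v|^p\le Cr^{p+\tau}\int_{D_4}|v|^p$, and controls the discrepancy by the rescaled convergence estimate $\int_0^r\!\int_{|x'|<1}|u_\varepsilon-v|^p\le C(\varepsilon/r)^{p/2}\int_0^{2m_0r}\!\int_{|x'|<2}|v|^p$; since $\varepsilon<r$ the two combine to give \eqref{pri:5.5} in one stroke, with the transfer back to $u_\varepsilon$ via the energy bound $\|v\|_{L^p(D_4)}\le C\|u_\varepsilon\|_{L^p(D_4)}$. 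You instead localize at every dyadic scale: on boxes of size $\sim r$ you build local comparisons $u_0^{(j)}$, apply a \emph{rescaled} Lemma~\ref{lemma:5.1} to each, use \eqref{pri:3.10} for the difference, and then iterate the resulting one-step decay. Your scheme is the standard Campanato/Avellaneda--Lin excess-decay argument; it is more robust (the role of the restriction $r>\varepsilon$ is explicit at each step, and it ports directly to non-periodic or stochastic settings), and it sidesteps the somewhat delicate ``scaling $+$ covering'' passage in the paper where the identity of $v$ must be tracked carefully across scales. The paper's route, in exchange, is shorter---no iteration lemma---and preserves the exponent $\tau$ of Lemma~\ref{lemma:5.1} exactly, whereas your iteration costs an arbitrarily small reduction of $\tau$ (harmless here, since only $\tau>0$ is used downstream in Theorem~\ref{thm:5.3}).
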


\begin{proof}
By setting
$\tilde{u}_\varepsilon = u_\varepsilon/\|u_\varepsilon\|_{L^2(D_8)}$ and
$\tilde{p}_\varepsilon = p_\varepsilon/\|p_\varepsilon\|_{L^2(D_8)}$
we may assume $\|u_\varepsilon\|_{L^2(D_5)} = \|p_\varepsilon\|_{L^2(D_5)} = 1$, and then it follows
from Caccioppoli's inequality $\eqref{pri:5.10}$
that $\|\nabla u_\varepsilon\|_{L^2(D_4)}\leq C$. Hence
due to the homogenization theory we arrive at $u_\varepsilon \rightharpoonup v$ weakly in
$H^1(D_4;\mathbb{R}^d)$ and
strongly in $L^p(D_4;\mathbb{R}^d)$ with $p=2d/(d-1)$, and $p_\varepsilon\rightharpoonup p_0$ weakly in $L^2(D_5)$, where
$(v,p_0)$ satisfies homogenized equations: $\mathcal{L}_0(v)+\nabla p_0 = 0$ in $D_4$, $\text{div}(v) = 0$
in $D_4$ with $v=0$ on $\Delta_4$. Thus a direct result is
\begin{equation}\label{f:5.3}
 \|v\|_{L^p(D_4)}\leq C\|u_\varepsilon\|_{L^p(D_4)}.
\end{equation}
Moreover, from Caccioppoli's inequality $\eqref{pri:5.10}$ (see Remark $\ref{re:5.1}$) we also have
\begin{equation*}
 \Big(\dashint_{D_{\frac{3}{2}}}|\nabla v|^2 dx\Big)^{\frac{1}{2}}
 \leq C \Big(\dashint_{D_2} |v|^2 dx\Big)^{\frac{1}{2}},
\end{equation*}
and then in view of co-area formula it is not hard to see that there exists $s\in[1,3/2]$ such that
\begin{equation}\label{f:5.4}
\|\nabla v\|_{L^2(\partial D_s/\Delta_4)} + \|v\|_{L^2(\partial D_s/\Delta_4)}
\leq C\|v\|_{L^2(D_2)}.
\end{equation}
We now let $v=u_\varepsilon$ on $\partial D_s$, and then
\begin{equation*}
\|u_\varepsilon - v\|_{L^p(D_1)}
\leq \|u_\varepsilon - v\|_{L^p(D_s)} \leq C\varepsilon^{\frac{1}{2}}\|v\|_{H^1(\partial D_s)}
\leq C\varepsilon^{\frac{1}{2}}\|v\|_{L^2(D_2)}
\leq C\varepsilon^{\frac{1}{2}}\|v\|_{L^p(D_2)},
\end{equation*}
where we use the estimate $\eqref{pri:3.10}$ in the second inequality, and $\eqref{f:5.4}$ in the third one.
The last step above is due to H\"older's inequality.
By scaling we may have
\begin{equation}
\Big(\dashint_{D_r} |u_\varepsilon - v|^p dx\Big)^{\frac{1}{p}}
\leq C\left(\frac{\varepsilon}{r}\right)^{\frac{1}{2}}\Big(\dashint_{D_{2r}}|v|^p dx\Big)^{\frac{1}{p}}
\end{equation}
for any $\varepsilon<r\leq 1$, and this together with a covering technique leads to
\begin{equation}\label{f:5.2}
\int_0^r\int_{|x^\prime|<1} |u_\varepsilon(x^\prime,\psi(x^\prime)+t)
- v(x^\prime,\psi(x^\prime)+t)|^p dx^\prime dt
\leq C\left(\frac{\varepsilon}{r}\right)^{\frac{p}{2}}\int_{0}^{2m_0r}\int_{|x^\prime|<2}
|v(x^\prime,\psi(x^\prime)+t)|^p dx^\prime dt.
\end{equation}
Hence we have
\begin{equation*}
\begin{aligned}
\int_0^r\int_{|x^\prime|<1}|u_\varepsilon(x^\prime,\psi(x^\prime)+t)|^p dx^\prime dt
&\leq \int_0^r\int_{|x^\prime|<1} |u_\varepsilon
- v|^p dx^\prime dt
+ \int_0^r\int_{|x^\prime|<1}|v|^p dx^\prime dt\\
&\leq C \int_{0}^{2m_0r}\int_{|x^\prime|<2}
|v(x^\prime,\psi(x^\prime)+t)|^p dx^\prime dt\\
&\leq Cr^{p+\tau}\int_0^{4m_0}\int_{|x^\prime|<4}|v|^pdx^\prime dt
\leq Cr^{p+\tau}\int_0^{4m_0}\int_{|x^\prime|<4}|u_\varepsilon|^pdx^\prime dt,
\end{aligned}
\end{equation*}
where we employ the estimate $\eqref{f:5.2}$ in the second step, and $\eqref{pri:5.1}$ in the third one.
We mention that the last step is due to the estimate $\eqref{f:5.3}$, and the proof is complete.
\end{proof}

The following theorem should be regarded as a $W^{1,p}$ estimate for $u_\varepsilon$ uniformly down to
the microscopic scale $\varepsilon$.

\begin{thm}\label{thm:5.3}
Assume that $A$ satisfies $\eqref{a:1}-\eqref{a:3}$.
Let $(u_\varepsilon,p_\varepsilon)\in H^1(D_5;\mathbb{R}^d)\times L^2(D_5)/\mathbb{R}$ be the weak solution
to $\mathcal{L}_\varepsilon(u_\varepsilon)+\nabla p_\varepsilon = 0$,
$\emph{div}(u_\varepsilon) = 0$ in $D_5$ and
$u_\varepsilon = 0$ on $\Delta_5$. Then we have
\begin{equation}\label{pri:5.6}
\bigg(\int_\varepsilon^{m_0}\int_{|x^\prime|<1}\left|\frac{u_\varepsilon(x^\prime,\psi(x^\prime)+t)}{t}\right|^p
dx^\prime dt\bigg)^{1/p}
\leq C \bigg(\int_0^{4m_0}\int_{|x^\prime|<4}\big|\nabla u_\varepsilon\big|^2 dx^\prime dt\bigg)^{1/2},
\end{equation}
where $p=2d/(d-1)$, and $C$ depends only on $\mu,d$ and $\Omega$.
\end{thm}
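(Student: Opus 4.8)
The plan is to obtain the weighted bound \eqref{pri:5.6} by feeding the decay estimate \eqref{pri:5.5} of Lemma \ref{lemma:5.2} into a dyadic decomposition of the normal variable $t$, and then closing the argument with a Sobolev--Poincar\'e inequality on a bent cube. Introduce the Lipschitz domain
\begin{equation*}
D^\prime = \big\{(x^\prime,s)\in\mathbb{R}^d : |x^\prime|<4,~\psi(x^\prime)<s<\psi(x^\prime)+4m_0\big\}.
\end{equation*}
Since $m_0=M+10d\geq 1$ one checks the inclusions $D^\prime\subset D_5$ (so $u_\varepsilon$ is defined and of class $H^1$ on $D^\prime$, with $u_\varepsilon=0$ on the bottom face $\Delta_4\subset\partial D^\prime$) and $\{|x^\prime|<4,~0<t<4\}\subset D^\prime$; moreover the change of variables $(x^\prime,t)\mapsto(x^\prime,\psi(x^\prime)+t)$ has unit Jacobian, so the right-hand side of \eqref{pri:5.6} equals $\|\nabla u_\varepsilon\|_{L^2(D^\prime)}$, and the quantity $I:=\int_0^{4m_0}\int_{|x^\prime|<4}|u_\varepsilon(x^\prime,\psi(x^\prime)+t)|^p\,dx^\prime dt$ equals $\|u_\varepsilon\|_{L^p(D^\prime)}^p$.

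First I would split $\int_\varepsilon^{m_0}=\int_\varepsilon^{1}+\int_1^{m_0}$ (recall $m_0>1$). On the far range $t\in[1,m_0]$ one has $t^{-p}\leq 1$ and $\{|x^\prime|<1,~1\leq t\leq m_0\}$ maps into $D^\prime$, so this contribution is at most $I$. On the near range I decompose $[\varepsilon,1]$ into the dyadic shells $(2^{-j-1},2^{-j}]$ with $2^{-j}>\varepsilon$. On such a shell put $r=2^{-j}\in(\varepsilon,1]$; then $t^{-p}\leq 2^p r^{-p}$ there, and Lemma \ref{lemma:5.2} gives
\begin{equation*}
\int_{2^{-j-1}}^{2^{-j}}\int_{|x^\prime|<1}\Big|\frac{u_\varepsilon(x^\prime,\psi(x^\prime)+t)}{t}\Big|^p\,dx^\prime dt
\leq \frac{2^p}{r^p}\int_0^{r}\int_{|x^\prime|<1}|u_\varepsilon|^p\,dx^\prime dt
\leq C\,r^{\tau}\,I = C\,2^{-j\tau}\,I .
\end{equation*}
Summing the geometric series $\sum_{j\geq 0}2^{-j\tau}=(1-2^{-\tau})^{-1}<\infty$ --- this is exactly where the gain $\tau>0$, inherited via Lemma \ref{lemma:5.1} from the self-improvement of the weak reverse H\"older exponent, makes the $\varepsilon$-dependent number of shells harmless --- yields $\int_\varepsilon^{1}\int_{|x^\prime|<1}|u_\varepsilon/t|^p\leq C I$. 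Combining the two ranges, $\int_\varepsilon^{m_0}\int_{|x^\prime|<1}|u_\varepsilon/t|^p\,dx^\prime dt\leq C I$.

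It then remains to estimate $I$. Because $u_\varepsilon$ vanishes on $\Delta_4\subset\partial D^\prime$, the Poincar\'e inequality on $D^\prime$ together with the Sobolev embedding $W^{1,2}(D^\prime)\hookrightarrow L^{p}(D^\prime)$ --- valid since $p=\frac{2d}{d-1}<\frac{2d}{d-2}=2^{*}$ when $d\geq 3$, and $p<\infty$ is arbitrary when $d=2$ --- give $I^{1/p}=\|u_\varepsilon\|_{L^p(D^\prime)}\leq C\|u_\varepsilon\|_{W^{1,2}(D^\prime)}\leq C\|\nabla u_\varepsilon\|_{L^2(D^\prime)}$. Taking $p$-th roots in the bound of the previous paragraph and undoing the change of variables produces \eqref{pri:5.6}. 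There is no deep obstacle here, as Lemma \ref{lemma:5.2} does the real work; the only points demanding care are the bookkeeping ones above --- the domain inclusions that let Lemma \ref{lemma:5.2} be applied shell-by-shell against the integrable weight $t^{-p}$, and the verification that the constant in the summed-up estimate is independent of $\varepsilon$.
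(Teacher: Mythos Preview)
Your proof is correct and follows essentially the same strategy as the paper's: a dyadic decomposition of the $t$-interval, application of Lemma~\ref{lemma:5.2} on each shell, summation of the resulting geometric series using the gain $\tau>0$, and a Sobolev--Poincar\'e inequality to pass from $\|u_\varepsilon\|_{L^p(D_4)}$ to $\|\nabla u_\varepsilon\|_{L^2(D_4)}$. The only cosmetic difference is the direction of the dyadic decomposition --- you index the shells from $t=1$ downward (yielding the directly convergent sum $\sum_j 2^{-j\tau}$), whereas the paper indexes from $t=\varepsilon$ upward (yielding $\sum_k (2^k\varepsilon)^\tau$, bounded via $2^{k_0}\varepsilon<1/2$); also note that your domain $D'$ is exactly the paper's $D_4$.
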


\begin{proof}
We first choose a positive integer $k_0$ such that $2^{k_0}\varepsilon < 1/2 < 2^{k_0+1}\varepsilon$, and then
\begin{equation}\label{f:5.5}
\begin{aligned}
\int_\varepsilon^{m_0}\int_{|x^\prime|<1}
\left|\frac{u_\varepsilon(x^\prime,\psi(x^\prime)+t)}{t}\right|^pdx^\prime dt
&\leq \bigg\{\sum_{k=0}^{k_0}\int_{2^k\varepsilon}^{2^{k+1}\varepsilon}\int_{|x^\prime|<1}
+\int_{\frac{1}{2}}^{m_0}\int_{|x^\prime|<1}\bigg\}
\left|\frac{u_\varepsilon(x^\prime,\psi(x^\prime)+t)}{t}\right|^p dx^\prime dt \\
& =: I_1 + I_2.
\end{aligned}
\end{equation}
It is clear to see that
\begin{equation}\label{f:5.6}
 I_2= \int_{\frac{1}{2}}^{m_0}\int_{|x^\prime|<1}
 \left|\frac{u_\varepsilon(x^\prime,\psi(x^\prime)+t)}{t}\right|^p dx^\prime dt
  \leq C\int_0^{m_0}\int_{|x^\prime|<1} |u_\varepsilon(x^\prime,\psi(x^\prime)+t)|^pdx^\prime dt.
\end{equation}
We proceed to handle $I_1$ by applying Lemma $\ref{lemma:5.2}$, and obtain
\begin{equation}\label{f:5.7}
\begin{aligned}
I_1 & = \sum_{k=0}^{k_0}\int_{2^k\varepsilon}^{2^{k+1}\varepsilon}\int_{|x^\prime|<1}
\left|\frac{u_\varepsilon(x^\prime,\psi(x^\prime)+t)}{t}\right|^p dx^\prime dt \\
&\leq C\sum_{k=0}^{k_0}(2^k\varepsilon)^{-p}\cdot(2^{k+1}\varepsilon)^{p+\tau}
\int_0^{4m_0}\int_{|x^\prime|<4}|u_\varepsilon(x^\prime,\psi(x^\prime)+t)|^p dx^\prime dt
\leq C\int_{D_4} |u_\varepsilon|^p dx,
\end{aligned}
\end{equation}
where we use the fact that $2^{k_0}\varepsilon<1/2<2^{k_0+1}\varepsilon$ in the last step.
Plugging the estimates $\eqref{f:5.6}$ and $\eqref{f:5.7}$ back into $\eqref{f:5.5}$ we have
\begin{equation*}
\int_\varepsilon^{m_0}\int_{|x^\prime|<1}
\left|\frac{u_\varepsilon(x^\prime,\psi(x^\prime)+t)}{t}\right|^pdx^\prime dt
\leq \int_{D_4}|u_\varepsilon|^p dx \leq C\bigg(\int_{D_4} |\nabla u_\varepsilon|^2 dx\bigg)^{p/2},
\end{equation*}
where we employ the Sobolev-Poincar\'e inequality in the last inequality, and we have completed the proof.
\end{proof}

\subsection{Operators with $\text{VMO}(\mathbb{R}^d)$ coefficients}

\begin{thm}\label{thm:5.1}
Let $\mathcal{L} = -\emph{div}(A(x)\nabla)$
with $A\in \emph{VMO}(\mathbb{R}^d)$ satisfying $\eqref{a:1}$ and $\eqref{a:3}$. Suppose that
$(u,P)\in H^1(D_{4r};\mathbb{R}^d)\times L^2(D_{4r})/\mathbb{R}$ is the weak solution to $\mathcal{L}(u)
+ \nabla P = 0$ and $\emph{div}(u) = 0$ in $D_{4r}$ with $u=0$ on $\Delta_{4r}$. Then for
$2\leq p < \frac{2d}{d-1}+\epsilon$ we have
\begin{equation}\label{pri:5.4}
 \Big(\dashint_{D_r}|\nabla u|^pdx\Big)^{1/p}
 \leq C_p \Big(\dashint_{D_{2r}}|\nabla u|^2dx\Big)^{1/2},
\end{equation}
where $C_p$ depends on $\mu,\omega,d,p$ and $M$.
\end{thm}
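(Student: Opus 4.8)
The plan is to combine a $\mathrm{VMO}$ freezing (perturbation) argument with the constant‑coefficient Stokes reverse H\"older inequality already available from the proof of Lemma~\ref{lemma:5.1}, and then to upgrade the exponent by a real‑variable argument in the spirit of \cite{SZW10,JGZSLS}. By a dilation we may assume $r=1$ (the $\mathrm{VMO}$ modulus of the rescaled coefficient is $\omega(r\,\cdot)$, which is harmless for the scales of interest). It suffices to prove a weak reverse H\"older inequality at one fixed exponent $q_0=\tfrac{2d}{d-1}+\epsilon_0>\tfrac{2d}{d-1}$, both for interior balls $B\subset D_{4}$ and for boundary cylinders $D_{\rho}(x_0)$ with $x_0\in\Delta_{4}$, namely
\[
\Big(\dashint_{B}|\nabla u|^{q_0}\,dx\Big)^{1/q_0}\leq C\Big(\dashint_{2B}|\nabla u|^2\,dx\Big)^{1/2};
\]
the asserted estimate $\eqref{pri:5.4}$ for every $2\leq p<\tfrac{2d}{d-1}+\epsilon$ then follows from H\"older's inequality with $\epsilon=\epsilon_0$. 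As a preliminary step, Caccioppoli's inequality near the boundary (Lemma~\ref{lemma:5.5}), its interior analogue, the Sobolev--Poincar\'e inequality and Gehring's lemma yield a Meyers‑type higher integrability $\nabla u\in L^{2+\delta}_{\mathrm{loc}}$ with a scale‑invariant bound; this $\delta>0$ is what converts the $L^1$‑smallness of $A-\bar A$ into an $L^2$‑smallness below.

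For the boundary estimate, fix $x_0\in\Delta_{4}$ and $\rho$ small, put $\bar A=\dashint_{D_{2\rho}(x_0)}A$, and let $(w,Q)$ solve the constant‑coefficient Stokes system $-\mathrm{div}(\bar A\nabla w)+\nabla Q=0$, $\mathrm{div}(w)=0$ in $D_{2\rho}(x_0)$ with $w=u$ on $\partial D_{2\rho}(x_0)$. Then $v=u-w$ solves $-\mathrm{div}(\bar A\nabla v)+\nabla(P-Q)=\mathrm{div}\big((A-\bar A)\nabla u\big)$, $\mathrm{div}(v)=0$ in $D_{2\rho}(x_0)$, $v=0$ on $\partial D_{2\rho}(x_0)$; the standard energy estimate for the Stokes system together with H\"older's inequality, the bound $\|A-\bar A\|_{L^\infty}\leq 2\mu^{-1}$, the $\mathrm{VMO}$ condition and the Meyers estimate give
\[
\Big(\dashint_{D_{2\rho}(x_0)}|\nabla v|^2\,dx\Big)^{1/2}\leq C\,[\omega(2\rho)]^{\theta}\Big(\dashint_{D_{4\rho}(x_0)}|\nabla u|^2\,dx\Big)^{1/2},\qquad \theta=\theta(\delta)\in(0,1).
\]
On the other hand $w$ solves a constant‑coefficient Stokes system and vanishes on the boundary portion, so the argument inside the proof of Lemma~\ref{lemma:5.1} (Caccioppoli near boundary, the trace estimate of \cite{RMBZS}, the co‑area formula and the self‑improvement of weak reverse H\"older inequalities) furnishes some $q_0>\tfrac{2d}{d-1}$ with
\[
\Big(\dashint_{D_{\rho}(x_0)}|\nabla w|^{q_0}\,dx\Big)^{1/q_0}\leq C\Big(\dashint_{D_{2\rho}(x_0)}|\nabla w|^2\,dx\Big)^{1/2}\leq C\Big(\dashint_{D_{2\rho}(x_0)}|\nabla u|^2\,dx\Big)^{1/2},
\]
the interior analogue being classical. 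Thus on every such ball or cylinder $\nabla u$ splits as a function satisfying a scale‑invariant $L^{q_0}$ reverse H\"older inequality plus a remainder whose $L^2$‑average is controlled by $[\omega(2\rho)]^{\theta}$ times the $L^2$‑average of $\nabla u$ on the double.

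Since $A\in\mathrm{VMO}(\mathbb{R}^d)$ we have $[\omega(2\rho)]^{\theta}\to 0$ as $\rho\to 0$, so for any prescribed tolerance there is a scale below which the remainder is as small as one wishes. This is exactly the hypothesis of Shen's real‑variable lemma (cf. \cite{SZW10} and \cite{JGZSLS}), applied to the mixed family of interior balls and boundary cylinders adapted to the Lipschitz graph; its conclusion is the weak reverse H\"older inequality for $\nabla u$ at every exponent $p<q_0$, from which $\eqref{pri:5.4}$ follows, with $\epsilon=\epsilon_0$ and $C_p$ depending only on $\mu,\omega,d,p$ and $M$.

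I expect the main obstacle to be twofold. First, assembling the constant‑coefficient Stokes reverse H\"older estimate at an exponent strictly above $\tfrac{2d}{d-1}$ near a Lipschitz boundary: this rests on the $L^p$ boundary regularity theory for the Stokes system in Lipschitz domains and on the self‑improving property of weak reverse H\"older inequalities, and it is precisely what limits the admissible range of $p$. Second, running the real‑variable lemma correctly over the family of interior balls and boundary cylinders of the flattened domain — verifying the doubling and covering structure it requires, and checking that the quantity $[\omega(\cdot)]^{\theta}$ is uniformly (in the base point) the smallness the lemma calls for — is the delicate bookkeeping step.
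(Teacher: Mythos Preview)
Your proposal is correct and is essentially the same argument as the paper's: the paper packages the VMO freezing step (your decomposition $u=w+v$ with the Meyers/weak reverse H\"older input) as Lemma~\ref{lemma:5.3} and then invokes the real-variable lemma \cite[Theorem~3.5]{JGZSLS} to conclude, exactly as you outline. The only cosmetic difference is that for the constant-coefficient reverse H\"older at an exponent above $\tfrac{2d}{d-1}$ the paper cites \cite[Theorem~1.3]{JGJK} directly rather than rederiving it via the route in the proof of Lemma~\ref{lemma:5.1}.
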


\begin{proof}
The result directly follows from Lemma $\ref{lemma:5.3}$ and \cite[Theorem 3.5]{JGZSLS}, and we are done.
\end{proof}
\begin{remark}
\emph{Theorem $\ref{thm:5.1}$ has been shown in \cite{SGZWS} without a proof
in the case of $\partial\Omega\in C^1$ for $2\leq p<\infty$. We also mention that
the approximation argument employed here is quite similar to
that shown in \cite{JGZSLS} and originally investigated in \cite{LI}.}
\end{remark}

\begin{lemma}\label{lemma:5.3}
Assume the coefficient of $\mathcal{L}$ satisfies the same conditions as in Theorem $\ref{thm:5.1}$. Then
there exist a function $h(r)$ and some constants $C_0>0$ and $p>\frac{2d}{d-1}$ with the following properties:
\begin{itemize}
  \item $\lim_{r\to 0} h(r) = 0$;
  \item if $(u,P)\in H^1(D_{3r};\mathbb{R}^d)\times L^2(D_{3r})/\mathbb{R}$
  is the weak solution to $\mathcal{L}(u) +\nabla P = 0$ and $\emph{div}(u)=0$ in $D_{3r}$
  with $u = 0$ on $\Delta_{3r}$.
  Then there exists $v\in W^{1,p}(D_{2r};\mathbb{R}^d)$ such that
  \begin{equation}\label{pri:5.2}
  \Big(\dashint_{D_{2r}}\big|\nabla (u-v)\big|^2 dx\Big)^{1/2}
  \leq h(r)\Big(\dashint_{D_{3r}} |\nabla u|^2 dx\Big)^{1/2},
  \end{equation}
  \vspace{-0.5cm}
  \begin{equation}\label{pri:5.3}
  \Big(\dashint_{D_r}|\nabla v|^p dx\Big)^{1/p}
  \leq C_0\Big(\dashint_{D_{3r}} |\nabla u|^2 dx\Big)^{1/2}.
  \end{equation}
\end{itemize}
\end{lemma}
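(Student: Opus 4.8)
The plan is to run the standard frozen–coefficient (Campanato) perturbation argument. Comparing $(u,P)$ with the solution of the Stokes system whose matrix is frozen to the average $\overline{A}=\dashint_{D_{3r}}A$, we will obtain \eqref{pri:5.3} from a boundary $W^{1,p}$ (reverse H\"older) estimate for constant–coefficient Stokes systems in Lipschitz domains, and \eqref{pri:5.2} from a Caccioppoli energy comparison in which the smallness of $A-\overline{A}$ is extracted from the $\text{VMO}$ hypothesis together with a boundary Meyers estimate for $\nabla u$.

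First I would fix a good radius $\rho\in(2r,\tfrac{5r}{2})$, chosen by a Fubini/co-area argument so that $u|_{\partial D_\rho}\in H^{1/2}(\partial D_\rho)$ and $\int_{\partial D_\rho}(|\nabla u|^2+|u|^2)\le Cr^{-1}\int_{D_{3r}}(|\nabla u|^2+|u|^2)$. Let $(v,Q)\in H^1(D_\rho;\mathbb{R}^d)\times L^2(D_\rho)/\mathbb{R}$ solve $-\text{div}(\overline{A}\nabla v)+\nabla Q=0$, $\text{div}(v)=0$ in $D_\rho$ with $v=u$ on $\partial D_\rho$; the compatibility condition holds since $\text{div}(u)=0$ in $D_{3r}$, and $v=0$ on $\Delta_\rho$ because $u=0$ on $\Delta_{3r}$. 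For this constant–coefficient Stokes solution, the boundary reverse H\"older inequality for the gradient — obtained exactly as in Lemma \ref{lemma:5.1} (Caccioppoli near the boundary, Remark \ref{re:5.1}; the embedding chain $H^1(\partial D_s)\hookrightarrow W^{1/2,2}(D_s)\hookrightarrow W^{1,\frac{2d}{d-1}}(D_s)$; then self–improvement of the weak reverse H\"older inequality, \cite[Theorem 6.38]{MGLM}) — yields $p>\frac{2d}{d-1}$ and $C_0=C_0(\mu,d,p,M)$ with $\big(\dashint_{D_r}|\nabla v|^p\big)^{1/p}\le C_0\big(\dashint_{D_{2r}}|\nabla v|^2\big)^{1/2}$.

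Next, set $w=u-v\in H^1_0(D_\rho;\mathbb{R}^d)$, so $\text{div}(w)=0$ in $D_\rho$ and $-\text{div}(\overline{A}\nabla w)+\nabla(P-Q)=\text{div}\big((A-\overline{A})\nabla u\big)$ in $D_\rho$. Testing with $w$ (the pressure drops out since $\text{div}(w)=0$) and using \eqref{a:1},
\[
\mu\int_{D_\rho}|\nabla w|^2\,dx\le\int_{D_\rho}|A-\overline{A}|\,|\nabla u|\,|\nabla w|\,dx .
\]
I would then bring in the boundary Meyers estimate: combining Caccioppoli near the boundary (Lemma \ref{lemma:5.5}, Remark \ref{re:5.1}) with the interior version, Sobolev–Poincar\'e ($u=0$ on $\Delta_{3r}$) and Gehring's lemma gives $\delta>0$ and $C=C(\mu,d,M)$ with $\big(\dashint_{D_\rho}|\nabla u|^{2+\delta}\big)^{1/(2+\delta)}\le C\big(\dashint_{D_{3r}}|\nabla u|^2\big)^{1/2}$. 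Applying H\"older with exponents $\tfrac{2(2+\delta)}{\delta},\,2+\delta,\,2$, bounding $|A-\overline{A}|^{\frac{2(2+\delta)}{\delta}}\le C(\mu)|A-\overline{A}|$ and estimating $\dashint_{D_\rho}|A-\overline{A}|$ by the $\text{VMO}$ modulus $\omega(3r)$, the $r$–powers balance and one gets
\[
\Big(\dashint_{D_\rho}|\nabla w|^2\,dx\Big)^{1/2}\le C\,\omega(3r)^{\frac{\delta}{2(2+\delta)}}\Big(\dashint_{D_{3r}}|\nabla u|^2\,dx\Big)^{1/2}.
\]
Setting $h(r)=C\,\omega(3r)^{\delta/(2(2+\delta))}$ gives $\lim_{r\to0}h(r)=0$ and, since $D_{2r}\subset D_\rho$, the estimate \eqref{pri:5.2}; moreover $\big(\dashint_{D_{2r}}|\nabla v|^2\big)^{1/2}\le\big(\dashint_{D_{2r}}|\nabla u|^2\big)^{1/2}+\big(\dashint_{D_\rho}|\nabla w|^2\big)^{1/2}\le(1+h(r))\big(\dashint_{D_{3r}}|\nabla u|^2\big)^{1/2}$, which together with the reverse H\"older bound for $v$ yields \eqref{pri:5.3} after enlarging $C_0$.

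The main obstacle is the boundary $W^{1,p}$ estimate for constant–coefficient Stokes systems in a Lipschitz domain with the precise range $p>\frac{2d}{d-1}$: in contrast to the smooth case one cannot take $p$ arbitrarily large, the threshold $\frac{2d}{d-1}$ being forced by the $H^1(\partial D_s)\hookrightarrow W^{1/2,2}(D_s)\hookrightarrow W^{1,\frac{2d}{d-1}}(D_s)$ mechanism with the extra $\epsilon$ coming only from self–improvement; this is exactly where the Lipschitz geometry and the theory behind Lemma \ref{lemma:5.1} (and \cite{GPGCGHS,JGJK,RMBZS}) enter. The remaining ingredients — freezing the coefficients, the Caccioppoli energy comparison, and converting the $\text{VMO}$ modulus into the smallness factor $h(r)$ via the higher integrability of $\nabla u$ — are then routine.
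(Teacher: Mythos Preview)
Your proposal is correct and follows essentially the same frozen--coefficient comparison argument as the paper: freeze $A$ to its average, solve the constant--coefficient Stokes problem with boundary data $u$, and control $\nabla(u-v)$ by testing the difference equation and splitting via higher integrability of $\nabla u$ together with the $\text{VMO}$ smallness of $A-\overline{A}$. The only inessential differences are that the paper poses the comparison directly on $D_{2r}$ (your co-area choice of a good $\rho$ is harmless but unnecessary), invokes \cite[Theorem~1.3]{JGJK} for the constant--coefficient reverse H\"older estimate rather than rederiving it via the Lemma~\ref{lemma:5.1} mechanism, and handles the $L^{2s'}$ norm of $A-\overline{A}$ by John--Nirenberg instead of your pointwise bound $|A-\overline{A}|^{q}\le C(\mu)|A-\overline{A}|$.
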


\begin{proof}
The proof is similar to that in\cite[Lemma 3.4]{JGZSLS} and we provide it for the sake of the completeness.
First of all, we denote an operator with constant coefficients by
\begin{equation*}
 \bar{\mathcal{L}} = - \text{div}(\bar{A}\nabla)=-\frac{\partial}{\partial x_i}
 \Big\{\bar{a}_{ij}^{\alpha\beta}\frac{\partial}{\partial x_j}\Big\}
 \quad\text{and}\quad
 \bar{a}_{ij}^{\alpha\beta} = \dashint_{B(x,r)} a_{ij}^{\alpha\beta}(x) dx,
 \quad \text{with}~B(x,r)\cap\Omega = D_r,
\end{equation*}
and let $(v,q)\in H^1(D_{2r};\mathbb{R}^d)\times L^2(D_{2r})/\mathbb{R}$ be the weak solution to
$\bar{\mathcal{L}}(v) + \nabla q = 0$  and $\text{div}(v) = 0$ in $D_{2r}$ with
$v= u$ on $\partial D_{2r}$. Then it is clear to see that
\begin{equation}\label{pde:5.1}
\left\{\begin{aligned}
\bar{\mathcal{L}}(v-u) + \nabla (q-P) & = \text{div}\big[(\tilde{A}-A)\nabla u\big]
&\quad& \text{in}~~D_{2r},\\
\text{div}(v-u) & = 0 &\quad& \text{in}~~D_{2r},\\
 v- u &= 0 &\quad& \text{on}~\partial D_{2r},
\end{aligned}\right.
\end{equation}
where we use the fact that $\mathcal{L}(u)+\nabla P = 0$ in $D_{2r}$ in the first line of $\eqref{pde:5.1}$.
By taking $v-u$ as the test function, we have
\begin{equation*}
\int_{D_{2r}}\bar{A}\nabla(v-u)\cdot\nabla(v-u) dx
= \int_{D_{2r}} (A-\bar{A})\nabla u\cdot \nabla(v-u)dx.
\end{equation*}
On account of the ellipticity condition and Young's inequality, it follows that
\begin{equation*}
\begin{aligned}
\dashint_{D_{2r}} |\nabla (v-u)|^2 dx &\leq C\dashint_{D_{2r}}|A-\bar{A}|^2|\nabla u|^2 dx\\
&\leq C\Big(\dashint_{D_{2r}}|A-\bar{A}|^{2s^\prime} dx\Big)^{\frac{1}{s^\prime}}
\Big(\dashint_{D_{2r}}|\nabla u|^{2s} dx\Big)^{\frac{1}{s}}
\leq h^2(r)\dashint_{D_{3r}}|\nabla u|^{2} dx,
\end{aligned}
\end{equation*}
where $s>1$ is properly chosen and $s^\prime$ is the conjugate index of $s$. In the last step,
we employ the weak reverse H\"older's inequality
\begin{equation}
\Big(\dashint_{D_{2r}}|\nabla u|^{2s}dx\Big)^{\frac{1}{2s}}
\leq C\Big(\dashint_{D_{3r}}|\nabla u|^2 dx\Big)^{\frac{1}{2}},
\end{equation}
and define
\begin{equation}
h(r) = C\sup_{x\in\Omega}\sum_{i,j,\alpha,\beta}\Big(\dashint_{B(x,r)}
|a_{ij}^{\alpha\beta}-\bar{a}_{ij}^{\alpha\beta}|^{2s^\prime} dy\Big)^{\frac{1}{2s^\prime}}.
\end{equation}
Since $a_{ij}^{\alpha\beta}\in \text{VMO}$,
by the John-Nirenberg inequality (see \cite[Corollary 6.12]{JD}), it is not hard to see
$h(r)\to 0$ as $r\to 0$.

Then we turn to prove the estimate $\eqref{pri:5.3}$. Noting that $(v,q)$ satisfies
$\bar{\mathcal{L}}(v) + \nabla q = 0$ and $\text{div}(v) = 0$ in $D_{2r}$ with
$v=0$ on $\Delta_{2r}$. It follows from \cite[Theorem 1.3]{JGJK} that
\begin{equation*}
\Big(\dashint_{D_r}|\nabla v|^p dx\Big)^{\frac{1}{p}}
\leq C \Big(\dashint_{D_{2r}}|\nabla v|^2 dx\Big)^{\frac{1}{2}}
\leq C \Big(\dashint_{D_{3r}}|\nabla u|^2 dx\Big)^{\frac{1}{2}}
\end{equation*}
by choosing $p=2d/(d-1)+\epsilon$,
where we employ the estimate $\eqref{pri:5.2}$ in the last step. We have completed the proof.
\end{proof}

\begin{thm}\label{thm:5.2}
Suppose that $A\in\emph{VMO}(\mathbb{R}^d)$ satisfies $\eqref{a:1}-\eqref{a:3}$. Let
$(u_\varepsilon,p_\varepsilon)\in H^1(D_{5r};\mathbb{R}^d)\times L^2(D_{5r})$ be the weak solution to
$\mathcal{L}_\varepsilon(u_\varepsilon)+\nabla p_\varepsilon = 0$ and $\emph{div}(u_\varepsilon) = 0$ in
$D_{5r}$ with $u_\varepsilon = 0$ on $\Delta_{5r}$. Then we have
\begin{equation}\label{pri:5.7}
 \Big(\dashint_{D_r}|\nabla u_\varepsilon|^p dx\Big)^{1/p}
 \leq C\Big(\dashint_{D_{4r}}|\nabla u_\varepsilon|^2 dx\Big)^{1/2}
\end{equation}
for $p=2d/(d-1)+\epsilon$, where $C$ depends only on $\mu,\omega,d$ and $M$.
\end{thm}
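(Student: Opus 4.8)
The plan is to deduce \eqref{pri:5.7} from a \emph{uniform reverse H\"older inequality at the endpoint exponent} $p_0:=2d/(d-1)$, valid at every scale and location inside $D_{5r}$, and then to gain the extra exponent $\epsilon>0$ by the self‑improving property of such inequalities. After a dilation one may normalise the flat‑boundary configuration at the cost of replacing $\varepsilon$ by $\varepsilon/r$, so that everything reduces to proving, for auxiliary surface balls $D_\rho\subset D_{5r}$ (and for interior balls $B(x,\rho)\subset\Omega$), the bound
\begin{equation*}
\Big(\dashint_{D_\rho}|\nabla u_\varepsilon|^{p_0}\,dx\Big)^{1/p_0}\le C\Big(\dashint_{D_{2\rho}}|\nabla u_\varepsilon|^2\,dx\Big)^{1/2},\qquad C=C(\mu,\omega,d,M),
\end{equation*}
together with its interior analogue. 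The argument splits according to whether $\rho\lesssim\varepsilon$ or $\rho\gtrsim\varepsilon$.

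For $\rho\lesssim\varepsilon$ homogenization plays no role: the change of variables $x\mapsto x/\varepsilon$ turns $\mathcal{L}_\varepsilon$ into the \emph{fixed} operator $\mathcal{L}_1$, whose coefficient $A$ lies in $\text{VMO}(\mathbb{R}^d)$ with a modulus $\omega$ that does not depend on $\varepsilon$. Since $p_0$ belongs to the admissible range $[2,\,2d/(d-1)+\epsilon)$ of Theorem \ref{thm:5.1} (the gain $\epsilon=\epsilon(\mu,\omega,d,M)$ there being exactly the one furnished by Lemma \ref{lemma:5.3} together with \cite[Theorem 3.5]{JGZSLS}), Theorem \ref{thm:5.1} applied to $\mathcal{L}_1$ on $D_{\rho/\varepsilon}$ and rescaled back yields the desired boundary inequality with an $\varepsilon$‑free constant; for interior balls the same rescaling combined with the interior Calder\'on--Zygmund estimate for divergence‑form operators with VMO coefficients (which holds for every finite exponent) gives the interior reverse H\"older inequality at $p_0$, again uniformly in $\varepsilon$.

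For $\rho\gtrsim\varepsilon$ I would invoke the ``down to scale $\varepsilon$'' estimate. Fix such a $D_\rho$ and split it into the thin layer $\{\delta\lesssim\varepsilon\}$ and the co‑layer $\{\delta\gtrsim\varepsilon\}$. On the layer, cover by boundary cubes $Q$ of side $\sim\varepsilon$ sitting on the flat part of $\partial\Omega$; Caccioppoli near the boundary (Lemma \ref{lemma:5.5}) bounds $\|\nabla u_\varepsilon\|_{L^2(Q)}$ by $\varepsilon^{-1}\|u_\varepsilon\|_{L^2(2Q)}$, Poincar\'e (using $u_\varepsilon=0$ on the flat part) replaces this by $\|\nabla u_\varepsilon\|_{L^2(2Q)}$, and the case $\rho\lesssim\varepsilon$ already treated (applied on $Q$) upgrades the $L^2$ average to an $L^{p_0}$ average; summing over $Q$ controls $\nabla u_\varepsilon$ in $L^{p_0}$ on the layer. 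On the co‑layer, a Whitney decomposition $B_i=B(x_i,\delta(x_i)/16)$ together with the interior reverse H\"older inequality and interior Caccioppoli (with constant $0$) gives $\dashint_{B_i}|\nabla u_\varepsilon|^{p_0}\lesssim\big(\dashint_{2B_i}|u_\varepsilon/\delta|^2\big)^{p_0/2}$, whence, by bounded overlap and the $L^2$‑maximal function $M_2 f=(M(|f|^2))^{1/2}$ (bounded on $L^{p_0}$ because $p_0>2$), $\|\nabla u_\varepsilon\|_{L^{p_0}(\text{co-layer})}\lesssim\|u_\varepsilon/\delta\|_{L^{p_0}}$ on a slightly larger co‑layer. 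Theorem \ref{thm:5.3}, rescaled to the scale $\rho$ (legitimate since $\varepsilon/\rho<m_0$) and followed by the Sobolev--Poincar\'e inequality, bounds the last quantity by $\|\nabla u_\varepsilon\|_{L^2(D_{c\rho})}$; combining the layer and co‑layer contributions yields the reverse H\"older inequality at $p_0$, uniformly in $\varepsilon$.

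Once the reverse H\"older inequality at $p_0$ is available at every scale (with the interior analogue), writing $|\nabla u_\varepsilon|^2$ as the weight turns it into a reverse H\"older inequality with exponent $p_0/2>1$ and $L^1$ on the right, so Gehring's lemma --- in its ``increasing supports, up to the boundary'' form, as encoded in \cite[Theorem 3.5]{JGZSLS} --- produces $\epsilon=\epsilon(\mu,\omega,d,M)>0$ and the inequality $\big(\dashint_{D_\rho}|\nabla u_\varepsilon|^{p_0+\epsilon}\big)^{1/(p_0+\epsilon)}\le C\big(\dashint_{D_{c\rho}}|\nabla u_\varepsilon|^2\big)^{1/2}$; taking $\rho=r$ and absorbing the fixed dilation factor $c$ by a covering argument gives \eqref{pri:5.7} with $p=2d/(d-1)+\epsilon$. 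The delicate point --- and the main obstacle --- is to make the splitting in the regime $\rho\gtrsim\varepsilon$ quantitatively seamless: one must verify that the boundary‑layer contribution (governed by the $\varepsilon$‑scale VMO estimate, Theorem \ref{thm:5.1} after rescaling) and the co‑layer contribution (governed by the homogenization estimate Theorem \ref{thm:5.3}) assemble with constants independent of $\varepsilon$, and that Theorem \ref{thm:5.3} may genuinely be used at the exact exponent $p_0$ --- which works only because $p_0=2d/(d-1)$ is strictly below the Sobolev embedding exponent of $W^{1,2}(D_{c\rho})$ (namely $2d/(d-2)$ for $d\ge3$, and any finite exponent for $d=2$), so that its right‑hand side is dominated via Sobolev--Poincar\'e by $\|\nabla u_\varepsilon\|_{L^2}$.
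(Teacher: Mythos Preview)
Your overall architecture matches the paper's: reduce (by self-improvement) to the endpoint $p_0=2d/(d-1)$, handle scales $\rho\lesssim\varepsilon$ by rescaling to $\mathcal{L}_1$ and invoking the VMO estimate (Theorem~\ref{thm:5.1}), and handle scales $\rho\gtrsim\varepsilon$ via the ``down to scale $\varepsilon$'' estimate (Theorem~\ref{thm:5.3}). Your Whitney/maximal-function argument on the co-layer is exactly the content of Lemma~\ref{lemma:5.4}, which the paper quotes as a black box; so for the co-layer you and the paper agree.

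The genuine gap is in your treatment of the boundary layer $\{0<t\lesssim\varepsilon\}$ when $\rho\gtrsim\varepsilon$. Summing the $\varepsilon$-scale reverse H\"older inequalities over the covering cubes $Q$ yields only
\[
\int_{\text{layer}}|\nabla u_\varepsilon|^{p_0}\;\le\;C\,\varepsilon^{\,d(1-p_0/2)}\Big(\int_{\text{2-layer}}|\nabla u_\varepsilon|^{2}\Big)^{p_0/2},
\]
whereas the target reverse H\"older at scale $\rho$ requires the prefactor $\rho^{\,d(1-p_0/2)}$. Since $d(1-p_0/2)=-d/(d-1)<0$, one must make up the factor $(\varepsilon/\rho)^{d/(d-1)}$, which is equivalent to a decay estimate of the type $\int_{\{0<t<c\varepsilon\}}|\nabla u_\varepsilon|^2\lesssim(\varepsilon/\rho)\int_{D_{c\rho}}|\nabla u_\varepsilon|^2$ (or, in the form the paper uses, $\int_0^{c\varepsilon}\!\int|u_\varepsilon|^{p_0}\le C\varepsilon^{p_0+\tau}\int_0^{4m_0}\!\int|u_\varepsilon|^{p_0}$). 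This is \emph{not} a consequence of Caccioppoli plus Poincar\'e alone (those two together are circular here), and it is precisely the content of Lemma~\ref{lemma:5.2}/estimate~\eqref{pri:5.5}, the decay lemma obtained from the convergence rate \eqref{pri:3.10}. You correctly flag the ``quantitatively seamless'' assembly as the main obstacle, but you do not invoke the tool that resolves it.

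Concretely, the paper's proof is organized as: Lemma~\ref{lemma:5.4} converts $\int_{D_1}|\nabla u_\varepsilon|^{p_0}$ to $\int_0^{m_0}\!\int|u_\varepsilon/t|^{p_0}$; the range $t\in[\varepsilon,m_0]$ is handled by Theorem~\ref{thm:5.3}; and for $t\in(0,\varepsilon)$ one uses Hardy, then Theorem~\ref{thm:5.1} on each $\varepsilon$-cube, then Caccioppoli and H\"older to reach $C\varepsilon^{-p_0}\int_0^{3m_0\varepsilon}\!\int|u_\varepsilon|^{p_0}$, and finally Lemma~\ref{lemma:5.2} to absorb the $\varepsilon^{-p_0}$ and close with Sobolev--Poincar\'e. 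Insert Lemma~\ref{lemma:5.2} at the analogous point of your layer step and your argument goes through.
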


To prove Theorem $\ref{thm:5.2}$, we need to introduce the following lemma
\begin{lemma}\label{lemma:5.4}
Under the same conditions as in Theorem $\ref{thm:5.2}$, suppose that
$(u_\varepsilon,p_\varepsilon)$ is the weak solution to
$\mathcal{L}_\varepsilon(u_\varepsilon)+\nabla p_\varepsilon = 0$ and $\emph{div}(u_\varepsilon) = 0$ in
$D_{3r}$ with $u_\varepsilon = 0$ on $\Delta_{3r}$. Then for any $2\leq p<\infty$ we have
\begin{equation}
\int_{0}^{rm_0}\int_{|x^\prime|<r}|\nabla u_\varepsilon(x^\prime,\psi(x^\prime)+t)|^p dx^\prime dt
\leq C_p\int_0^{2rm_0}\int_{|x^\prime|<2r}
\left|\frac{u_\varepsilon(x^\prime,\psi(x^\prime)+t)}{t}\right|^p dx^\prime dt,
\end{equation}
where $C_p$ depends on $\mu,\omega,p,d$ and $M$.
\end{lemma}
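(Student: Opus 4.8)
The plan is to first reduce to $r=1$: the dilation $x\mapsto rx$ turns $\mathcal{L}_\varepsilon$ into $\mathcal{L}_{\varepsilon/r}$, preserves all the hypotheses, and both sides of the asserted inequality scale the same way, so it suffices to prove $\int_{D_1}|\nabla u_\varepsilon|^p\,dx\le C\int_{D_2}|u_\varepsilon(x)/t(x)|^p\,dx$, where for $x=(x',\psi(x')+t)$ I write $t(x)=t$ for the height of $x$ above the graph (so $t(x)\sim\text{dist}(x,\Delta_{3})$). The heart of the matter is a purely local estimate. Pick $c=c(M)\in(0,\tfrac18)$ so small that for every $x\in D_1$ the ball $B(x,4c\,t(x))$ is contained in $D_3$ and satisfies $t(y)\ge\tfrac12 t(x)$ for all $y\in B(x,4c\,t(x))$ — a routine verification from $\|\nabla\psi\|_{L^\infty}\le M$ and $m_0=M+10d$. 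Then $B(x,c\,t(x))$ is a genuine interior ball on which $(u_\varepsilon,p_\varepsilon)$ solves $\mathcal{L}_\varepsilon(u_\varepsilon)+\nabla p_\varepsilon=0$, $\text{div}(u_\varepsilon)=0$. Applying the interior $W^{1,p}$ reverse H\"older inequality for $\mathcal{L}_\varepsilon$, which is valid for every $1<p<\infty$ with a constant independent of $\varepsilon$ (at scales $\gtrsim\varepsilon$ this is the interior estimate of \cite{SGZWS}; at scales $\lesssim\varepsilon$ it reduces, after rescaling, to the interior Calder\'on--Zygmund estimate for the $\text{VMO}$ operator $-\text{div}(A\nabla)$, whose interior version holds for all $p$), followed by the interior Caccioppoli inequality $\big(\dashint_{B(x,2ct(x))}|\nabla u_\varepsilon|^2\big)^{1/2}\le Ct(x)^{-1}\big(\dashint_{B(x,4ct(x))}|u_\varepsilon|^2\big)^{1/2}$ (the interior analogue of Lemma \ref{lemma:5.5}, with the admissible choice of additive constant zero), one gets
\[
\Big(\dashint_{B(x,ct(x))}|\nabla u_\varepsilon|^p\Big)^{1/p}
\le C\,t(x)^{-1}\Big(\dashint_{B(x,4ct(x))}|u_\varepsilon|^2\Big)^{1/2}.
\]
Because $t(x)^{-1}\le 2/t(\cdot)$ on $B(x,4ct(x))$ and $p\ge 2$, two uses of H\"older's inequality turn the right-hand side into $C\big(\dashint_{B(x,4ct(x))}|u_\varepsilon/t|^p\big)^{1/p}$.

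With this local estimate established, I would select, by the standard Whitney/Besicovitch covering argument, a countable family $\{B(x_i,c\,t_i)\}_i$ with $t_i=t(x_i)$ that covers $D_1$, whose fourfold dilates $\{B(x_i,4c\,t_i)\}_i$ have bounded overlap (this is because $c$ is small enough that $t(\cdot)$ varies by at most a factor $2$ over $B(x_i,8ct_i)$, so overlapping dilated balls carry comparable radii), and satisfy $\bigcup_i B(x_i,4c\,t_i)\subset D_2$. Then, since $|B(x_i,ct_i)|/|B(x_i,4ct_i)|=4^{-d}$,
\[
\int_{D_1}|\nabla u_\varepsilon|^p
\le \sum_i\int_{B(x_i,ct_i)}|\nabla u_\varepsilon|^p
\le C\sum_i\int_{B(x_i,4ct_i)}\Big|\frac{u_\varepsilon}{t}\Big|^p
\le C\int_{D_2}\Big|\frac{u_\varepsilon}{t}\Big|^p ,
\]
and undoing the rescaling recovers the inequality in the statement of Lemma \ref{lemma:5.4} for general $r$. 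Note that the estimate never invokes $u_\varepsilon=0$ on $\Delta_{3r}$; that hypothesis is what makes the right-hand side finite (via a Hardy-type bound) in the applications, but the inequality itself holds regardless.

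I expect the main technical issue to be not any single step but securing the precise statement and $\varepsilon$-uniformity of the interior $W^{1,p}$ reverse H\"older inequality for $\mathcal{L}_\varepsilon$ across all finite $p$ — this is exactly where the $\text{VMO}$ hypothesis and periodic homogenization enter. Working with a ball comfortably inside $D_3$, rather than with a boundary half-cylinder sitting on $\Delta_3$, is what lets the local estimate hold at every scale (a boundary cylinder would force a restriction on $p$ coming from the Lipschitz character and would also run into the $5$-fold dilation not fitting in $D_3$ when the height is comparable to $m_0$). The only remaining work is the routine but fiddly bookkeeping of choosing $c=c(M)$ and the covering so that every ball used stays inside $D_{3r}$ and every union used stays inside $D_{2r}$, compatibly with $m_0=M+10d$.
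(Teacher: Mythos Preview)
Your proposal is correct and follows essentially the same approach that the paper defers to its references: the paper's proof is omitted and simply cites \cite[Lemma 7.3]{SGZWS} (the uniform interior $W^{1,p}$/Lipschitz estimate for the Stokes system $\mathcal{L}_\varepsilon$) together with the Whitney-covering argument from \cite{SZW10}, which is exactly the interior reverse H\"older plus Caccioppoli on balls of radius $\sim t(x)$ followed by summation over a bounded-overlap covering that you have outlined. Your remark that the boundary condition on $\Delta_{3r}$ is not used in the inequality itself, only to guarantee finiteness of the right-hand side in applications, is also accurate.
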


\begin{proof}
The desired result follows from \cite[Lemma 7.3]{SGZWS}
and the arguments developed in \cite{SZW10}.
The core idea is not connected to homogenization topics and we thus omit the proof.
\end{proof}

\begin{flushleft}
\textbf{Proof of Theorem \ref{thm:5.2}.} By rescaling we may assume that $r=1$,
and by the self-improvement property we only prove this theorem for
$p=2d/(p-1)$.
On account of Lemma $\ref{lemma:5.4}$, it is equivalent to estimating the quantity
\end{flushleft}
\begin{equation}\label{f:5.8}
\int_0^{m_0}\int_{|x^\prime|<1}\left|\frac{u_\varepsilon(x^\prime,\psi(x^\prime)+t)}{t}\right|^p dx^\prime dt
= \bigg\{\int_0^{\varepsilon}\int_{|x^\prime|<1}+\int_{\varepsilon}^{m_0}\int_{|x^\prime|<1}\bigg\}
\left|\frac{u_\varepsilon(x^\prime,\psi(x^\prime)+t)}{t}\right|^p dx^\prime dt.
\end{equation}
Note that the second term in the right-hand side of $\eqref{f:5.8}$ immediately
follows from Theorem $\ref{thm:5.3}$.
In fact we only need to handle the estimate at microcosmic scale $\varepsilon$,
where the smoothness of the coefficient of $\mathcal{L}_\varepsilon$ comes into play.
To estimate the first term in the right-hand side of $\eqref{f:5.8}$, we first obtain
\begin{equation*}
\begin{aligned}
\int_0^{\varepsilon}\int_{|x^\prime|<\varepsilon}\left|\frac{u_\varepsilon(x^\prime,\psi(x^\prime)+t)}{t}\right|^p dx^\prime dt
&\leq C \int_0^{\varepsilon}\int_{|x^\prime|<\varepsilon}
\left|\nabla u_\varepsilon(x^\prime,\psi(x^\prime)+t)\right|^p dx^\prime dt\\
&\leq C\varepsilon^{d-\frac{pd}{2}}\bigg(\int_{D_{2\varepsilon}}
|\nabla u_\varepsilon|^2 dx\bigg)^{\frac{p}{2}}
\leq C\varepsilon^{d-\frac{pd}{2}-p}\bigg(\int_{D_{3\varepsilon}}
|u_\varepsilon|^2 dx\bigg)^{\frac{p}{2}}\\
&\leq C\varepsilon^{-p}\int_0^{3m_0\varepsilon}\int_{|x^\prime|<2}
|u_\varepsilon|^p dx^\prime dt,
\end{aligned}
\end{equation*}
where we employ Hardy's inequality in the first step, and the estimate $\eqref{pri:5.4}$ in the
second one, and Caccioppoli's inequality $\eqref{pri:5.10}$ in the third one, and H\"older's inequality
in the last one. The above estimate coupled with a covering argument gives
\begin{equation}\label{f:5.9}
\begin{aligned}
\int_0^{\varepsilon}\int_{|x^\prime|<1}
\left|\frac{u_\varepsilon(x^\prime,\psi(x^\prime)+t)}{t}\right|^p dx^\prime dt
&\leq C\varepsilon^{-p}\int_0^{3m_0\varepsilon}\int_{|x^\prime|<2}
|u_\varepsilon(x^\prime,\psi(x^\prime)+t)|^p dx^\prime dt \\
&\leq C\int_0^{4m_0}\int_{|x^\prime|<4}|u_\varepsilon(x^\prime,\psi(x^\prime)+t)|^p dx^\prime dt
\leq C\bigg(\int_{D_4}|\nabla u_\varepsilon|^2 dx\bigg)^{p/2},
\end{aligned}
\end{equation}
where we use the estimate $\eqref{pri:5.5}$ in the second inequality, and the Sobolev-Poincar\'e inequality
in the last one. Collecting $\eqref{f:5.8}$, $\eqref{f:5.9}$ and $\eqref{pri:5.6}$ leads to
the desired estimate $\eqref{pri:5.7}$, and we have completed the proof.
\qed

Due to the real methods originally developed by Z.Shen in
\cite{SZW15}, from Theorem $\ref{thm:5.2}$ we have the following theorem.

\begin{thm}\label{thm:5.4}
Assume the same conditions as in Theorem $\ref{thm:5.2}$. For any $f\in L^p(\Omega;\mathbb{R}^{d\times d})$
with $\frac{2d}{d+1}-\epsilon< p<\frac{2d}{d-1}+\epsilon$, there exists a unique solution
$(u_\varepsilon,p_\varepsilon)\in W_0^{1,p}(\Omega;\mathbb{R}^d)\times L^p(\Omega)/\mathbb{R}$ satisfying
\begin{equation}\label{pde:5.2}
\left\{\begin{aligned}
\mathcal{L}_\varepsilon(u_\varepsilon)+\nabla p_\varepsilon &= \emph{div}(f) &\quad&\emph{in}~~\Omega,\\
\emph{div}(u_\varepsilon) &= 0 &\quad&\emph{in}~~\Omega,\\
u_\varepsilon &=0 &\quad& \emph{on}~\partial\Omega.
\end{aligned}\right.
\end{equation}
Moreover, the solution satisfies the uniform estimate
\begin{equation}\label{pri:5.8}
 \|\nabla u_\varepsilon\|_{L^p(\Omega)} + \|p_\varepsilon\|_{L^p(\Omega)/\mathbb{R}}
 \leq C\|f\|_{L^{p}(\Omega)},
\end{equation}
where $C$ depends only on $\mu,\omega,d$ and $\Omega$.
\end{thm}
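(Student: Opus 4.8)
The plan is to combine the interior-and-boundary reverse Hölder estimate of Theorem \ref{thm:5.2} with Shen's real-variable argument in \cite{SZW15} to obtain the $L^p$ estimate on the range $2\le p<\frac{2d}{d-1}+\epsilon$, and then to recover the lower range $\frac{2d}{d+1}-\epsilon<p\le 2$ by the duality argument already used for Lemma \ref{lemma:2.1}. First I would establish existence, uniqueness and the a priori estimate for $p$ in the upper range. Since $f\in L^p(\Omega;\mathbb{R}^{d\times d})\subset L^2(\Omega;\mathbb{R}^{d\times d})$ for $p\ge 2$, Theorem \ref{thm:2.1} (with $F=\text{div}(f)\in H^{-1}$, $h=0$, $g=0$) produces a weak solution $(u_\varepsilon,p_\varepsilon)\in H^1_0(\Omega;\mathbb{R}^d)\times L^2(\Omega)/\mathbb{R}$. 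The task is then to upgrade $\nabla u_\varepsilon\in L^2$ to $\nabla u_\varepsilon\in L^p$. For this I would verify the hypotheses of the real-variable lemma of \cite[Theorem ...]{SZW15}: decompose, for a surface ball $\Delta_{r}\subset\partial\Omega$ (or an interior ball $B_r\subset\Omega$), the solution as $u_\varepsilon = v_\varepsilon + w_\varepsilon$, where $v_\varepsilon$ solves the same system with the localized data $f\mathbf{1}_{4B}$ (for which $\|\nabla v_\varepsilon\|_{L^2(4B)}\le C\|f\|_{L^2(4B)}$ by Theorem \ref{thm:2.1}) and $w_\varepsilon$ solves the homogeneous system $\mathcal{L}_\varepsilon(w_\varepsilon)+\nabla q_\varepsilon=0$, $\text{div}(w_\varepsilon)=0$ in $4B$ with $w_\varepsilon=0$ on $\Delta_{4r}$ in the boundary case. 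Theorem \ref{thm:5.2} gives precisely the reverse Hölder inequality $\big(\dashint_{B_r}|\nabla w_\varepsilon|^p\big)^{1/p}\le C\big(\dashint_{B_{4r}}|\nabla w_\varepsilon|^2\big)^{1/2}$ for $p=\frac{2d}{d-1}+\epsilon$, and the interior analogue follows from an interior reverse Hölder estimate for $\mathcal{L}_\varepsilon$ with $\text{VMO}$ coefficients (which is contained in the arguments of \cite{SGZWS,JGZSLS}). Feeding these two ingredients into the real-variable machinery of \cite{SZW15} yields $\|\nabla u_\varepsilon\|_{L^p(\Omega)}\le C\|f\|_{L^p(\Omega)}$ for $2\le p<\frac{2d}{d-1}+\epsilon$.

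Next I would recover the pressure estimate. Once $\nabla u_\varepsilon\in L^p(\Omega)$, we have $\nabla p_\varepsilon = \text{div}(f) - \mathcal{L}_\varepsilon(u_\varepsilon) = \text{div}\big(f - A(\cdot/\varepsilon)\nabla u_\varepsilon\big)\in W^{-1,p}(\Omega;\mathbb{R}^d)$, and Lemma \ref{lemma:2.1} (valid for $1<p<\infty$ on the Lipschitz domain $\Omega$ without external cusps) gives $\|p_\varepsilon\|_{L^p(\Omega)/\mathbb{R}}\le C\|\nabla p_\varepsilon\|_{W^{-1,p}(\Omega)}\le C\big\{\|f\|_{L^p(\Omega)}+\|\nabla u_\varepsilon\|_{L^p(\Omega)}\big\}\le C\|f\|_{L^p(\Omega)}$. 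This completes the estimate \eqref{pri:5.8} in the upper range.

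For the lower range $\frac{2d}{d+1}-\epsilon<p<2$, I would use the duality argument already foreshadowed in the introduction. Let $1/p+1/p'=1$, so $2<p'<\frac{2d}{d-1}+\epsilon$, and let $g\in L^{p'}(\Omega;\mathbb{R}^{d\times d})$ be arbitrary; let $(\phi_\varepsilon,\theta_\varepsilon)\in W_0^{1,p'}(\Omega;\mathbb{R}^d)\times L^{p'}(\Omega)/\mathbb{R}$ be the unique solution to the adjoint incompressible problem $\mathcal{L}_\varepsilon^*(\phi_\varepsilon)+\nabla\theta_\varepsilon=\text{div}(g)$, $\text{div}(\phi_\varepsilon)=0$ in $\Omega$, $\phi_\varepsilon=0$ on $\partial\Omega$, which is covered by the upper-range case just proved (the adjoint coefficient $A^*\in\text{VMO}(\mathbb{R}^d)$ satisfies \eqref{a:1}--\eqref{a:3}), with $\|\nabla\phi_\varepsilon\|_{L^{p'}(\Omega)}+\|\theta_\varepsilon\|_{L^{p'}(\Omega)/\mathbb{R}}\le C\|g\|_{L^{p'}(\Omega)}$. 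Testing the equation for $u_\varepsilon$ against $\phi_\varepsilon$ and the equation for $\phi_\varepsilon$ against $u_\varepsilon$, and using that both are divergence free with zero trace, gives $\int_\Omega \nabla u_\varepsilon : g\,dx = -\int_\Omega f:\nabla\phi_\varepsilon\,dx$ up to the pressure terms, which drop out; hence $\big|\int_\Omega\nabla u_\varepsilon:g\,dx\big|\le \|f\|_{L^p(\Omega)}\|\nabla\phi_\varepsilon\|_{L^{p'}(\Omega)}\le C\|f\|_{L^p(\Omega)}\|g\|_{L^{p'}(\Omega)}$. Taking the supremum over $g$ yields $\|\nabla u_\varepsilon\|_{L^p(\Omega)}\le C\|f\|_{L^p(\Omega)}$, and then $\|p_\varepsilon\|_{L^p(\Omega)/\mathbb{R}}\le C\|f\|_{L^p(\Omega)}$ by Lemma \ref{lemma:2.1} again. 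Existence for $p<2$ is obtained by approximating $f\in L^p$ by $f_n\in L^2$, using the uniform bound just derived together with the $L^p$-closedness of the solution operator; uniqueness follows from the same duality pairing. The main obstacle I anticipate is bookkeeping in the application of the real-variable lemma: one must check that the localized pieces $v_\varepsilon,w_\varepsilon$ satisfy the precise form of the two hypotheses (an $L^2$ control of the "bad" part $v_\varepsilon$ and an $L^p$ reverse Hölder control of the "good" part $w_\varepsilon$, uniformly in $\varepsilon$ and in the scale $r$), and that Theorem \ref{thm:5.2}, which is stated on the model half-region $D_{5r}$, transfers correctly to genuine surface balls on $\partial\Omega$ via the bi-Lipschitz charts $\vartheta$; the interior reverse Hölder estimate, while standard for $\text{VMO}$ coefficients, must also be quoted in a form uniform in $\varepsilon$.
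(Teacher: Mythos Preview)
Your proposal is correct and follows essentially the same approach as the paper: the paper too localizes the data with a cutoff $\varphi$ supported in $6B$, solves the global problem with source $\operatorname{div}(\varphi f)$ to produce the ``bad'' part $v_\varepsilon$ (controlled in $L^2$ by Theorem~\ref{thm:2.1}), notes that $w_\varepsilon=u_\varepsilon-v_\varepsilon$ is then homogeneous on $4B\cap\Omega$ so Theorem~\ref{thm:5.2} supplies the reverse H\"older inequality, and feeds these into the real-variable lemma (quoted there as \cite[Theorem~6.2]{JGJK}, originating in \cite{SZW15}); the range $p<2$ is handled by duality and the pressure by Lemma~\ref{lemma:2.1}, exactly as you outline.
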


\begin{proof}
The proof is standard, and we provide a proof for the sake of the completeness.
Let $B=B(x_0,r)$ and $nB=B(x_0,nr)$ with $n\in\mathbb{R}_+$.
In the case of $p>2$, the existence of the solution comes down to the case $p=2$, and we focus on the
estimate $\eqref{pri:5.8}$. To do so, we split the source term $f$ up into $\varphi f$ and $(1-\varphi)f$,
where $\varphi\in C_0^\infty(6B)$ is a cut-off function such that $\varphi = 1$ in $4B$ and
$\varphi = 0$ outside $5B$, and then we construct the following auxiliary equations
\begin{equation*}
(\text{i})\left\{\begin{aligned}
\mathcal{L}_\varepsilon(v_\varepsilon) + \nabla q_\varepsilon &= \text{div}(\varphi f) &\quad&\text{in}~~\Omega,\\
\text{div}(v_\varepsilon) &= 0 &\quad&\text{in}~~\Omega,\\
v_\varepsilon & = 0 &\quad&\text{on}~\partial\Omega,
\end{aligned}\right.
\qquad\quad
(\text{ii})\left\{\begin{aligned}
\mathcal{L}_\varepsilon(w_\varepsilon) + \nabla r_\varepsilon &= \text{div}\big[(1-\varphi) f\big] &\quad&\text{in}~~\Omega,\\
\text{div}(w_\varepsilon) &= 0 &\quad&\text{in}~~\Omega,\\
w_\varepsilon & = 0 &\quad&\text{on}~\partial\Omega.
\end{aligned}\right.
\end{equation*}
It is well known that $(v_\varepsilon,q_\varepsilon)$ and $(w_\varepsilon,r_\varepsilon)$ belongs to
$H_0^1(\Omega;\mathbb{R}^d)\times L^2(\Omega)/\mathbb{R}$, and it is not hard to see that
$u_\varepsilon = v_\varepsilon + w_\varepsilon$ and $p_\varepsilon = q_\varepsilon+r_\varepsilon$.
We denote $F = |\nabla u_\varepsilon|$, $F_B=|\nabla v_\varepsilon|$, $R_B= |\nabla w_\varepsilon|$ and
$g=|f|$. Hence from (i) we have
\begin{equation}\label{f:5.10}
\begin{aligned}
\dashint_{2B\cap\Omega}|F_B|^2 dx  = \dashint_{2B\cap\Omega}|\nabla v_\varepsilon|^2 dx
&\leq \frac{1}{|2B\cap\Omega|}\int_{\Omega}|\nabla v_\varepsilon|^2 dx\\
&\leq \frac{1}{|2B\cap\Omega|}\int_{\Omega}|\varphi f|^2 dx
\leq C\dashint_{4B}|f|^2 dx
= C\dashint_{4B}g^2 dx,
\end{aligned}
\end{equation}
where we use the estimate $\eqref{pri:2.6}$ in the second inequality.
Since $(1-\varphi)f = 0$ in $4B\cap\Omega$, we have
$\mathcal{L}_\varepsilon(w_\varepsilon)+\nabla r_\varepsilon = 0$ in $4B\cap\Omega$.
Combining the conditions $\text{div}(w_\varepsilon) = 0$ in $4B\cap\Omega$ and
$w_\varepsilon = 0$ on $4B\cap\partial\Omega$, it follows from Theorem $\ref{thm:5.2}$ that
\begin{equation}\label{f:5.11}
\begin{aligned}
\Big(\dashint_{B\cap\Omega}|R_B|^q dx\Big)^{\frac{1}{p}}
= \Big(\dashint_{B\cap\Omega}|\nabla w_\varepsilon|^q dx\Big)^{\frac{1}{p}}
&\leq C\Big(\dashint_{2B\cap\Omega}|\nabla w_\varepsilon|^2 dx\Big)^{\frac{1}{2}}\\
&\leq C\bigg\{\Big(\dashint_{2B\cap\Omega}|\nabla u_\varepsilon|^2 dx\Big)^{\frac{1}{2}}
+\Big(\dashint_{4B\cap\Omega}|\nabla v_\varepsilon|^2 dx\Big)^{\frac{1}{2}}\bigg\}\\
&\leq C\bigg\{\Big(\dashint_{2B\cap\Omega}|F|^2 dx\Big)^{\frac{1}{2}}
+\Big(\dashint_{4B\cap\Omega} g^2 dx\Big)^{\frac{1}{2}}\bigg\},
\end{aligned}
\end{equation}
where $q=2d/(d-1)+\epsilon$, and we employ the estimate $\eqref{f:5.10}$ in the last inequality.
Until now two conditions of \cite[Theorem 6.2]{JGJK} have already been satisfied by the estimates
$\eqref{f:5.10}$ and $\eqref{f:5.11}$, and then
for $2<p<q$ we obtain
\begin{equation}\label{f:5.12}
 \|\nabla u_\varepsilon\|_{L^p(\Omega)} \leq C\|f\|_{L^p(\Omega)}.
\end{equation}

We mention that the case $\frac{2d}{d+1}-\epsilon<p<2$ directly follows from the duality argument, and then
we handle the pressure term. By observing $\nabla p_\varepsilon =
\text{div}(A(\cdot/\varepsilon)\nabla u_\varepsilon + f)=:\tilde{F}$, it follows from Lemma $\ref{lemma:2.1}$ that
\begin{equation*}
\|p_\varepsilon\|_{L^p(\Omega)/\mathbb{R}}
\leq C\|\tilde{F}\|_{W^{-1,p}(\Omega)} \leq C\Big\{\|\nabla u_\varepsilon\|_{L^p(\Omega)}
+ \|f\|_{L^p(\Omega)}\Big\}\leq C\|f\|_{L^p(\Omega)},
\end{equation*}
where we use the estimate $\eqref{f:5.12}$ for $p\in(\frac{2d}{d+1}-\epsilon,\frac{2d}{d-1}+\epsilon)$.
We have completed the proof.
\end{proof}

\begin{remark}\label{remark:5.1}
\emph{Assume the same conditions as in Theorem $\ref{thm:5.4}$, and we replace the source term $\text{div}(f)$ in
the right-hand side of
$\eqref{pde:5.2}$ into $f_0\in L^p(\Omega;\mathbb{R}^d)$. Then by the duality argument we have
the uniform estimate
\begin{equation}\label{pri:5.9}
 \|\nabla u_\varepsilon\|_{L^p(\Omega)} + \|p_\varepsilon\|_{L^p(\Omega)/\mathbb{R}}
 \leq C\|f_0\|_{L^p(\Omega)}
\end{equation}
for the same range of $p$ as in Theorem $\ref{thm:5.4}$. It is not a sharp estimate but is sufficient
for us to establish the same type estimate for $F\in W^{-1,p}(\Omega;\mathbb{R}^d)$.
Note that for any $F\in W^{-1,p}(\Omega;\mathbb{R}^d)$ with $1\leq p<\infty$, there exist
$f_0,f_1,\cdots,f_d\in L^p(\Omega;\mathbb{R}^d)$ such that $F = f_0 + \partial f_i/\partial x_i$, and
$\|F\|_{W^{-1,p}(\Omega)} = \max_{0\leq i\leq d}\big\{\|f_i\|_{L^p(\Omega)}\big\}$.
Thus from the linearity of $(\textbf{DS}_\varepsilon)$, if the source term $\text{div}(f)$ of
$\eqref{pde:5.2}$ is substituted for $F\in W^{-1,p}(\Omega;\mathbb{R}^d)$ in Theorem $\ref{thm:5.4}$,
and then we can derive
\begin{equation}
 \|\nabla u_\varepsilon\|_{L^p(\Omega)} + \|p_\varepsilon\|_{L^p(\Omega)/\mathbb{R}}
 \leq C\|F\|_{W^{-1,p}(\Omega)},
\end{equation}
where we actually set $F=f_0+\text{div}(f)$, and employ the estimates $\eqref{pri:5.8}$ and $\eqref{pri:5.9}$.}
\end{remark}

\begin{flushleft}
\textbf{Proof of Theorem \ref{thm:1.2}.}
First of all, the proof of the estimate $\eqref{pri:1.5}$ has been given in Theorem $\ref{thm:5.4}$ and
Remark $\ref{remark:5.1}$ for the case $h=0$ and $g=0$. The remaining thing is to handle the inhomogeneous
equations. Let $G$ be the extension of $g$ such that $G=g$ on $\partial\Omega$ in trace sense and
$\|G\|_{W^{1,p}(\Omega)}\leq C\|g\|_{B^{1-1/p,p}(\partial\Omega)}$.
Thus $u_\varepsilon-G = 0$ on $\partial\Omega$.
\end{flushleft}
\vspace{-0.3cm}

Consider the equations: $\text{div}(v) = h-\text{div}(G)$ in $\Omega$ and $v=0$ on $\partial\Omega$.
By noting that $\int_\Omega h dx - \int_{\Omega}\text{div}(G) dx = \int_\Omega h dx
- \int_{\partial\Omega}n\cdot g dS = 0$ (see the compatibility condition $\eqref{a:4}$), we have the
unique existence of $v\in W^{1,p}_0(\Omega;\mathbb{R}^d)$ according to Lemma $\ref{lemma:2.2}$. Moreover,
it follows from the estimate $\eqref{pri:2.2}$ that
\begin{equation}\label{f:5.13}
\|v\|_{W^{1,p}_0(\Omega)}\leq C\Big\{\|h\|_{L^p(\Omega)}+\|\nabla G\|_{L^p(\Omega)}\Big\}
\leq C\Big\{\|h\|_{L^p(\Omega)}+\|g\|_{B^{1-1/p,p}(\partial\Omega)}\Big\}.
\end{equation}
We now observe that $\text{div}(u_\varepsilon-G-v) = 0$ in $\Omega$ and $u_\varepsilon-G-v = 0$ on $\partial\Omega$.
If $w_\varepsilon = u_\varepsilon-G-v$, then
\begin{equation*}
\mathcal{L}_\varepsilon(w_\varepsilon)+\nabla p_\varepsilon
= \tilde{F} \quad \text{in}~\Omega,
\qquad \text{div}(w_\varepsilon) = 0 \quad\text{in}~\Omega,
\qquad w_\varepsilon =0 \quad \text{on}~\partial\Omega,
\end{equation*}
where $\tilde{F} = F+\text{div}\big[A(\cdot/\varepsilon)\nabla(G+v)\big]\in W^{-1,p}(\Omega;\mathbb{R}^d)$.
It follows from the estimate $\eqref{pri:5.9}$ that
\begin{equation}\label{f:5.14}
\begin{aligned}
 \|\nabla w_\varepsilon\|_{L^p(\Omega)} + \|p_\varepsilon\|_{L^p(\Omega)/\mathbb{R}}
 & \leq C\Big\{\|F\|_{W^{-1,p}(\Omega)} + \|\nabla G\|_{L^p(\Omega)} + \|\nabla v\|_{L^p(\Omega)}\Big\} \\
 & \leq C\Big\{\|F\|_{W^{-1,p}(\Omega)} + \|h\|_{L^p(\Omega)} + \|g\|_{B^{1-1/p,p}(\partial\Omega)}\Big\}.
\end{aligned}
\end{equation}
Hence the desired estimate $\eqref{pri:1.5}$ consequently follows from $\eqref{f:5.13}$ and $\eqref{f:5.13}$,
and we have completed the proof.
\qed

\begin{center}
\textbf{Acknowledgements}
\end{center}

The author wants to express his sincere appreciation to Professor Zhongwei Shen
for his constant and illuminating instruction.
This work was supported by the National Natural Science Foundation of China (Grant NO.11471147).

\section*{References}

\end{document}